\documentclass[11pt,reqno]{amsart}

\usepackage[centering,a4 paper]{geometry}
\usepackage[utf8]{inputenc}
\usepackage{amsmath,amsaddr,amssymb,amsthm}
\usepackage[inline]{enumitem}
\usepackage{thmtools}
\usepackage{thm-restate}
\usepackage{hyperref,xcolor,graphicx,float}
\usepackage[capitalize,nameinlink,sort]{cleveref}

\allowdisplaybreaks
\crefname{enumi}{item}{items}
\crefname{enumi}{Item}{Items}

\makeatletter
\newcommand{\symitem}[2]{%
  \item[#1]%
  \phantomsection                    
  \protected@edef\@currentlabel{#1}  
  \label[enumi]{#2}                  
}
\makeatother

\usepackage{pgf,tikz,tikz-cd}
\usetikzlibrary{arrows,arrows.meta,automata, calc, chains, decorations, shapes, positioning,decorations.pathmorphing}

\hypersetup{hidelinks}

\declaretheorem[numberwithin=section]{theorem}
\declaretheorem[sibling=theorem]{lemma,corollary,proposition}
\declaretheorem[sibling=theorem,style=definition]{example,definition,remark}

\declaretheorem[sibling=theorem,style=definition,refname={question},Refname={Question}]{question}

\usepackage{mathtools}
\usepackage[normalem]{ulem}
\newcommand{\seqnum}[1]{\href{https://oeis.org/#1}{\rm \underline{#1}}}
\newcommand{\btau}{\boldsymbol{\tau}}
\newcommand{\cA}{\mathcal{A}}
\newcommand{\cB}{\mathcal{B}}
\newcommand{\cC}{\mathcal{C}}
\newcommand{\cL}{\mathcal{L}}
\newcommand{\cR}{\mathcal{R}}
\newcommand{\cS}{\mathcal{S}}
\newcommand\NN{\mathbb{N}}
\newcommand\RR{\mathbb{R}}

\newcommand\faset{\mathcal{L}}
\newcommand{\infw}[1]{{\bf{#1}}}

\usepackage{todonotes}
\title{Factor-balancedness, linear recurrence, and factor complexity}
\date{\today}

\author{Bastián Espinoza and Manon Stipulanti}
\address{Department of Mathematics, University of Liège, Allée de la Découverte 12, 4000 Liège, Belgium}
\email{baespinoza@uliege.be, m.stipulanti@uliege.be}

\author{Pierre Popoli}
\address{School of Computer Science, University of Waterloo, Waterloo, ON N2L 3G1, Canada}
\email{pierre.popoli@uwaterloo.ca}

\begin{document}

\begin{abstract}
In the study of infinite words, various notions of balancedness provide quantitative measures for how regularly letters or factors occur, and they find applications in several areas of mathematics and theoretical computer science.
In this paper, we study factor-balancedness and uniform factor-balancedness, making two main contributions.
First, we establish general sufficient conditions for an infinite word to be (uniformly) factor-balanced, applicable in particular to any given linearly recurrent word.
These conditions are formulated in terms of $\mathcal{S}$-adic representations and generalize results of Adamczewski on primitive substitutive words, which show that balancedness of length-2 factors already implies uniform factor-balancedness. 
As an application of our criteria, we characterize the Sturmian words and ternary Arnoux--Rauzy words that are uniformly factor-balanced as precisely those with bounded weak partial quotients.
Our second main contribution is a study of the relationship between factor-balancedness and factor complexity.
In particular, we analyze the non-primitive substitutive case and construct an example of a factor-balanced word with exponential factor complexity, thereby making progress on a question raised in 2025 by Arnoux, Berthé, Minervino, Steiner, and Thuswaldner on the relation between balancedness and discrete spectrum.
\end{abstract}

\maketitle

\noindent\textbf{Keywords}: Combinatorics on words, balancedness, factor-balancedness, uniform factor-balancedness, discrepancy, Sturmian words, Arnoux--Rauzy words, linearly recurrent words, substitutions, decisiveness, factor complexity, Toeplitz words. 

\medskip 

\noindent\textbf{2020 Mathematics Subject Classification}: 68R15. 

\medskip 

\noindent\textbf{Acknowledgments}. We thank Boris Adamczewski, Shigeki Akiyama, Pierre Arnoux, Valérie Berthé, and Jeffrey Shallit for insightful discussions all along this project. 

Bastián Espinoza is supported by ULiège's Special Funds for Research, IPD-STEMA Program. 
Pierre Popoli is supported by NSERC grant 2024-03725.
Manon Stipulanti is an FNRS Research Associate supported by the Research grant 1.C.104.24F. 

\setcounter{tocdepth}{1} 

\tableofcontents


\section{Introduction}

The study of \emph{balanced} infinite \emph{words}---also called \emph{sequences}, \emph{strings}, or \emph{streams}---originates in the pioneering work of Morse and Hedlund~\cite{Morse-Hedlund-1938,mh40} in the late 1930s.
Their research introduces balancedness as a fundamental property linking the combinatorics of the so-called \emph{Sturmian} words with seemingly distant areas such as billiard dynamics in the square and the arithmetic of continued fractions. 
This perspective laid the foundations for what has since become a recurrent theme in combinatorics on words (for instance, see~\cite{DurandPerrinBook, PytheasFogg2002, Lothaire2002, Thuswaldner2020}) and also across several other fields. 
In the theory of aperiodic order, balancedness is an important regularity property of Delone sets, tilings, and quasicrystals, providing a quantitative measure of how uniformly points or tiles are spread; for instance, see~\cite{BaakeGrimm2013}.
It is also a recurring notion in uniform distribution theory~\cite{KN1974}.
In topological dynamics on the Cantor set and ergodic theory, the Gottschalk--Hedlund theorem relates the balancedness of clopen sets to cohomological properties and to the existence of topological eigenvalues~\cite{DurandPerrinBook,Gottschalk-Hedlund}. 
From a computational viewpoint, balancedness is connected to certain optimization and operations research problems; for instance, see~\cite{AGH2000,Tijdeman1980}. 
In arithmetic and number theory, it appears under the name of bounded remainder sets and is closely related to continued fractions (for instance see~\cite{PytheasFogg2002}) as well as to open problems such as Fraenkel's conjecture~\cite{Fraenkel1973}. 
Originally stated in the field of number theory~\cite{Fraenkel1973}, this conjecture can be reformulated combinatorially as follows: for each $k \ge 3$, there is only one $1$-balanced infinite word over a $k$-letter alphabet with different letter frequencies, up to a permutation of the letters. 
This conjecture is verified for small values of $k$, see~\cite{AGH2000,Tijdeman2000bis,Tijdeman2000} for $k\in\{3,4,5,6\}$, and some particular families of words such as episturmian words, see~\cite{PV2007}.
Finally, in the context of billiard dynamics, the complete unbalancedness is linked to the notion of topological weak mixing.

In combinatorics on words and symbolic dynamics, balancedness is a quantitative measure of how uniformly factors occur in a given infinite word.
Originally, Morse and Hedlund~\cite{Morse-Hedlund-1938,mh40} define an infinite word $\infw{x}$ to be what is widely called now\footnote{We warn the reader that this property is sometimes plainly called \emph{balancedness}. Here, we use the terms \emph{letter-balanced} and \emph{factor-balanced} to mean $C$-letter-balanced and $C$-factor-balanced, respectively, for some constant $C$, which are introduced later on.} \emph{$1$-balanced} if, for any letter $a$ and all factors $u,v$ of $\infw{x}$ with the same length, we have $\bigl||u|_a-|v|_a\bigr|\le 1$ (where the notation $|w|_a$ stands for the number of occurrences of the letter $a$ in the word $w$). 
The notion of $1$-balancedness is then extended by replacing the constant $1$ with an arbitrary integer $C \ge 1$, leading to the notion of \emph{$C$-letter-balancedness}.
We note that this is equivalent to a bounded discrepancy condition (with a possibly different constant), see~\cite{Ada03} or \cref{lem:balance=>discrepancy} below; for a more general reference on the discrepancy theory, we point the reader to~\cite{KN1974}.
It is also equivalent to having a bounded abelian factor complexity as pointed out in~\cite{RichommeSaariZamboni-2011}. 
The seminal paper of Rauzy~\cite{Rauzy82} uses letter-balancedness as a key property to construct a fractal set associated with the \emph{Tribonacci} substitution. 
This work extends, to a two-dimensional setting, the connection between arithmetic geometry and combinatorics on words, which was already present in the Sturmian case. Rauzy's paper initiated a large body of subsequent research.
In particular, a long series of works aim at generalizing this interplay to broader families of infinite words. 
This line of research includes the introduction of Arnoux–Rauzy words~\cite{ArnouxRauzy1991}, the refutation of the conjecture asserting their letter-balancedness in all cases~\cite{CFZ2000}, the study of Lyapunov exponents associated with these systems~\cite{AD2019}, and results showing that they are almost surely balanced and admit geometric representations~\cite{bertheJEMS}. 
Further extensions were developed for other families, such as Brun words~\cite{bertheJEMS}.
Recently, Berthé, Cecchi--Bernales, and Espinoza establish necessary and sufficient conditions for letter-balancedness of primitive substitutive words in terms of incidence matrices and so-called extension graphs, see~\cite[Proposition 6.1]{BertheCecchiEspinoza}.

Instead of considering letters, one may turn to factors.
Given an integer $C\ge 1$ and an infinite word $\infw{x}$, a finite word $w$ is \emph{$C$-balanced in $\infw{x}$} if, for all factors $u,v$ of $\infw{x}$ of the same length, the difference between the numbers of occurrences of $w$ in $u,v$ is bounded by $C$.
If each factor of $\infw{x}$ is $C$-balanced in $\infw{x}$ for some $C$, then $\infw{x}$ is said to be \emph{factor-balanced}.
When the bound $C$ is uniform, that is, independent of the chosen factors of $\infw{x}$, we speak of \emph{uniform factor-balancedness}. 
The study of factor-balancedness and its uniform version is the main focus of this paper. 

Contrary to letter-balancedness, which depends only on the abelianization of words, factor-balancedness depends on finer combinatorics and therefore cannot be reduced to the study of letters in general.
However, Adamcwzeski~\cite{Ada03} proves that the factor-balancedness of primitive substitutive words is equivalent to balancedness for length-$2$ factors.
Related to famous families of words, Fagnot and Vuillon~\cite{FV:2002} show that Sturmian words are factor-balanced and give conditions under which they are even uniformly factor-balanced.
In addition, many works are dedicated to investigating the links between letter-balancedness and factor-balancedness of different lengths.  
For instance, Berthé and Cecchi--Bernales~\cite{BertheCecchi} prove that letter-balancedness implies factor-balancedness in dendric words.
This family of words has gained interest in recent years and generalizes Arnoux--Rauzy words and codings of interval exchange transformations; for instance see~\cite{Bertheetal2015a,Bertheal2015b,Bertheal2015c,Gheeraert1,Gheeraert2}. 
Later on, the result of Berthé and Cecchi--Bernales was generalized by Berthé et al.~\cite[Corollary 5.5]{BCBDLPP2021} to primitive unimodular proper $\cS$-adic subshifts.
More recently, Arnoux et al.~\cite{NonStationaryMarkov} connect factor-balancedness with the Pisot conjecture, which asserts that any irreducible Pisot substitution generates a symbolic dynamical system with pure discrete spectrum; see~\cite{ABBLS2015} for a general overview of this conjecture. 
In this context, Arnoux et al.~\cite[Question 18.2]{NonStationaryMarkov} ask which are the factor-balanced infinite words generating a symbolic system with a discrete spectrum.
Regarding the relationship between the original notion of letter-balancedness and its extension to factor-balancedness, we also mention the works of Poirier and Steiner~\cite{PoirierSteiner} and that of Quéffelec~\cite{Queffelec-2010}.

Although uniform factor-balancedness is stronger than factor-balancedness, Adamczewski's results~\cite{Ada03,Ada04} imply that any fixed point of a substitution whose length-$2$ sliding block code is a primitive \emph{Pisot} substitution (i.e., with second-largest eigenvalue in modulus (strictly) less than $1$) satisfies this property. 
As a consequence, many classical infinite words belong to this class, e.g., the Tribonacci word, and more generally, any substitutive Arnoux-Rauzy word~\cite {ArnouxIto2001}.
To the best of our knowledge, this property explicitly appeared only two decades later as a computational hypothesis in~\cite{CDOPSS2025}, motivated by the use of the theorem-prover \texttt{Walnut}~\cite{Walnut-2023}, which is built on the connection between first-order logic and automata theory.
From this hypothesis, Couvreur et. al.~\cite{CDOPSS2025} prove that the two-dimensional generalized abelian complexity of an automatic uniform factor-balanced word is regular, in the sense of Allouche and Shallit~\cite{AS:1992}. 
Furthermore, authors of~\cite{CDOPSS2025} also prove the following explicit result about the Tribonacci word, generalizing that of~\cite{RSZ:2010} concerning letter-balancedness.

\begin{theorem}[\cite{CDOPSS2025}]
\label{thm: Tribo is 2 UFB}
The Tribonacci word is uniformly factor-balanced with constant $2$. 
\end{theorem}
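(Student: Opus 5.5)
The plan is a reduction to letter-balancedness of derived sequences, followed by a finite check that is in practice done with \texttt{Walnut}. Let $\infw{t}$ be the Tribonacci word, the fixed point of $\tau\colon 0\mapsto 01,\ 1\mapsto 02,\ 2\mapsto 0$.

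\textbf{Step 1 (return words).} Every factor $w$ of $\infw{t}$ is a factor of an Arnoux--Rauzy word, so it has exactly three return words. The derived sequence of $\infw{t}$ with respect to $w$ is again a Tribonacci-type word. More concretely, for each $n$, the occurrences of $w$ are coded by an image of $\infw{t}$ under a conjugate of $\tau^n$, where $n$ is determined by the length of the bispecial factor that $w$ extends uniquely to.

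\textbf{Step 2 (counting occurrences).} The number of occurrences of $w$ in a window $u$ of length $N$ equals, up to boundary effects of at most $1$, the number of letters of the derived sequence that fall into the corresponding window. Balancedness of $w$ with constant $C$ therefore reduces to a balance statement for the weighted letter counts of a Tribonacci-coded sequence. Here the weights are the lengths of the return words, which are the entries of the column sums of $M_\tau^n$.

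\textbf{Step 3 (uniform bound).} I would use the Rauzy fractal description. The discrepancy vector $M_\tau$-contracts along the stable plane, and the Tribonacci word is $2$-letter-balanced by \cite{RSZ:2010}. Since every derived sequence is the same word up to shift and renaming, the bound does not depend on $w$. A crude version of this argument gives some constant $C$. The main obstacle is getting exactly $2$, because boundary effects and the rounding in Step~2 can add $1$.

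For the sharp constant, I would follow \cite{CDOPSS2025}. The property ``for all $w$ and all $u,v$ of equal length, $\bigl||u|_w-|v|_w\bigr|\le 2$'' is not directly first-order, since it quantifies over $w$. I would first reduce it to bispecial $w$, because non-bispecial factors are unique extensions of bispecial ones and have the same count up to $\pm$ the bound. For bispecial $w$, which are prefixes of $\infw{t}$ with lengths given by the Tribonacci recurrence, occurrence counting is expressible in Tribonacci numeration. The statement with constant $2$ then becomes a first-order sentence about $\infw{t}$ in Tribonacci representation, and it is decided by \texttt{Walnut}~\cite{Walnut-2023}.

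Finally, I would check that $1$ fails, since $\infw{t}$ is not $1$-balanced for letters. This shows the constant $2$ is optimal.
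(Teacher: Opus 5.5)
The paper does not prove this statement. It imports it from \cite{CDOPSS2025}, and its own criterion (\cref{thm:char_balance_LR}, applied in \cref{ex: constant for the Tribonacci word}) only gives a constant of about $4602$ for $\infw{t}$. Your Steps 1--3 sketch an argument of that same crude type, and, as you say yourself, they yield only \emph{some} uniform constant. Note also that ``every derived sequence is the same word'' is not what makes the constant uniform, since the return-word lengths change with $k$. What is needed is that $\max_a|\tau^k(a)|/\min_a|\tau^k(a)|$ stays bounded in $k$. So the actual content of the statement, the constant $2$, rests entirely on your last paragraph, and that is where there is a genuine gap.

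Saying you would ``follow \cite{CDOPSS2025}'' appeals to the very result to be proved. Your one substantive claim is that for bispecial $w$ ``occurrence counting is expressible in Tribonacci numeration'', so that the property becomes a first-order sentence. This is exactly the hard point, and it is not justified.
\begin{itemize}
\item \texttt{Walnut} decides first-order formulas over $\NN$ with addition and automatic predicates, and it has no counting quantifier.
\item Let $u_k$ be the $k$-th bispecial prefix. The map $(i,n,k)\mapsto |t_i\cdots t_{i+n-1}|_{u_k}$ is not synchronized. Up to a bounded correction it amounts to deleting the last $k$ digits of a Tribonacci representation, and a shift by an unbounded number of digits is not a regular relation.
\end{itemize}
You need an explicit encoding. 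For instance:
\begin{itemize}
\item The occurrences of $u_k$ are exactly the positions $|\tau^k(t_0\cdots t_{j-1})|$, that is, the integers whose Tribonacci representation is $\mathrm{rep}(j)0^k$.
\item Carry a count $c$ at the same scale, as $\mathrm{rep}(c)0^k$.
\item Use that the tuples $(\mathrm{rep}(T_k),\mathrm{rep}(a)0^k,\mathrm{rep}(b)0^k,\mathrm{rep}(a+b)0^k)$, read in parallel, form a regular set.
\end{itemize}
This makes ``two windows of equal length whose counts of $u_k$ differ by at least $3$'' first-order, with $k$ quantified through $T_k$. Only then is there a sentence to submit, and it must still actually be run.

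The reduction to bispecial factors must also be exact, not ``up to $\pm$ the bound'', because any additive loss destroys the constant $2$. The exact version is as follows. If $w$ extends uniquely to the bispecial factor $\hat w = pws$, then $|u|_w = |\hat u|_{\hat w}$. Here $\hat u$ is $u$ extended by $|p|$ letters on the left and $|s|$ on the right, taken around an occurrence of $u$ at a position at least $|p|$. Hence the balance of $w$ is at most that of $\hat w$, with no loss.

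Finally, the optimality remark (that the constant $1$ fails) is not part of the statement.
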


However, Adamczewski's general theorem appears to be largely underutilized for factor-balancedness in the literature. 
This became apparent when the authors of~\cite{CDOPSS2025} presented their work to audiences in combinatorics on words, dynamical systems, and numeration systems, where uniform factor-balancedness (and in particular Theorem \ref{thm: Tribo is 2 UFB}) often produced some surprise.
This observation constitutes the main motivation for the present work. 
Our goal is to bring uniform factor-balancedness back into focus by revisiting known results, clarifying its scope and significance, and extending the current understanding of this property, as well as providing some new general results on factor-balancedness. 

\subsection{Our contributions}
\label{sec: our contributions}
In this paper, we contribute to the theory of factor-balancedness and uniformly factor-balancedness in two different ways.






\textbf{First}, in~\cref{sec:UFB&LR}, we provide conditions under which linearly recurrent words are (uniformly) factor-balanced.
We give our result here in~\cref{thm:char_balance_LR} but we refer the reader to~\cref{sec:UFB&LR} for the necessary definitions and notation that appear in the statement. Note that a congenial sequence as stated below always exists for a given linearly recurrent word.

\begin{restatable}{theorem}{maintwo}
\label{thm:char_balance_LR}
    Let $\infw{x}$ be a linearly recurrent word and $(\tau_n,\, a_n)_{n \ge 0}$ be a congenial sequence generating $\infw{x}$ satisfying Conditions \ref{defi:L1}, \ref{defi:L2}, and \ref{defi:L3bis}.
    Let $\infw{x^{(n)}}$ denote the limit of $(\tau_{n,m}(a_m))_{m > n}$.
    Suppose that there exists a sequence $(B_n)_{n\geq 0}$ such that the word $\infw{x^{(n)}}$ is $B_n$-letter-balanced. Then the word $\infw{x}$ is factor-balanced. 
    Moreover, if the sequence $(B_n)_{n\geq 0}$ is bounded, then the word $\infw{x^{(n)}}$ is uniformly factor-balanced. 
\end{restatable}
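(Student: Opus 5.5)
We work directly with the $\cS$-adic structure provided by the congenial sequence. The strategy is to reduce balancedness of an arbitrary factor $w$ of $\infw{x}$ to letter-balancedness of a suitable desubstituted word $\infw{x^{(n)}}$. We do this by expressing occurrences of $w$ through the letters of $\infw{x^{(n)}}$ and their images under $\tau_{0,n}$.

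\textbf{Step 1 (choosing the level).} Fix a factor $w$ of $\infw{x}$. By linear recurrence, and by Conditions \ref{defi:L1} and \ref{defi:L2}, the lengths $|\tau_{0,n}(a)|$ grow geometrically and are comparable across letters $a$, with constants independent of $n$. We choose the least $n$ such that $|\tau_{0,n}(a)| \ge |w|$ for all letters $a$. Then every occurrence of $w$ in $\infw{x}$ lies inside $\tau_{0,n}(bc)$ for some length-$2$ factor $bc$ of $\infw{x^{(n)}}$. The number of such pairs, and the ratio $|\tau_{0,n}(a)|/|w|$, are bounded by a constant $K$ independent of $w$.

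\textbf{Step 2 (counting via desubstitution).} We use Condition \ref{defi:L3bis}, the recognizability or decisiveness type hypothesis, to show that each occurrence of $w$ in $\tau_{0,n}(\infw{x^{(n)}})$ is determined by the factor $bc$ (or, more generally, a factor of bounded length) of $\infw{x^{(n)}}$ covering it, together with a position inside its image. This yields, for any factor $u$ of $\infw{x}$,
\[
|u|_w = \sum_{v} \alpha_v\,|u'|_{v} + O(K),
\]
where $u'$ is the desubstituted factor of $\infw{x^{(n)}}$ with $\tau_{0,n}(u')\approx u$, the sum runs over boundedly many short factors $v$ of $\infw{x^{(n)}}$, and the coefficients $\alpha_v\le K$. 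This is the main obstacle. Letter-balancedness of $\infw{x^{(n)}}$ controls letters only, not length-$2$ factors. We therefore have to exploit the choice of level, possibly passing to level $n+1$, so that the contribution of each short factor $v$ is actually a function of single letters of $\infw{x^{(n+1)}}$. Concretely, we aim to show that occurrences of $w$ correspond to positions inside $\tau_{0,n+1}(a)$ for single letters $a$, plus boundary terms. Two facts support this. First, by \ref{defi:L3bis} the images $\tau_{n}(a)$ start and end with prescribed letters, so length-$2$ factors across a boundary are determined by a single letter at the next level. Second, the lengths remain comparable at level $n+1$. With this we get $|u|_w=\sum_a \beta_a |u''|_a + O(K)$ with $\beta_a\le K$.

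\textbf{Step 3 (conclusion).} Take two factors $u,v$ of $\infw{x}$ of equal length. Their desubstitutions $u'',v''$ have lengths differing by $O(K)$, because the lengths of $\tau_{0,n+1}(a)$ are comparable and $\infw{x^{(n+1)}}$ is letter-balanced. This gives
\[
\bigl||u|_w-|v|_w\bigr| \le K\sum_a \bigl||u''|_a-|v''|_a\bigr| + O(K) \le C(K, B_{n+1}),
\]
using $B_{n+1}$-letter-balancedness; the length mismatch is absorbed by extending the shorter word. Hence $w$ is balanced, and so $\infw{x}$ is factor-balanced. If $(B_n)$ is bounded, the constant depends only on $K$ and $\sup_n B_n$, not on $w$, which gives uniform factor-balancedness. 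The same argument applied to the shifted congenial sequence $(\tau_{m},a_m)_{m\ge n}$ gives the statement for each $\infw{x^{(n)}}$.
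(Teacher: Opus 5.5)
Your skeleton is the paper's: take the least level whose images all have length at least $|w|$, go up one level so that $\tau_{0,n+1}=\tau_{0,n}\circ\tau_n$ is $|w|$-decisive in $\infw{x^{(n+1)}}$ (this is \cref{lem:find_representative}, via \cref{lem:composition_decisive}), and write the number of occurrences of $w$ as $\sum_a \beta_a|u''|_a$ plus boundary terms, as in \cref{prop:formula_occurrences_decisive}. Your Step~3 compares two factors directly through the lengths of their desubstitutions, instead of bounding the discrepancy with respect to $\mu_u$. That is a valid alternative to the paper's $T_1,T_2,T_3$ estimates. You do misquote \ref{defi:L3bis}, though. It does not say that the images $\tau_n(a)$ start and end with prescribed letters. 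It says that the beginning of $\tau_n(b)$ is determined by the letter preceding $b$ in $\infw{x^{(n+1)}}$. This still makes a length-$2$ factor straddling $\tau_n(a)\tau_n(b)$ a function of $a$ alone, so the reduction to single letters in your Step~2 goes through once stated correctly.

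The genuine gap is in the uniform part. It rests on the claims $\beta_a\le K$ and boundary error $O(K)$, where your $K$ only bounds the number of pairs and the ratio $|\tau_{0,n+1}(a)|/|w|$. These facts do not bound the number of occurrences of $w$ in a window of length about $K^3|w|$ independently of $w$. A priori that number can be of order $|w|$; think of $w=b^m$ inside $b^{Km}$. So your constant $C(K,B_{n+1})$ secretly depends on $|w|$. This is harmless for factor-balancedness but destroys uniformity.

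The missing ingredient is power-freeness. An aperiodic word that is linearly recurrent with constant $L$ is $(L+1)$-power-free, hence $|t|_w\le (L+1)|t|/|w|$ for all factors $t$ (\cref{lem:LR_freqs_bound,lem:bound_occs_by_powerfree}). When $|\tau_{0,n+1}(a)|\le K^3|w|$, this gives $\beta_a\le (L+1)(K^3+1)$ and a boundary error depending only on $L$ and $K$. This is exactly where the paper uses linear recurrence: the bounds on $T_1$ and $T_3$, and $q_{\tau,\infw{x}}(u,a)\le P|\tau(a)|/|u|$ in \cref{lem:bound_freqs_convergence}. Your argument never uses the hypothesis for this purpose. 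The ingredient cannot be avoided, since by \cref{factorBal-powerFree:main_prop} uniform factor-balancedness of an aperiodic recurrent word forces power-freeness.

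The same remark applies to your extension to each $\infw{x^{(n)}}$. There, power-freeness is inherited from $\infw{x}$, because $v^P\in\cL(\infw{x^{(n)}})$ implies $\tau_{0,n}(v)^P\in\cL(\infw{x})$.
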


Our approach to~\cref{thm:char_balance_LR} is based on the rich theory of $\cS$-adic representations of linearly recurrent words, and is explained in~\cref{subsec:LR_as_Sadic}.
Another crucial ingredient of our approach is the notion of decisiveness in~\cref{sec:decisiveness}. This allows us to reduce in~\cref{subsec:charact_LR_balance} the problem of studying factor-balancedness to that of letter-balancedness in an adequate desubstitution of the original word.

\cref{thm:char_balance_LR} can be applied to any primitive substitutive word, since such a word is linearly recurrent \cite{Durand-1999}. In this case, we obtain in~\cref{subsec:charact_LR_balance} an equivalence between uniform factor-balancedness and balancedness of length-$2$ factors. 

\begin{proposition}
\label{pro: Adamczewski for primitive substitutive words} 
Let $\infw{x}$ be the fixed point of a primitive substitution. Then each length-2 factor of $\infw{x}$ is balanced in $\infw{x}$ if and only if the word $\infw{x}$ is uniformly factor-balanced.
\end{proposition}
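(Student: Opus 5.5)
The reverse implication is immediate: if $\infw{x}$ is uniformly factor-balanced with constant $C$, then in particular every length-$2$ factor is $C$-balanced in $\infw{x}$. The work is in the forward implication, and the plan is to reduce it to \cref{thm:char_balance_LR} with a stationary congenial sequence. Let $\sigma$ be the primitive substitution with $\infw{x}=\sigma^\omega(a)$. Replacing $\sigma$ by a power does not change the fixed point, so I may assume $\sigma(a)$ begins with $a$ and $|\sigma^k(b)|\to\infty$ for every letter $b$.

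The natural object is the length-$2$ sliding block code $\sigma_2$ of $\sigma$. It acts on the alphabet $A_2$ of length-$2$ factors of $\infw{x}$ by sending $(bc)$ to the sequence of length-$2$ windows of $\sigma(b)\sigma(c)$ that start in $\sigma(b)$. The substitution $\sigma_2$ is again primitive, and its fixed point $\infw{x}_2$ is the coding of $\infw{x}$ by its length-$2$ windows. Hence $|u_2|_{(bc)}$ counts occurrences of $bc$ in the corresponding factor $u$ of $\infw{x}$, up to an error of $1$. The hypothesis therefore says exactly that $\infw{x}_2$ is letter-balanced.

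Next I would build a stationary congenial sequence $(\tau_n,a_n)=(\tau,a')$ generating $\infw{x}$ that satisfies Conditions \ref{defi:L1}, \ref{defi:L2} and \ref{defi:L3bis}. In the stationary case each desubstituted word $\infw{x^{(n)}}$ is the same fixed point $\infw{y}$, so the sequence $B_n$ is constant and the conclusion of \cref{thm:char_balance_LR} gives uniform factor-balancedness. The candidate for $\tau$ is a suitable power of $\sigma_2$, or a recoding of it (for instance the return-word substitution, or a Mossé-recognizability-based recoding), composed with the letter-to-letter projection $A_2\to A$. These conditions, namely decisiveness and properness-type requirements, presumably hold after passing to a sufficiently high power, using Mossé recognizability. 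It then remains to check that $\infw{y}$ is letter-balanced, which is the place where the length-$2$ hypothesis enters.

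I expect this last step to be the main obstacle. The required balancedness concerns $\infw{y}$, not $\infw{x}_2$, and $\infw{y}$ must be expressed through $\infw{x}_2$. The first thing I would try is to choose $\tau$ so that $\infw{y}=\infw{x}_2$ itself, or so that $\infw{y}$ is obtained from $\infw{x}_2$ by a letter-to-letter map or a bounded recoding, both of which preserve letter-balancedness.

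If the decisiveness conditions force higher-order codings (length-$k$ windows with $k>2$), I would instead use the following fact. For a fixed point of a primitive substitution, every occurrence of a long factor is determined by the length-$2$ factor at a bounded desubstitution position. Then $|u|_w$, for a long word $w$, equals a sum of counts of length-$2$ letters in a desubstituted factor, up to a bounded error. Iterating this through the powers of $\sigma$ gives a geometric, or at least bounded, accumulation of errors. This is essentially Adamczewski's argument, and it transfers the balancedness of length-$2$ factors to letter-balancedness of the recoded fixed point with a uniform constant.
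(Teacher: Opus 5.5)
Your opening matches the paper. Passing to the $2$-block substitution $\sigma_2$, you correctly note that the hypothesis says exactly that $\infw{x}_2$ is $C$-letter-balanced for one constant $C$, since there are finitely many length-$2$ factors. But the proposal does not close, and the reason is a structural confusion in the next step.

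You ask for a \emph{stationary} sequence $(\tau,a')$ generating $\infw{x}$, with $\tau$ built from $\sigma_2$ ``composed with the projection $A_2\to A$''. In a stationary sequence every $\tau_n$ is an endomorphism of a single alphabet, so the projection cannot sit at every level. Suppose instead that $\tau$ is an endomorphism of $A$ generating $\infw{x}$. Then every level equals $\infw{x}$, and \cref{thm:char_balance_LR} would need only letter-balancedness of $\infw{x}$, so the length-$2$ hypothesis would never enter. Such a $\tau$ therefore cannot be decisive in general: the Thue--Morse word is letter-balanced but not factor-balanced, because its $2$-block image, the period-doubling word, is not letter-balanced. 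This is what creates your ``main obstacle''. The appeal to Mossé recognizability (``presumably'') does not fill it. The fallback to ``essentially Adamczewski's argument'' does not either; it just defers to the statement being proved.

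The missing idea is to apply the theorem to $\infw{x}_2$ rather than to $\infw{x}$, and then come back through a block code. First replace $\sigma$ by a power with $|\sigma(b)|\ge 2$ for all $b$, and take the constant sequence $(\sigma_2,[x_0x_1])_{n\ge0}$.
\begin{itemize}
\item Every level is $\infw{x}_2$, so $B_n=C$ for all $n$.
\item Conditions \ref{defi:L1} and \ref{defi:L2} follow from primitivity and constancy.
\item Condition \ref{defi:L3bis} needs no recognizability. If $[bc][cd]\in\cL(\infw{x}_2)$, the first $|\sigma(c)|-1$ letters of $\sigma_2([cd])$ are the windows lying inside $\sigma(c)$. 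These depend only on $c$, hence only on the preceding letter $[bc]$. So $\sigma_2$ is $(\min_e|\sigma(e)|-1)$-decisive in $\infw{x}_2$.
\end{itemize}
Then \cref{thm:char_balance_LR} makes $\infw{x}_2$ uniformly factor-balanced. Item~(2) of \cref{balance_inherited_by_block_codes} transfers this to $\infw{x}=\pi(\infw{x}_2)$, because $\pi\colon[bc]\mapsto b$ is invertible on $\infw{x}_2$ (its inverse is the $2$-block code $bc\mapsto[bc]$). This is the paper's proof.

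Your projection idea can also be made to work, but only non-stationarily. Take $\tau_0=\pi$, which is $1$-decisive in $\infw{x}_2$, and $\tau_n=\sigma_2$ for $n\ge1$. This generates $\infw{x}$ directly, and all levels are letter-balanced.
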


This phenomenon already appears in the work of Adamczewski, following directly from~\cite[Theorem 22]{Ada03}. 

More importantly, another application of~\cref{thm:char_balance_LR} provides a full characterization of binary and ternary Arnoux--Rauzy words that are uniformly factor-balanced, which we discuss in~\cref{sec:UFB_AR}. All necessary definitions are introduced in~\cref{subsec:ARwords}.
Also note that the binary case, i.e., about Sturmian words, was already considered in~\cite{FV:2002}.

\begin{restatable}{theorem}{mainAR} \label{thm:mainAR} 
   An Arnoux--Rauzy word over an alphabet of at most $3$ letters is uniformly factor-balanced if and only if it has bounded weak partial coefficients. 
\end{restatable}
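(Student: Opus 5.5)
We have to prove \cref{thm:mainAR}: an Arnoux--Rauzy word on at most $3$ letters is uniformly factor-balanced if and only if it has bounded weak partial coefficients. Both directions go through the $\cS$-adic representation of $\infw{x}$ by the elementary Arnoux--Rauzy substitutions $\sigma_a\colon a\mapsto a,\ b\mapsto ab$ for $b\neq a$. The directive sequence is encoded by blocks $a_{i_1}^{k_1}a_{i_2}^{k_2}\cdots$ with $i_j\neq i_{j+1}$. Bounded weak partial coefficients means exactly that the lengths $k_j$ of these runs are bounded.

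\textbf{Sufficiency.} Assume the $k_j$ are bounded. Every letter then reappears in the directive sequence within bounded gaps, so consecutive compositions of bounded length are positive matrices drawn from a finite set. Hence $\infw{x}$ is linearly recurrent. I would then build a congenial sequence $(\tau_n,a_n)$ satisfying \ref{defi:L1}, \ref{defi:L2} and \ref{defi:L3bis} by telescoping the Arnoux--Rauzy directive sequence into blocks of bounded length. I expect this to be possible because these substitutions are left proper after composing over a full cycle of letters, and their return-word structure is explicit.

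By \cref{thm:char_balance_LR} it then suffices to show that the desubstituted words $\infw{x^{(n)}}$, which are again Arnoux--Rauzy words with a shifted directive sequence, are $B$-letter-balanced with $B$ independent of $n$. For this I would use the known result that an Arnoux--Rauzy word whose weak partial quotients are bounded by $h$ is letter-balanced with a constant depending only on $h$ (as in the work of Berthé--Cassaigne--Steiner); the shifted sequences satisfy the same bound. For Sturmian words this is even simpler, since they are $1$-balanced.

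\textbf{Necessity.} Suppose the weak partial coefficients are unbounded. I would exhibit factors $w$ and pairs $u,v$ of equal length with $\bigl||u|_w-|v|_w\bigr|\ge K$ for arbitrarily large $K$. A run $a^{k}$ in the directive sequence at level $n$ produces, in $\infw{x}$, a factor of the form $\tau(a)^{k}\tau(b)$, where $\tau$ is the composition of the first $n$ substitutions. Take $w=\tau(a)\tau(a)$ or a short suffix/prefix combination recognizing such a position. A window of length roughly $k|\tau(a)|$ lying inside the power then contains about $k$ occurrences of $w$. Another window of the same length, placed where $\tau(b)$-images are interleaved (which exists by recurrence and the Arnoux--Rauzy branching structure), contains far fewer.

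To keep the count of occurrences of $w$ controlled by how the window is decomposed into $\tau$-images, I would use recognizability (or decisiveness, see \cref{sec:decisiveness}). The discrepancy then grows with $k$, so it is not bounded by any constant independent of $w$.

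\textbf{Main obstacle.} The hard part is checking the conditions \ref{defi:L1}--\ref{defi:L3bis} for a telescoping of the Arnoux--Rauzy directive sequence, together with the uniform letter-balancedness bound in the ternary case. For the latter I would first try the bound given by the Berthé--Cassaigne--Steiner theorem and verify that it depends only on the bound on the weak partial quotients.
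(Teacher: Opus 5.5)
Your sufficiency argument has a genuine gap at its first step. Bounded weak partial quotients only bound the runs $a^{k}$ of a single letter in the directive sequence. They do not make every letter reappear with bounded gaps; that is the \emph{strong} partial quotient condition, and for three letters the two conditions differ. For instance, the directive sequence $(ab)^{m_1}c\,(ab)^{m_2}c\cdots$ with $m_j\to\infty$ has all weak partial quotients equal to $1$. Yet compositions of $\sigma_a$ and $\sigma_b$ never produce $c$ from $a$ or $b$, so no telescoping into blocks of bounded length satisfies \ref{defi:L1}. By the characterization of linearly recurrent Arnoux--Rauzy words through bounded strong partial quotients quoted in \cref{sec:UFB_AR}, such a word is not linearly recurrent. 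So \cref{thm:char_balance_LR} cannot be applied to these words, and the ``main obstacle'' you single out cannot be overcome for them. In the Sturmian case weak and strong partial quotients coincide, so your route is fine there.

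The missing idea is to dispense with linear recurrence altogether. In the proof of \cref{thm:char_balance_LR}, linear recurrence is used only for two things:
\begin{itemize}
\item power-freeness, which gives $|t|_u\le (L+1)|t|/|u|$ and $\mu_u\le L/|u|$;
\item the comparability $|\tau_{0,n}(a)|\le K|\tau_{0,n}(b)|$ of \cref{prop:Sadic_props_for_LR}.
\end{itemize}
This is the content of \cref{thm:main_thm_weaker_hyp}, and the paper proves both inputs directly from bounded runs.
\begin{itemize}
\item Comparability (\cref{lem:images_mainAR}) comes from the contraction $T(\eta\sigma_a\sigma_b)\le \tfrac12 T(\eta)+2$ for $a\neq b$, where $T(\eta)=\max_{a,b}|\eta(a)|/|\eta(b)|$, together with $T(\eta\sigma_a)\le T(\eta)+1$. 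Bounded runs force a change of letter within bounded gaps, so the contraction applies regularly.
\item Power-freeness (\cref{lem:powerfree_mainAR}) comes from desubstituting a high power $u^P$, with $u$ primitive, to a level $n$ where $|\tau_{0,n}(a)|\ge 4|u|$ for all $a$. The common long suffix shared by all images, combined with Fine--Wilf, forces the blocks $\tau_{0,n+1}(v_i)$ for $i\ge 2$ to have lengths divisible by $|u|$. Hence they begin with the same letter, so $v=v_1\lambda^{|v|-1}$ with $|v|$ large, which bounded runs exclude.
\end{itemize}
With these two facts, the level-independent constant $2h+1$ from \cref{thm:AR_letter-balanced} finishes the argument as you intended.

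Your necessity sketch correctly locates the high powers $\tau(a)^k$. However, the claim that some other window of the same length contains ``far fewer'' occurrences of $w$ is unjustified: occurrences may overlap, and you give no density argument. The clean tool is \cref{factorBal-powerFree:technical}. Suppose every word is $C$-balanced and $p^{2C+3}$ is a factor with $p$ primitive. Compare $p^{2C+3}$, which contains $C+2$ occurrences of $p^{C+2}$, with a window of the same length that begins with $p^{C+1}$ but not with $p^{C+2}$; such a window exists by aperiodicity. Primitivity of $p$ forces that window to contain at most one occurrence of $p^{C+2}$, giving an imbalance of at least $C+1$. Hence $k\le 2C+3$, with no recognizability needed.
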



\textbf{Second}, in~\cref{sec:complexity}, we study links between factor-balancedness and factor complexity. 
We first prove in \cref{UFB&substitutive=>uniformly_recurrent} that substitutive factor-balanced words have linear factor complexity, and in the more specialized \cref{UFB&substitutive=>uniformly_recurrent} we describe a special substitutive structure for these words.
Furthermore, with~\cref{thm: FB word with exp complexity}, we then provide an example of a factor-balanced word with exponential factor complexity. 
This reasoning is based on Toeplitz words, whose construction is recalled in~\cref{sec:Toeplitz}.

\begin{theorem}
\label{thm: FB word with exp complexity}
    There exists an infinite word that is factor-balanced and has exponential-growth factor complexity.
    In particular, the symbolic dynamical system generated by $\infw{x}$ has positive topological entropy and thus cannot have pure discrete spectrum.
\end{theorem}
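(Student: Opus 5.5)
We will build the word as a Toeplitz word whose factor complexity grows exponentially but whose "holes" are filled at a geometrically thinning rate. Fix a sequence of periods $p_k = q_0 q_1\cdots q_{k-1}$ with $q_i\ge 2$. At stage $k$ we fill a proportion of the still-undetermined positions with a pattern that is $p_{k+1}$-periodic. Concretely, we use the skeleton of a Toeplitz construction: start with the all-undefined word, and at stage $k$ fill all but a small fraction of the undefined positions in each period of length $p_{k+1}$. The filled values are chosen freely among two or more options in a controlled set of blocks. That freedom produces many distinct factors, hence $\log p(n)/n$ bounded below, while the periodic skeleton forces balance.

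The key steps are as follows.

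\textbf{Step 1: reduce balance of a factor $w$ to balance on each periodic layer.} Any factor $w$ of length $\ell$ whose occurrences all lie at positions of the word determined by stage $k$ (for $p_k\gg \ell$ and up to the ``hole'' positions) occurs periodically modulo $p_{k}$. Hence its number of occurrences in a window of length $N$ equals $N\cdot(\text{frequency}) + O(p_k)$, uniformly in $N$. The occurrences touching undetermined positions at stage $k$ are controlled by the stage-$(k+1)$ layer, and so on.

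\textbf{Step 2: sum the errors over layers.} Count occurrences of $w$ in a window $u$ of length $N$ by decomposing according to the first stage at which all letters of the occurrence are fixed. The contribution from stage $j$ is a $p_j$-periodic count, so its discrepancy is at most $p_j$ times the density of positions filled at stage $j$ relative to the hole structure. If the density of still-undefined positions after stage $j$ is $\delta_j$, the total discrepancy is bounded by something like $\sum_j p_{j+1}\delta_j\cdot(\text{number of occurrence-windows})$. We must choose parameters so that $\delta_j p_{j+1}$ is summable. This yields a constant $C_w$ depending on $w$ (through the first stage $k(w)$ where its length is negligible), which gives factor-balancedness but not uniformity.

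\textbf{Step 3: exponential complexity.} At each stage we leave the choice of a letter in $\{0,1\}$ free at a set of positions of density $\varepsilon_j$ inside each period, choosing them independently per block and not periodically. Equivalently, we take a nonperiodic choice indexed by a fixed sequence coding all binary words, so that a positive-density set of positions is free. This forces exponentially many factors of length $n$.

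\textbf{Main obstacle.} Steps 2 and 3 conflict. Free non-periodic choices break periodicity, so the free choices must be placed on positions whose total density is positive, giving entropy, yet arranged so that each factor's count remains balanced. We would try to make the free positions periodic themselves, at a density that is positive but only in long isolated blocks. Inside those blocks we write words from a fixed balanced code, for instance blocks that are permutations with identical factor counts up to a bounded boundary error. The free choice among such equi-balanced blocks then contributes only bounded error per block.

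Finally, positive entropy follows from the exponential complexity. Pure discrete spectrum systems have zero entropy, which gives the last sentence of the theorem.
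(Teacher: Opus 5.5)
\textbf{There is a genuine gap: the proposal has no working mechanism for balance, and the one sketched in Steps 1--2 provably cannot work.}

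\emph{The summability condition is impossible.} After stage $j$ the filled positions form a $p_{j+1}$-periodic set. If the word is aperiodic, every period contains a hole, so $\delta_j p_{j+1}\ge 1$ and $\sum_j \delta_j p_{j+1}$ always diverges.

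\emph{Positive entropy makes it worse.} Positive entropy forces $\liminf_j \delta_j>0$. Indeed, a factor of length $p_{j+1}$ is determined by its starting position modulo $p_{j+1}$ and by the values at the $\delta_j p_{j+1}$ holes it contains. Hence $p_{\infw{x}}(p_{j+1})\le p_{j+1}\,|\cA|^{\delta_j p_{j+1}}$. So in any positive-entropy Toeplitz word, a positive proportion of the occurrences of $w$ touch positions not fixed by a given finite layer. Any layer-by-layer estimate that charges each stage for its holes per period therefore blows up, and the periodic skeleton cannot be the source of balance.

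\emph{Step 3 and the proposed fix.} As written, Step 3 (free non-periodic binary choices at positive density, e.g.\ along a sequence coding all binary words) destroys even letter-balancedness, as you note. The fix in your last paragraph is only aspirational. A bounded error per block is not enough: a window of length $N$ contains about $N/L$ blocks, and these errors add up unless the sequence of blocks is itself balanced. That just pushes the same problem one level up.

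\textbf{The missing idea is to impose exact equidistribution, for letters only, at every level of a hierarchy, and then invoke a transfer theorem.}

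\emph{The construction.} In the paper, the letters of $\cA_{n+1}$ code all words over $\cA_n$ that begin with a fixed word $u_n$, end with a fixed word $v_n$, and contain each letter of $\cA_n$ exactly $k_n$ times.

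\emph{Entropy.} Since all blocks have the same letter content, each level word $\infw{x^{(m)}}$ is $4k_m$-letter-balanced. The freedom in ordering makes $|\cA_{n+1}|$ a multinomial coefficient. The blocks $\tau_{0,n}(a)$, $a\in\cA_n$, are pairwise distinct factors of a common length $p_n$. A Stirling estimate then keeps $\frac{1}{p_n}\log p_{\infw{x}}(p_n)$ bounded away from $0$, with for instance $k_n\ge n^2$ so that $\prod_n(1-2/k_n)>0$, a large initial alphabet, and a large $k_1$.

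\emph{Factor-balancedness.} This does not come from the skeleton. The $\tau_n$ are left-proper and primitive with letter-balanced levels, so the theorem of Poirier and Steiner applies. Heuristically, for a factor $w$ one passes to a level $n$ where the common prefix of all blocks $\tau_{0,n}(b)$ is longer than $|w|$. There, the occurrences of $w$ are counted, up to bounded boundary terms, by a weighted letter count $\sum_{a\in\cA_n}|v|_a\,q(w,a)$ of a level-$n$ word $v$, where $q(w,a)$ counts occurrences of $w$ starting inside $\tau_{0,n}(a)$. Letter-balancedness of $\infw{x^{(n)}}$ bounds the discrepancy by a constant depending on $n$, hence on $w$.

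Different blocks may contain $w$ different numbers of times; only letter counts need to agree. Your requirement of equal factor counts inside blocks is therefore unnecessary and harder to meet.

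Your closing remark about entropy and discrete spectrum is fine, once you note that factor-balancedness gives unique ergodicity. The positive topological entropy is then carried by the unique invariant measure.
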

This makes progress towards a question raised by Arnoux et al. in~\cite[Question 18.2]{NonStationaryMarkov} by showing that not all factor-balanced symbolic system have discrete spectrum.

Finally, the reader will find in~\cref{sec:preliminaries} the necessary background for the paper; in particular, subsections dedicated to definitions about words, substitutions, the close  connection between discrepancy and balancedness, $\cS$-adic representations, Sturmian words, and Arnoux--Rauzy words.
\cref{sec: generalities on UFB} contains generalities on (uniform) factor-balancedness, especially its stability by invertible higher block codes and its relationship to uniform recurrence in infinite words.
Instead of finishing the paper with some open problems, we propose some directly below, in~\cref{sec: open problems}.


\section{Open problems}
\label{sec: open problems}

In this section, we gather some open questions we came up with during the writing of the paper, which motivates further research on the topic of (uniform) factor-balancedness.

In~\cref{sec:UFB&LR}, \cref{thm:char_balance_LR} provides sufficient conditions for a linearly recurrent word to be (uniformly) factor-balanced.
As a natural question, we ask whether there are necessary conditions for this to hold.

\begin{question}
What are the necessary conditions on linearly recurrent words to be (uniformly) factor-balanced?
\end{question}

To answer this question, it appears that some recognizability hypothesis is required. In~\cref{sec:UFB_AR}, we fully characterize uniformly factor-balanced Sturmian and ternary Arnoux--Rauzy words.
One can naturally wonder whether the property extends to any size alphabet.

\begin{question}
Is it possible to characterize which Arnoux--Rauzy words are uniformly factor-balanced over any $d$-ary alphabet, with $d\ge 2$?
\end{question}

Note that three letters were already a limitation in~\cite{BCS2013}, so the general case might be more intricate and new methods might be necessary to tackle the question.






In~\cref{sec:exponential sub complexity}, we explicitly provide an example of a factor-balanced word with an exponential-growth factor complexity. One may wonder the following finer question. 

\begin{question}
Can every exponential-growth factor complexity be attained with factor-balanced infinite words?
\end{question}



Our Toeplitz construction does not allow us to obtain uniformly factor-balanced examples with exponential-growth factor complexity without substantial modifications. 
Moreover, all known examples of uniformly factor-balanced words have linear factor complexity. 
It is therefore natural to ask the following.

\begin{question}
Is it possible to construct a uniformly factor-balanced infinite word with a superlinear-growth factor complexity?
\end{question}

By \cref{UFB&substitutive=>uniformly_recurrent}, no (uniformly) factor-balanced word with super-linear factor complexity can be substitutive. 
Then, one has to use different words, as we did to prove \cref{thm: FB word with exp complexity} with Toeplitz words.


\section{Preliminaries}
\label{sec:preliminaries}

\subsection{Generalities on words}

An \emph{alphabet} is any finite set of elements, themselves called \emph{letters}. A \emph{finite word} $w$ of length $n\in\mathbb{N}$ over the alphabet $\cA$ is an element $w=w_1w_2\cdots w_n$ where each $w_i$ belongs to $\cA$.
We let $\cA^n$ denote the set of length-$n$ words over $\cA$.
We let $\cA^*$ denote the set of finite words over $\cA$, i.e., $\cA^* = \cup_{n\ge 0} \cA^n$, and we let $\varepsilon$ denote the \emph{empty word}.
An \emph{infinite word} $\infw{x}$ is an element of $\cA^{\NN}$ (unless otherwise specified, we start indexing them at $0$).
To distinguish finite from infinite words, we write the latter in bold.

A finite word $v=v_1\cdots v_k$ is a \emph{factor} of another finite word $u=u_1\cdots u_\ell$ if there exists $i\geq 1$ such that $v_1\cdots v_k=u_i\cdots x_{i+k-1}$. If $i=1$, we say that $v$ is a \emph{prefix} of $u$, and if $i=\ell-k+1$, we say that $v$ is a \emph{suffix} of $u$. If $v$ is a factor of $u$, an \emph{occurrence} of $v$ in $u$ is any integer $i$ such that $v_1\cdots v_k=u_i\cdots x_{i+k-1}$. We let $|u|_v$ denote the number of occurrences of $v$ in $u$ (possibly with overlaps). When $u$ is an infinite word, we use the same terminology.
Given an infinite word $\infw{x}$, we also let $\cL_n(\infw{x})$ denote \emph{its set of factors of length $n$} and $\cL(\infw{x})=\bigcup_{n\geq 0}\cL_n(\infw{x})$ its \emph{set of factors}. We let $p_\infw{x}$ denote the \emph{factor complexity} of $\infw{x}$, i.e., the map $p_\infw{x} \colon \NN \to \NN, n \mapsto \# \faset_n(\infw{x})$ that counts the number of factors of $\infw{x}$ of each length.
If $\infw{x}$ can be written as $uvvv\cdots$ for finite words $u,v$, we say that $\infw{x}$ is \emph{ultimately} or \emph{eventually periodic with period $|v|$}; if $u=\varepsilon$, we say that $\infw{x}$ is \emph{(fully) periodic}. If it is not periodic,  $\infw{x}$ is \emph{aperiodic}.
A word $u\in\cA^*$ is a \emph{power (of another word)} if there exist $v \in \cA^*$ and $k \ge 2$ with $u = v^k$; otherwise we say that $u$ is {\em primitive}.
For any integer $P$, a word is \emph{$P$-power-free} if none of its factors is of the form $v^P$ for some non-empty word $v$. 

Consider an infinite word $\infw{x}=(x_n)_n\in \cA^\NN$ and a finite word $u\in \cA^*$.
The \emph{frequency $\mu_\infw{x}(u)$ of $u$ in $\infw{x}$} is defined, if it exists, as the following limit \[\lim_{n\rightarrow + \infty}\frac{|x_0\cdots x_{n-1}|_u}{n}.\] When the limit exists, we say that $\infw{x}$ admits a \emph{frequency for the word $u$}. 
We say that $\infw{x}$ admits a \emph{uniform frequency for the word $u$} if the number of occurrences of $u$ in $x_k\cdots x_{k+n-1}$ divided by $n$ admits a uniform limit in $k$ when $n$ goes to infinity, i.e., there exists a real number $\mu_u$ such that \[\forall \varepsilon>0, \exists n_0\geq 0, \forall n\geq n_0, \forall k, \left| \frac{1}{n}|x_k \cdots x_{n+k-1}|_u-\mu_u \right| \leq \varepsilon.\] Finally, we say that $\infw{x}$ has \emph{uniform frequencies} if all its factors have uniform frequencies.

An infinite word $\infw{x} \in \cA^{\NN}$ is \emph{recurrent} if every factor of $\infw{x}$ appears infinitely often in $\infw{x}$. 
The word $\infw{x}$ is \emph{uniformly recurrent}, or \emph{minimal}, if there exists an integer $k$ such that every factor $w\in \cL(\infw{x})$ occurs in every length-$k$ factor of $\infw{x}$. 
For any infinite word $\infw{x}$, there exists a uniformly recurrent word $\infw{y}$ such that $\cL(\infw{y}) \subseteq \cL(\infw{x})$
(this result follows from the well-known fact that every topological dynamical system contains a minimal sub-system; for instance, see~\cite[Theorem 5.2]{Walters}).

Let $\infw{x} \in \cA^{\NN}$ be an infinite word, and $u \in \cL(\infw{x})\setminus \{\varepsilon\}$. 
A \emph{return word to $u$ in $\infw{x}$} is a word $w$ such that $wu\in \cL(\infw{x})$ and $u$ occur exactly twice in $wu$ (once as a prefix and once as a suffix). 
We let $\mathcal{R}_{\infw{x}}(u)$ denote the set of returns to $u$ in $\infw{x}$. 
Then, $\infw{x}$ is uniformly recurrent if, for every $u \in \cL(\infw{x})\setminus \{\varepsilon\}$, the set $\mathcal{R}_{\infw{x}}(u)$ is nonempty and finite. Moreover, we say that $\infw{x}$ is \emph{linearly recurrent (with constant $L\ge 0$)} if, for all $u \in \cL(\infw{x})\setminus \{\varepsilon\}$ and all $w \in \mathcal{R}_{\infw{x}}(u)$, we have $|w|\leq L|u|$. 
In particular, $\infw{x}$ is linearly recurrent with constant $L$ if every length-$(L+1)n$ factor of $\infw{x}$ contains every element of $\cL_n(\infw{x})$. 
We recall the following result.

\begin{lemma}[{\cite[Theorem 24]{Durand-1999}}] \label{lem:Durand1999LR}
Let $\infw{x} \in \cA^{\NN}$ be an infinite aperiodic word that is linearly recurrent with constant $L$.
\begin{enumerate}
    \item For any nonempty word $u\in \cA^*$, $u^{L+1}$ is not a factor of $\infw{x}$. 
    \item For any nonempty $u\in\cL(\infw{x})$ and any $w\in \mathcal{R}_{\infw{x}}(u)$, we have $\frac{|w|}{L}\leq |u|\le L|x|$. 
    \item For any nonempty $u\in\cL(\infw{x})$, we have $\mathrm{Card}\bigl(\mathcal{R}_{\infw{x}}(u) \bigr) \leq L(L+1)^2$. 
\end{enumerate}
\end{lemma}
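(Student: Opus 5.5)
The plan is to derive all three items from the two facts recalled just before the statement: every factor of length $(L+1)n$ of $\infw{x}$ contains every element of $\cL_n(\infw{x})$, and every return word to $u$ has length at most $L|u|$. The only external input is the Morse--Hedlund theorem: an aperiodic word satisfies $p_{\infw{x}}(n)\ge n+1$ for all $n$. Note that the middle inequality of item (2) should read $\frac{|w|}{L}\le |u|\le L|w|$. Its first half is exactly the definition of linear recurrence, so only $|u|\le L|w|$ needs proof.

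\emph{Item (1).} Suppose $u^{L+1}\in\cL(\infw{x})$ with $u$ nonempty, and set $n=|u|$. The factor $u^{L+1}$ has length $(L+1)n$, so it contains every element of $\cL_n(\infw{x})$. Every length-$n$ factor of $u^{L+1}$ is a conjugate of $u$, and there are at most $n$ conjugates. Hence $p_{\infw{x}}(n)\le n$, which contradicts Morse--Hedlund since $\infw{x}$ is aperiodic.

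\emph{Item (2).} Let $w\in\mathcal{R}_{\infw{x}}(u)$ and suppose $|u|>L|w|$. The word $wu$ has occurrences of $u$ at positions $0$ and $|w|$. Hence $u$, and therefore $wu$, has period $|w|$. Since $|wu|=|w|+|u|>(L+1)|w|$ and $wu$ begins with $w$, the word $w^{L+1}$ is a prefix of $wu$, hence a factor of $\infw{x}$. This contradicts item (1).

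\emph{Item (3).} Each return word $w$ satisfies $|wu|\le (L+1)|u|$. Fix any factor $F$ of $\infw{x}$ of length $(L+1)\cdot(L+1)|u|=(L+1)^2|u|$. By linear recurrence, every word $wu$ with $w\in\mathcal{R}_{\infw{x}}(u)$ occurs in $F$. Each such occurrence starts at an occurrence $i$ of $u$ in $F$, and $w$ is determined by $i$: it is the prefix from $i$ up to the next occurrence of $u$. So distinct return words give distinct occurrences of $u$ in $F$.

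By item (2), two consecutive occurrences of $u$ are at distance at least $|u|/L$. Moreover, the starting positions of these occurrences lie in an interval of length at most $|F|-|u|-|u|/L$, since a full $wu$ with $|w|\ge |u|/L$ must fit in $F$. Counting occurrences spaced by at least $|u|/L$ in that interval gives at most $L(L+1)^2$ distinct return words.

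The main obstacle is the bookkeeping in this last count, namely getting exactly $L(L+1)^2$ rather than $L(L+1)^2+1$. If the margin turns out to be too tight, I would shrink the window: take $F$ of length exactly $(L+1)\max_w|wu|$ and use the lower bound from item (2) to absorb the extra term.
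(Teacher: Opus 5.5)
Your proof is correct. The paper gives no argument of its own and simply cites Durand's Theorem~24, and your route is the standard argument behind that theorem: Morse--Hedlund applied to the at most $|u|$ conjugates of $u$ for (1), the period $|w|$ of $wu$ forcing a prefix $w^{L+1}$ for (2), and, for (3), counting occurrences of $u$ spaced at least $|u|/L$ apart in a window of length $(L+1)^2|u|$.

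The count you worried about has room to spare. The admissible starting positions lie in an interval of length $(L+1)^2|u|-|u|-|u|/L$, so there are at most $L(L+1)^2-L$ return words.

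Two small points each need one line:
\begin{itemize}
\item Morse--Hedlund yields only eventual periodicity. You upgrade this to periodicity using recurrence, since the paper's ``aperiodic'' means ``not periodic''.
\item The window property you use (every factor of length $(L+1)n$ contains all factors of length $n$) has to be derived from the definition together with uniform recurrence. The paper's sentence states the converse implication, so you cannot cite it for this direction.
\end{itemize}
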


Consider an infinite word $\infw{x}\in \cA^\NN$ and a finite word $w \in \cA^*$. Fix a constant $C>0$. We say that $w$ is \emph{$C$-balanced in $\infw{x}$} if, for all factors $u,v$ of $\infw{x}$ of equal length, we have $||u|_w - |v|_w| \le C$. We say that $\infw{x}$ is \emph{$C$-letter-balanced}, or \emph{letter-balanced}, if any letter $a$ is $C$-balanced in $\infw{x}$. We say that $\infw{x}$ is \emph{factor-balanced} if any word $w$ is $C_w$-balanced in $\infw{x}$ for some constant $C_w>0$. Notice that we have highlighted that the balance constant could depend on the considered word $w$. Finally, we say that $\infw{x}$ is \emph{uniformly factor-balanced} if there exists a constant $C>0$ such that each finite word $w$ is $C$-balanced in $\infw{x}$. Of course, uniformly factor-balancedness implies factor-balancedness, which also implies letter-balancedness. We have the following classical result about the link between letter-balancedness and uniform letter frequencies. 

\begin{proposition}[{\cite[Proposition 7]{Ada03}}] \label{prop:letterbal-freq}
    An infinite word $\infw{x}\in\cA^\NN$ is letter-balanced if and only if it has uniform letter frequencies and there exists a constant $B$ such that, for any factor $u$ of $\infw{x}$, we have $| |u|_a-\mu_a|u||\leq B$ for any letter $a\in\cA$, where $\mu_a$ is the frequency of $a$. 
\end{proposition}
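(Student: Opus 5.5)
We prove the two directions separately. Fix a letter $a\in\cA$ and write $N_a(i)=|x_0\cdots x_{i-1}|_a$ for the counting function of $a$ along $\infw{x}$. Every factor of $\infw{x}$ has the form $u=x_k\cdots x_{k+n-1}$, and then $|u|_a=N_a(k+n)-N_a(k)$.

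\emph{Balanced implies frequencies with bounded discrepancy.} Assume every letter is $C$-balanced. For each $n\ge 1$ set $M_n=\max\{|u|_a : u\in\cL_n(\infw{x})\}$. By balancedness, every length-$n$ factor has between $M_n-C$ and $M_n$ occurrences of $a$.

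Splitting a factor of length $n+m$ into factors of lengths $n$ and $m$ gives the subadditivity $M_{n+m}\le M_n+M_m$. The same splitting gives the lower bound $M_{n+m}\ge (M_n-C)+(M_m-C)$, so $M_{n+m}\ge M_n+M_m-2C$.

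By Fekete's lemma applied to $(M_n)$ and to $(2C-M_n)$, the limit $\mu_a=\lim M_n/n$ exists. Moreover $M_n-2C\le n\mu_a\le M_n$, since $\mu_a=\inf_n M_n/n$ and $\mu_a=\sup_n (M_n-2C)/n$.

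Combining this with $M_n-C\le |u|_a\le M_n$ for $u\in\cL_n(\infw{x})$ gives $\bigl||u|_a-\mu_a|u|\bigr|\le 3C=:B$. Dividing by $n$ shows that $|x_k\cdots x_{k+n-1}|_a/n\to\mu_a$ uniformly in $k$. Hence $\infw{x}$ has uniform letter frequencies, $\mu_a$ is the frequency of $a$, and the discrepancy is bounded by $B$.

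\emph{Frequencies with bounded discrepancy implies balanced.} Suppose $\bigl||u|_a-\mu_a|u|\bigr|\le B$ for all factors $u$ and all letters $a$. If $|u|=|v|$, the triangle inequality gives
\[
\bigl||u|_a-|v|_a\bigr|\le \bigl||u|_a-\mu_a|u|\bigr|+\bigl||v|_a-\mu_a|v|\bigr|\le 2B .
\]
So every letter is $\lceil 2B\rceil$-balanced. The uniform-frequency hypothesis is not even needed for this direction.

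The only step needing care is the first direction: one has to obtain the two-sided estimate $M_n-2C\le n\mu_a\le M_n$ from the two near-additivity inequalities, rather than only existence of the limit. The standard device is to iterate over multiples: $M_{kn}\le kM_n$ and $M_{kn}\ge kM_n-2(k-1)C$. Dividing by $kn$ and letting $k\to\infty$ yields both bounds.
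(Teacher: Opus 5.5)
Your proof is correct, and it is essentially the same Fekete-type block-splitting argument that the paper gives for the word version in \cref{lem:balance=>discrepancy}; the paper itself only cites this proposition from Adamczewski. The one difference is bookkeeping. The paper squeezes $\mu_a$ between the minimum and maximum of $|u|_a/n$ over $u\in\cL_n(\infw{x})$, whose gap is at most $C/n$, and so obtains discrepancy at most $C$. Tracking only the maximum costs you a constant factor, although your two inequalities in fact already give $2C$ rather than the stated $3C$.
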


One can show, with a similar proof, the following result, about uniform factor frequencies and (uniform) factor-balancedness. 

\begin{proposition}
Let $\infw{x}\in\cA^\NN$ be an infinite word.
    \begin{enumerate}
        \item The word $\infw{x}$ is factor-balanced if and only if it has uniform factor frequencies and for any factor $u$ of $\infw{x}$, there exists a constant $B_u$ such that we have $| |u|_a-\mu_a|u||\leq B_u$ for any letter $a\in\cA$, where $\mu_a$ is the frequency of $a$. 
        \item The word $\infw{x}$ is uniformly factor-balanced if and only if it has uniform factor frequencies and there exists a constant $B$ such that, for any factor $u$ of $\infw{x}$, we have $| |u|_a-\mu_a|u||\leq B$ for any letter $a\in\cA$, where $\mu_a$ is the frequency of $a$. 
    \end{enumerate}
\end{proposition}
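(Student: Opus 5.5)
The statement as printed mixes two indices; I will prove the intended version. For every factor $w$ of $\infw{x}$ there is a constant $B_w$ such that $\bigl||u|_w-\mu_w|u|\bigr|\le B_w$ for every factor $u$ of $\infw{x}$, where $\mu_w$ is the (uniform) frequency of $w$. In the uniform case, $B_w$ should be a single $B$. The direction ($\Leftarrow$) is immediate in both items: for factors $u,v$ with $|u|=|v|=n$, the triangle inequality gives $\bigl||u|_w-|v|_w\bigr|\le |\,|u|_w-n\mu_w|+|\,|v|_w-n\mu_w|\le 2B_w$. So I concentrate on ($\Rightarrow$), adapting the proof of \cref{prop:letterbal-freq}.

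Fix $w$ and assume $w$ is $C$-balanced in $\infw{x}$. The first obstacle compared with letters is that $|\cdot|_w$ is not additive under concatenation: occurrences straddling a cut are lost. To repair this, I count starting positions. For a factor $u=x_i\cdots x_{i+n-1}$, let $h(i,n)$ be the number of $j\in\{i,\dots,i+n-1\}$ at which $w$ occurs in $\infw{x}$. Then $h(i,n)=|x_i\cdots x_{i+n+|w|-2}|_w$ is an occurrence count in a factor of length $n+|w|-1$. Hence, for fixed $n$, the values $h(i,n)$ lie in an interval $[m(n),M(n)]$ with $M(n)-m(n)\le C$, and $h$ is exactly additive: $h(i,n+n')=h(i,n)+h(i+n,n')$. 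Splitting a window of length $kn$ into $k$ consecutive blocks gives $k\,m(n)\le h(i,kn)\le k\,M(n)$. Dividing by $kn$ and using $M(n)-m(n)\le C$ for all $n$, the sequences $m(n)/n$ and $M(n)/n$ converge to a common limit $\mu_w$ with $m(n)\le n\mu_w\le M(n)$. Therefore $|h(i,n)-n\mu_w|\le C$ for all $i,n$, which in particular gives uniform frequency of $w$ (and of all factors, since $w$ was arbitrary).

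It remains to pass from $h$ back to $|u|_w$. Since $|u|_w$ counts the occurrences starting in the first $n-|w|+1$ positions of $u$, we have
\[
h(i,n-|w|+1)\le |u|_w\le h(i,n).
\]
This yields $|u|_w-n\mu_w\le C$ and $n\mu_w-|u|_w\le C+(|w|-1)\mu_w$. For item (1) this finishes the proof with $B_w=C+(|w|-1)\mu_w$; for short $u$ with $n<|w|$ the bound is trivial since $|u|_w=0$ and $n\mu_w\le(|w|-1)\mu_w$.

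The main obstacle is item (2): I must show that $(|w|-1)\mu_w$ is bounded by a constant depending only on the uniform balance constant $C$. My plan is to use balance again at a cleverly chosen length. Suppose $(|w|-1)\mu_w$ is large. Then, since occurrences of $w$ are dense at scale $|w|$, two occurrences overlap heavily, so $w$ has a small period $p\le 1/\mu_w$ (roughly), and $\infw{x}$ contains long stretches of $p$-periodic behaviour. Inside such a stretch of length $N$, one gets about $N-|w|+1$ occurrences of $w$. A window of length $N$ in which the periodic pattern is broken once near the middle loses on the order of $|w|/p$ occurrences, and such a window must exist because $\mu_w<1/p$, unless $\infw{x}$ is eventually periodic, which is an easy separate case. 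Uniform $C$-balance then forces $|w|/p\lesssim C$, so $(|w|-1)\mu_w\le C'$ for some $C'$ depending only on $C$. Making this comparison rigorous, in particular choosing the two windows of equal length and controlling the lost overlaps, is the step I expect to need the most care. Once it is done, $B=2C+C'$ works uniformly in $w$, which gives item (2).
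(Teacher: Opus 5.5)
Your ($\Leftarrow$) directions and your proof of item (1) are correct and follow the same route as the paper's \cref{lem:balance=>discrepancy}: super/subadditivity of occurrence counts over windows of length $n+|w|-1$. You are in fact more careful than that lemma. Converting $|h(i,n)-n\mu_w|\le C$ into a bound on $\bigl||u|_w-\mu_w|u|\bigr|$ genuinely costs the extra term $(|w|-1)\mu_w$, which the lemma's claim $D_{\infw{x}}(u)\le C$ silently drops. So $B_w=C+(|w|-1)\mu_w$ is the right constant.

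The gap is in item (2), and it is not a matter of care: the case you set aside as ``an easy separate case'' is a counterexample.
\begin{itemize}
\item The word $a^\omega$ is uniformly factor-balanced, since equal-length factors contain equally many occurrences of every $w$.
\item For $w=a^k$, however, $\mu_w=1$ and $\bigl||a^n|_w-\mu_w n\bigr|=k-1$ for all $n\ge k$. So no $B$ independent of $w$ exists.
\item The same happens in $ba^\omega$, in $(ab)^\omega$, and in every eventually periodic word with periodic block $t$ (all of which are uniformly factor-balanced). There $w=t^k$ has $\mu_w\ge 1/|t|$, hence $(|w|-1)\mu_w\to\infty$.
\end{itemize}
So the forward direction of item (2), in your reading, is false unless you assume that $\infw{x}$ is not eventually periodic, and your plan cannot be completed as written.

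Under that extra hypothesis, the step you flagged as delicate is already in the paper. By \cref{factorBal-powerFree:technical}, uniform $C$-balance forces $\infw{x}$ to be $P$-power-free with $P=\lceil 2C+3\rceil$. Then \cref{lem:bound_occs_by_powerfree} gives $|v|_w\le P|v|/|w|$ for all factors $v$. Hence $\mu_w\le P/|w|$ and $(|w|-1)\mu_w<P$, so $B=C+P$ works.

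Alternatively, to get a statement valid for all words, normalize by the number of possible starting positions. Your own estimate $|h(i,n)-n\mu_w|\le C$ says exactly that $\bigl||u|_w-(|u|-|w|+1)\mu_w\bigr|\le C$ for every factor $u$ with $|u|\ge|w|$, and this bound is uniform in $w$.
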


\subsection{Substitutions}

Given an alphabet $\cA$, a \emph{substitution} is a map $\tau$ from $\cA$ to $\cA^*$ that is compatible with the concatenation, and usually its extension from $\cA^*$ to $\cA^*$ is also called $\tau$. 
If there exists a letter $a$ such that the first letter of $\tau(a)$ is $a$, $\lim_{n\to+\infty}|\tau^n(a)| = +\infty$, and $|\tau(b)|\ne 0$ for every letter $b$, then the limit $\tau^{\omega}(a):=\lim_{n\rightarrow +\infty}\tau^{n}(a)$ exists and is called the \emph{fixed point of $\tau$ starting by $a$}.
In that case, we say that the produced infinite word is \emph{substitutive}.

A substitution $\tau \colon \cA^* \to \cA^*$ is \emph{primitive} if there exists an integer $k$ such that for all letters $a,b \in \cA$, $a$ is a factor of $\tau^k(b)$. 
If $\infw{x}$ is substitutive, 
then one can easily check that $\infw{x}$ is uniformly recurrent whenever the involved substitution is primitive. 

A set $\cL$ of words over $\cA$ is \emph{factor-closed} if, whenever $u$ is a factor of $v \in \cL$, then $u \in \cL$.
It is \emph{biextensible} if $u \in \cL$ implies that $aub \in \cL$ for some letters $a,b \in \cA$.
For a substitution $\tau \colon \cA^* \to \cA^*$, consider the set $L$ formed by all words $u \in \cA^*$ occurring in some $\tau^n(a)$, where $n \ge 1$ and $a \in \cA$.
Then, $L$ contains a unique inclusion-maximal factor-closed and biextensible set of words, which we call the {\em language generated by $\tau$} and denote by $\cL(\tau)$.
We say that $\cL(\tau)$ satisfies a given property (e.g., recurrent or $P$-power-free) if for every $\infw{x} \in \cA^\NN$ such that $\cL(x) \subseteq \cL(\tau)$, $\infw{x}$ also satisfies this property.

A letter $a \in \cA$ is {\em bounded} for $\tau$ if $\bigl(|\tau^n(a)|\bigr)_{n \ge 0}$ is bounded.
We let $B_\tau$ denote the set of bounded letters for $\tau$, and define $\cC_\tau = \cA \setminus B_\tau$, the set of {\em growing letters} for $\tau$.
The substitution $\tau$ is {\em growing} if $\cC_\tau = \cA$, i.e., every letter is growing.
We also say that $\tau$ is {\em quasi-primitive} if there exists $n \ge 1$ such that $b$ occurs in $\tau^n(a)$ for all $a,b \in \cC_\tau$.
This is equivalent to the notion of \emph{$l$-primitiveness} found in~\cite{Rust_tameness,Shimomura_tameness}.
Note that, for growing substitutions, $\tau$ is primitive if and only if $\tau$ is quasi-primitive.
We say that $\tau$ is {\em quasi-uniform} if there exists $C > 0$ such that 
\[  |\tau^n(a)| \leq C \, |\tau^n(b)|
    \enspace \text{for all $n \ge 1$ and $a,b \in \cA$.}
    \]
Finally, $\tau$ is \emph{tame} if and only if $\cL(\tau) \cap \cB_\tau^*$ is finite. 
This definition is equivalent to the original notion of tameness, as shown in~\cite[Theorem 2.9]{Rust_tameness} and~\cite[Proposition 3.17]{Shimomura_tameness}.

\subsection{Discrepancy and balancedness}
\label{sec: discrepancy}

For an infinite word $\infw{x} \in \cA^\NN$ and $u \in \cL(\infw{x})$ such that its frequency $\mu_\infw{x}(u)$ in $\infw{x}$ exists, we define its \emph{discrepancy} by $D_\infw{x}(u) = \sup_{n\geq 0} D_\infw{x}(u,n)$, where 
\begin{align*}
    D_\infw{x}(u,n) = \sup\bigl\{\big||w|_u - |w| \mu_\infw{x}(u)\big| : w \in \cL_n(\infw{x})\bigr\} \in \RR.
\end{align*}
The \emph{balance function} of $u$ in $\infw{x}$ is defined by $B_\infw{x}(u)=\sup_{n\geq 0}B_\infw{x}(u,n)$, where
\begin{align*}
    B_\infw{x}(u,n) = \sup \big\{ \big| |t|_u - |s|_u \big| : t,s \in \cL_n(\infw{x}) \big\}.
\end{align*} 

The basic relationship between the discrepancy and balance function is given by the following lemma, which adapts~\cite[Proposition~7]{Ada03} from the letter case (expressions of the form $|u|_a - |v|_a$ with $a \in \cA$) to the word case.

\begin{lemma}
\label{lem:balance=>discrepancy}
Let $\infw{x}$ be an infinite word and $u \in \cL(\infw{x})$.
\begin{enumerate}
    \item If $u$ is $C$-balanced in $\infw{x}$, then the frequency $\mu_{\infw{x}}(u)$ of $u$ in $\infw{x}$ exists uniformly, and the discrepancy $D_\infw{x}(u)$ is bounded by $C$.
    \item If the discrepancy $D_\infw{x}(u)$ is bounded by $C$, then $u$ is $2C$-balanced in $\infw{x}$.
\end{enumerate}
\end{lemma}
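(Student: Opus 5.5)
The plan adapts the classical letter argument of \cite[Proposition~7]{Ada03} to an arbitrary factor $u$. The only structural fact needed is that occurrence counts of $u$ are almost additive under concatenation: if $w = w_1 w_2 \cdots w_k$ with $w$ a factor of $\infw{x}$, then
\[
\sum_{i=1}^k |w_i|_u \;\le\; |w|_u \;\le\; \sum_{i=1}^k |w_i|_u + (k-1)(|u|-1),
\]
since an occurrence of $u$ in $w$ is either inside a single block or straddles one of the $k-1$ cut points.

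\textbf{Item (1).} Assume $u$ is $C$-balanced. For each $n$, set $M_n=\max\{|w|_u : w\in\cL_n(\infw{x})\}$ and $m_n=\min\{|w|_u : w\in\cL_n(\infw{x})\}$, so that $M_n-m_n\le C$.
\begin{enumerate}
\item A factor of length $kn$ splits into $k$ blocks of length $n$, so $k\,m_n \le m_{kn}$ and $M_{kn}\le k M_n + (k-1)(|u|-1)$.
\item Hence $m_n/n$ and $M_n/n$ differ by at most $C/n$, and both sequences are essentially subadditive or superadditive. Fekete-type arguments then give a common limit $\mu=\mu_{\infw{x}}(u)$, and the frequency exists uniformly because every factor of length $n$ has count between $m_n$ and $M_n$.
\item For the discrepancy, avoid the boundary error $(k-1)(|u|-1)$ by counting occurrences by starting position. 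Write $f(w)$ for the number of occurrences of $u$ starting in $w$, where the occurrence may extend past the end of $w$ inside $\infw{x}$. This count is exactly additive over concatenations of consecutive blocks.
\item Apply $C$-balancedness to the words $w\,x_{[\,\cdot\,]}$ extended by $|u|-1$ letters. It is cleaner to observe that the count $|w|_u$ for $|w|=n$ equals the number of occurrences starting in the first $n-|u|+1$ positions.
\item Fix $w$ of length $n$ and concatenate consecutive factors of $\infw{x}$, all of length $n$, following an occurrence of $w$. Additivity of $f$ gives that $f$ of a length-$kn$ window lies in $[k\,m'_n,\,k\,M'_n]$, where $m'_n,M'_n$ are the extremal values of $f$ on length-$n$ windows. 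Balancedness gives $M'_n-m'_n\le C$.
\item Dividing by $kn$ and letting $k\to\infty$ yields $m'_n\le n\mu\le M'_n$, hence $\bigl||w|_u-n\mu\bigr|\le C$ with the right count convention.
\end{enumerate}

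\textbf{Main obstacle.} The delicate point is matching the convention $|w|_u$, which counts only occurrences fully inside $w$, with the additive count $f$. I would handle it by noting that $|w|_u=f(w')$, where $w'$ is the prefix of $w$ of length $n-|u|+1$. Both balancedness and the frequency $\mu$ then transfer with the same constant, because the lengths are shifted uniformly: $C$-balancedness of $|\cdot|_u$ on length $n$ is exactly $C$-balancedness of $f$ on length $n-|u|+1$. The bound becomes $\bigl||w|_u-(n-|u|+1)\mu\bigr|\le C$. The residual $(|u|-1)\mu$ term must be absorbed. Since the statement asserts the bound $C$ exactly, I would instead run the sandwich argument of step~5 directly on $|\cdot|_u$, using superadditivity for the lower side; for the upper side I would use the shifted-window count, whose excess tends to $0$ after division by $k$.

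\textbf{Item (2).} This is immediate from the triangle inequality: for $s,t\in\cL_n(\infw{x})$,
\[
\bigl||t|_u-|s|_u\bigr| \le \bigl||t|_u-n\mu\bigr|+\bigl||s|_u-n\mu\bigr| \le 2C.
\]
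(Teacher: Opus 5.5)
Up to the bound $\bigl||w|_u-(|w|-|u|+1)\mu\bigr|\le C$, your argument is correct and is essentially the paper's. The paper works with $g(z)=|z|_u/n$ on factors $z$ of length $n+|u|-1$, which is exactly your start-position count $f$. Its block decomposition gives $\min g\le\mu\le\max g$, and your Item (2) is the same triangle inequality.

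The gap is your final step, where you try to absorb the residual $(|u|-1)\mu$ and reach exactly $C$. Superadditivity does give the lower side, $\min\{|w|_u : w\in\cL_n(\infw{x})\}\le n\mu$. The upper side, $n\mu\le\max\{|w|_u : w\in\cL_n(\infw{x})\}$, does not follow from the shifted-window count. That count bounds each length-$n$ block by $\max\{|z|_u : z\in\cL_{n+|u|-1}(\infw{x})\}$, so you only get $n\mu\le\max\{|z|_u : z\in\cL_{n+|u|-1}(\infw{x})\}$. The boundary excess $f(W)-|W|_u\le|u|-1$ of the long window does vanish after dividing by $k$. But the difference between the block maxima over lengths $n+|u|-1$ and $n$ is paid once per block, so it is of order $k(|u|-1)$ and survives the limit.

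No argument can close this, because Item (1) with the exact constant $C$ is false for $|u|\ge 2$. Take $\infw{x}=aaa\cdots$ and $u=aaa$. Every length-$n$ factor is $a^n$, so $u$ is $1$-balanced and $\mu_{\infw{x}}(u)=1$. Yet $\bigl||a^n|_u-n\bigr|=2$ for all $n\ge 2$, so $D_{\infw{x}}(u)=2>1$.

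What is actually true, and what both your argument and the paper's prove, is $\bigl||w|_u-(|w|-|u|+1)\mu_{\infw{x}}(u)\bigr|\le C$. This gives $D_{\infw{x}}(u)\le C+(|u|-1)\mu_{\infw{x}}(u)$. The paper's closing sentence, ``which implies that $D_{\infw{x}}(u)\le C$'', silently drops the same shift you flagged.

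So your diagnosis of the obstacle was right. The correct resolution is to weaken the constant, or to normalize the discrepancy by $|w|-|u|+1$, not to try to absorb the term. For single letters the two versions coincide, and that is the only case where the paper later uses the discrepancy bound of Item (1). Item (2) is unaffected.
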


\begin{proof}
We prove the first item of the statement.
Assume that $u$ is $C$-balanced in $\infw{x}$.
For $z \in \cL(\infw{x})$ of length $n + |u| - 1$, we define $g(z) = |z|_u / n$, and we let $m_n$ (resp., $M_n$) denote the minimum (resp., maximum) value of $g(z)$ for $z \in \cL(\infw{x})$ of length $n + |u| - 1$. First, let us show that
\begin{align} \label{eq:lem:balance=>discrepancy:claim} \limsup_{\ell\to\infty} M_\ell \leq M_n 
\end{align}  for every $n \geq 0$.
For $w \in \cL(\infw{x})$ of length $\ell + |u| - 1$ with $\ell\geq n$, we let $k = \lfloor \ell/n \rfloor$.
For each $i\in\{0,\ldots,k-1\}$, we let $z_i$ denote the factor of $w$ of length $n + |u| - 1$ occurring at positions $i n$ and we let $z_k$ be the suffix of $w$ starting at position $kn$.
Since $|w|_u = \sum_{i=0}^k |z_i|_u$, we get 
\[
g(w) = \frac{|w|_u}{\ell}= \sum_{i=0}^k \frac{|z_i|_u}{\ell}=\frac{n}{\ell} \sum_{i=0}^{k-1} g(z_i) + \frac{|z_k|_u}{\ell}.
\]
Since $|z_k| \leq n$, we get $g(w) \leq \frac{nk}{\ell}M_n+\frac{n}{\ell}$, from which~\eqref{eq:lem:balance=>discrepancy:claim} follows.
Similarly, we can prove that 
\begin{align} \label{eq:lem:balance=>discrepancy:claim2}
m_n \leq \liminf_{\ell\to\infty} m_\ell
\end{align} 
for every $n\geq 0$. 
Now, since $u$ is $C$-balanced in $\infw{x}$, we have that $M_n - m_n \leq \frac{C}{n}$ for every $n \geq 0$.
From both Inequalities~\eqref{eq:lem:balance=>discrepancy:claim} and~\eqref{eq:lem:balance=>discrepancy:claim2}, we deduce that the frequency $\mu_\infw{x}(u)$ of $u$ in $\infw{x}$ exists and is the limit of both sequences $(m_n)_{n\ge 0}$ and $(M_n)_{n\ge 0}$.
By Inequalities~\eqref{eq:lem:balance=>discrepancy:claim} and~\eqref{eq:lem:balance=>discrepancy:claim2} one more time, we have $m_n \leq \mu_\infw{x}(u) \leq M_n$ for all $n \geq 0$, which implies that $D_\infw{x}(u) \leq C$ using the bound $M_n - m_n \leq \frac{C}{n}$.

We turn to the proof of the second item of the statement.
Assume that $D_\infw{x}(u)$ is bounded by $C$.
Then, for $w,w' \in \cL(\infw{x})$ of the same length $\ell$, we have
\[	\big| |w|_u - |w'|_u \big| \le 
	\big| |w|_u  - \mu_\infw{x}(u) \ell \big| + 
	\big| |w'|_u - \mu_\infw{x}(u) \ell \big| \le 2C,	\]
which shows that $u$ is $2C$-balanced in $\infw{x}$.
\end{proof}

\subsection{$\cS$-adic representations}

An efficient way to represent low-complexity words is through limits of compositions of substitutions. 
We recall here the basic definitions, which will be central in what follows.

Let $(\cA_n)_{n \ge 0}$ be a sequence of alphabets, and for each $n\ge 0$, consider a substitution $\tau_n \colon \cA_{n+1}^* \to \cA_n^*$.
Assume that there exists a sequence $(b_n)_{n \ge 0}$ of letters such that $b_n\in \cA_n$ for each $n\ge 0$ and each image $\tau_n(b_{n+1})$ begins with $b_n$.
Following the terminology of Berthé, Karimov, and Vahanwala~\cite{Toghrul}, we call $\btau = (\tau_n,b_n)_{n \ge 0}$ a \emph{congenial sequence}.  
For $n > m \ge 0$, we abbreviate
\[
\tau_{m,n} = \tau_m \circ \tau_{m+1} \circ \tau_{m+2} \circ \dots \circ \tau_{n-1}.\]
Because $\tau_n(b_{n+1})$ begins with $b_n$, it follows that $\tau_{m,n}(b_n)$ is a prefix of $\tau_{m,n+1}(b_{n+1})$.
Hence, for each $m \ge 0$, the limit
\[
   \infw{x^{(m)}} = \lim_{n \to \infty} \tau_{m,n}(b_n)
\]
exists in $\cA_m^\NN \cup \cA_m^*$.  
Moreover, one verifies that
\begin{equation}
   \infw{x^{(m)}} = \tau_{m,n}(\infw{x^{(n)}}) 
   \quad \text{for all $n > m \ge 0$.}
\end{equation}

Our main object of study is $\infw{x} \coloneqq \infw{x^{(0)}}$, which we call the \emph{word generated by $\btau$}.  
In this case, $\btau$ is said to be an \emph{$\cS$-adic expansion} of $\infw{x}$.  
The auxiliary infinite words $\infw{x^{(m)}}$ for $m \ge 1$ play a crucial role in the analysis of $\infw{x}$.

\subsection{Sturmian words}

The prototypical example of $\cS$-adic expansions is the one for Sturmian words obtained by Morse and Hedlund~\cite{mh40}, which we describe now. 
We first introduce Sturmian words, following the presentation of Berthé, Holton, and Zamboni~\cite{BertheHoltonZamboni2006}.

Let $\alpha$ be an irrational number in $(0,1)$. 
Consider the two two-interval exchange transformations $R_\alpha : [-\alpha, 1-\alpha) \to [-\alpha, 1-\alpha)$ and  $\widetilde{R}_\alpha : (-\alpha, 1-\alpha] \to (-\alpha, 1-\alpha]$ respectively defined by 
\[
R_\alpha(z) =
\begin{cases}
z + \alpha, & \text{if } z \in [-\alpha, 1 - 2\alpha); \\[6pt]
z + \alpha - 1, & \text{if } z \in [1 - 2\alpha, 1 - \alpha);
\end{cases}
\]
and
\[
\widetilde{R}_\alpha(z) =
\begin{cases}
z + \alpha, & \text{if } z \in (-\alpha, 1 - 2\alpha]; \\[6pt]
z + \alpha - 1, & \text{if } z \in (1 - 2\alpha, 1 - \alpha].
\end{cases}
\]
Both can be considered as rotations of angle $2\pi \alpha$, since these are conjugate, after identification of points $-\alpha$ and $1-\alpha$, to a circle rotation. 

A \emph{Sturmian word} $\infw{x}=(x_n)_{n\ge 0} \in \{0,1\}^{\mathbb{N}}$ \emph{of slope $\alpha$} is the forward itinerary (with respect to the natural partition) of a point $\rho \in [-\alpha, 1-\alpha]$, called the \emph{intercept} of $\infw{x}$, under the action of one of these transformations, i.e., 
\begin{enumerate}
    \item either for all $n \in \NN$, $x_n = 0$ if and only if $R_\alpha^n(x) \in [-\alpha,1-2\alpha)$, 
    \item or for all $n \in \NN$, $x_n = 0$ if and only if $\widetilde{R}_\alpha^n(x) \in [-\alpha,1-2\alpha)$. 
\end{enumerate}
If $z = 0$, then both definitions above produce the same word $\infw{x}$, which is called, in this case, the \emph{characteristic Sturmian word of slope $\alpha$}. 

We now describe the $\cS$-adic expansion of characteristic Sturmian words. For this, let $\theta_0,\theta_1 \colon \{0,1\}^* \to \{0,1\}^*$ be the substitutions defined by
\[
\theta_0 \colon
\begin{cases}
0 &\mapsto 0,\\
1 &\mapsto 01;
\end{cases}
\quad \text{and} \quad
\theta_1\colon
\begin{cases}
0 &\mapsto 10,\\
1 &\mapsto 1.
\end{cases}
\]
We further define, for $n \ge 2$, $\theta_n = \theta_0$ if $n$ is even, and $\theta_n = \theta_1$ if $n$ is odd. The following result is classical, first proven by Morse and Hedlund~\cite{mh40}.
The version cited here follows Berthé, Holton, and Zamboni~\cite{BertheHoltonZamboni2006}, and is presented using the language introduced in the previous subsection.

\begin{proposition}[\cite{mh40}]
\label{lem: S-adic structure of Sturmian words}
Let $\alpha$ be an irrational number in $(0,1)$ with continued fraction expansion $[0;a_1,a_2,\ldots]$.
For $n \ge 0$, define $b_n = 0$ if $n$ is even and $b_n = 1$ otherwise.
Then, $\btau = (\theta_n^{a_{n+1}}, b_n : n \ge 0)$ is a congenial directive sequence generating the characteristic Sturmian word $\infw{x} \in \{0,1\}^\NN$ of slope $\alpha$, i.e., 
\[  \infw{x} = \lim_{n\to\infty}
    \theta_0^{a_1} \circ \theta_1^{a_2} 
    \circ \dots \circ \theta_n^{a_{n+1}}(b_{n+1}).
    \]
\end{proposition}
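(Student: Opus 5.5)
The plan is to reduce the statement to the classical continued-fraction dynamics of Sturmian words. I will work with the combinatorial characterization of characteristic Sturmian words via standard words rather than with the composed substitutions directly. Set $s_{-1}=1$, $s_0=0$ and $s_{n}=s_{n-1}^{a_n}s_{n-2}$ for $n\ge 1$. It is classical, and follows from the Morse--Hedlund/Rauzy description, that the characteristic word $\infw{c}_\alpha$ is the limit of $(s_n)$, at least after a harmless shift of indices depending on whether $a_1$ should be replaced by $a_1-1$.

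The first key step is to identify the compositions with standard words. I will prove by induction on $n$ that $\theta_0^{a_1}\circ\cdots\circ\theta_n^{a_{n+1}}(b_{n+1})$ and $\theta_0^{a_1}\circ\cdots\circ\theta_n^{a_{n+1}}(1-b_{n+1})$ are (up to the index shift) the two standard words of consecutive levels. To do this, I will track the pair $(\tau_{0,n}(0),\tau_{0,n}(1))$. Applying $\theta_0^{a}$ gives $0\mapsto 0$ and $1\mapsto 0^a1$. Applying $\theta_1^{a}$ gives $1\mapsto 1$ and $0\mapsto 1^a0$. Substituting these into the pair $(u,v)=(\tau_{0,n}(0),\tau_{0,n}(1))$, one of the two words stays fixed and the other becomes $u^a v$ or $v^a u$. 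This is exactly the recursion $s_{n+1}=s_n^{a_{n+1}}s_{n-1}$, with the roles of the letters alternating, and this alternation is why $b_n$ alternates between $0$ and $1$.

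Next, I will verify congeniality. The image $\theta_0^{a}(0)=0$ begins with $0$. The image $\theta_1^{a}(1)=1$ begins with $1$. However, the definition requires that $\tau_n(b_{n+1})$ begins with $b_n$, i.e. that $\theta_0^{a}(1)=0^a1$ begins with $0$ and $\theta_1^a(0)=1^a0$ begins with $1$. Both hold when $a\ge1$, which is true since the partial quotients are positive. I also need $|\tau_{0,n}(b_n)|\to\infty$ so that the limit is an infinite word. This follows because the standard-word lengths satisfy $q_{n+1}=a_{n+1}q_n+q_{n-1}$, which tends to infinity.

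Finally, I will identify the limit with the characteristic Sturmian word of slope $\alpha$. Here I will use the coding definition. The letter frequency of $1$ in $s_n$ is $p_n/q_n$, the convergents of $\alpha$; this ratio of lengths is computed from the incidence matrices of the $\theta_i^{a}$. Moreover, each standard word $s_n$ is a prefix of the rotation coding of $0$ by the rational $p_n/q_n$. Since the coding of $0$ depends continuously on the slope, except at the discontinuity points, the limit must be the coding for $\alpha$.

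The main obstacle is exactly this last identification, together with the off-by-one bookkeeping: which letter codes which interval, and whether $a_1$ or $a_1-1$ appears. I would first try to settle the conventions on the first two levels by direct computation. For the continuity argument, I would use that $0$ has irrational orbit, so its orbit never meets the interval endpoints, and therefore the prefixes of the codings stabilize as the rational slopes $p_n/q_n$ approach $\alpha$.
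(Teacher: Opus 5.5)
\textbf{Gap: the ``harmless shift of indices'' changes the slope.} Your induction is correct: $S_n:=\tau_{0,n}(b_n)$ satisfies $S_0=0$, $S_1=0^{a_1}1$ and $S_{n+1}=S_n^{a_{n+1}}S_{n-1}$ (with $S_{-1}=1$), and congeniality holds because $a_n\ge 1$. But then the ratios $|S_n|_1/|S_n|$ are the convergents of $\beta=[0;a_1+1,a_2,a_3,\dots]=\alpha/(1+\alpha)$, not of $\alpha$. Already $|S_1|_1/|S_1|=1/(a_1+1)$, while the first convergent of $\alpha$ is $1/a_1$. Your lengths obey the right recursion, but with $|S_{-1}|=1$ instead of $q_{-1}=0$. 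So your frequency claim in the last step is false, and replacing $a_1$ by $a_1-1$ changes the slope, not the indexing.

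In fact, with the paper's definition (rotation by $\alpha$, with the letter $1$ coding an interval of length $\alpha$), the proposition as printed is false, so no proof can close this gap. The substitution $\theta_0^{a_1}$ sends every letter to a word beginning with $0$, so the limit always begins with $0$. But for $\alpha>1/2$ (i.e.\ $a_1=1$), the characteristic word of slope $\alpha$ begins with $\lfloor 2\alpha\rfloor-\lfloor\alpha\rfloor=1$. In general, the limit has frequency of $1$ equal to $\alpha/(1+\alpha)$.

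The paper gives no proof of its own, only citations. The classical form of the result (e.g.\ the standard-word theorem in \cite[Chapter 2]{Lothaire2002}) requires $\alpha=[0;a_1+1,a_2,\dots]$. Equivalently, one can read ``slope'' as the ratio of the frequencies of $1$ and $0$, i.e.\ the cutting sequence of $y=\alpha x$. The paper's only use of the proposition, the binary case of \cref{thm:mainAR}, concerns boundedness of the $a_n$ and is unaffected. You should state and prove the corrected version, and say explicitly that the printed one needs this correction.

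\textbf{The final identification step.} For the corrected statement, your induction combined with the standard-word theorem (quoted with its actual convention) is already a proof at the level of the paper, which only cites the result. If you want the self-contained continuity argument instead, its key claim is false as stated: $s_n$ is \emph{not} a prefix of the coding of $0$ under rotation by $p_n/q_n$ when $n$ is odd. For instance, for $\beta=[0;2,1,1,\dots]$ one has $s_1=01$ and $s_3=01001$. The rational characteristic words of slopes $1/2$ and $2/5$ begin with $10$ and $01010$, respectively.

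What is true is that $s_n$ with its last two letters removed is such a prefix. That suffices for your limiting argument at the irrational slope. However, it is precisely the substance of the Morse--Hedlund theorem and has to be proved. One way is via the central-word structure of standard words. Another is an induction showing how $\theta_0^k$ and $\theta_1^k$ act on rotation codings, one step of the continued fraction expansion at a time.
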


\subsection{Arnoux--Rauzy words} \label{subsec:ARwords}


In this section, let $\cA$ be a finite alphabet of size $d\ge 2$. An infinite word $\infw{x}$ over $\cA$ is \emph{Arnoux--Rauzy} if it is recurrent, its factor complexity satisfies $\rho_{\infw{x}}(n)=(d-1)n+1$ for all $n\in\NN$, and it has exactly one left special and one right special factor of length $n$ for all $n\in\NN$.
In this paper, we use their description in terms of $\cS$-adic expansions (for instance, see~\cite{BCS2013}), which generalizes the $\cS$-adic expansion of Sturmian words. 
For each letter $a\in \cA$, we define the \emph{Arnoux--Rauzy substitution} $\sigma_a \colon \cA  \to \cA^*$ mapping $a$ to $a$ and any other letter $b \in \cA \setminus \{a\}$ to $ba$.
We then have the following description of Arnoux--Rauzy words in terms of Arnoux--Rauzy substitutions, resembling~\cref{lem: S-adic structure of Sturmian words} for Sturmian words.

\begin{theorem}[\cite{ArnouxRauzy1991}]
An infinite word $\infw{x}$ over $\cA$ is Arnoux--Rauzy if and only if its set of factors coincides with the set of factors of an infinite word $\infw{y}$ of the form \[ \infw{y} = \lim_{n\to\infty}
    \sigma_{a_0} \circ \sigma_{a_1} 
    \circ \dots \circ \sigma_{a_{n+1}}(a),
\]
where $a$ and $a_n$ are letters for any $n\in \NN$ and such that every letter of $\cA$ occurs infinitely often in the sequence $(a_n)_{n\ge 0}$.
\end{theorem}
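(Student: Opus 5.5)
The plan is to prove both implications by \emph{desubstitution}: peel off one Arnoux--Rauzy substitution at a time, and show that this step preserves the defining properties (recurrence, complexity $(d-1)n+1$, a unique left and a unique right special factor of each length). Throughout I use that $\cL(\infw{x})$ depends only on the language, so $\infw{x}$ may be replaced by any word with the same set of factors.

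\textbf{From Arnoux--Rauzy to $\cS$-adic.} Let $\infw{x}$ be Arnoux--Rauzy.
\begin{enumerate}
\item Since $\rho_{\infw{x}}(1)=d$ and $\rho_{\infw{x}}(2)=2d-1$, there is exactly one right special letter $a$, it has all $d$ right extensions, and every other letter has a unique right extension. Dually there is a unique left special letter.
\item I claim that both special letters equal $a$, that $b\mapsto ba$ describes the forced extension of every $b\neq a$, and that $aa$ may or may not occur while $bc\notin\cL(\infw{x})$ for $b,c\neq a$. This uses recurrence: if some $b\neq a$ were always followed by $c\neq a$, counting left extensions would produce a second left special letter.
\item Then, after removing a bounded prefix, $\infw{x}$ factorizes uniquely into blocks $a$ and $ba$ with $b\neq a$. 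Hence $\infw{x}=\sigma_a(\infw{x}')$ up to a shift, where $\infw{x}'$ is recurrent.
\item I show $\infw{x}'$ is again Arnoux--Rauzy via the correspondence between special factors: $w$ is left special in $\infw{x}'$ if and only if $\sigma_a(w)a$ is left special in $\infw{x}$, with the same extensions, and similarly on the right. Uniqueness of special factors of each length in $\infw{x}$, together with the fact that lengths increase monotonically under $w\mapsto\sigma_a(w)a$, transfers to uniqueness in $\infw{x}'$. Complexity $(d-1)n+1$ then follows, since $d$ extensions per length give increments $d-1$.
\item Iterating yields $a_0,a_1,\dots$ and words $\infw{x}^{(n)}$ with $\infw{x}^{(n)}=\sigma_{a_n}(\infw{x}^{(n+1)})$ up to shift. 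Taking the prefix letters gives $\cL(\infw{x})=\cL(\infw{y})$.
\item Every letter occurs infinitely often in $(a_n)$. Otherwise, from some rank on some letter $c$ never appears, so $c$ is never the letter being "added". Tracking frequencies (or the images of $c$) then shows that $c$ eventually disappears from $\infw{x}^{(n)}$, or that an arbitrarily long power appears, contradicting $\rho_{\infw{x}^{(n)}}(1)=d$.
\end{enumerate}

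\textbf{From $\cS$-adic to Arnoux--Rauzy.} Let $\infw{y}$ be given by a sequence $(a_n)$ in which every letter occurs infinitely often. Then each product $\sigma_{a_m}\circ\dots\circ\sigma_{a_n}$ is positive on all letters for large $n$, which gives uniform recurrence of $\infw{y}$ and of every $\infw{y}^{(m)}$. Next I identify the bispecial factors. They are the palindromic prefixes $w_n$ defined by $w_0=\varepsilon$ and $w_{n+1}=\sigma_{a_0}\circ\dots\circ\sigma_{a_n}(a_n)\cdots$, equivalently by iterated palindromic closure $w_{n+1}=(w_na_n)^{(+)}$. Using the extension-correspondence lemma above in the converse direction, each $w_n$ has all $d$ left extensions and all $d$ right extensions. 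Every left special factor is a suffix of some $w_n$, and prefixes of left special words are left special, so there is exactly one left special factor of each length, with $d$ extensions; symmetrically for right special factors. Therefore $\rho(n+1)-\rho(n)=d-1$, and $\rho(n)=(d-1)n+1$.

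\textbf{Main obstacle.} The delicate step is the bijection between special factors of $\infw{x}'$ and of $\infw{x}=\sigma_a(\infw{x}')$, and in particular showing that \emph{every} special factor of $\infw{x}$ of length at least $1$ has the form $\sigma_a(w)a$, or is a suffix of one. I would handle this by the synchronization property of $\sigma_a$: any factor containing a letter different from $a$ has a unique cutting into blocks $a$ and $ba$, and a factor lying entirely in $a^*$ is treated separately, being short by recurrence.
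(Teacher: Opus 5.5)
The paper gives no proof of this statement; it is quoted from Arnoux and Rauzy, so your proposal has to stand on its own. Your overall strategy is the standard one and can be completed: desubstitute by the unique right special letter, transport special factors through $\sigma_a$, and identify the bispecial factors for the converse. As written, however, the converse has a genuine gap.

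\emph{The uniqueness step of the converse.} Your claim that every left special factor is a \emph{suffix} of some $w_n$ is false. For example, $10$ is a suffix of the bispecial factor $010$ of the Fibonacci language, yet $110$ is not a factor. The correct statement is that every left special factor is a \emph{prefix} of some $w_n$. This is exactly what needs proof: knowing that each $w_n$ has all $d$ extensions only gives existence of a left special factor of each length, not its uniqueness.

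It follows by induction on $|s|$, simultaneously over all levels $\infw{y}^{(m)}=\sigma_{a_m}(\infw{y}^{(m+1)})$. Let $w^{(m)}_k$ denote the bispecial factors built from $a_m,a_{m+1},\dots$; they satisfy $w^{(m)}_{k+1}=a_m\,\sigma_{a_m}(w^{(m+1)}_k)$.
\begin{itemize}
\item A left special letter of $\infw{y}^{(m)}$ must be $a_m$, since every other letter is preceded only by $a_m$. So $s=a_m s'$.
\item Every occurrence of $a_m$ ends a block, so $s'$ always starts at a cutting point.
\item Hence $s$ is a prefix of $a_m\sigma_{a_m}(\hat w)$, where $\hat w=w$ if $s'=\sigma_{a_m}(w)$, and $\hat w=wb$ if $s'=\sigma_{a_m}(w)\,b$ with $b\neq a_m$.
\item This $\hat w$ is left special in $\infw{y}^{(m+1)}$ and $|\hat w|<|s|$. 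By induction $\hat w$ is a prefix of some $w^{(m+1)}_k$, so $s$ is a prefix of $w^{(m)}_{k+1}$.
\end{itemize}
This is your ``main obstacle'', and it must be carried out in the converse direction as well. Your displayed formula for $w_{n+1}$ is also garbled; in the paper's convention it should read $w_{n+1}=w_n\,\sigma_{a_0}\circ\cdots\circ\sigma_{a_{n-1}}(a_n)$.

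\emph{The correspondence uses the wrong map.} With the paper's $\sigma_a(b)=ba$, the map preserving both extension sets is $w\mapsto a\,\sigma_a(w)$. The map $w\mapsto\sigma_a(w)\,a$ belongs to the other convention $b\mapsto ab$. For instance, write the Fibonacci word as $\sigma_0(\infw{x}')$ with $\infw{x}'=0101101\cdots$. The letter $1$ is left special in $\infw{x}'$, but $\sigma_0(1)0=100$ is always preceded by $0$, whereas $0\,\sigma_0(1)=010$ is left special.

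\emph{Smaller repairs.}
\begin{itemize}
\item \textbf{Step~4.} Monotonicity of lengths does not transfer uniqueness, because words of equal length can have images of different lengths ($|\sigma_a(ab)|=3$, $|\sigma_a(bb)|=4$). Argue instead that the left special factors of $\infw{x}$ form a chain for the prefix order, and that $\{a\}\cup\{ba : b\neq a\}$ is a prefix code. Then $a\sigma_a(w_1)$ being a prefix of $a\sigma_a(w_2)$ forces $w_1$ to be a prefix of $w_2$.
\item \textbf{Step~6.} Neither of your alternatives is a contradiction. Occurrences of $c$ correspond bijectively between levels, so $c$ never disappears. Arbitrarily long powers do occur in Arnoux--Rauzy words with unbounded weak partial quotients. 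What works: if $c\neq a_n$ for all $n\ge N$, then $\sigma_{a_N}\circ\cdots\circ\sigma_{a_{n-1}}(c)$ is the only image containing $c$, contains it exactly once, and has length at least $n-N+1$. So two consecutive occurrences of $c$ in $\infw{x}^{(N)}$ would be at distance at least $n-N+1$ for every $n$.
\item \textbf{Step~5.} The equality $\cL(\infw{x})=\cL(\infw{y})$ uses growth of all images and uniform recurrence, so it must come after Step~6.
\item \textbf{Step~2.} Contrary to what you say, $aa$ always occurs, since the right special letter has all $d$ right extensions.
\end{itemize}
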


For a sequence $(a_n)_{n\ge 0}$ of letters of $\cA$, write $a_0a_1a_2\cdots=b_0^{k_0}b_1^{k_1}b_2^{k_2}\cdots$ such that $b_n\in \cA$, $k_n\geq 1$ and $b_n\neq b_{n+1}$ for all $n\geq 0$ (in other words, we group equal consecutive letters in $(a_n)_{n\ge 0}$).
Then $(k_n)_{n\ge 0}$ is called a sequence of \emph{weak partial quotients}.
Let $(K_n)_{n \ge 0}$ be the increasing sequence of integers satisfying $K_0=0$ and
\[
\{a_{K_n}, a_{K_n+1}, \ldots, a_{K_{n+1}}\}=\cA \text{ and } 
\{a_{K_n}, a_{K_n+1},\ldots, a_{K_{n+1}-1}\}\neq \cA \]
for all $n\geq 0$.
The sequence $(K_{n+1}-K_n)_{n\geq 0}$ of first differences is called the sequence of \emph{strong partial quotients}. Note that this terminology comes from the corresponding so-called \emph{Arnoux--Rauzy continued fraction algorithm} (also see~\cite{ArnouxRauzy1991}).




\section{Generalities on factor-balanced words}
\label{sec: generalities on UFB}

We define a natural analogue of the notion of higher block codes between subshifts adapted to the context of one-sided infinite words. 
A {\em higher $k$-block code} is a map $\pi \colon \cA^k \to \cB$, where $k \ge 1$ and $\cA,\cB$ are alphabets.
We let $\pi$ act on words $w = a_0 a_1 \cdots a_{\ell+k-1}$ of length $\ell + k$ by
\[
\pi(w) = \pi(a_0a_1\cdots a_{k-1}) \, \pi(a_1a_2\cdots a_k) \cdots \pi(a_{\ell} a_{\ell+1} \cdots a_{\ell+k-1}).
\]
Let $\infw{x} = (x_n)_{n\ge 0} \in \cA^\NN$ be an infinite word. 
The image of $\infw{x}$ by $\pi$ is the infinite word $\pi(\infw{x}) \in \cB^\NN$ defined by 
\[  \pi(\infw{x}) = \pi(x_0x_1\cdots x_{k-1}) \, \pi(x_1x_2\cdots x_k) \, \pi(x_2x_3\cdots x_{k+1}) \cdots \]
We say that $\pi$ is {\em invertible on $\infw{x}$} if there exists a higher $\ell$-block code $\rho \colon \cB^\ell \to \cA$ such that $\rho(\pi(\infw{x}))$ equals the $(k+\ell)$th-shift of $\infw{x}$, i.e., $\rho(\pi(\infw{x})) = x_{k+\ell} \, x_{k+\ell+1} \cdots$.

One famous example of a higher $k$-block code is the so-called \emph{length-$k$ sliding block code}, which maps distinct length-$k$ factors to distinct letters. We illustrate it on an example below. Also, every length-$k$ sliding block code is invertible, where roughly, the inverse maps every length-$k$ factor to its first letter.

\begin{example}
We consider the ubiquitous Thue--Morse word $\infw{t}=01101001\cdots$, fixed point of the binary substitution $0\mapsto 01, 1\mapsto 10$, see~\cite[\seqnum{A010060}]{Sloane}.
In~\cref{fig:Thue-Morse and 2-block code}, we have represented its length-$2$ sliding block code $\pi(\infw{t})$.
Observe that, since $\infw{t}$ has four length-$2$ factors, $\pi(\infw{t})$ is an infinite word over four letters. More precisely, the factors $01$, $11$, $10$, and $00$ are respectively coded by $0,1,2,3$, which defines $\pi$. Then, $\pi$ is invertible on $\infw{t}$ with inverse $\rho:0,3\mapsto 0; 1,2 \mapsto 1$.

\begin{figure}[ht]
    \centering
\begin{tikzpicture}
  \def\dx{0.55}

  \foreach \x [count=\i] in {0,1,1,0,1,0,0,1,1,0,0,1,0,1,1,0} {
    \node (b\i) at ({\i*\dx},0) {\x};
  }
  \node at ({17*\dx},0) {$\cdots$};

  \foreach \x [count=\i from 1] in {0,1,2,0,2,3,0,1,2,3,0,2,0,1,2} {
    \node (t\i) at ({(\i+0.5)*\dx},-1) {\x};
    \draw[gray!70, dashed, line width=0.3pt] 
      (t\i.north) -- ($ (b\i.north)!0.5!(b\the\numexpr\i+1\relax.north) $);
  }
  \node at ({16.5*\dx},-1) {$\cdots$};

\draw [->] (10,-1) to [bend right] node [right]{$\rho$} (10,0);
\draw [->] (0,0) to [bend right] node [left]{$\pi$} (0,-1);
\end{tikzpicture}
    \caption{The Thue--Morse word $\infw{t}=01101001\cdots$ and its length-$2$ sliding block code $\pi(\infw{t})=012023\cdots$.}
    \label{fig:Thue-Morse and 2-block code}
\end{figure}
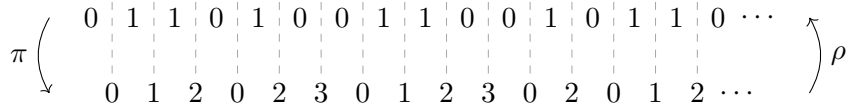
\end{example}

We also provide an example of a non-invertible block code. 

\begin{example}
We consider again the Thue--Morse word $\infw{t}=01101001\cdots$. Let $\pi$ be the $2$-block code defined as $\pi(ab)=a+b\bmod{2}$ for any $a,b\in \{0,1\}$, see~\cref{fig:Thue-Morse and sum block code}.
Let $\infw{p}=\pi(\infw{t})=(p_n)_{n\ge 0}$.
Note that $\infw{p}$ is known under the name \emph{period-doubling}, see~\cite[\seqnum{A035263}]{Sloane}, and is also $2$-automatic (see~\cite{AlloucheShallit2003}).

 \begin{figure}[ht]
    \centering
\begin{tikzpicture}

  \def\dx{0.55}

  \foreach \x [count=\i] in {0,1,1,0,1,0,0,1,1,0,0,1,0,1,1,0} {
    \node (b\i) at ({\i*\dx},0) {\x};
  }
  
  \node at ({17*\dx},0) {$\cdots$};

  \foreach \x [count=\i from 1] in {1,0,1,1,1,0,1,0,1,0,1,1,1,0,1} {
    \node (t\i) at ({(\i+0.5)*\dx},-1) {\x};
    \draw[gray!70, dashed, line width=0.3pt] 
      (t\i.north) -- ($ (b\i.north)!0.5!(b\the\numexpr\i+1\relax.north) $);
  }
  
  \node at ({16.5*\dx},-1) {$\cdots$};

\draw [->] (0,0) to [bend right] node [left]{$\pi$} (0,-1);
\end{tikzpicture}
    \caption{The Thue--Morse word $\infw{t}=01101001\cdots$ and its length-$2$ sum block code. }
    \label{fig:Thue-Morse and sum block code}
\end{figure}

We prove that $\pi$ is not invertible on $\infw{t}$. 
One can easily verify that $\infw{p}$ also satisfies $p_0=1$, $p_1=0$, and the following recurrence relations \[
p_{4n+2}=p_{4n}=p_{2n}, p_{4n+3}=p_n, p_{8n+5}=p_{8n+1}=p_{4n+1}
\]
for all $n\geq 0$. Suppose that $\pi$ is invertible on $\infw{t}$ with inverse $\rho:\{0,1\}^\ell \rightarrow \{0,1\}$ with $\ell\ge 1$ fixed, so $\rho(p_ip_{i+1}\cdots p_{i+\ell-1})=t_{i+\ell+2}$. For all $k\geq 0$, the recurrences relations for $\infw{p}$ give 
\[
p_{2^\ell k}p_{2^\ell k+1}\cdots p_{2^\ell k+\ell-1}=p_0p_1\cdots p_{\ell-1}.
\]
Therefore, this implies that the sequence $(t_{2^\ell k +\ell+2})_{k\ge 0}$ is constant, which is not the case.
\end{example}

This notion adapts that of higher block codes between subshifts to the context of infinite words.
Indeed, there exists a block code $\tilde{\pi}$ mapping a transitive subshift $X$ into $Y$ if and only if there exist a transitive point $\infw{x} \in X$ and a block code $\pi$ such that $\pi(\infw{x}) \in Y$; see~\cite{LindMarcus} for a general background on symbolic dynamics. 
The following simple lemma is left to the reader.

\begin{lemma}
    \label{same_language_same_balance}
    Let $\infw{x},\infw{y}$ be infinite words such that $\cL(\infw{x}) = \cL(\infw{y})$.
    Then, $\infw{x}$ is uniformly factor-balanced if and only if $\infw{y}$ is uniformly factor-balanced.
\end{lemma}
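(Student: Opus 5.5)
The statement is immediate once we note that the notion of being $C$-balanced in $\infw{x}$ refers to $\infw{x}$ only through its set of factors. By definition, a finite word $w$ is $C$-balanced in $\infw{x}$ if for all $u,v \in \cL(\infw{x})$ with $|u| = |v|$ we have $\bigl||u|_w - |v|_w\bigr| \le C$. The quantity $|u|_w$ depends only on the finite words $u$ and $w$, not on where $u$ occurs in $\infw{x}$. So the property ``$w$ is $C$-balanced in $\infw{x}$'' is a statement quantified only over pairs of equal-length elements of $\cL(\infw{x})$.

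The plan is therefore as follows. Assume $\infw{x}$ is uniformly factor-balanced with constant $C$. Fix any finite word $w$ and any $u,v \in \cL(\infw{y})$ of equal length. Since $\cL(\infw{y}) = \cL(\infw{x})$, both $u$ and $v$ lie in $\cL(\infw{x})$. Hence $\bigl||u|_w - |v|_w\bigr| \le C$, so $w$ is $C$-balanced in $\infw{y}$. As $w$ was arbitrary, $\infw{y}$ is uniformly factor-balanced with the same constant $C$. The converse follows by exchanging the roles of $\infw{x}$ and $\infw{y}$, since the hypothesis is symmetric.

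The only point worth stating explicitly is that the definition quantifies over all finite words $w$, not only over factors of $\infw{x}$. This causes no trouble, because for each fixed $w$ the condition still involves only $\cL(\infw{x})$. For the same reason the argument shows more: each finite word has the same optimal balance constant in $\infw{x}$ and in $\infw{y}$, namely $B_{\infw{x}}(w) = B_{\infw{y}}(w)$ in the notation of \cref{sec: discrepancy}. It follows that factor-balancedness, not only its uniform version, is also preserved. I expect no real obstacle in this proof.
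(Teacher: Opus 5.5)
Your proof is correct. The paper leaves this lemma to the reader, and your observation that ``$w$ is $C$-balanced in $\infw{x}$'' is a condition on $\cL(\infw{x})$ alone, so it transfers verbatim between words with the same language (together with the remark that the same argument preserves each balance constant), is exactly the intended argument.
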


We are now interested by seeing how factor-balancedness is conserved through higher block codes.

\begin{proposition}
\label{balance_inherited_by_block_codes}
Let $k\in\NN$ be a positive integer, let $\infw{x} \in \cA^\NN$ be an infinite word, and let $\pi \colon \cA^k \to \cB$ be a higher $k$-block code. \begin{enumerate}
    \item If $\infw{x}$ is factor-balanced, so is $\pi(\infw{x})$.
    \item If $\infw{x}$ is uniformly factor-balanced and $\pi$ invertible in $\infw{x}$, then $\pi(\infw{x})$ is uniformly factor-balanced.
\end{enumerate}
\end{proposition}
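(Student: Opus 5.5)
The plan is to reduce occurrences of a word $v$ in $\pi(\infw{x})$ to occurrences of finitely many words in $\infw{x}$. Write $\infw{y} = \pi(\infw{x})$. Fix a nonempty $v \in \cL(\infw{y})$ of length $m$, and let $P_v = \{ w \in \cA^{m+k-1} : \pi(w) = v\}$. This is a finite set of words. For a factor $u = x_i \cdots x_{i+n+k-2}$ of $\infw{x}$ of length $n+k-1$, the image $\pi(u) = y_i \cdots y_{i+n-1}$ is a factor of $\infw{y}$. Every factor of $\infw{y}$ of length $n$ arises this way. Position $j$ is an occurrence of $v$ in $\pi(u)$ exactly when the factor of $u$ of length $m+k-1$ at position $j$ lies in $P_v$, so
\[
|\pi(u)|_v = \sum_{w \in P_v} |u|_w .
\]
Given two factors $s,t$ of $\infw{y}$ of length $n$, we choose preimages $u,u'$ of length $n+k-1$ in $\infw{x}$ and get $\bigl||s|_v-|t|_v\bigr| \le \sum_{w\in P_v} \bigl||u|_w-|u'|_w\bigr| \le \sum_{w \in P_v\cap\cL(\infw{x})} C_w$. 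This settles item (1), with constant $\sum_{w \in P_v \cap \cL(\infw{x})} C_w$. The constant depends on $v$, which is allowed there.

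For item (2), the obstacle is that $\#(P_v \cap \cL(\infw{x}))$ can grow with $m$, so the sum above is not uniformly bounded. Invertibility is what removes this: it should force $P_v \cap \cL(\infw{x})$ to contain essentially one word. Let $\rho$ be the $\ell$-block inverse, so that $\rho(\pi(\infw{x}))$ is the $(k+\ell)$-shift of $\infw{x}$. As stated, the definition is index-shifted, so I would first normalize it: there is a fixed offset $d$ such that $y_i \cdots y_{i+\ell-1}$ determines $x_{i+d}$, for every $i$. Then a factor $v$ of $\infw{y}$ with $m \ge \ell$, occurring at position $i$, determines $x_{i+d}\cdots x_{i+d+m-\ell}$. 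That is a central stretch of the preimage, missing only boundedly many letters at the two ends, where the number missed depends on $d,\ell,k$ and not on $m$. Hence $P_v \cap \cL(\infw{x})$ is contained in a set of words of the form $p\,z\,q$ with $z$ fixed and $|p|+|q| \le K$ for a constant $K$. There are at most $(\#\cA)^K$ such words, a bound independent of $v$. Together with the uniform constant $C$ for $\infw{x}$, this gives $\bigl||s|_v-|t|_v\bigr| \le (\#\cA)^K C$ for $|v| \ge \ell$.

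There is a cleaner route for (2) that avoids even that count. For words $v$ of length $m \ge \ell$, $\rho$ itself acts as a block code. Occurrences of $v$ at position $i$ in $\infw{y}$ are in bijection with occurrences, at position $i+d$, of the set of extensions of $z$ by the boundary letters. Grouping these by $z$ gives $\sum_{p,q} |u|_{pzq}$, and this sum equals $|u|_z$ up to boundary errors bounded by $K$. So $\bigl||s|_v-|t|_v\bigr| \le C + 2K$ directly.

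The finitely many short words $v$ with $|v| < \ell$ are handled by item (1). Taking the maximum of the finitely many constants for short $v$ and the uniform constant for long $v$ gives a uniform constant for all factors of $\infw{y}$. The step I expect to need the most care is the offset bookkeeping in the invertibility definition, namely showing that $v$ pins down its preimage up to a bounded number of boundary letters; everything else is the counting identity above.
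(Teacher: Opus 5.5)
Your proof of item (1) and your first argument for item (2) are correct, and they are essentially the paper's proof. The identity $|\pi(u)|_v=\sum_{w\in P_v}|u|_w$ gives (1). For (2), invertibility bounds $\#(P_v\cap\cL(\infw{x}))$ independently of $v$, and the uniform constant $C$ of $\infw{x}$ then finishes the argument.

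The two proofs differ only in how they do that count. The paper bounds it by $\mathrm{Card}(\cB^{k+\ell})$, writing each preimage as $\rho(s)$ with $s\in\cL(\pi(\infw{x}))$ ending in $v$. You bound it by $(\#\cA)^K$, noting that $\rho(v)$ fixes all but $K$ letters of each preimage. Your version only uses the direction ``an occurrence of $w$ in $\infw{x}$ gives an occurrence of $\pi(w)$ in $\pi(\infw{x})$''. It therefore does not need every preimage to reappear far enough into $\infw{x}$ to lie in the image of $\rho$, which the paper's step $\rho(S)=W$ tacitly uses. Your normalization of the offset $d$ is also justified: the paper's literal definition (a shift by $k+\ell$) does not match its own Thue--Morse example, where the shift is $0$.

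You should, however, discard the ``cleaner route'', because it is wrong. The word $z=\rho(v)$ is determined by $v$, but $v$ is not determined by $z$. So occurrences of $v$ correspond only to occurrences of those extensions $pzq$ with $\pi(pzq)=v$, not to all boundary extensions of $z$. The sum over this restricted set need not be within a bounded error of $|u|_z$.

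Here is a concrete failure. Take the length-$2$ sliding block code on the Thue--Morse word, so $k=2$, $\ell=1$, $d=0$, and let $v=\pi(01)$. Then $z=0$ and the only preimage of $v$ is $01$. Hence $|u|_z-|\pi(u)|_v=|u|_{00}+O(1)$, which grows linearly with $|u|$.

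So the bound $C+2K$ is unjustified. The valid uniform constant is the one from your first route, $(\#\cA)^K C$, combined with the constants for the finitely many short words $v$.
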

\begin{proof}
Let us prove the first part of the statement.
We show that each factor $w$ in $\pi(\infw{x})$ is $C$-balanced for some constant $C$.
Let $w$ be a factor of $\pi(\infw{x})$.
Since $\infw{x}$ is factor-balanced, for each factor $w'$ of $\infw{x}$, there exists a constant $C_{w'}$ such that  $w'$ is $C_{w'}$-balanced in $\infw{x}$, and define $W = \{w' \in \cL(\infw{x}) \cap \cA^{|w| + k} : \pi(w') = w\}$.
We prove there exists a constant $K > 0$ such that, for all $u,v \in \cL(\pi(\infw{x}))$ with $|u| = |v|$, we have 
\begin{equation}
\label{proof:main_claim:balance_inherited_by_block_codes}
\big| |u|_w - |v|_w \big| \leq  K \cdot \mathrm{Card}(W), 
\end{equation} i.e., $w$ is $(K \, \mathrm{Card}(W))$-balanced in $\infw{\pi(x)}$. 
Set $q = |u| = |v|$.
Since $u,v \in \cL(\pi(\infw{x}))$, there exist $u',v' \in \cL(\infw{x})$ of length $q + k$ such that $\pi(u') = u$ and $\pi(v') = v$.
Then, $\big| |u'|_{w'} - |v'|_{w'} \big| \leq C_{w'}$ for all $w' \in W$.
For each $t\in\{u,v\}$, by the definition of $W$ and since $t = \pi(t')$, we get $|t|_w = \sum_{w' \in W} |t'|_{w'}$.
Therefore,
\[  \big| |u|_{w} - |v|_{w} \big| \leq 
    \sum_{w' \in W} \big| |u'|_{w'} - |v'|_{w'} \big| \leq
    \mathrm{Card}(W) \max_{w' \in W}C_{w'}.  \]
This proves \eqref{proof:main_claim:balance_inherited_by_block_codes} for $K \coloneqq \max_{w' \in W} C_{w'}$, from which it follows that $\pi(\infw{x})$ is factor-balanced.

Next, we prove the second part of the statement, for which it is enough to show that the constants $\mathrm{Card}(W)$ and $K$ appearing in \eqref{proof:main_claim:balance_inherited_by_block_codes} are bounded independently from $w$.
If $\infw{x}$ is furthermore uniformly $C$-factor-balanced, then $K \coloneqq \max_{w' \in W} C_{w'}$ is bounded from above by $C$.
Thus, it is enough to show that $\mathrm{Card}(W)$ is bounded independently from $w$.
Suppose that $\pi$ has an inverse $\rho$, i.e., $\rho$ is an $\ell$-block code such that $\rho(\pi(\infw{x})) = x_{k+\ell} x_{k+\ell+1} \cdots$.
Set $S = \{s \in \cL(\pi(\infw{x})) \cap \cB^{|w| + k + \ell} : \rho(s) \in W\} \cap \cL(\pi(\infw{w}))$.
Then, for any $s \in S$, $\pi(\rho(s)) = w$ is a suffix of $s$.
This shows that $S \subseteq \{t \, w : t \in \cB^{k+\ell}\}$, from which it follows that $\mathrm{Card}(S) \leq \mathrm{Card}(\cB^{k+\ell})$.
As $\rho(S) = W$, we get $\mathrm{Card}(W) \leq \mathrm{Card}(\cB^{k+\ell})$.
We conclude that $K$ is bounded independently of $w$, as desired.
\end{proof}

\begin{remark}
Notice that the first item of the previous proposition fails if we only consider letter-balancedness. For example the Thue--Morse word is letter-balanced while the period doubling word is not. Indeed, define $\tau$ as $\tau(1) = 10$ and $\tau(0) = 11$, then $\infw{x} = \tau^\omega(1)$ is the period doubling word. Define $u_0 = 1$ and $u_{n+1} = \tau^2(u_n) 1$ for $n \ge 0$, i.e., \[   u_n = \tau^{2n}(1) \, \tau^{2(n-1)}(1) \cdots \tau^2(1) \, 1.   \] Let us show that $u_n \in \cL(\infw{x})$. It is clear for $n=0$, and if $u_n \in \cL(\infw{x})$, then $u_n a \in \cL(\infw{x})$ for some $a \in \{0,1\}$, so $\tau^2(u_n a)=\tau^2(u_n)\tau^2(a)$ is in $\cL(\infw{x})$ as well. Since both $\tau^2(0)$ and $\tau^2(1)$ both start with $1$, $u_{n+1}=\tau^2(u_n)1$ is a prefix of $\tau^2(u_n)\tau^2(a)$, and hence it belongs to $\cL(\infw{x})$. 

We now prove that the letter $1$ is not balanced in $\infw{x}$ by means of estimating its discrepancy. The frequency $\mu_1$ of $1$ in $\infw{x}$ is given by the normalized right Perron eigenvector of the incidence matrix of $\tau$, i.e., $\begin{pmatrix}
        0 & 2 \\ 1 & 1
    \end{pmatrix}$; in this case, $\mu_1 = 2/3$. The incidence matrix of $\tau^{2n}$ is \[
\frac{1}{3}
\begin{pmatrix}
2 +4^n & 2(4^n-1) \\ 4^n-1 & 2\cdot 4^n+1
\end{pmatrix}.
\]
Therefore,
\[
|u_n|_1 - \mu_1|u_n| = \sum_{0 \leq i \leq n} \left(|\tau^{2i}(1)|_1 - \mu_1 |\tau^{2i}(1)|\right) =\frac{1}{3} \sum_{0 \leq i \leq n} \left(2\cdot 4^{i}+1-2\cdot 4^{i}\right)=\frac{n+1}{3},
\]
that is, the discrepancy of $1$ is unbounded.
\end{remark}

We now show that factor-balancedness forces recurrence in infinite words to be uniform. 

\begin{proposition}
\label{recurrence&balance=>uniform_recurrence}
Let $\infw{x}$ be a factor-balanced infinite word and let $\infw{y}$ be a recurrent infinite word with $\cL(\infw{y}) \subseteq \cL(\infw{x})$.
\begin{enumerate}
\item The word $\infw{y}$ is uniformly recurrent.
\item For every recurrent infinite word $\infw{y'}$ with $\cL(\infw{y'}) \subseteq \cL(\infw{x})$, we have $\cL(\infw{y}) = \cL(\infw{y'})$.
\end{enumerate}
In particular, every factor-balanced recurrent infinite word is uniformly recurrent.
\end{proposition}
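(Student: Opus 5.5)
The plan is to use frequencies. By \cref{lem:balance=>discrepancy}, factor-balancedness of $\infw{x}$ implies that every factor $w$ of $\infw{x}$ has a uniform frequency $\mu_\infw{x}(w)$. It also gives a bound $\bigl||z|_w - |z|\mu_\infw{x}(w)\bigr| \le C_w$ for every factor $z \in \cL(\infw{x})$. Since $\cL(\infw{y}) \subseteq \cL(\infw{x})$, these estimates hold in particular for all factors $z$ of $\infw{y}$. The key dichotomy I would prove is: for $w \in \cL(\infw{x})$, either $\mu_\infw{x}(w) > 0$, and then $w$ occurs in every factor of $\infw{x}$ of length at least $N_w := \lceil (C_w+1)/\mu_\infw{x}(w)\rceil$; or $\mu_\infw{x}(w) = 0$, and then every factor of $\infw{x}$ contains at most $C_w$ occurrences of $w$. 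The first alternative is immediate from $|z|_w \ge |z|\mu_\infw{x}(w) - C_w \ge 1$. The second is immediate from $|z|_w \le C_w$. Since $\infw{x}$ itself is a factor-sequence of its own prefixes, the second alternative even says that $w$ occurs at most $C_w$ times in all of $\infw{x}$.

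For item (1), take $w \in \cL(\infw{y})$. Because $\infw{y}$ is recurrent, $w$ occurs infinitely often in $\infw{y}$. Long enough prefixes of $\infw{y}$ therefore contain more than $C_w$ occurrences of $w$, and these prefixes belong to $\cL(\infw{x})$. This rules out the zero-frequency case, so $\mu_\infw{x}(w) > 0$. By the dichotomy, $w$ then occurs in every factor of $\infw{y}$ of length $N_w$, so $w$ has bounded gaps in $\infw{y}$. Since this holds for each factor $w$, $\infw{y}$ is uniformly recurrent, in the form "every factor occurs with bounded gaps". I would note that the paper's definition with a single $k$ is meant per factor, i.e.\ the standard notion; this matches the return-word characterization stated in the preliminaries.

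For item (2), let $\infw{y'}$ be another recurrent word with $\cL(\infw{y'}) \subseteq \cL(\infw{x})$, and take $w \in \cL(\infw{y})$. By the argument above, $\mu_\infw{x}(w) > 0$, so $w$ occurs in every factor of $\infw{x}$ of length $N_w$. In particular, $w$ occurs in any factor of $\infw{y'}$ of that length, which exists because $\infw{y'}$ is infinite. Hence $w \in \cL(\infw{y'})$, so $\cL(\infw{y}) \subseteq \cL(\infw{y'})$, and by symmetry the two languages are equal. The final "in particular" follows by taking $\infw{y} = \infw{x}$ in item (1).

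The only delicate point is the zero-frequency case: making sure that the bound $|z|_w \le C_w$ applies to arbitrarily long factors of $\infw{y}$. This is exactly what the inclusion $\cL(\infw{y}) \subseteq \cL(\infw{x})$, together with the uniform discrepancy bound of \cref{lem:balance=>discrepancy}, provides. The rest is routine.
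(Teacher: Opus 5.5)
Your proof is correct. It uses the same mechanism as the paper: balancedness turns ``many occurrences of $t$ in one factor'' into ``at least one occurrence in every factor of that length''. The difference is that you route this through uniform frequencies and the discrepancy bound of \cref{lem:balance=>discrepancy}, with a positive/zero-frequency dichotomy.

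The paper skips frequencies entirely. Using recurrence of $\infw{y}$, it picks $p \in \cL(\infw{y})$ with $|p|_t \ge C_t+1$. Then for any $w \in \cL(\infw{y'})$ with $|w| = |p|$, the definition of $C_t$-balance gives $|w|_t \ge |p|_t - C_t \ge 1$. This proves both items at once, with the explicit window $\ell = |p|$.

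The paper's route is shorter and uses only the raw balance constant. In particular it avoids the small imprecision in \cref{lem:balance=>discrepancy}: its proof really bounds $|z|_t$ against $(|z|-|t|+1)\mu$ rather than $|z|\mu$. This is harmless for you, since any constant $C_w$ suffices.

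Your route yields a little more. It identifies the common language $\cL(\infw{y})$ as exactly the set of factors of $\infw{x}$ with positive frequency. It also shows that every other factor of $\infw{x}$ occurs at most $C_w$ times in $\infw{x}$.

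Your remark that uniform recurrence must be read per factor (for each $w$ there is some $k$) is right. It matches the paper's own conclusion that every word of the common language appears with bounded gaps.
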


\begin{proof}
Let $\infw{y}, \infw{y'}$ be recurrent infinite words with $\cL(\infw{y}), \cL(\infw{y'}) \subseteq \cL(\infw{x})$, and fix an arbitrary factor $t \in \cL(\infw{y})$.
We claim that 
\begin{align}
\label{eq: claim factor-balanced recurrent}
\text{there exists $\ell \ge 1$ such that every length-$\ell$ factor of $\infw{y'}$ contains $t$.}
\end{align}

Let us first show that Claim~\eqref{eq: claim factor-balanced recurrent} implies the statement.
By Claim~\eqref{eq: claim factor-balanced recurrent}, $t \in \cL(\infw{y'})$ and, moreover, $t$ occurs in $\infw{y'}$ with gaps bounded by $\ell$.
Since $t$ was arbitrary, we obtain $\cL(\infw{y}) \subseteq \cL(\infw{y'})$.
By symmetry (interchanging $\infw{y}$ and $\infw{y'}$), we get $\cL(\infw{y})=\cL(\infw{y'})$, and every word of this common set appears with bounded gaps in both $\infw{y}$ and $\infw{y'}$.
In particular, taking $\infw{y'}=\infw{y}$ yields Item~(1).

It remains to prove Claim~\eqref{eq: claim factor-balanced recurrent}.
As $t\in \cL(\infw{x})$, let $C_t$ be the balance constant of $t$ in $\infw{x}$, i.e., for any two factors $u,v \in \cL(\infw{x})$ of equal length, we have $\big||u|_t - |v|_t\big| \le C_t$.
Because $\infw{y}$ is recurrent and $t \in \cL(\infw{y})$, there exists a factor $p$ of $\infw{y}$ containing at least $C_t+1$ occurrences of $t$.
Set $\ell \coloneqq |p|$.
Now let $w$ be any factor of $\infw{y'}$ with $|w|=\ell$.
Then $p,w \in \cL(\infw{x})$, so by the factor-balancedness of $\infw{x}$,
\[  |w|_t \geq |p|_t - \big| |w|_t - |p|_t \big| \ge |p|_t - C_t. \]
Since $|p|_t \ge C_t+1$, we obtain $|w|_t \ge 1$.
Thus $t$ occurs in every length-$\ell$ factor of $\infw{y'}$, which shows Claim~\eqref{eq: claim factor-balanced recurrent}.
\end{proof}

We now provide an example of a word that is uniformly recurrent but not balanced.

\begin{example}
    Consider the so-called \emph{Chacon word} $\infw{c}=0012001212012\cdots$, which is the fixed point of the primitive substitution $0\mapsto 0012, 1\mapsto 12, 2\mapsto 012$; see~\cite[\seqnum{A049321}]{Sloane}. It is known that $\infw{c}$ is aperiodic, uniformly recurrent, and has linear factor complexity, see~\cite{Ferenczi1995}, but for any word $w \in \{0,1,2\}^{*}$, $w$ is not balanced in $\infw{c}$.
    Indeed, it is known that $\infw{c}$ generates a {\em weakly mixing} topological dynamical system~\cite{Chacon1969}, which are known to admit no balanced factors~\cite{BertheCecchi}.
\end{example}

In combinatorics on words, we say that a nonempty word $w\in\cA^*$ \emph{overlaps itself} if $ww$ contains more than two occurrences of $w$.
Otherwise we say that $w$ \emph{does not overlap itself}; note that then $ww$ contains exactly two occurrences of $w$, one as prefix and one as suffix.

\begin{lemma}[{\cite[Chapter 1]{Lothaire2002}}]
\label{lem:primitive_nonoverlap}
A word is primitive if and only if it does not overlap itself.
\end{lemma}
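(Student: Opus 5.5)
The plan is to prove the two directions separately, working with the contrapositive in each case: $w$ is a power if and only if $w$ has an occurrence in $ww$ strictly between position $0$ and position $|w|$. Throughout, $w$ is nonempty, and occurrences in $ww$ are indexed from $0$ to $|w|$. The occurrences at $0$ and $|w|$ always exist.

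First, suppose $w$ is not primitive, say $w=v^k$ with $v$ nonempty and $k\ge 2$. Then $ww=v^{2k}$, and $w=v^k$ occurs in $v^{2k}$ at every position $j|v|$ with $0\le j\le k$. In particular it occurs at position $|v|$. Since $0<|v|<|w|$, this is a third occurrence, so $w$ overlaps itself.

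Conversely, suppose $w$ overlaps itself, so $w$ occurs in $ww$ at some position $i$ with $0<i<|w|$. I will write $w=xy$ with $|x|=i$; then $x$ and $y$ are both nonempty. Since $ww=xyxy$, the factor of length $|w|$ starting at position $i$ is $yx$, so the occurrence gives $xy=yx$.

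The key step is the classical fact that two commuting nonempty words are powers of a common word: there is a nonempty $z$ with $x=z^m$ and $y=z^n$ for some $m,n\ge 1$. I will prove it by induction on $|x|+|y|$.
\begin{itemize}
\item If $|x|=|y|$, then comparing prefixes of $xy=yx$ gives $x=y$, and we take $z=x$.
\item Otherwise assume without loss of generality $|x|<|y|$. From $xy=yx$, $x$ is a prefix of $y$, so $y=xy'$ with $y'$ nonempty.
\item Substituting gives $xxy'=xy'x$, and cancelling the leading $x$ gives $xy'=y'x$.
\item The induction hypothesis applied to $(x,y')$ gives $x=z^m$ and $y'=z^{n'}$, hence $y=z^{m+n'}$.
\end{itemize}
Then $w=xy=z^{m+n}$ with $m+n\ge 2$, so $w$ is not primitive. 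This completes the equivalence.

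The only step needing care is this commutation lemma; the rest is bookkeeping of positions. The main thing to check is that the induction is well founded, which holds because $|x|+|y'|<|x|+|y|$ and both words stay nonempty.
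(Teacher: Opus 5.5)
Your proof is correct. The reduction of self-overlap to an occurrence of $w$ in $ww$ at a position $0<i<|w|$, the resulting identity $xy=yx$, and the induction showing that commuting nonempty words are powers of a common word are all sound. The paper gives no proof of its own and only cites Lothaire, where the argument is this same classical route through the commutation lemma.
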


\begin{lemma}
	\label{factorBal-powerFree:technical}
Let $\infw{x}$ be an infinite word and $u \in \cL(\infw{x})$ be nonempty.
If $u$ is $C$-balanced in $\infw{x}$ for some $C>0$ and $u^{2C+3} \in \cL(\infw{x})$, then $\infw{x}$ is eventually periodic of period dividing $|u|$.
\end{lemma}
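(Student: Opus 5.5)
The plan is to argue by contradiction using the balance constant through the discrepancy bound of \cref{lem:balance=>discrepancy}, combined with a maximal-run argument. Set $n=|u|$ and $M=2C+3$.

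First, I reduce to the case where $u$ is primitive. Write $u=v^k$ with $v$ primitive, so that $u^{M}=v^{kM}$. By \cref{lem:primitive_nonoverlap}, $v$ does not overlap itself. A standard consequence is that every occurrence of $u$ inside a word with period $|v|$ that is a power of $v$ starts at a multiple of $|v|$. This pins down the occurrences of $u$ inside long periodic stretches, which is all the counting below needs. The conclusion "period dividing $|u|$" is then obtained with period $|v|$, which divides $n$.

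Second, I extract a quantitative consequence. By \cref{lem:balance=>discrepancy}, the frequency $\mu=\mu_{\infw{x}}(u)$ exists and $\bigl||w|_u-\mu|w|\bigr|\le C$ for every factor $w$. Applying this to $w=u^{M}$, which contains at least $M$ occurrences of $u$, gives $\mu n\ge (M-C)/M=(C+3)/(2C+3)>1/2$. In the other direction, every factor of length $Mn$ contains at least $M-C=C+3$ occurrences of $u$.

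Third, I argue by contradiction: suppose $\infw{x}$ is not eventually periodic with period $|v|$. Then there are infinitely many positions $i$ with $x_i\neq x_{i+|v|}$, so every maximal stretch of $\infw{x}$ having period $|v|$ is finite. Consider such a maximal stretch $z$ that contains an occurrence of $u^{M}$. I compare two windows of the same length $\ell$, a multiple of $n$ chosen large.
\begin{itemize}
\item The first window lies inside periodic material. Its count is essentially $\ell/|v|$ times the number of occurrences of $u$ per period, and it can be made to exceed $\mu\ell$ by a definite amount using the copy of $u^{M}$.
\item The second window straddles a break of the period. There, the occurrences of $u$ that would have appeared at the aligned positions near the break are destroyed.
\end{itemize}
Each break costs at least one occurrence relative to a perfectly periodic count. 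Chaining $M$ consecutive aligned positions, the $2C+3$ copies give a deficit of more than $2C$ between a window fully inside $u^{M}$ and a window shifted by roughly half of $Mn$ across the break. This contradicts $C$-balancedness, via the bound $2C$ from item~(2) of \cref{lem:balance=>discrepancy}.

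The main obstacle is this last comparison. The content of $\infw{x}$ after the break is a priori arbitrary and could contain misaligned occurrences of $u$ that make up the deficit. To control this, I would first try the sliding-window function $f(i)=|x_i\cdots x_{i+\ell-1}|_u$. It changes by at most $1$ per step and must stay within an interval of width $C$. Following $f$ as the window slides from inside $u^{M}$ across the right end of the maximal run, the window loses about $M$ aligned occurrences. To keep $f$ within width $C$, it must regain at least $M-C>C+2$ occurrences in the unknown part. That unknown part has length at most about $Mn$, so the regained occurrences are at least $C+3$ occurrences of $u$ packed into that length. A pigeonhole argument, using that $u$ does not overlap itself, should then force two consecutive occurrences at distance exactly $|v|$ adjacent to the run. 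This would extend the maximal run by one more period, contradicting maximality.
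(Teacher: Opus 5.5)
\textbf{The gap is in your last step, and it cannot be filled.} When only $u$ is assumed $C$-balanced, the statement you are trying to prove is false, so no pigeonhole argument can force the maximal run to extend.

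Take $\infw{x}=(100000)^\omega$, $u=0$ and $C=1$. A window of length $6k+r$ with $0\le r<6$ contains $k$ or $k+1$ letters $1$, so $0$ is $1$-balanced. Also $0^5=u^{2C+3}$ occurs. Yet $\infw{x}$ is not eventually constant.

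The example also shows where your heuristics break down:
\begin{enumerate}
\item A break in the $|v|$-periodic structure destroys only the single aligned occurrence covering it. A window crossing the break therefore does not lose ``about $M$'' occurrences. Here any two windows of the same length differ by at most one zero, so a deficit exceeding $2C=2$ never arises.
\item The occurrences regained after the break can be packed at distance exactly $|v|$ while forming a new run, separated from the old one by the break. Nothing then contradicts maximality.
\end{enumerate}

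\textbf{What the paper's proof actually uses.} It needs the long power $u^{C+2}$, not merely $u$, to be $C$-balanced. This is hidden in the sentence asserting that every factor of length $|u^{2C+3}|$ contains two occurrences of $u^{C+2}$. That hypothesis is available in the application, \cref{factorBal-powerFree:main_prop}, where $\infw{x}$ is uniformly factor-balanced with constant $C$; it also justifies replacing $u$ by its primitive root.

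Under that hypothesis the argument is local and needs neither frequencies nor sliding windows. Take $u$ primitive and assume $\infw{x}$ is not eventually $|u|$-periodic.
\begin{enumerate}
\item The end of the $|u|$-periodic run through $u^{2C+3}$ yields a factor $w=u^{C+1}ts$ of length $(2C+3)|u|$, with $|t|=|u|$ and $t\neq u$.
\item Since $u^{2C+3}$ contains $C+2$ occurrences of $u^{C+2}$, $w$ contains two of them, at positions $i<j$.
\item A start at $k|u|$ with $k\le C$ would force $t=u$. Any other start below $C|u|$ would give a nontrivial occurrence of $u$ in $uu$. Hence $i>C|u|$.
\item Also $j\le |w|-|u^{C+2}|=(C+1)|u|$, so $0<j-i<|u|$.
\item The two overlapping copies of $u^{C+2}$ then give a nontrivial occurrence of $u$ in $uu$, contradicting \cref{lem:primitive_nonoverlap}.
\end{enumerate}

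The missing idea is therefore to count occurrences of $u^{C+2}$ rather than of $u$, with the hypothesis strengthened accordingly.
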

\begin{proof}
Without loss of generality, we may assume that $u$ is primitive.
We argue by contradiction and we assume that $\infw{x}$ is not eventually periodic of period dividing $|u|$.
Then, since $u^{2C+3} \in \cL(\infw{x})$, there exists a word $w \in \cL(\infw{x})$ of length $|u^{2C+3}|$ having $u^{C+1}$ as a prefix, but not $u^{C+2}$.
We decompose $w = u^{C+1} t s$, with $|t| = |u|$ and $|s| = (C+1)|u|$; note that $t \neq u$.
Now, $u^{2C+3}$ contains at least $C+2$ occurrences of $u^{C+2}$, so, since $u$ is $C$-balanced in $\infw{x}$, every factor of length $|u^{2C+3}|$ of $\infw{x}$ has at least two occurrences of $u^{C+2}$.
This applies in particular to $w$; let $i < j$ be two different positions at which $u^{C+2}$ occurs in $w$.

On the one hand, we notice that $i \not\in \{ 0,|u|,\dots,C|u|\}$ since, otherwise, $u^{C+2}$ would be a prefix of $w$, contradicting the choice of $w$.
This implies that if $i \leq C|u|$, then $u$ overlaps with itself, which by~\cref{lem:primitive_nonoverlap} is impossible since $u$ is primitive.
Therefore, $i > C|u|$.
On the other hand, we trivially have that $j \leq |w| - |u^{C+2}| = |s| = (C+1)|u|$.
We conclude that $C|u| < i < j \leq (C+1)|u|$.
Thus, the intervals $[i,i+|u^{C+2}|)$ and $[j,j+|u^{C+2}|)$ in $w$ at which $u^{C+2}$ occurs overlap in a window of size at least $|u|$, producing an overlap of $u$ with itself.
In view of~\cref{lem:primitive_nonoverlap}, this is a contradiction.
\end{proof}

Therefore, from the previous lemma, we prove the following two results about power avoidance.

\begin{corollary} \label{factorBal-powerFree:main_prop}
An aperiodic recurrent word that is uniformly factor-balanced  with constant $C$ is $(2C+3)$-power-free.
\end{corollary}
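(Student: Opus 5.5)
The corollary follows from \cref{factorBal-powerFree:technical} by contraposition. Let $\infw{x}$ be aperiodic, recurrent, and uniformly factor-balanced with constant $C$. Suppose for contradiction that $\infw{x}$ is not $(2C+3)$-power-free. Then there is a nonempty word $v$ with $v^{2C+3} \in \cL(\infw{x})$. In particular $v \in \cL(\infw{x})$, and by uniform factor-balancedness $v$ is $C$-balanced in $\infw{x}$. The hypotheses of \cref{factorBal-powerFree:technical} therefore hold with $u = v$, and the lemma gives that $\infw{x}$ is eventually periodic, with period dividing $|v|$.

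The remaining step, which is the only nontrivial one, is to upgrade eventual periodicity to full periodicity using recurrence. This is where the recurrence hypothesis is used. Write $\infw{x} = w z^\omega$ with $z$ nonempty and $|w|$ minimal. If $w \neq \varepsilon$, let $a$ be the last letter of $w$, so $w = w'a$. By minimality of $|w|$, the letter $a$ differs from the last letter $b$ of $z$; otherwise $\infw{x} = w'(bz'')^\omega$ with a shorter preperiod, where $z = z''b$.

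Now take $N$ large and consider the factor $f = a z^{N}$ occurring at position $|w|-1$. By recurrence, $f$ occurs again at some position $p > |w|$. That occurrence lies entirely inside the periodic part, which has period $|z|$. Since $p \ge |w|$, the letter at $p + |z|$ and the letter at $p$ agree, so the letter at $p$ equals the letter at position $p+|z|$ within $f$.

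On the other hand, inside $f$ the letter at offset $|z|$ is $b$, the last letter of the first copy of $z$, while the letter at offset $0$ is $a$. So the occurrence of $f$ at $p$ forces $a = b$, a contradiction. Hence $w = \varepsilon$ and $\infw{x}$ is periodic, contradicting aperiodicity. This concludes the proof.

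Since the statement already says ``aperiodic'', it may suffice to read ``aperiodic'' as ``not eventually periodic''. In that case the recurrence argument is not needed, though the argument above covers the stricter reading.
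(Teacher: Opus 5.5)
Your proposal is correct and follows the paper's proof: apply \cref{factorBal-powerFree:technical} to obtain eventual periodicity, then use recurrence to upgrade this to full periodicity, contradicting aperiodicity. The only difference is that you prove the step ``recurrent and eventually periodic implies periodic'' explicitly, via the minimal preperiod and the factor $a z^N$, whereas the paper asserts it without proof.
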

\begin{proof}
If $\infw{x}$ is not $(2C+3)$-power-free, then it is eventually periodic by~\cref{factorBal-powerFree:technical}.
Since $\infw{x}$ is recurrent, this implies that it is periodic, contradicting the hypothesis.
\end{proof}

\begin{lemma}
\label{lem:bound_occs_by_powerfree}
Let $\infw{x} \in \cA^\NN$ be $P$-power-free for an integer $P\ge 1$.
Then, $|v|_u \le \frac{P|v|}{|u|}$ for any $u,v \in \cL(\infw{x})$.
\end{lemma}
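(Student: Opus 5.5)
The idea is to show that two distinct occurrences of $u$ inside $v$ must start at least $|u|/P$ positions apart (roughly). Then $v$ can only contain about $P|v|/|u|$ occurrences of $u$. We use $P$-power-freeness together with the classical fact that close occurrences of a word create periodicity.

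\textbf{Step 1: close occurrences produce a period.} Let $i<j$ be two occurrences of $u$ in $v$ with $d=j-i<|u|$. Then $u$ has period $d$, and the factor of $v$ starting at position $i$ of length $|u|+d$ also has period $d$. Any word with period $d$ and length at least $Pd$ contains the factor $(u_1\cdots u_d)^P$. Since $v\in\cL(\infw{x})$ and $\infw{x}$ is $P$-power-free, this forces $|u|+d<Pd$, and in fact already $|u|<Pd$. So any two occurrences of $u$ in $v$ are at distance $d>|u|/P$.

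\textbf{Step 2: count.} The occurrences of $u$ in $v$ start at positions in $\{0,\dots,|v|-|u|\}$, and consecutive ones differ by more than $|u|/P$. If there are $m\ge1$ occurrences, then $(m-1)|u|/P<|v|-|u|$. This gives
\[ m<1+\frac{P(|v|-|u|)}{|u|}=\frac{P|v|}{|u|}-(P-1)\le \frac{P|v|}{|u|}, \]
using $P\ge1$. If $m=0$, the bound is trivial.

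\textbf{Degenerate case.} If $d\ge|u|$, the occurrences are already at least $|u|\ge|u|/P$ apart, so the same count applies. For $P=1$ the hypothesis says $\infw{x}$ avoids every $v^1$, which is impossible for a nonempty infinite word, so the statement is vacuous there.

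\textbf{Main obstacle.} The only delicate point is Step 1, turning two overlapping occurrences into an explicit $P$-th power with exact length bookkeeping. I would handle it via the standard fact that $u$ with period $d$ is a prefix of $(u_1\cdots u_d)^\omega$, so $u$ itself contains $(u_1\cdots u_d)^{\lfloor |u|/d\rfloor}$. This yields $\lfloor |u|/d\rfloor\le P-1$, hence $|u|/d<P$.
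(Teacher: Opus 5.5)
Your proof is correct and follows essentially the same route as the paper. The paper argues by contradiction, using pigeonhole to find two occurrences of $u$ at distance $\ell\le |u|/P$ whose overlap forces the length-$\ell$ prefix of $u$ to appear as a $(P+1)$-st power, whereas you prove the gap bound directly and then count; one small slip is that for $m=1$ (e.g.\ when $|v|=|u|$) the strict inequality $(m-1)|u|/P<|v|-|u|$ can fail, but the non-strict version yields the same conclusion.
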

\begin{proof}
Suppose, towards a contradiction, that $|v|_u > \frac{P|v|}{|u|}$ for some nonempty $u,v \in \cL(\infw{x})$.
By the Pigeonhole principle, there exist two occurrences of $u$ starting at positions $i$ and $i + \ell$ of $v$ such that $\ell \le |u|/P$.
These two occurrences overlap, which forces the length-$\ell$ prefix $t$ of $u$ to repeat at least $P+1$ times.
In other words, $t^{P+1}$ occurs at position $i$ of $v$, contradicting that $\infw{x}$ is $P$-power-free.
\end{proof}



\section{Factor-balancedness in linearly recurrent words}
\label{sec:UFB&LR}

The objective of this section is to characterize which linearly recurrent words are uniformly factor-balanced.
Our approach relies on an $\cS$-adic description of these words, due to Durand~\cite{Durand-1999} and introduced in~\cref{subsec:LR_as_Sadic}.
The specific substitutive structure of linearly recurrent words allows us to reduce the problem of studying factor-balancedness to that of letter-balancedness across the different levels of the structure.
The resulting characterization is given in~\cref{thm:char_balance_LR}, whose proof is presented in~\cref{subsec:charact_LR_balance}.
As an immediate corollary, we recover Adamczewski’s analogous result for substitutive words~\cite{Ada03,Ada04}, namely~\cref{pro: Adamczewski for primitive substitutive words}.

\subsection{Substitutive description of linearly recurrent words}
\label{subsec:LR_as_Sadic}

We recall the reader that, for a congenial sequence $\btau = (\tau_n \colon \cA_{n+1}^* \to \cA_n^*,\, a_n \in \cA_n)_{n \ge 0}$, the notation $\tau_{m,n}$, for $n > m \ge 0$, stands for the composition $\tau_m \circ \tau_{m+1} \circ \dots \circ \tau_{n-1}$.
Furthermore, we say that $\tau_{m,n}$ is {\em positive} if $b$ occurs in $\tau_{m,n}(a)$ for all $a \in \cA_n$ and $b \in \cA_m$.

The following classical theorem gives a concrete characterization of infinite words that are linearly recurrent in terms of a substitutive structure.
Recall that a substitution $\tau \colon \cA^* \to \cA^*$ is {\em left-proper} if there exists a letter $a$ such that each $\tau(b)$ begins with $a$.

\begin{theorem}[{\cite{Durand-1999}}]
    \label{thm:Sadic_char_LR}
    An infinite word $\infw{x}$ is linearly recurrent if and only if there exists a congenial sequence $\btau = (\tau_n,\, a_n)_{n \ge 0}$ generating $\infw{x}$ satisfying:
    \begin{enumerate}
    \symitem{$(\mathbf{P})$}{defi:L1}
    There exists $k \in \NN$ such that $\tau_{n,n+k}$ is positive for all $n \ge 0$;
    \symitem{$(\mathbf{F})$}{defi:L2}
    The set $\{\tau_n : n \ge 0\}$ is finite; and
   \symitem{$(\mathbf{L})$}{defi:L3}
   The substitution $\tau_n$ is left-proper for all $n \ge 0$.
\end{enumerate}
\end{theorem}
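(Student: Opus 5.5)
The statement is Durand's characterization, so I would prove the two directions separately: return words give the direct implication, and a standard "comparable lengths" argument gives the converse. The periodic case is trivial: take a constant sequence built from a primitive period. So I assume below that $\infw{x}$ is aperiodic and linearly recurrent with constant $L$, and freely use \cref{lem:Durand1999LR}.

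\emph{($\Rightarrow$).} I would build a sequence of prefixes $u_0, u_1, \dots$ of $\infw{x}$ whose lengths grow geometrically. Concretely, set $|u_{n+1}| = M|u_n|$ for a fixed large integer $M$ depending only on $L$. Let $\cA_n$ be a coding alphabet for $\mathcal{R}_{\infw{x}}(u_n)$; by \cref{lem:Durand1999LR}(3), $\mathrm{Card}(\cA_n) \le L(L+1)^2$. Since $u_n$ is a prefix of $u_{n+1}$, every return word to $u_{n+1}$ factors uniquely as a concatenation of return words to $u_n$. This defines $\tau_n \colon \cA_{n+1}^* \to \cA_n^*$, together with the derived words $\infw{x^{(n)}}$ satisfying $\infw{x^{(n)}} = \tau_n(\infw{x^{(n+1)}})$. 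The letters $a_n$ are the codes of the first return word, which gives congeniality.

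I would then check the three conditions in turn.
\begin{itemize}
\item \ref{defi:L3}: every return word to $u_{n+1}$ begins with $u_{n+1}$. Once $|u_{n+1}| \ge |u_n| + L|u_n|$, the prefix $u_{n+1}$ determines the first return word to $u_n$ it starts with. So every $\tau_n(b)$ begins with the same letter.
\item \ref{defi:L2}: by \cref{lem:Durand1999LR}(2), return words to $u_{n+1}$ have length between $|u_{n+1}|/L$ and $L|u_{n+1}|$, and return words to $u_n$ have length at least $|u_n|/L$. Hence $|\tau_n(b)| \le L^2 M$. With bounded alphabet sizes, only finitely many substitutions occur up to renaming letters.
\item \ref{defi:L1}: a return word to $u_{n+k}$ has length at least $|u_{n+k}|/L = M^k|u_n|/L$. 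For $M^k/L \ge (L+1)(L+1)$ this is long enough to contain every word $ru_n$ with $r \in \mathcal{R}_{\infw{x}}(u_n)$, since such words have length at most $(L+1)|u_n|$. Therefore $\tau_{n,n+k}$ is positive.
\end{itemize}

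\emph{($\Leftarrow$).} From \ref{defi:L1} and \ref{defi:L2}, there is a constant $K$ such that the quantities $\min_a|\tau_{0,n}(a)|$ and $\max_a|\tau_{0,n}(a)|$ are within a factor $K$ of each other, and each increases by at most a factor $K$ from $n$ to $n+1$. Given a factor $w$, I would pick the least $n$ with $\min_a|\tau_{0,n}(a)| \ge |w|$. Then $w$ occurs inside $\tau_{0,n}(bc)$ for some length-$2$ factor $bc$ of $\infw{x^{(n)}}$.

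The key step, which I expect to be the main obstacle, is to show that every length-$2$ factor of $\infw{x^{(n)}}$ already occurs inside $\tau_{n,n+k+1}(e)$ for every letter $e$. This is where \ref{defi:L3} enters. A length-$2$ factor $bc$ of $\infw{x^{(n)}} = \tau_n(\infw{x^{(n+1)}})$ either lies inside some $\tau_n(d)$, or it straddles a boundary. In the straddling case, $b$ is the last letter of some $\tau_n(d)$ and $c$ is the common first letter of all images $\tau_n(\cdot)$. Now positivity of $\tau_{n+1,n+k+1}$ puts $d$ followed by some further letter inside $\tau_{n+1,n+k+1}(e)$, except possibly when $d$ is the last letter of that image. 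I would handle that boundary case by going one more level up, so that $d$ occurs in a non-final position, at the cost of replacing $k$ by $k+1$.

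Consequently $w$ occurs in every $\tau_{0,n+k+1}(e)$. Consecutive occurrences of $w$ are then separated by at most $2\max_e|\tau_{0,n+k+1}(e)| \le 2K^{k+3}|w|$, which is linear recurrence.
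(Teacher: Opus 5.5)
The paper gives no proof of this statement; it quotes it from Durand. Your outline follows the standard argument from that work: derived sequences along prefixes of geometrically growing length for ($\Rightarrow$), and properness plus positivity for ($\Leftarrow$). Most of it is sound, but the boundary case in ($\Leftarrow$) is handled incorrectly.

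\textbf{The boundary case.} Going one level up does not force $d$ into a non-final position. We have $\tau_{n+1,n+k+2}(e)=\tau_{n+1,n+k+1}(\tau_{n+k+1}(e))$, and a left-proper $\tau_{n+k+1}$ may send $e$ to a single letter, in which case nothing is gained.

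Here is a concrete failure on $\{0,1\}$. Take the left-proper substitutions $\alpha\colon 0\mapsto 01,\,1\mapsto 0$, $\beta\colon 0\mapsto 0,\,1\mapsto 01$ and $\gamma\colon 0\mapsto 001,\,1\mapsto 01$, the periodic directive sequence $\alpha,\beta,\gamma,\alpha,\beta,\gamma,\dots$, and $a_n=0$.
\begin{itemize}
\item The products $\alpha\beta$, $\beta\gamma$, $\gamma\alpha$ are positive, so $(\mathbf{P})$ holds with $k=2$.
\item The generated word is aperiodic, since the incidence matrix of $\alpha\beta\gamma$ has characteristic polynomial $X^2-6X-1$.
\item At a level $n$ with $\tau_n=\alpha$, the word $00$ is a factor of $\infw{x^{(n)}}$. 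It straddles $\alpha(1)\alpha(\cdot)$, so $d=1$.
\item However, $\tau_{n,n+4}(1)=\alpha\beta\gamma\alpha(1)=\alpha\beta\gamma(0)=0101010$ contains no $00$.
\end{itemize}
So your claim fails with $k$ replaced by $k+1$.

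\textbf{The repair.} Go up $k$ levels instead of one.
\begin{itemize}
\item If $\infw{x}$ is periodic there is nothing to prove. Otherwise every $\cA_m$ has at least two letters, so positivity gives $|\tau_{m,m+k}(e)|\ge 2$.
\item Write $\tau_{n+1,n+2k+1}(e)=\tau_{n+1,n+k+1}(g_1g_2\cdots)$, where $g_1g_2\cdots=\tau_{n+k+1,n+2k+1}(e)$ has length at least $2$.
\item By positivity, $d$ occurs in $\tau_{n+1,n+k+1}(g_1)$, and this block is followed by the nonempty block $\tau_{n+1,n+k+1}(g_2)$.
\item With left-properness of $\tau_n$, every length-$2$ factor of $\infw{x^{(n)}}$ then occurs in $\tau_{n,n+2k+1}(e)$ for every $e$.
\end{itemize}
Your final estimate becomes $2K^{2k+3}|w|$, and the rest of the argument is unchanged.

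\textbf{Two smaller points in ($\Rightarrow$).}
\begin{itemize}
\item Your $\tau_n$ generate the derived sequence of $\infw{x}$ with respect to $u_0$, not $\infw{x}$ itself. You must prepend the decoding map sending each code to its return word to $u_0$. This map is left-proper, and positivity at level $0$ follows as in your argument.
\item A return word $w$ to $u_{n+1}$ need not begin with $u_{n+1}$, since it can be shorter. In the Fibonacci word $0100101001\cdots$, $01$ is a return word to $010$. What you actually need is that $wu_{n+1}$ begins with $u_{n+1}$ and that $u_n$ occurs at position $|w|$. Then the first return word $r$ to $u_n$ satisfies $|r|\le |w|$, and for $M\ge L+1$, $ru_n$ is a prefix of $u_{n+1}$. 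This gives left-properness.
\end{itemize}
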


The prototypical example of a congenial sequence $\btau$ as in~\cref{thm:Sadic_char_LR} corresponds to substitutive ones.
More precisely, suppose that $\tau \colon \cA^* \to \cA^*$ is a left-proper primitive substitution and $a \in \cA$.
Then, the constant sequence $\btau \coloneqq (\tau,\, a)_{n \ge 0}$ is congenial and satisfies the three conditions of~\cref{thm:Sadic_char_LR}.
This shows that any fixed point of a left-proper primitive substitution is linearly recurrent.
In fact, any fixed point of any primitive substitution (not necessarily proper) is linearly recurrent~\cite{Durand-1999}.
Therefore, linearly recurrent words generalize primitive substitutive words.
This generalization is vast, since there are only countably many substitutive words but uncountably many linearly recurrent words.
For instance, among Sturmian words, the substitutive ones correspond to slopes that are quadratic algebraic numbers, whereas the linearly recurrent ones correspond to slopes whose continued fraction coefficients are bounded (an uncountable set of zero Lebesgue measure), see~\cref{Sturmians:LR_iff_bounded_coeffs}.

We now collect several facts about linearly recurrent words and the substitutive structure in~\cref{thm:Sadic_char_LR}. Recall that, for a substitution $\tau \colon \cA^* \to \cB^*$, we use the notation $|\tau| = \max_{a\in\cA} |\tau(a)|$. 

\begin{proposition}[\cite{Durand_char_subst_words,Durand-1999}]
\label{prop:Sadic_props_for_LR}
Let $\infw{x}$ be a linearly recurrent word generated by the congenial sequence $\btau = (\tau_n,\, a_n)_{n \ge 0}$ and satisfying Conditions \ref{defi:L1} and \ref{defi:L2} of~\cref{thm:Sadic_char_LR}.
Then, there exists $K > 0 $ such that:
\begin{enumerate}
    \item We have $|\tau_n| \leq K$ and $|\cA_n| \leq K$ for every $n \geq 0$; and 
    \item We have $|\tau_{0,n}(a)| \leq K \cdot |\tau_{0,n}(b)|$ for all $a,b \in \cA_n$ and $n \ge 1$.
\end{enumerate}
\end{proposition}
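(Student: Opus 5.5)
Item (1) is immediate: set $K_1 = \max\{|\tau| : \tau \in \{\tau_n : n\ge 0\}\}$, which is finite by Condition \ref{defi:L2}. Each $\tau_n$ is a map $\cA_{n+1}^*\to\cA_n^*$, and only finitely many distinct substitutions occur, so only finitely many alphabets occur among the domains and codomains. Hence $\sup_n |\cA_n|$ is also finite.

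For item (2), I use positivity. Condition \ref{defi:L1} gives $k$ with $\tau_{n,n+k}$ positive for all $n$. Fix $n \ge 1$ and $a,b\in\cA_n$.

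\emph{Case $n \ge k$.} Write $\tau_{0,n} = \tau_{0,n-k}\circ\tau_{n-k,n}$. By positivity, every letter $c\in\cA_{n-k}$ occurs in $\tau_{n-k,n}(b)$, so
\[ |\tau_{0,n}(b)| \ge \max_{c\in\cA_{n-k}} |\tau_{0,n-k}(c)|. \]
On the other hand,
\[ |\tau_{0,n}(a)| \le |\tau_{n-k,n}(a)|\cdot \max_{c\in\cA_{n-k}}|\tau_{0,n-k}(c)| \le K_1^k \max_{c}|\tau_{0,n-k}(c)|, \]
since $|\tau_{n-k,n}| \le K_1^k$ by item (1). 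Combining the two gives $|\tau_{0,n}(a)|\le K_1^k|\tau_{0,n}(b)|$. When $n=k$, read $\tau_{0,0}$ as the identity.

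\emph{Case $1\le n<k$.} Here $|\tau_{0,n}(a)|\le K_1^k$. For the lower bound, $|\tau_{0,n}(b)|\ge 1$ provided substitutions are non-erasing. Non-erasingness follows from positivity: if some $\tau_m(c)$ were empty, then $\tau_{m,m+k}$ applied to any letter whose image contains $c$ would still be fine, but $\tau_{m-k+1,m+1}(c)$ would be empty (for $m\ge k-1$), contradicting positivity. For small $m$, I would simply assume non-erasingness as part of the standing convention; it is implicit in the definition of a congenial sequence generating an infinite word.

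Taking $K = \max(K_1^k, K_1, \sup_n|\cA_n|)$ then proves both items. The only delicate point is non-erasingness of the finitely many initial substitutions. I would handle it either by noting that erasing letters can be removed without changing $\infw{x}$, or by invoking the cited results of Durand directly.
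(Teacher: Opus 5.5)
Your argument is correct, given a non-erasing convention discussed below, but your route for Item~(2) differs from the paper's. The paper gets Item~(1) from finiteness exactly as you do, and for Item~(2) it simply cites Lemma~8 of Durand's paper on linearly recurrent subshifts. You instead prove Item~(2) directly. You write $\tau_{0,n}=\tau_{0,n-k}\circ\tau_{n-k,n}$ and compare both $|\tau_{0,n}(a)|$ and $|\tau_{0,n}(b)|$ with $\max_{c\in\cA_{n-k}}|\tau_{0,n-k}(c)|$. Positivity of $\tau_{n-k,n}$ gives the lower bound, and $|\tau_{n-k,n}|\le K_1^k$ gives the upper bound. This is self-contained and gives an explicit constant of the form $\max(K_1^k,\sup_n|\cA_n|)$. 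It also shows exactly where non-erasingness is used, which the citation hides.

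That point needs a cleaner treatment than in your write-up:
\begin{itemize}
\item Your case $n\ge k$ uses no non-erasingness at all: if the maximum is $0$, both sides vanish. So your derivation that $\tau_m$ is non-erasing for $m\ge k-1$ is not needed.
\item The case $1\le n<k$ needs $\tau_0,\dots,\tau_{k-2}$ to be non-erasing. This does not follow from \ref{defi:L1}, \ref{defi:L2}, or congeniality, which only forces $\tau_n(b_{n+1})\neq\varepsilon$.
\item For instance, take $\tau_0\colon 0\mapsto 0,\,1\mapsto 1,\,2\mapsto\varepsilon$ followed by the constant substitution $0\mapsto 012,\,1\mapsto 02,\,2\mapsto 01$. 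This satisfies \ref{defi:L1} with $k=3$ and \ref{defi:L2}. It generates a linearly recurrent word, since that word is the image of a primitive fixed point under the non-erasing map $0\mapsto 01,\,1\mapsto 0,\,2\mapsto 01$. Yet $|\tau_{0,1}(2)|=0<|\tau_{0,1}(0)|$, so Item~(2) fails at $n=1$.
\item Deleting erased letters changes $\btau$ and the alphabets $\cA_n$, so it does not prove the statement for the given sequence.
\end{itemize}
The correct reading is that non-erasingness is a standing hypothesis. It is implicit in Durand's setting and automatic under \ref{defi:L3}. With that hypothesis, your proof is complete.
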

\begin{proof}
The set $\{\tau_n : n \ge 0\}$ is finite by hypothesis, so there exists $K_1 > 0$ satisfying Item~(1).  
Moreover, Lemma~8 in~\cite{Durand-LR} provides a constant $K_2$ for which Item~(2) holds. 
Thus, $K = \max(K_1, K_2)$ satisfies the statement of the proposition. 
\end{proof}

\begin{lemma} \label{lem:LR_freqs_bound}
Let $\infw{x}$ be an aperiodic linearly recurrent word of constant $L$.
Then:
\begin{enumerate}
    \item The word $\infw{x}$ is $(L+1)$ power-free.
    \item For any $u,v \in \cL(\infw{x})$, $|v|_u \leq (L+1)\frac{|v|}{|u|}$.
    \item The frequency of every $u \in \cL(\infw{x})$ exists and satisfies $\mu_\infw{x}(u) \le \frac{L}{|u|}$.
\end{enumerate}
\end{lemma}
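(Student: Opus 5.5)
Item (1) comes directly from \cref{lem:Durand1999LR}(1): for an aperiodic linearly recurrent word with constant $L$, no word $u^{L+1}$ with $u$ nonempty is a factor. So $\infw{x}$ is $(L+1)$-power-free.

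Item (2) then follows from \cref{lem:bound_occs_by_powerfree} with $P = L+1$, which gives $|v|_u \le (L+1)|v|/|u|$ for all $u,v \in \cL(\infw{x})$. The case of empty $u$ is excluded, or handled by convention.

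For item (3), the main point is existence of the frequency. I plan to derive it from linear recurrence through the standard argument of Durand, rather than from balancedness, which we do not have. Fix $u \in \cL(\infw{x})$. Let $M_n$ and $m_n$ be the maximum and minimum of $|w|_u$ over $w \in \cL_n(\infw{x})$. I want to show that $M_n/n$ and $m_n/n$ converge to the same limit, uniformly in the starting position.

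The route I would try first uses return words. Take $w$ of length $N$, a large factor, so that every factor of length $(L+1)N$ contains every factor of length $N$. Then decompose $\infw{x}$ into return words to $w$. Each return word $r$ satisfies $|w|/L \le |r| \le L|w|$ by \cref{lem:Durand1999LR}(2), and there are at most $L(L+1)^2$ of them.

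An alternative is to use the $\cS$-adic structure of \cref{thm:Sadic_char_LR} together with \cref{prop:Sadic_props_for_LR}. Condition \ref{defi:L1} with bounded $k$ gives a positive composition $\tau_{n,n+k}$ from a finite set. A Birkhoff contraction (Hilbert metric) argument on the products of incidence matrices then shows that the normalized column vectors of $M_{\tau_{0,n}}$ converge, uniformly in the column, at an exponential rate. Applied to the block code of length $|u|$, this yields uniform frequencies of $u$.

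I expect this existence step to be the main obstacle. I would likely just cite Durand~\cite{Durand-1999}: linearly recurrent words are uniquely ergodic, so every factor has a uniform frequency.

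Once the frequency exists, the bound is immediate. By item (2), $|x_0\cdots x_{n-1}|_u/n \le (L+1)/|u|$, so $\mu_\infw{x}(u) \le (L+1)/|u|$. This falls short of the claimed $L/|u|$, so a sharper count is needed. Occurrences of $u$ in $\infw{x}$ are separated by return words to $u$, each of length at least $|u|/L$ by \cref{lem:Durand1999LR}(2). Hence a prefix of length $n$ contains at most $nL/|u| + 1$ occurrences. Dividing by $n$ and letting $n\to\infty$ gives $\mu_\infw{x}(u) \le L/|u|$.
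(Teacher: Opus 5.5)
Your proposal follows the paper's proof: items (1) and (2) are obtained exactly as there (Durand's theorem, then \cref{lem:bound_occs_by_powerfree} with $P=L+1$), and for item (3) the paper likewise just cites Durand (Proposition~13 of \cite{Durand-LR}) for both the existence of the frequency and the bound. Your one addition---deriving $\mu_{\infw{x}}(u)\le L/|u|$ from the fact that consecutive occurrences of $u$ are more than $|u|/L$ apart---is correct, and that separation even follows directly from item (1), since a return word $w$ to $u$ with $|w|\le |u|/L$ would make $w^{L+1}$ a prefix of $wu\in\cL(\infw{x})$.
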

\begin{proof}
Item (1) is contained in Theorem~24 of~\cite{Durand-1999}, and Item (3) in Proposition~13 of~\cite{Durand-LR}.  
Item (2) follows from Item (1) and~\cref{lem:bound_occs_by_powerfree}.
\end{proof}

\subsection{Decisiveness}
\label{sec:decisiveness}

We now introduce a concept that plays a crucial role in the proof of~\cref{thm:char_balance_LR} along with the properties that will be used in its proof.

\begin{definition}
\label{def: decisive substitutions}
Let $\infw{x} \in \cA^\NN$ be an infinite word and $k \in \NN$.
A substitution $\tau \colon \cA^* \to \cB^*$ is {\em $k$-decisive in $\infw{x}$} if there exists a map $r \colon \cA \to \cB^k$ such that $\tau(b)$ starts with $r(a)$ for all $a,b \in \cA$ with $ab \in \cL(\infw{x})$.
For $u \in \cB^*$ of length $|u| \le k + 1$ and $a\in\cA$, we define the notation $q_{\tau,\infw{x}}(u,a)$ as follows.
For $a \in \cA$, we let $r_{|u|}(a)$ denote the prefix of length $|u|-1$ of $r(a)$, and then set
\begin{equation}
	\label{def: decisive substitutions: vector q}
	q_{\tau,\infw{x}}(u,a) = |\tau(a) \, r_{|u|}(a)|_u.
\end{equation}
\end{definition}

We can rephrase decisiveness as follows: it is the property that if a letter $a$ appears in $\infw{x}$ at some position $i$, then this determines not only that the block $\tau(a)$ appears in $\tau(\infw{x})$ at the expected position $j \coloneqq |\tau(\infw{x}_{[0,i)})|$, but also that the extended block $\tau(a) r(a)$ that appears at position $j$.

Decisiveness can be seen a generalization of properness, as described next. Recall that a substitution $\tau \colon \cA^* \to \cA^*$ is left-proper if there exists a letter $a$ such that each $\tau(b)$, $b\in\cA$, begins with $a$. Note that in this case, $\tau$ has a unique fixed point, namely, $\tau^\omega(a)$.

\begin{proposition}
  Let $\tau \colon \cA^* \to \cB^*$ be a left-proper substitution, and  in $\infw{x}$ its fixed point. Then $\tau$ is $1$-decisive in $\infw{x}$. 
\end{proposition}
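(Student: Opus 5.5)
Left-properness gives a single letter $c\in\cB$ with which every image $\tau(b)$ begins. The natural choice for the map in \cref{def: decisive substitutions} is therefore the constant map $r\colon \cA \to \cB^1$, $r(a) = c$ for every $a \in \cA$.

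We then check the defining condition. Let $a,b \in \cA$ with $ab \in \cL(\infw{x})$. We need $\tau(b)$ to start with $r(a) = c$. This holds by the definition of left-properness, and the hypothesis $ab \in \cL(\infw{x})$ is not even used. For this we need $\tau(b)$ to be nonempty. Since $\infw{x}$ is the fixed point $\tau^\omega(c)$, the standing conventions for fixed points of substitutions (every $|\tau(b)| \ne 0$) give this, and left-properness itself already implies that each $\tau(b)$ has a first letter. So $\tau$ is $1$-decisive in $\infw{x}$ in the sense of \cref{def: decisive substitutions}.

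There is essentially no obstacle; the only subtlety is bookkeeping about alphabets. The statement writes $\tau\colon \cA^*\to\cB^*$, while a fixed point requires $\cB=\cA$. The argument above never uses $\cB=\cA$, so I will remark that the property holds for any infinite word $\infw{x}\in\cA^\NN$ whatsoever, in particular for the fixed point $\tau^\omega(c)$.

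As a sanity check, I will also record the resulting quantity $q_{\tau,\infw{x}}(u,a)$ from \eqref{def: decisive substitutions: vector q}:
\begin{itemize}
\item for $|u| = 1$, $r_{1}(a)$ is empty and $q_{\tau,\infw{x}}(u,a) = |\tau(a)|_u$;
\item for $|u|=2$, $r_{2}(a) = c$ and $q_{\tau,\infw{x}}(u,a) = |\tau(a)c|_u$.
\end{itemize}
The second case correctly counts the length-$2$ factors of $\tau(\infw{x})$ that begin inside the block $\tau(a)$, because the next block always begins with $c$.
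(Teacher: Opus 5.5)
Your proposal is correct and is essentially the paper's own argument. The paper also takes $r$ to be the constant map sending every letter to the common first letter of the images $\tau(b)$, so the decisiveness condition holds trivially; your extra remarks (nonemptiness of the images, the irrelevance of $\cB=\cA$ and of $\infw{x}$, and the computation of $q_{\tau,\infw{x}}$) are accurate but not needed.
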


\begin{proof}
    By setting $r(b) = a$ for $b \in \cA$, this shows that a left-proper substitution is always $1$-decisive in its unique fixed point.
\end{proof}

More generally, for any substitution $\tau \colon \cA^* \to \cA^*$, its length-2 sliding block code is $k$-decisive in any of its fixed points, with $k = \min\{|\tau(a)| : a \in \cA\} - 1$. Furthermore, if a substitution $\tau$ is $1$-decisive in a certain $\infw{x}$, then $\tau^n$ is $k_n$-decisive in $\infw{x}$, where $k_n = \min\{|\sigma^n(a)| : a \in \cA\}$.

One important property of decisive substitutions is that they allow us to compute the occurrences of a given short word $u$ in $\tau(\infw{x})$ using only the occurrences of letters in $\infw{x}$ and some data that depend on $u$, but not on the position where $u$ occurs. 
The next lemma makes this precise.

\begin{proposition}
\label{prop:formula_occurrences_decisive}
Let $\tau \colon \cA^* \to \cB^*$ be a $k$-decisive substitution in $\infw{x} \in \cA^\NN$.
Suppose that $w \in \cA^*$ occurs at position $i \in \NN$ in $\infw{x}$, and let $p = |\tau(\infw{x}_{[0,i + |w|)})|$ be the natural position at which $\tau(w)$ ends in $\tau(\infw{x})$. 
Then, for every $u \in \cL(\tau(\infw{x}))$ of length $|u| \le k + 1$, we have
\begin{equation}
    \label{eq:formula_occurrences_decisive}
    |\tau(w) \, \tau(\infw{x})_{[p, p + |u|)}|_u = 
    \sum_{a\in\cA} |w|_a \, q_{\tau,\infw{x}}(u,a).
\end{equation}
\end{proposition}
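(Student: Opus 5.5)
We prove \eqref{eq:formula_occurrences_decisive} by induction on $|w|$, after showing that the extended word on the left-hand side splits into blocks, one for each letter of $w$.

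First we identify the tail $\tau(\infw{x})_{[p,p+|u|)}$. Write $w = w_1\cdots w_m$ and let $b = x_{i+m}$ be the letter following $w$ in $\infw{x}$. Then $w_m b \in \cL(\infw{x})$, and $k$-decisiveness says that $\tau(b)$ begins with $r(w_m)$. More generally, $\tau(x_{i+m}x_{i+m+1}\cdots)$ begins with $r(w_m)$, since $r(w_m)$ has length $k$ and is a prefix of $\tau(b)$ already. We have $|u|-1 \le k$, so the length-$(|u|-1)$ word starting at position $p$ is $r_{|u|}(w_m)$. Here we use the length $|u|-1$ rather than $|u|$, and we reinterpret the left-hand side so that it counts occurrences of $u$ starting in $\tau(w)$. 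Concretely, the occurrences of $u$ in $\tau(w)\,r_{|u|}(w_m)$ are exactly those starting at positions inside $\tau(w)$. I would read the left-hand side as $|\tau(w)\, r_{|u|}(w_m)|_u$, which matches the definition of $q$; the intended $\tau(\infw{x})_{[p,p+|u|-1)}$ is presumably a harmless off-by-one.

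Next we decompose. The occurrences of $u$ starting within $\tau(w)$ are partitioned according to which block $\tau(w_j)$ contains the starting position. Fix $j$. The word of $\tau(\infw{x})$ that starts at the beginning of $\tau(w_j)$ and has length $|\tau(w_j)|+|u|-1$ is $\tau(w_j)$ followed by the first $|u|-1$ letters of $\tau(w_{j+1}w_{j+2}\cdots)$. For $j<m$ the pair $w_jw_{j+1}$ lies in $\cL(\infw{x})$, and for $j=m$ the pair $w_m b$ does. In both cases decisiveness gives that these $|u|-1$ letters form $r_{|u|}(w_j)$. This requires $|\tau(w_{j+1})|\ge k$; if some image is shorter than $k$, decisiveness forces $r(a)$ to be a prefix of the concatenation of subsequent images anyway. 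I expect this to be the main subtle point: the definition as stated only says that $\tau(b)$ starts with $r(a)$, which implicitly forces $|\tau(b)|\ge k$ whenever some $ab$ occurs, so the point is harmless.

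Finally we count and sum. The number of occurrences of $u$ starting inside $\tau(w_j)$ therefore equals $|\tau(w_j)\,r_{|u|}(w_j)|_u = q_{\tau,\infw{x}}(u,w_j)$. An occurrence counted in $|\tau(w_j)r_{|u|}(w_j)|_u$ has length $|u|$ and starts in $\tau(w_j)$, so it is contained in that window. Summing over $j$ and grouping the indices by letter gives $\sum_{a}|w|_a\,q_{\tau,\infw{x}}(u,a)$. Formally this is the induction on $m$, peeling off the first block, using $|\tau(w_1)\,z|_u = |\tau(w_1)\,r_{|u|}(w_1)|_u + |z|_u$ where $z$ is the remainder, because $z$ begins with $r_{|u|}(w_1)$.
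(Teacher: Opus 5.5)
Your proof is correct and follows the paper's argument. Both split the occurrences of $u$ that start inside $\tau(w)$ according to the block $\tau(x_j)$ containing the start, use decisiveness on $x_jx_{j+1}\in\cL(\infw{x})$ to identify the next $|u|-1$ letters as $r_{|u|}(x_j)$, and sum $q_{\tau,\infw{x}}(u,x_j)$ over $j$. Your off-by-one reading is also right. With a tail of length $|u|$ the left-hand side exceeds the right-hand side by one whenever $u$ occurs at position $p$, and the paper itself later uses the identity with a tail of length $|u|-1$ in the proof of \cref{lem:bound_freqs_convergence}.
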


\begin{proof}
We need to count occurrences of $u$ whose starting position lies in the interval $[p-|\tau(w)|, p + |u|)$ of $\tau(\infw{x})$. For each $j \in \{i,i+1,\dots,i+|w|-1\}$, consider $\tau(x_j x_{j+1}) = \tau(x_j) \tau(x_{j+1})$.  
Since $\tau$ is $k$-decisive and $|u| \le k+1$, the first $|u|-1$ letters of $\tau(x_{j+1})$ are the fixed word $r_{|u|}(x_j)$, independent of $x_{j+1}$.  
Hence the number of occurrences of $u$ that start in $\tau(x_j)$ (possibly crossing into $\tau(x_{j+1})$) equals
\[
\bigl|\tau(x_j)\, r_{|u|}(x_j)\bigr|_u = q_{\tau,\infw{x}}(u,x_j).
\]
These sets of starting positions when $j$ varies in $\{i,i+1,\ldots,i+|w|-1\}$ are disjoint and together cover exactly the starts of $u$ in the interval $[p-|\tau(w)|, p + |u|)$.
Therefore,
\[
|\tau(w) \, \tau(\infw{x})_{[p, p + |u|)}|_u = 
\sum_{i \leq j < i + |w|} q_{\tau,\infw{x}}(u,x_j) =
\sum_{a \in \cA} |w|_a \, q_{\tau,\infw{x}}(u,a)
\]
since $w = x_i x_{i+1} \cdots x_{i+|w|-1}$.
\end{proof}

We record here the following simple observation, which will be used later.

\begin{lemma}
    \label{lem:composition_decisive}:
    Let $\tau \colon \cA^* \to \cB^*$ and $\sigma \colon \cB^* \to \mathcal{C}^*$ be substitutions.
    Suppose that $\tau$ is $k$-decisive in a word $\infw{x} \in \cA^\NN$.
    Then, $\sigma \circ \tau$ is $\ell$-decisive in $\infw{x}$, where $\ell = k \cdot \min\{|\sigma(b)| : b \in \cA\}$.
\end{lemma}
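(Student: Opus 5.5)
The plan is to build the new map $r'$ directly from the map $r$ given by the $k$-decisiveness of $\tau$. Let $r \colon \cA \to \cB^k$ be such that $\tau(b)$ starts with $r(a)$ whenever $ab \in \cL(\infw{x})$. Set $m = \min\{|\sigma(b)| : b \in \cB\}$; the minimum must range over $\cB$, the domain of $\sigma$, and I read the statement this way. Then $\ell = km$. For each $a \in \cA$, I would define $r'(a) \in \mathcal{C}^\ell$ to be the prefix of length $\ell$ of $\sigma(r(a))$.

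The first step is to check that this prefix exists. Write $r(a) = c_1 \cdots c_k$ with $c_i \in \cB$. Since $\sigma$ is a morphism, $\sigma(r(a)) = \sigma(c_1)\cdots\sigma(c_k)$. Each factor has length at least $m$, so $|\sigma(r(a))| \ge km = \ell$. The degenerate cases $k=0$ or $m=0$ give $\ell = 0$, and there $r'(a)$ is the empty word, which needs nothing.

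The second step is the decisiveness property itself. Take $a,b \in \cA$ with $ab \in \cL(\infw{x})$. By hypothesis $\tau(b) = r(a)\,s$ for some $s \in \cB^*$. Applying $\sigma$ gives $\sigma\circ\tau(b) = \sigma(r(a))\,\sigma(s)$. So $\sigma\circ\tau(b)$ begins with $\sigma(r(a))$, and hence with its prefix $r'(a)$. Therefore $r'$ witnesses that $\sigma\circ\tau$ is $\ell$-decisive in $\infw{x}$.

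I do not expect a real obstacle in this argument. The only point needing care is the length bound $|\sigma(r(a))| \ge \ell$, which uses that the minimum is taken over the letters of $\cB$, since $r(a) \in \cB^k$.
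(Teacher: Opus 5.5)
Your proof is correct and is the natural argument: taking $r'(a)$ to be the length-$\ell$ prefix of $\sigma(r(a))$ works because $\sigma(\tau(b))$ begins with $\sigma(r(a))$, which has length at least $k \cdot \min\{|\sigma(c)| : c \in \cB\}$. The paper records this lemma as a ``simple observation'' without proof. Your reading of the minimum as ranging over $\cB$ rather than $\cA$ is the right one, and it matches how the lemma is applied in \cref{lem:find_representative}, where the bound is $\min\{|\tau_{0,n-1}(a)| : a \in \cA_{n-1}\}$.
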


Recall that, for a substitution $\tau \colon \cA^* \to \cB^*$, we use the notation $\|\tau\| = \sum_{a\in\cA} |\tau(a)|$. 

\begin{proposition}
\label{lem:bound_freqs_convergence}
Let $\infw{x} \in \cA^\NN$ be $B$-letter-balanced, with letter frequencies $\nu_a$ for $a \in \cA$.
Suppose that $\tau \colon \cA^* \to \cB^*$ is $k$-decisive in $\infw{x}$, such that $\tau(\infw{x})$ is $P$-power-free.
Fix $u \in \cL(\tau(\infw{x}))$ of length $|u| \leq k + 1$ and define, using the notation defined in \eqref{def: decisive substitutions: vector q},
\begin{equation}
\label{eq:def_freq_u:lem:bound_freqs_convergence}
	\mu_u = \frac{\sum_{a\in\cA} \nu_a \, q_{\tau,\infw{x}}(u,a)}{\sum_{a\in\cA} \nu_a \, |\tau(a)|}.
\end{equation}
Then, for any $w \in \cL(\infw{x})$,
\begin{equation}
	\label{eq:statement:lem:bound_freqs_convergence}
	\bigl|	|\tau(w)|_u - \mu_u |\tau(w)| \bigr| 
	\le \frac{2PB\|\tau\|}{|u|} + 2P.
\end{equation}
In particular, the frequency of $u$ in $\tau(\infw{x})$ exists and equals $\mu_u$.
\end{proposition}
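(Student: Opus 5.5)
The plan is to compare $|\tau(w)|_u$ with the linear expression $\sum_{a} |w|_a \, q_{\tau,\infw{x}}(u,a)$ given by \cref{prop:formula_occurrences_decisive}, and then to compare that expression with $\mu_u|\tau(w)|$ using letter-balancedness of $\infw{x}$. The two error sources produce the additive term $2P$ and the term $2PB\|\tau\|/|u|$, respectively.

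\emph{Step 1 (boundary error).} Fix an occurrence of $w$ at some position $i$ of $\infw{x}$, and let $p$ be as in \cref{prop:formula_occurrences_decisive}. That proposition gives
\[
|\tau(w)\,\tau(\infw{x})_{[p,p+|u|)}|_u = \sum_{a\in\cA}|w|_a\, q_{\tau,\infw{x}}(u,a).
\]
The left-hand side counts the occurrences of $u$ starting in $\tau(w)$ or in the appended window. It differs from $|\tau(w)|_u$ only by occurrences of $u$ that start in the last $|u|-1$ positions of $\tau(w)$ or in the appended window. All of these lie inside a factor of $\tau(\infw{x})$ of length less than $3|u|$, so by \cref{lem:bound_occs_by_powerfree} their number is bounded by a constant multiple of $P$. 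A slightly careful count of starting positions in a window of length $2|u|-1$ should give
\[
0\le \sum_a |w|_a q_{\tau,\infw{x}}(u,a) - |\tau(w)|_u \le 2P.
\]

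\emph{Step 2 (letter discrepancy).} Write $|w|_a = \nu_a|w| + \delta_a$. By \cref{lem:balance=>discrepancy}, $B$-letter-balancedness gives $|\delta_a|\le B$. Using $|\tau(w)| = \sum_a |w|_a|\tau(a)|$ and the definition of $\mu_u$, the terms proportional to $|w|$ cancel, leaving
\[
\sum_a |w|_a q_{\tau,\infw{x}}(u,a) - \mu_u|\tau(w)| = \sum_a \delta_a\bigl(q_{\tau,\infw{x}}(u,a) - \mu_u|\tau(a)|\bigr).
\]
Both $q_{\tau,\infw{x}}(u,a)$ and $\mu_u|\tau(a)|$ are nonnegative, so each bracket is bounded in absolute value by the larger of the two.

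For the first, $q_{\tau,\infw{x}}(u,a)$ counts starting positions of $u$ inside a segment of length $|\tau(a)|$, and distinct starts are more than $|u|/P$ apart by the power-freeness argument of \cref{lem:bound_occs_by_powerfree}. Decisiveness forces $k\le |\tau(b)|$ whenever $r$ is defined, hence $|u|\le |\tau(a)|+1$. Together these give $q_{\tau,\infw{x}}(u,a)\le 2P|\tau(a)|/|u|$.

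For the second, $\mu_u \le P/|u|$. This follows either directly from the frequency statement proved below combined with \cref{lem:bound_occs_by_powerfree}, or by bounding $\sum_a\nu_a q$ against $\sum_a \nu_a|\tau(a)|$ using the same per-letter estimate.

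Summing over $a$ yields the bound $2PB\|\tau\|/|u|$. Adding Step 1 gives \eqref{eq:statement:lem:bound_freqs_convergence}.

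\emph{Step 3 (frequency).} Any factor $v$ of $\tau(\infw{x})$ can be written as $v = s\,\tau(w)\,t$ with $w\in\cL(\infw{x})$ and $|s|,|t|\le|\tau|$. The bound \eqref{eq:statement:lem:bound_freqs_convergence} then shows that $\bigl||v|_u-\mu_u|v|\bigr|$ is bounded independently of $v$. Dividing by $|v|$ and letting $|v|\to\infty$ shows that the uniform frequency of $u$ exists and equals $\mu_u$.

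The main obstacle is getting the exact constants: the $+2P$ in Step 1, and the factor $2$ in $q\le 2P|\tau(a)|/|u|$, which needs the relation $|u|\le k+1\le |\tau(a)|+1$ to absorb the ceiling in the pigeonhole count. If the constants do not come out exactly, the structure of the argument is unchanged and only the numerical constant is affected.
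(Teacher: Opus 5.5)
Your proposal is correct and is essentially the paper's proof: the boundary error between $|\tau(w)|_u$ and $\sum_a|w|_a\,q_{\tau,\infw{x}}(u,a)$ is controlled via \cref{prop:formula_occurrences_decisive} and power-freeness (giving $2P$), and writing $|w|_a=\nu_a|w|+\delta_a$ with $|\delta_a|\le B$ makes the main terms cancel by the definition of $\mu_u$. The only difference is bookkeeping. You bound $\sum_a\delta_a\bigl(q_{\tau,\infw{x}}(u,a)-\mu_u|\tau(a)|\bigr)$ termwise by the maximum of two nonnegative numbers, so your weaker but more carefully justified estimate $q_{\tau,\infw{x}}(u,a)\le 2P|\tau(a)|/|u|$, together with $\mu_u\le 2P/|u|$, still yields $2PB\|\tau\|/|u|$; here $\mu_u\le 2P/|u|$ is what your per-letter route gives, and it avoids the circularity of deducing $\mu_u\le P/|u|$ from the frequency statement you prove afterwards. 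The paper instead splits the two contributions by the triangle inequality and uses $q_{\tau,\infw{x}}(u,a)\le P|\tau(a)|/|u|$ and $\mu_u\le P/|u|$.
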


\begin{proof}
Let $q_{\tau,\infw{x}}(u,w) \coloneqq \sum_{a\in\cA} |w|_a \, q_{\tau,\infw{x}}(u,a)$ be the quantity appearing in \eqref{eq:formula_occurrences_decisive}. 
By~\cref{prop:formula_occurrences_decisive}, $q_{\tau,\infw{x}}(u,w) = |\tau(w) v|_u$ for certain word $v$ of length $|u|-1$.
Using~\cref{lem:bound_occs_by_powerfree}, we can bound $|q_{\tau,\infw{x}}(u,w) - |\tau(w)|_u| \le 2P$.
Therefore, it is enough to prove that $\big| q_{\tau,\infw{x}}(u,w) - \mu_u |\tau(w)|\big|$ is also bounded above by $2PB\|\tau\|/|u|$.

For $a \in \cA$, let $\varepsilon_a = |w|_a - \nu_a |w|$, and note that $|\varepsilon_a| \le B$ by~\cref{lem:balance=>discrepancy}.
We can compute 
\begin{equation}
    \label{eq:1:lem:bound_freqs_convergence}
    q_{\tau,\infw{x}}(u,w) =
	\sum_{a \in \cA} |w|_a \, q_{\tau,\infw{x}}(u,a) = 
	|w|\sum_{a\in\cA} \nu_a\,q_{\tau,\infw{x}}(u,a) +
	\sum_{a\in\cA} \varepsilon_a\,q_{\tau,\infw{x}}(u,a).
\end{equation}
The second term in the right-hand side is small:
From~\cref{lem:bound_occs_by_powerfree} we have $q_{\tau,\infw{x}}(u,a) \le P|\tau(a)|/|u|$, so, since $|\varepsilon_a| \leq B$,
\begin{equation}
    \label{eq:2:lem:bound_freqs_convergence}
    \sum_{a\in\cA} \varepsilon_a\,q_{\tau,\infw{x}}(u,a) \le
	B \cdot \frac{P\|\tau\|}{|u|}.
\end{equation}
Next, we show that the first term in the right side of \eqref{eq:1:lem:bound_freqs_convergence} is close to $\mu_u\, |\tau(w)|$.
Using the definition of $\varepsilon_a$, we can write
\[	|\tau(w)| = 
	\sum_{a\in\cA} |w|_a \, |\tau(a)| =
	|w| \sum_{a\in\cA} \nu_a \, |\tau(a)| +
	\sum_{a\in\cA} \varepsilon_a\,|\tau(a)|.	\]
From this and the definition of $\mu_u$ in \eqref{eq:def_freq_u:lem:bound_freqs_convergence}, we obtain
\[	\Big|
	|w|\sum_{a\in\cA} \nu_a\,q_{\tau,\infw{x}}(u,a)
    - \mu_u|\tau(w)|
	\Big| = 
	\mu_u \Big|\sum_{a\in\cA} \varepsilon_a \, |\tau(a)| \Big|.	\]
Note that $q_{\tau,\infw{x}}(u,a) \leq P|\tau(a)|/|u|$ by~\cref{lem:bound_occs_by_powerfree}, so $\mu_u \le P/|u|$.
This and $|\varepsilon_a| \leq B$ give the estimate:
\begin{equation}
    \label{eq:3:lem:bound_freqs_convergence}
    \Bigl|
	|w|\sum_{a\in\cA} \nu_a\,q_{\tau,\infw{x}}(u,a) -
	\mu_u|\tau(w)| \Bigr| \leq 
	\frac{P}{|u|} \cdot B \cdot \|\tau\|.
\end{equation}
Finally, we can bound $\big| q_{\tau,\infw{x}}(u,w) - \mu_u|\tau(w)| \big|$ by first replacing $q_{\tau,\infw{x}}(u,w)$ using \eqref{eq:1:lem:bound_freqs_convergence} and then estimating the remaining terms with \eqref{eq:2:lem:bound_freqs_convergence} and \eqref{eq:3:lem:bound_freqs_convergence}, i.e.,
\begin{equation*}
    \big| q_{\tau,\infw{x}}(u,w) - \mu_u|\tau(w)| \big| = 
    \Big| |w|\sum_{a\in\cA} \nu_a q_{\tau,\infw{x}}(u,a) - \mu_u|\tau(w)| +
    \sum_{a\in\cA} \varepsilon_a q_{\tau,\infw{x}}(u,a) \Big| 
    \leq \frac{2BP\|\tau\|}{|u|},
\end{equation*}
as desired.
\end{proof}

\subsection{Uniformly factor-balancedness for linearly recurrent words}
\label{subsec:charact_LR_balance}

The main result of this section is~\cref{thm:char_balance_LR}, which we state again for clarity and where we provide conditions under which a linearly recurrent word is (uniformly) factor-balanced.
The criterion is based on a congenial sequence generating the word that satisfies conditions as those in~\cref{thm:Sadic_char_LR}.
It is however convenient to consider the following weakening of Condition \ref{defi:L3}.
This is crucial for treating the substitutive case later.
Let $\btau = (\tau_n,\,a_n)_{n \ge 0}$ be a congenial sequence, and let $\infw{x^{(n)}}$ denote the limit of $(\tau_{n,m}(a_m) )_{m > n}$.
Then, $\btau$ satisfies \ref{defi:L3bis} if
\begin{enumerate}
    \symitem{$(\mathbf{D})$}{defi:L3bis}
    the substitution $\tau_n$ is decisive in $\infw{x^{(n+1)}}$ for every $n \ge 0$.
\end{enumerate}

\maintwo*

Before giving the proof of \cref{thm:char_balance_LR}, we note that a particular case of this result is Adamczewski's result, namely~\cref{pro: Adamczewski for primitive substitutive words} stated in~\cref{sec: our contributions}.

\begin{proof}[Proof of~\cref{pro: Adamczewski for primitive substitutive words}]
As the necessary condition is trivial, we only prove the sufficient condition.
Assume that every length-$2$ factor of $\infw{x}$ is letter-balanced.
Let $\tau \colon \cA^* \to \cA^*$ be a primitive substitution such that $\infw{x}=(x_n)_{n\ge 0}$ is one of its fixed points.
Consider the length-$2$ sliding block code $\tau^{[2]}$ of $\tau$, set $\bar{a} = [x_0 x_1] \in \cA^{[2]}$, and let $\pi \colon \cA^{[2]} \to \cA$ be the map that decodes the $2$-block code back into letters of $\cA$.  
Then, $\tau^{[2]}$ is prolongable on $\bar{a}$, and the generated fixed point $\bar{\infw{x}}$ satisfies $\pi(\bar{\infw{x}}) = \infw{x}$. 
The assumption that every length-$2$ factor of $\infw{x}$ is balanced implies that $\bar{\infw{x}}$ is $C$-letter-balanced for some $C$.

We now define $\tau_n = \tau^{[2]}$ and $a_n = \bar{a}$ for all $n \geq 0$, and let $\btau = (\tau_n,\,a_n)_{n \ge 0}$.  
Then, $\btau$ is congenial and generates $\bar{\infw{x}}$. 
Let us verify that $\btau$ satisfies the hypotheses of~\cref{thm:char_balance_LR}.
First, Conditions \ref{defi:L1} and \ref{defi:L2} of~\cref{thm:Sadic_char_LR} both hold, since $\btau$ is constant (each $\tau_n$ equals the primitive substitution $\tau^{[2]}$).
For \ref{defi:L3bis} now, note that since $\btau$ is constant, all limit points $\bar{\infw{x}}^{\infw{(n)}}$  of $\btau$ coincide with $\bar{\infw{x}}$.
So, as sliding block codes are decisive, each $\tau_{n} = \tau^{[2]}$ is decisive in $\bar{\infw{x}}^{\infw{(n+1)}} = \bar{\infw{x}}$.
Thus, as $\bar{\infw{x}}^{\infw{(n)}}=\bar{\infw{x}}$ for all $n\ge 0$, each $\bar{\infw{x}}^{\infw{(n)}}$ is thus $C$-letter-balanced.

As shown in the previous paragraph, the hypotheses of~\cref{thm:char_balance_LR} are satisfied, and hence $\bar{\infw{x}}$ is uniformly factor-balanced.  
Finally, by~\cref{balance_inherited_by_block_codes} and the fact that $\pi$ is an invertible block code, we conclude that $\infw{x}$ is uniformly factor-balanced as well.
\end{proof}

We now present the proof of~\cref{thm:char_balance_LR}.
We begin with a technical lemma stating that, in substitutive structures such as those in~\cref{thm:char_balance_LR}, for any word $u$ one can find a suitable level $n$ of the structure whose scale $|\tau_{0,n}|$ is comparable to the length of the word $u$.
This is an essential step in efficiently controlling the bound given by~\cref{lem:bound_freqs_convergence}.

\begin{lemma}
    \label{lem:find_representative}
    Let $\boldsymbol{\tau} = (\tau_n \colon \cA_{n+1}^* \to \cA_n,\, a_n \in \cA_n)_{n \ge 0}$ be a congenial sequence satisfying Items (1) and (2) of~\cref{prop:Sadic_props_for_LR} with constant $K > 0$, as well as Condition \ref{defi:L3bis}.
    Let $\infw{x^{(m)}}$ denote the limit of $(\tau_{m,n},a_n)_{n > m}$.
    Then, for every $u \in \cL(\infw{x^{(0)}})$, there exists $n \ge 1$ such that
    \begin{enumerate}[label=(\roman*)]
        \item The substitution $\tau_{0,n}$ is $|u|$-decisive in $\infw{x^{(n)}}$; and
        \item We have $|\tau_{0,n}(a)| \le K^3|u|$ for all $a \in \cA_n$.
    \end{enumerate}
    In particular, $|u|\leq |\tau_{0,n}(a)| \leq K^3|u|$ for all $a \in \cA_n$.
\end{lemma}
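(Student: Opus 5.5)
We choose $n$ as the first level at which the images $\tau_{0,n}(a)$ become at least as long as $u$. Condition \ref{defi:L3bis} makes each $\tau_m$ $1$-decisive (at least) in $\infw{x^{(m+1)}}$. Since decisiveness with constant $k$ implies decisiveness with any smaller constant, we may assume $k_m \ge 1$ for every $m$.

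Let $\ell_n = \min_{a\in\cA_n}|\tau_{0,n}(a)|$. The sequence $(\ell_n)$ should tend to infinity. This follows from \ref{defi:L1} as used in \cref{prop:Sadic_props_for_LR}, or directly from Item~(2) of \cref{prop:Sadic_props_for_LR}: since $\infw{x^{(0)}}$ is infinite, $\max_a|\tau_{0,n}(a)|\to\infty$, and Item~(2) transfers this to the minimum. Assume $|u|\ge 1$ and let $n\ge1$ be the least index with $\ell_n \ge |u|$. The case $|u|$ very small, e.g.\ $\ell_1\ge |u|$, is handled with $n=1$, and $K^3\ge K\ge|\tau_0|$ gives (ii) directly.

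For (ii), take $n\ge2$. By minimality, $\ell_{n-1} < |u|$, so some $b\in\cA_{n-1}$ satisfies $|\tau_{0,n-1}(b)|<|u|$. Item~(2) then gives $|\tau_{0,n-1}(c)|\le K|u|$ for every $c\in\cA_{n-1}$. For $a\in\cA_n$, write $\tau_{0,n}(a)=\tau_{0,n-1}(\tau_{n-1}(a))$. Since $|\tau_{n-1}(a)|\le K$ by Item~(1), we get $|\tau_{0,n}(a)|\le K\cdot K|u|\le K^3|u|$, using $K\ge1$. The exponent $3$ leaves room for an extra factor of $K$ if Item~(2) is instead applied at level $n$.

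For (i), apply \cref{lem:composition_decisive} to $\tau=\tau_{n-1}$, which is $k$-decisive in $\infw{x^{(n)}}$ with $k\ge1$, and $\sigma=\tau_{0,n-1}$. This makes $\tau_{0,n}$ $\ell$-decisive with $\ell = k\cdot\min_{c\in\cA_{n-1}}|\tau_{0,n-1}(c)|$. Concretely, the prefix of length $k\ge1$ of $\tau_{n-1}(b)$ is $r(a)$ whenever $ab\in\cL(\infw{x^{(n)}})$, so $\tau_{0,n}(b)$ begins with $\tau_{0,n-1}(r(a))$. That word has length at least $\ell_{n-1}$, which is generally smaller than $|u|$.

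This is the main obstacle: we need $\ell_{n-1}\ge|u|$ for decisiveness, but the minimality of $n$ gives $\ell_{n-1}<|u|$, so the choices in (i) and (ii) conflict. The fix is to choose $n$ one level higher. Let $m$ be the least index with $\ell_m\ge|u|$ and set $n=m+1$. Then $\tau_{0,n}=\tau_{0,m}\circ\tau_m$, and \cref{lem:composition_decisive} gives decisiveness of order at least $\ell_m\ge|u|$, proving (i). For (ii), the bound for level $m$ from the previous paragraph is $\max_{a\in\cA_m}|\tau_{0,m}(a)|\le K^2|u|$. One more application of Item~(1) gives $|\tau_{0,n}(a)|\le K^3|u|$, which is exactly the constant $K^3$ in the statement.

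Finally, decisiveness of order $|u|$ forces $|\tau_{0,n}(a)|\ge |u|$ for every $a\in\cA_n$ that has a successor in $\infw{x^{(n)}}$, which is every letter occurring in that word. Alternatively, this lower bound follows from $\ell_n\ge\ell_m\ge|u|$, since each $\tau_m(a)$ is nonempty. This yields the last assertion.
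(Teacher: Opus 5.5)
Your final argument, with $n=m+1$, is correct and is essentially the paper's proof run in the dual direction. The paper takes the largest $n$ with $\max_a|\tau_{0,n}(a)|\le K^3|u|$ and derives $\min_a|\tau_{0,n-1}(a)|\ge|u|$ from maximality; you take the least $m$ with $\min_a|\tau_{0,m}(a)|\ge|u|$ and derive the $K^3$ bound from minimality. Both use \cref{lem:composition_decisive} on $\tau_{0,n-1}\circ\tau_{n-1}$ and the same two-level estimate from Items (1) and (2), and you were right to discard your first choice of $n$.
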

\begin{proof}
    Let $u \in \cL(\infw{x^{(0)}})$.
    Since $|\tau_{0,n}(a_n)| \to \infty$ as $n \to \infty$, there exists a largest $n \ge 1$ such that $|\tau_{0,n}(a)| \le K^3|u|$ for all $a \in \cA_n$.
    Then, $n$ satisfies Item (ii).

    We show that $n$ also satisfies Item (i).
    By Condition \ref{defi:L3bis}, the fact that $\tau_{m-1}$ is decisive in $\infw{x}^{(m)}$ for every $m \ge 1$ implies, by~\cref{lem:composition_decisive}, that $\tau_{0,n} = \tau_{0,n-1} \circ \tau_{n-1}$ is $\ell_n$-decisive in $\infw{x^{(n)}}$, where $\ell_n = \min\{|\tau_{0,n-1}(a)| : a \in \cA_{n-1}\}$.
    So, to obtain Item (i), it suffices to prove that $|u| \le \ell_n$.
    Using the maximality of $n$, we can find $b \in \cA_{n+1}$ such that $K^3|u| < |\tau_{0,n+1}(b)|$, so we can estimate
    \[  K^3|u| < |\tau_{0,n+1}(b)| 
        \leq |\tau_{n-1} \circ \tau_n| \cdot |\tau_{0,n-1}|.
        \]
    Item (1) of~\cref{prop:Sadic_props_for_LR} gives $|\tau_{n-1} \circ \tau_{n}| \le |\tau_{n-1}|\cdot |\tau_n|\le K^2$ and Item (2) of~\cref{prop:Sadic_props_for_LR} that $|\tau_{0,n-1}| \le K \ell_n$.
    We conclude that $K^3|u| < K^3 \ell_{n}$, and thus $|u| \le \ell_{n+1}$, as desired.
\end{proof}

\begin{proof}[Proof of~\cref{thm:char_balance_LR}]

We fix a linearly recurrent word $\infw{x}$ with constant $L$, and we let $\btau = (\tau_n \colon \cA_{n+1}^* \to \cA_n^*,\, a_n \in \cA_n)_{n \ge 0}$ be the congenial sequence generating $\infw{x}$ given by~\cref{thm:Sadic_char_LR}.
We further define $K$ to be the constant appearing in~\cref{prop:Sadic_props_for_LR}.

Assume that, for all $n \geq 0$, there exists $B_n > 0$ such that $\infw{x^{(n)}}$ is $B_n$-letter-balanced.
Furthermore, assume that $\infw{x}$ is aperiodic, since otherwise the result is trivial. 
We show that the discrepancy of a given factor $u$ of $\infw{x}$ is bounded by $2(L+1)(B_nK^4+2K^3+1)$, for some $n$ that depends only on $u$.
This implies, by~\cref{lem:balance=>discrepancy}, that $\infw{x}$ is factor-balanced, as desired.

Let $u, w \in \cL(\infw{x})$.
Our goal is to estimate the quantity $\big||w|_u - \mu_\infw{x}(u)|w|\big|$.
We begin by introducing the necessary objects.
The hypotheses allow us to use~\cref{lem:find_representative} with $\btau$ and $u$, which yields an integer $n \ge 1$ such that
\begin{enumerate}[label=(\roman*)]
    \item The substitution $\tau_{0,n}$ is $|u|$-decisive in $\infw{x^{(n)}}$;
    \item We have $|\tau_{0,n}(a)| \le K^3|u|$ for all $a \in \cA_n$.
\end{enumerate}
Following \eqref{eq:def_freq_u:lem:bound_freqs_convergence}, we define
\[  \mu_u = \frac{\sum_{a\in\cA_n} \nu_a\, q_{\tau_{0,n},\infw{x^{(n)}}}(u,a)}{\sum_{a\in\cA_n} \nu_a\, |\tau_{0,n}(a)|},  \]
where $\nu_a$ is the frequency of $a$ in $\infw{x^{(n)}}$, which exists by~\cref{prop:letterbal-freq}, as $\infw{x^{(n)}}$ is letter-balanced.
Since $w$ occurs in $\tau_{0,n}(\infw{x^{(n)}})$, there exists $w' \in \cL(\infw{x^{(n)}})$ such that $w$ appears in $\tau_{0,n}(w')$.
We take the shortest such word.

We now estimate the discrepancy of $u$ in $w$ through the discrepancy of letters of $\tau_{0,n}(w')$.
More precisely, we decompose 
\begin{equation}
    |w|_u - \mu_u|w| =
    \underbrace{|w|_u - |\tau_{0,n}(w')|_u}_{:=T_1}  + 
    \underbrace{|\tau_{0,n}(w')|_u - \mu_u|\tau_{0,n}(w')|}_{:=T_2} +
    \underbrace{\mu_u|\tau_{0,n}(w')| - \mu_u|w|}_{:=T_3},
\end{equation}
and proceed to bound $|T_1|$, $|T_2|$, and $|T_3|$ separately.

We start with some general observations.
Since $w$ occurs in $\tau_{0,n}(w')$, we can write $p \, w \, s = \tau_{0,n}(w')$ for some $p,s \in \cA^*$.
Moreover, the minimality of $w'$ ensures that if $\alpha$ and $\beta$ are the first and last letters of $w'$, then $p$ is a prefix of $\tau_{0,n}(\alpha)$ and $s$ is a suffix of $\tau_{0,n}(\beta)$.
Hence, by Item (ii) above, we have
\begin{equation}
    \label{eq:bound_p_s}
    |p| \leq K^3|u|
    \enspace \text{and} \enspace
    |s| \leq K^3|u|.
\end{equation}
Also, as $\infw{x}$ is aperiodic and linearly recurrent with constant $L$,~\cref{lem:LR_freqs_bound} ensures that 
\begin{equation}
    \label{eq:bounds:|w|_u & mu_u}
    \mu_u \le \frac{L}{|u|}
    \enspace \text{and} \enspace
    |t|_u \le \frac{(L+1)|t|}{|u|}
    \enspace\text{for every $t \in \cL(\infw{x})$.}
\end{equation}

We now estimate the terms $T_1, T_2, T_3$.
As  $\tau_{0,n}(w')=pws$, note that $|T_1|$ corresponds to the number of occurrences of $u$ in $\tau_{0,n}(w')$ that either start within the prefix $p$ or end within the suffix $s$.
Since $|p|, |s| \leq K^3|u|$ by \eqref{eq:bound_p_s}, we can use \eqref{eq:bounds:|w|_u & mu_u} with $t\in\{p,s\}$ to obtain the bound \begin{align} \label{eq:T1}
    |T_1| \le 2(L+1)K^3.
\end{align} 

For $T_3$, we observe that $|\tau_{0,n}(w')| - |w| = |s| + |p|$, so we use~\eqref{eq:bound_p_s} and \eqref{eq:bounds:|w|_u & mu_u} to obtain
\begin{align}\label{eq:T3}
    |T_3| = \mu_u ||\tau_{0,n}(w')| - |w|| = \mu_u\,(|p| + |s|) \leq 2(L+1)K^3.
\end{align}

Finally, to control $|T_2|$, observe that since $\infw{x^{(n)}}$ is $B_n$-letter-balanced and $\tau_{0,n}(\infw{x^{(n)}})=\infw{x}$ is $(L+1)$-power-free by~\cref{lem:LR_freqs_bound}, we can use~\cref{lem:bound_freqs_convergence} to obtain 
\[
|T_2| \le \frac{2(L+1)B_n\|\tau_{0,n}\|}{|u|}+2(L+1).
\]
Since $\|\tau_{0,n}\| \le |\cA_n|\cdot|\tau_{0,n}|$, the bounds $|\cA_n| \le K$ from Item (1) of~\cref{prop:Sadic_props_for_LR} and $|\tau_{0,n}| \le K^3|u|$ from Item (ii) above finally give that \begin{align} \label{eq:T2}
    |T_2| \le 2(L+1)B_n K^4+2(L+1)=2(L+1)(B_nK^4+1).
\end{align}
We conclude by combining~\eqref{eq:T1}, \eqref{eq:T3}, and~\eqref{eq:T2} that 
\begin{align}
\label{eq: final bound}
    \big| |w|_u - \mu_u|w|\big| &\le 
    |T_1| + |T_2| + |T_3|  \nonumber \\
    &\le
    2(L+1)K^3
    + 2(L+1)(B_nK^4+1)
    + 2(L+1)K^3 \nonumber \\ & \leq 2(L+1)(B_nK^4+2K^3+1). 
\end{align}
as desired.

To prove the last part of the statement, if the sequence $(B_n)_{n\ge 0}$ is bounded, then~\eqref{eq: final bound} proves that the word $\infw{x}$ is $C$-uniformly factor bounded, with $C= 2(L+1)(\sup_{n\geq 0}B_n \, K^4+2K^3+1)$.
\end{proof}

One may ask for the optimality of the bound obtained in~\eqref{eq: final bound}.
In the next example, we study more explicitly the case of the Tribonacci word.

\begin{example}
\label{ex: constant for the Tribonacci word}
Define the Tribonacci substitution $\tau \colon \cA^* \to \cA^*$ by $a \mapsto ab$, $b \mapsto ac$, and $c \mapsto a$ and $\infw{t}$ its unique fixed point, called the \emph{Tribonacci word}.
Then, the constant congenial sequence $\btau = (\tau,\,a)_{n \ge 0}$ generates $\infw{t}$.
Moreover, since $\btau$ is constant, all the limit points are $\infw{t^{(n)}}$ generated by $\btau$ are equal to $\infw{t}$.
We establish the following quantities:
\begin{itemize}
    \item Since it is known that $\infw{t}$ is $2$-letter-balanced, see~\cite{RSZ:2010}, and $\infw{t^{(n)}} = \infw{t}$ for all $n \ge 0$, we get $B_n = 2$ for all $n\geq 0$.
    \item All quantities $|\cA|$, $|\tau|$ and $\sup\{ \frac{|\tau^n(a)|}{|\tau^n(b)|} : a,b \in \cA,\, n\ge1\}$ of~\cref{prop:Sadic_props_for_LR} are bounded by $3$, so we may choose $K = 3$.
    \item The linear recurrence constant satisfies $L=2\psi^2 + \psi + 1 \approx 9.605$, where $\psi$ is the Tribonacci root. We refer the reader to~\cref{appendix: Trib} for more details on how to compute this constant (using the theorem prover \texttt{Walnut}). 
\end{itemize}
Thus, the bound in~\eqref{eq: final bound} is equal to $2(2\psi^2+\psi+2)(2\cdot 3^4 + 2\cdot 3^3 + 1) \approx 4602.6734$, i.e., the balance constant is at most $4602$. Note that the optimal constant for the Tribonacci word is $2$, as proven in~\cite[Theorem 22]{CDOPSS2025} (also see~\cref{thm: Tribo is 2 UFB}).
\end{example}

\begin{remark}
One may replace the hypothesis of being linearly recurrent in \cref{thm:char_balance_LR} with that of being power-free and satisfying both Items (1) and (2) of \cref{prop:Sadic_props_for_LR}. Indeed, this hypothesis was only used when applying~\cref{lem:LR_freqs_bound} and~\cref{prop:Sadic_props_for_LR}. However, the same conclusions of~\cref{lem:LR_freqs_bound} follow under power-freeness by \cref{lem:bound_occs_by_powerfree} and the fact that the frequency of a balanced letter always exists, see~\cref{lem:balance=>discrepancy}. Therefore, one can rephrase our theorem as the following.

\begin{theorem} \label{thm:main_thm_weaker_hyp}
Let $\infw{x}$ be a power-free word and $(\tau_n,\, a_n)_{n \ge 0}$ be a congenial sequence generating $\infw{x}$ satisfying Conditions \ref{defi:L1}, \ref{defi:L2}, and \ref{defi:L3bis}. Suppose that both Items (1) and (2) of \cref{prop:Sadic_props_for_LR} hold. 
Let $\infw{x^{(n)}}$ denote the limit of $(\tau_{n,m}(a_m))_{m > n}$.
Suppose that there exists a sequence $(B_n)_{n\geq 0}$ such that the word $\infw{x^{(n)}}$ is $B_n$-letter-balanced. Then the word $\infw{x}$ is factor-balanced. 
Moreover, if the sequence $(B_n)_{n\geq 0}$ is bounded, then the word $\infw{x^{(n)}}$ is uniformly factor-balanced. 
\end{theorem}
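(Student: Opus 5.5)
The plan is to reuse the proof of \cref{thm:char_balance_LR} almost verbatim, and to check that linear recurrence was only used through \cref{lem:LR_freqs_bound} and \cref{prop:Sadic_props_for_LR}. Items (1) and (2) of \cref{prop:Sadic_props_for_LR} are now hypotheses, so they provide a constant $K$ directly. Let $P$ be a power-freeness exponent of $\infw{x}$, so that $\infw{x}$ is $P$-power-free.

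First I would rebuild the two estimates of \eqref{eq:bounds:|w|_u & mu_u}, with $L+1$ replaced by $P$. For $t,u\in\cL(\infw{x})$, \cref{lem:bound_occs_by_powerfree} gives $|t|_u\le P|t|/|u|$ directly. For the frequency bound, I would not use \cref{lem:LR_freqs_bound}. Instead I would take the explicit quantity $\mu_u$ of \eqref{eq:def_freq_u:lem:bound_freqs_convergence}, formed at the level $n$ supplied by \cref{lem:find_representative}. That lemma needs only Items (1), (2) and Condition \ref{defi:L3bis}, all of which are assumed here. The letter frequencies $\nu_a$ of $\infw{x^{(n)}}$ exist because $\infw{x^{(n)}}$ is $B_n$-letter-balanced, by \cref{lem:balance=>discrepancy} or \cref{prop:letterbal-freq}. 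Next I would apply \cref{lem:bound_freqs_convergence} to $\tau_{0,n}$, which is $|u|$-decisive in $\infw{x^{(n)}}$, with $\tau_{0,n}(\infw{x^{(n)}})=\infw{x}$ being $P$-power-free. This yields $\mu_u\le P/|u|$, as in the proof of that proposition, and shows that $\mu_u$ is the frequency of $u$ in $\infw{x}$.

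With these estimates, I would redo the decomposition $|w|_u-\mu_u|w|=T_1+T_2+T_3$. I would again choose a shortest $w'\in\cL(\infw{x^{(n)}})$ with $\tau_{0,n}(w')=pws$, so that $|p|,|s|\le K^3|u|$ by Item (ii) of \cref{lem:find_representative}. This gives $|T_1|,|T_3|\le 2PK^3$. By \cref{lem:bound_freqs_convergence}, together with $\|\tau_{0,n}\|\le K\cdot K^3|u|$, we get $|T_2|\le 2P(B_nK^4+1)$. Summing these, the discrepancy of $u$ is at most $2P(B_nK^4+2K^3+1)$, where $n$ depends only on $u$. Then \cref{lem:balance=>discrepancy} gives factor-balancedness. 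If $(B_n)$ is bounded, the bound is independent of $u$, which gives uniform factor-balancedness of $\infw{x}$; this is the intended reading of the final sentence.

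The main obstacle is making sure nothing else silently used linear recurrence. One place is aperiodicity, which was used only to invoke \cref{lem:LR_freqs_bound}; it is unnecessary here, since power-freeness already excludes periodicity. The other is the existence of $n\ge1$ in \cref{lem:find_representative}, which relies on $|\tau_{0,n}(a_n)|\to\infty$. I would obtain this from Condition \ref{defi:L1}, since positivity of the blocks $\tau_{m,m+k}$ forces growth once some image has length at least $2$. The degenerate case where all $\tau_n$ are eventually length-preserving makes $\infw{x}$ periodic, contradicting power-freeness.
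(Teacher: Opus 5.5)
Your proposal is correct and is essentially the paper's own argument. The paper justifies this theorem by observing that linear recurrence entered the proof of \cref{thm:char_balance_LR} only through \cref{lem:LR_freqs_bound} and \cref{prop:Sadic_props_for_LR}. It then notes that the needed estimates $|t|_u \le P|t|/|u|$ and $\mu_u \le P/|u|$ follow from power-freeness via \cref{lem:bound_occs_by_powerfree} together with the existence of letter frequencies at level $n$, which is exactly your replacement of $L+1$ by $P$. One small correction to your last paragraph: $|\tau_{0,n}(a_n)|\to\infty$ holds simply because the words $\tau_{0,n}(a_n)$ are nested prefixes of the infinite word $\infw{x}$, so no appeal to \ref{defi:L1} is needed, and the degenerate case you describe would make the limit finite rather than periodic.
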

    
We will apply this slight generalization to a specific example in the next section, where the considered word will be uniformly factor-balanced but not recurrent. 
\end{remark}


\section{Uniform factor-balancedness in Arnoux--Rauzy words} \label{sec:UFB_AR}

The goal of this section is to prove \cref{thm:mainAR} recalled below.

\mainAR*

We first deal with the case of a two-letter alphabet, i.e., Sturmian words, then the case of ternary Arnoux-Rauzy words.

\subsection{Sturmian words} \label{sec:Sturmian and UFB}

The original source of the next result states it for subshifts instead of infinite words, but since Sturmian words are uniformly recurrent~\cite{mh40}, both statements are equivalent.

\begin{theorem}[{\cite[Proposition 10]{Durand-LR}}] 
\label{Sturmians:LR_iff_bounded_coeffs}
Let $\infw{x}$ be a Sturmian word of slope $\alpha\in(0,1)$.
Then $\infw{x}$ is linearly recurrent if and only if the coefficients of the continued fraction of $\alpha$ are bounded.
\end{theorem}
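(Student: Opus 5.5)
The plan is to pass through the $\cS$-adic description of \cref{lem: S-adic structure of Sturmian words} and the linear recurrence characterization of \cref{thm:Sadic_char_LR}. Since the statement concerns the language of $\infw{x}$ (linear recurrence is a property of $\cL(\infw{x})$ for uniformly recurrent words), and all Sturmian words of slope $\alpha$ share the same language, it suffices to treat the characteristic Sturmian word of slope $\alpha$. Here $\alpha$ has continued fraction $[0;a_1,a_2,\ldots]$.

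\textbf{Sufficiency: bounded coefficients imply linear recurrence.} Assume $a_n\le M$ for all $n$ and consider the directive sequence $\btau=(\theta_n^{a_{n+1}},b_n)$.
\begin{enumerate}
\item Condition \ref{defi:L2} holds because the set $\{\theta_0^k,\theta_1^k: 1\le k\le M\}$ is finite.
\item For Condition \ref{defi:L1}, observe that $\theta_0^{a}\circ\theta_1^{b}$ is positive for all $a,b\ge1$. Indeed, $\theta_1^b(0)=1^b0$ and $\theta_1^b(1)=1$, so applying $\theta_0^a$ gives images $(0^a1)^b0$ and $0^a1$, each containing both letters. The symmetric composition $\theta_1^{a}\circ\theta_0^{b}$ is positive as well, so we may take $k=2$.
\item Condition \ref{defi:L3} (left-properness) fails for single $\theta_i$. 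I would fix this by grouping two consecutive substitutions and then conjugating, or by using the standard trick of replacing each $\theta_n^{a_{n+1}}$ by a suitable conjugate so that all images begin with the same letter. Conjugation preserves the language of the limit word up to shift. Alternatively, I would invoke Durand's result that the return-word derivation of a word with an $\cS$-adic sequence satisfying \ref{defi:L1} and \ref{defi:L2} yields a proper sequence.
\end{enumerate}
The cleanest route is direct: by positivity and $|\tau_n|\le M+1$, every factor of length $|\tau_{0,n}|$ lies inside $\tau_{0,n+2}(c)$ for some letter $c$. Every such block contains all of $\tau_{0,n}(\{0,1\})$ and hence all factors of length roughly $\min_a|\tau_{0,n}(a)|$. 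Comparing consecutive levels with ratio at most $M+1$ then gives the bound $|w|\le L|u|$ for return words.

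\textbf{Necessity: linear recurrence implies bounded coefficients.} Suppose the coefficients are unbounded. For large $a_{n+1}$, the word $\infw{x}=\tau_{0,n}(\infw{x^{(n)}})$ contains $\tau_{0,n}(b_n^{a_{n+1}})$, because $\theta_n^{a_{n+1}}$ maps the other letter to $b_n^{a_{n+1}}$ followed by that letter, or its mirror. Setting $v=\tau_{0,n}(b_n)$, this shows that $v^{a_{n+1}}$ is a factor of $\infw{x}$. Since $\infw{x}$ is aperiodic, \cref{lem:Durand1999LR}(1) forces $a_{n+1}\le L$, a contradiction.

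\textbf{Expected obstacle.} The main difficulty is the sufficiency direction, specifically verifying left-properness \ref{defi:L3} (or proving linear recurrence by hand). I would first try the direct return-time estimate, which avoids properness entirely.
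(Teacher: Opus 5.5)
Your necessity direction is correct. Since $\theta_0^{a}(1)=0^{a}1$ and $\theta_1^{a}(0)=1^{a}0$, the word $\tau_n(b_{n+1})$ contains $b_n^{a_{n+1}}$, so $\tau_{0,n}(b_n)^{a_{n+1}}\in\cL(\infw{x})$ and \cref{lem:Durand1999LR}(1) forces $a_{n+1}\le L$. This is the same power-freeness mechanism the paper uses in the binary case of \cref{thm:mainAR}; for this statement itself the paper gives no proof and only cites Durand.

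The gap is in the sufficiency direction, and it comes from a misreading: Condition \ref{defi:L3} does \emph{not} fail. The substitution $\theta_0^{a}$ maps $0\mapsto 0$ and $1\mapsto 0^{a}1$, and $\theta_1^{a}$ maps $0\mapsto 1^{a}0$ and $1\mapsto 1$. Hence every $\tau_n=\theta_n^{a_{n+1}}$ is left-proper, with all its images beginning with $b_n$. What fails is right-properness, which \cref{thm:Sadic_char_LR} does not require. You already checked \ref{defi:L1} with $k=2$ and \ref{defi:L2} under $a_n\le M$ correctly. With \ref{defi:L3} added, \cref{thm:Sadic_char_LR} gives linear recurrence immediately, and that one-line observation is all the sufficiency direction needs.

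Your fallback arguments do not replace this step. The conjugation and return-word suggestions are too vague to verify, and the ``direct route'' contains false steps. First, a factor of length $|\tau_{0,n}|$ need not lie inside a single block $\tau_{0,n+2}(c)$, since it may straddle two consecutive blocks. Second, a block containing both $\tau_{0,n}(0)$ and $\tau_{0,n}(1)$ need not contain every factor of length about $\min_a|\tau_{0,n}(a)|$. Such a factor may cross a boundary $\tau_{0,n}(b)\,\tau_{0,n}(c)$, and you have no control over which pairs $bc$ occur in the block.

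Handling these boundary factors is exactly what left-properness is for. Every $\tau_{0,n+1}(d)$ begins with $\tau_{0,n}(b_n)$. So any factor of length at most $|\tau_{0,n}(b_n)|$ that starts inside $\tau_{0,n+1}(c)$ already occurs in $\tau_{0,n+1}(c)\,\tau_{0,n}(b_n)$. That reduces the question to occurrences of the single letter $c$ in $\infw{x^{(n+1)}}$, which positivity controls. So the direct estimate can only be completed by using the very property you believed was missing. The simplest repair is to verify \ref{defi:L3} and quote \cref{thm:Sadic_char_LR}.
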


\begin{proof}[Proof of \cref{thm:mainAR} in the binary case]
The sufficient condition is already proven by Fagnot and Vuillon~\cite[Corollary 13]{FV:2002}, although stated in different terms.

It remains to prove the necessary condition.
As the Sturmian word of irrational slope $\alpha$ and intercept $\rho$ and the characteristic Sturmian word of slope $\alpha$ have the same set of factors (see~\cite[Proposition 2.1.18]{Lothaire2002}), by \cref{same_language_same_balance} we may only care about the case where $\infw{x}$ is this characteristic Sturmian word. Then, by~\cref{lem: S-adic structure of Sturmian words}, $\infw{x}$ admits the $\cS$-adic expansion
\[  \infw{x} = \lim_{n\to\infty}
        \theta_0^{a_1} \circ \theta_1^{a_2} \circ \dots \circ \theta_1^{a_{2n}}(0),
        \]
where $\alpha = [0;a_1,a_2,\dots]$ is the continued fraction of $\alpha$.
For all integers $n$, define the words $u_n = \theta_0^{a_1} \circ \theta_1^{a_2} \circ \dots \circ \theta_{\bar{n}}^{a_{n}}(0)$ and $v_n = \theta_0^{a_1} \circ \theta_1^{a_2} \circ \dots \circ \theta_{\bar{n}}^{a_{n}}(1)$, where $\bar{n}=n \mod 2$. 
Then, for even $n$, $u_{n+1} = u_n^{a_n} v_n$, and for odd $n$, $v_{n+1} = v_n^{a_n} u_n$.
Since $u_{n+1}, v_{n+1} \in \cL(\infw{x})$ and $\infw{x}$ is an aperiodic $C$-uniformly factor-balanced word, we can use~\cref{factorBal-powerFree:main_prop}, yielding $a_n \leq 2C + 3$ for all $n$.
Therefore, the coefficients of the continued fraction of $\alpha$ are uniformly bounded by $2C+3$.
Finally, observe that $(a_n)_{n\ge 1}$ forms the sequence of weak partial quotients, which ends the proof.
\end{proof}

\begin{remark}
    We mention that one may also use techniques similar to those of the next subsection to show the sufficient condition of the precedent result, using \cref{thm:main_thm_weaker_hyp}.
\end{remark}

\cref{Sturmians:LR_iff_bounded_coeffs} implies the following immediate corollary. 

\begin{corollary}
    A Sturmian word $\infw{x}$ is uniformly factor-balanced if and only if $\infw{x}$ is linearly recurrent.
\end{corollary}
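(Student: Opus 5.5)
The statement follows by chaining the binary case of \cref{thm:mainAR}, just proved, with \cref{Sturmians:LR_iff_bounded_coeffs}. Both directions go through the continued fraction coefficients of the slope. Let $\infw{x}$ be a Sturmian word of slope $\alpha=[0;a_1,a_2,\dots]$.

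The first step is to identify the weak partial quotients of $\infw{x}$, in the Arnoux--Rauzy sense, with the coefficients $(a_n)_{n\ge1}$. I would use the $\cS$-adic expansion of \cref{lem: S-adic structure of Sturmian words}. On $\{0,1\}$ the Arnoux--Rauzy substitutions $\sigma_0,\sigma_1$ coincide, up to the reversal/conjugacy convention, with $\theta_1,\theta_0$. The directive sequence $\theta_0^{a_1}\theta_1^{a_2}\cdots$ then groups into runs of lengths $a_1,a_2,\dots$, since consecutive blocks use different letters. This identification is exactly what is invoked at the end of the proof of the binary case.

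Passing from the characteristic word to an arbitrary intercept is harmless. Factor sets coincide by \cite[Proposition 2.1.18]{Lothaire2002}, so \cref{same_language_same_balance} handles uniform factor-balancedness. Linear recurrence depends only on the language, which is the subshift statement of \cref{Sturmians:LR_iff_bounded_coeffs}.

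With this identification, \cref{thm:mainAR} in the binary case says that $\infw{x}$ is uniformly factor-balanced if and only if $(a_n)$ is bounded. \cref{Sturmians:LR_iff_bounded_coeffs} says that $\infw{x}$ is linearly recurrent if and only if $(a_n)$ is bounded. Composing the two equivalences gives the corollary.

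The only point needing care is the dictionary between weak partial quotients and continued fraction coefficients. If it were contested, I would instead cite Arnoux--Rauzy \cite{ArnouxRauzy1991}, where the binary Arnoux--Rauzy algorithm is shown to be the additive (Farey) version of the continued fraction algorithm, whose runs are the $a_n$.
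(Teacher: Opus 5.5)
Your proposal is correct and is the paper's argument: the paper presents the corollary as immediate from chaining the binary case of \cref{thm:mainAR}, whose proof ends by observing that the continued fraction coefficients $(a_n)_{n\ge1}$ are exactly the weak partial quotients, with Durand's characterization in \cref{Sturmians:LR_iff_bounded_coeffs}. One cosmetic slip: reversing images turns $\sigma_a$ into $\theta_a$, not $\theta_{1-a}$, so your pairing needs an extra letter exchange; this changes nothing, since only the run lengths of the directive sequence matter.
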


\subsection{Arnoux--Rauzy over three letters}

It is known that bounded strong partial quotients characterize Arnoux--Rauzy words that are linearly recurrent. 

\begin{theorem}[{\cite[Proposition 8.4]{BST2019}}]
An Arnoux-Rauzy word over any alphabet is linearly recurrent if and only if it has bounded strong
partial quotients, that is, each Arnoux--Rauzy substitution occurs in its directive sequence with bounded gaps.
\end{theorem}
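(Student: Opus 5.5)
The plan is to pass through Durand's characterization \cref{thm:Sadic_char_LR} in one direction, and through the combinatorics of return words to bispecial factors in the other. Throughout, by \cref{same_language_same_balance}-type reasoning (linear recurrence depends only on the language), I will work with the word $\infw{y} = \lim_n \sigma_{a_0}\circ\cdots\circ\sigma_{a_n}(a)$. I will also use the strong partial quotients $K_{j+1}-K_j$ defined in \cref{subsec:ARwords}.

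\textbf{Sufficiency.} Assume $K_{j+1}-K_j \le M$ for all $j$. Then every weak partial quotient is also at most $M$, and every window of length $M$ of $(a_n)$ contains all letters of $\cA$. I will group the directive sequence into blocks $\tau_j = \sigma_{a_{K_j}}\circ\cdots\circ\sigma_{a_{K_{j+1}}}$. These blocks range over a finite set, since their length is bounded by $M+1$, which gives \ref{defi:L2}. The key combinatorial step is to check by induction that a composition of Arnoux--Rauzy substitutions in which every letter of $\cA$ occurs is positive. Indeed, after $\sigma_b$ is applied, every image contains $b$, and this property persists under further compositions because each $\sigma_c$ maps every letter to a word containing that letter. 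This yields \ref{defi:L1} with $k=1$. Properness is the only subtle point. The $\sigma_a$ are right-proper ($\sigma_a(b)$ ends with $a$), not left-proper. I would either invoke the right-proper version of Durand's theorem, which is symmetric, or replace each $\sigma_a$ by its conjugate $\sigma'_a\colon b\mapsto ab$, $a \mapsto a$. The conjugate is left-proper, and the associated $\cS$-adic word has the same language, since conjugate directive sequences generate the same Arnoux--Rauzy language. Then \cref{thm:Sadic_char_LR} gives linear recurrence.

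\textbf{Necessity.} Assume $\infw{y}$ is linearly recurrent with constant $L$. I will use the classical description of bispecial factors: there are prefixes $w_n$ of $\infw{y}$, given by $w_{n+1} = \sigma_{0,n}(a_n)\,w_n$ (with $\sigma_{0,n}=\sigma_{a_0}\circ\cdots\circ\sigma_{a_{n-1}}$, in suitably conjugated form), whose return words in $\infw{y}$ are exactly the words $\sigma_{0,n}(b)$ for $b\in\cA$. Combining Item~(2) of \cref{lem:Durand1999LR} with the definition of linear recurrence gives
\[
|w_n|/L \le |\sigma_{0,n}(b)| \le L|w_n| \quad\text{for all } b \in \cA,
\]
so the images $\sigma_{0,n}(b)$ have mutually comparable lengths with ratio at most $L^2$, uniformly in $n$. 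Now suppose, for contradiction, that the strong partial quotients are unbounded. Then for arbitrarily large $m$ there is an $n$ such that some letter $c$ is absent from $a_n,\dots,a_{n+m-1}$. Since $c$ is never the active letter in this window, each step appends at least one letter to the image of $c$. Hence $\sigma_{n,n+m}(c) = c\,v$ with $|v|\ge m$ and $v$ built from images of letters at level $n$. Consequently $|\sigma_{0,n+m}(c)| \ge m\cdot\min_b|\sigma_{0,n}(b)|$.

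To contradict the ratio bound at level $n+m$, I then need some letter $d$ with $|\sigma_{0,n+m}(d)|$ bounded by a constant times $\max_b |\sigma_{0,n}(b)|$. The natural candidate is the letter $d=a_{n+m-1}$ last used in the window, or, if necessary, a letter that is used but whose image grows slowly. If the weak partial quotients are bounded, the window alternates among at most $|\cA|-1\le 2$ letters in the ternary case, and a direct computation of $\sigma_{n,n+m}(d)$ should be possible. If instead some weak partial quotient is huge, then $(\sigma_{0,n}(b))^k$ occurs with $k$ large, which contradicts the $(L+1)$-power-freeness in \cref{lem:Durand1999LR}.

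\textbf{Main obstacle.} I expect the hardest step to be this last comparison, that is, exhibiting at level $n+m$ a return word that is short relative to $\sigma_{0,n+m}(c)$. My first attempt would split into the two cases above: power-freeness handles long runs of a single letter, and for bounded runs I would compare growth rates directly. If that case analysis becomes messy over arbitrary alphabets, the fallback is to compare instead the return time to $w_n$ inside $\sigma_{0,n}(v)$, which is a long $c$-free stretch, with $|w_n|$ itself. This would contradict the upper bound $L|w_n|$ on return words directly.
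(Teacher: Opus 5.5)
Your sufficiency argument is sound. One small fix: with the paper's definition of $K_j$, the windows $[K_j,K_{j+1}]$ share the index $K_{j+1}$, so group into disjoint windows such as $[K_{2j},K_{2j+2})$, each of which still contains every letter.

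\textbf{The gap.} It is in necessity, exactly at the step you flag, and length information alone cannot close it. Write $\ell_N(b)=|\sigma_{0,N}(b)|$. Suppose $c\notin\{a_n,\dots,a_{n+m-1}\}$. At each step of this window, $\ell(c)$ increases by $\ell_N(a_N)$, while $\sum_{b\neq c}\ell(b)$ increases by $(d-2)\,\ell_N(a_N)$. So for $d\ge 3$ you get $\ell_{n+m}(c)\le \ell_n(c)+\sum_{b\neq c}\ell_{n+m}(b)$: the image of the missing letter never becomes long compared with the others.

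Concretely, take $\cA=\{1,2,3\}$, $c=3$ and the window $1212\cdots$. This has bounded weak partial quotients, which is exactly your remaining case. Here $\ell(1)$ and $\ell(2)$ grow like consecutive Fibonacci numbers (their ratio is at most $2$ after two steps). Moreover $\ell(3)$ is at most $(L^2+2)\max\{\ell(1),\ell(2)\}$. Hence $\max_b\ell_{n+m}(b)/\min_b\ell_{n+m}(b)$ is bounded independently of $m$, and your $L^2$ ratio bound is never violated. Also every image grows exponentially in $m$, so there is no letter $d$ with $|\sigma_{0,n+m}(d)|=O(\max_b|\sigma_{0,n}(b)|)$.

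Your fallback fails for the same reason. Inside $\sigma_{0,n}(v)$, the returns to $w_n$ are just the blocks $\sigma_{0,n}(b)$ with $b$ a letter of $v$. These are at most $L|w_n|$ by the very inequality you derived, so returns to $w_n$ cannot see that $v$ avoids $c$.

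\textbf{The missing idea.} Use a factor that detects the letter $c$ at level $n$.
\begin{itemize}
\item Work in the left-proper conjugate form you already use. The occurrences of $w_n$ cut $\infw{y}=\sigma_{0,n}(\infw{x^{(n)}})$ exactly into the blocks $\sigma_{0,n}(x^{(n)}_i)$; this is the content of the return-word description you invoke, namely that the derived sequence of $\infw{y}$ on $w_n$ is $\infw{x^{(n)}}$.
\item Hence the complete return word $u=\sigma_{0,n}(c)\,w_n$ occurs exactly at the images of the occurrences of $c$ in $\infw{x^{(n)}}$.
\item You have $|u|\le (L+1)|w_n|$.
\item On the other hand, $cv=\sigma_{n,n+m}(c)$ is a factor of $\infw{x^{(n)}}$ with $v$ free of $c$ and $|v|\ge m$. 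It yields a return word to $u$ of length at least $|\sigma_{0,n}(cv)|\ge (m+1)|w_n|/L$.
\item Linear recurrence then forces $m+1\le L^2(L+1)$, so the strong partial quotients are bounded.
\end{itemize}
Equivalently: derived sequences of a linearly recurrent word are linearly recurrent with a uniform constant (Durand), and $c$ has a gap larger than $m$ in $\infw{x^{(n)}}$. This argument handles all alphabet sizes at once and makes your separate power-freeness case unnecessary.
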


Notice that the notion of strong and weak partial quotients are the same in the Sturmian words case. 
On the other hand, only a sufficient condition for letter-balancedness of Arnoux--Rauzy in terms of bounded weak partial quotients is possible. Indeed, Berthé et al. \cite{BCS2013} proved the following result. 

\begin{theorem}[{\cite[Theorem 7]{BCS2013}}] \label{thm:AR_letter-balanced}
    Let $\infw{x}$ be an Arnoux-Rauzy word over a $3$-letter alphabet with $\cS$-directive sequence $(i_m)_{m\ge 0}$. If the weak partial quotients are bounded by $C$, then $\infw{x}$ is $(2C+1)$-letter-balanced.
\end{theorem}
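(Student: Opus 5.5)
The plan is to argue by \emph{descent} along the directive sequence. Write $\infw{x}$ (up to equality of languages, which suffices by \cref{same_language_same_balance}) as $\lim_n \sigma_{i_0}\circ\cdots\circ\sigma_{i_n}(a)$ and let $\infw{x^{(n)}}$ be the desubstituted words, which are again ternary Arnoux--Rauzy words whose directive sequences are the shifts $(i_m)_{m\ge n}$. Their weak partial quotients are therefore still bounded by $C$. For two factors $u,v$ of $\infw{x^{(n)}}$ of equal length, consider the imbalance vector $\Delta(u,v)=(|u|_c-|v|_c)_{c\in\cA}\in\mathbb{Z}^3$, whose coordinates sum to $0$. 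I will show that any pair with $\|\Delta\|_\infty>2C+1$ can be pulled back to a pair of \emph{strictly shorter} equal-length factors at a deeper level, still with imbalance larger than $2C+1$. Iterating produces an infinite strictly decreasing sequence of lengths, which is absurd. Alternatively, the process reaches length $\le 1$, where the imbalance is at most $1$, again a contradiction.

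\textbf{Key step 1 (one desubstitution).} Every factor $u$ of $\sigma_a(\infw{y})$ can be written $u=s\,\sigma_a(u')\,p$ with $u'\in\cL(\infw{y})$ and $|s|,|p|\le 1$, by cutting $u$ at occurrences of $a$, since each image is $a$ or $ba$. Counting gives $|u|_b=|u'|_b+O(1)$ for $b\neq a$, and $|u|_a=|u'|+O(1)$. Given $u,v$ of equal length, their preimages $u',v'$ satisfy $|u'|-|v'|=-(|u|_{\neq a}-|v|_{\neq a})+O(1)$. So the preimages have unequal lengths exactly when the imbalance in the letters $\neq a$ is nonzero. To restore equal length, I extend or trim the shorter preimage by a factor whose letters $\neq a$ are at most as many as its length. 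I will track carefully how the $O(1)$ boundary terms enter, since they are what produce the additive constants.

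\textbf{Key step 2 (one run of a letter).} Group the directive sequence into runs $\sigma_a^{k}$ with $k\le C$. Over a run, $\sigma_a^k(b)=ba^k$ for $b\neq a$ and $\sigma_a^k(a)=a$. The imbalance in the two letters $\neq a$ transfers essentially unchanged to the preimage, while the imbalance in $a$ is at most $k$ times the total imbalance in the other letters plus $1$. The aim is to show the following. If $\|\Delta(u,v)\|_\infty\ge 2C+2$, then after desubstituting through the current run, and possibly the next one whose letter differs, we obtain an equal-length pair $u'',v''$ with $|u''|<|u|$ and $\|\Delta(u'',v'')\|_\infty\ge 2C+2$. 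The length strictly drops because each $\sigma_a$ with $a$ occurring lengthens every word containing a letter $\neq a$, and every sufficiently long factor contains all three letters.

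\textbf{Main obstacle.} The hardest part is Step~2: the imbalance must not decay when passing to the preimage. Concentrated in the letter $a$ of the current run, it can shrink by a factor up to $k$. I would handle this by a case split on which coordinate of $\Delta$ is largest. If it is a letter $b\neq a$, it passes through unchanged up to $\pm1$. If it is $a$, then since the coordinates sum to $0$, some $b\neq a$ has imbalance at least half as large with opposite sign. One then uses that the run length is at most $C$, together with the $3$-letter constraint that a run of $a$ is followed by a different letter, to recover a coordinate of size at least $2C+2$ after the next run. The constant $2C+1$ should fall out exactly from this bookkeeping. I expect the ternary restriction to be used precisely here, since with only two letters $\neq a$ the sign pattern of $\Delta$ is forced.
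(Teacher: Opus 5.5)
The paper does not prove this statement. It is quoted from \cite[Theorem 7]{BCS2013} and used as a black box in the ternary case of \cref{thm:mainAR}. Judged on its own, your proposal has a genuine gap. Key step 2, which carries the whole content of the theorem, is only stated as an aim, and the heuristic you give for it targets a difficulty that does not exist. Ignore the boundary pieces $s,p$, take $u=\sigma_a^k(u')$ and $v=\sigma_a^k(v')$ with $|u|=|v|$, and set $\delta_c=|u|_c-|v|_c$. Then one gets exactly
\[
|u'|_c-|v'|_c=\delta_c \quad (c\neq a),\qquad |u'|_a-|v'|_a=(k+1)\,\delta_a,\qquad |u'|-|v'|=k\,\delta_a .
\]
Suppose the coordinate of largest absolute value is $\delta_a>0$. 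Trimming $k\delta_a$ letters from $u'$ leaves an $a$-difference of at least $\delta_a$. Suppose instead it is $\delta_b>0$ with $b\neq a$. The zero-sum condition then forces $\delta_a\le 0$, and extending $u'$ by $k|\delta_a|$ letters keeps the $b$-difference at least $\delta_b$. Negative coordinates are handled by exchanging $u$ and $v$. So in the bulk the maximal imbalance never decreases. Nothing shrinks by a factor $k$, and your detour through a letter $b\neq a$ carrying half the imbalance and then through the next run is beside the point. In fact, if this transfer were exact, your descent would prove that every Arnoux--Rauzy word is $1$-balanced, which is false by \cite{CFZ2000}. So everything happens in the boundary terms, which you explicitly postpone (``I will track carefully how the $O(1)$ boundary terms enter'').

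This postponement is not a detail. Your descent passes through unboundedly many levels with a fixed threshold $2C+2$, so a loss of $1$ (``up to $\pm1$'') at a positive proportion of levels destroys it. You must show that whatever one descent step loses at the boundaries is recovered before the imbalance is compared with the threshold again. That needs exact, not $O(1)$, control of how $u$ and $v$ are cut relative to the blocks $\sigma_a(c)$.

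One standard device is to normalize the pair. Among equal-length pairs with $|u|_x-|v|_x=2C+2$, take one of minimal length; such a pair exists as soon as some pair has imbalance at least $2C+2$, by passing to prefixes. Then $u$ begins and ends with $x$, while $v$ begins and ends with letters different from $x$. In $\sigma_a(\infw{y})$, a factor beginning with $c\neq a$ begins at a block boundary, and one ending with $c\neq a$ ends one letter inside a block $ca$. So this normalization largely determines $s$ and $p$ for both words.

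The exact bookkeeping that must follow is where the constant $2C+1$ and the three-letter hypothesis actually enter. It has to show that boundary losses cannot outgrow what a run $\sigma_a^k$ with $k\le C$ permits. None of this is in your proposal, so as it stands it is a plan rather than a proof.
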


However, the converse of the previous result is not true, since authors of~\cite{BCS2013} provide an example of a $2$-letter balanced Arnoux--Rauzy word with unbounded weak partial quotients.


To prove~\cref{thm:mainAR} in the ternary case, we will make use of~\cref{thm:main_thm_weaker_hyp}. Notice that we could also prove the binary case with this method. Before that, we first prove the two following lemmas. Note that they hold for any alphabet size. 

\begin{lemma} \label{lem:images_mainAR}
Let $\btau = (\tau_n)_{n\ge0}$ be a sequence of Arnoux--Rauzy substitutions over $\cA$ with bounded weak partial coefficients.
For all $a,b\in\cA$, the sequence $\bigl(|\tau_{0,n}(a)| / |\tau_{0,n}(b)|\bigr)_{n\ge 0}$ is uniformly bounded.
\end{lemma}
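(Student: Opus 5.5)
The plan is to track the length vector $\ell_n(c) \coloneqq |\tau_{0,n}(c)|$, $c\in\cA$, directly. Write $\tau_n=\sigma_{a_n}$ and suppose every run of equal consecutive letters in $(a_n)_{n\ge0}$ has length at most $M$. Since $\tau_{0,n+1}=\tau_{0,n}\circ\sigma_{a_n}$, the definition of $\sigma_{a}$ gives the recursion
\[
\ell_{n+1}(a_n)=\ell_n(a_n),\qquad \ell_{n+1}(c)=\ell_n(c)+\ell_n(a_n)\quad (c\neq a_n).
\]
Consequently, $a_n$ is a letter of minimal length at time $n+1$. The statement is equivalent to a uniform bound on $R_n\coloneqq \max_c \ell_n(c)/\min_c \ell_n(c)$, and I would obtain it from two estimates: one controlling the largest entry by the middle ones, and one controlling the middle ones by the minimum.

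\textbf{Step 1 (largest entry versus the others).} I would prove by induction that, for $d=|\cA|\ge3$, every entry is at most the sum of the other entries: $\ell_n(c)\le\sum_{e\ne c}\ell_n(e)$. This holds at $n=0$, where all lengths are $1$.
\begin{itemize}
\item For the letter $a=a_n$ the inequality is trivial after the step, because $a$ keeps its length while all the others grow.
\item For $c\ne a$, the new length is $\ell_n(c)+\ell_n(a)$. By the induction hypothesis this is at most $2\ell_n(a)+\sum_{e\ne a,c}\ell_n(e)$.
\item The new sum of the other entries is $\ell_n(a)+\sum_{e\ne a,c}\bigl(\ell_n(e)+\ell_n(a)\bigr)$, which is at least as large since $d-2\ge1$.
\end{itemize}
In the binary case I would instead quote \cref{Sturmians:LR_iff_bounded_coeffs} together with \cref{prop:Sadic_props_for_LR}, or redo the computation with the continued fraction expansion.

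\textbf{Step 2 (second-largest entry versus the minimum).} This is the main obstacle, and it is where bounded weak partial quotients enter. The danger is that one letter stays tiny while the others grow. I would show by induction on $n$ that $\ell_n(c)\le D\,\ell_n(a_{n-1})$ for every letter $c$ other than the one of largest length, with $D$ depending only on $M$ and $d$.

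Within a run $a^k$ with $k\le M$, each other length increases by exactly $\ell(a)$ per step while $\ell(a)$ stays fixed. So the ratio to the minimum increases additively, by at most $M$ over the run. At a change of letter $a\to b$, the new minimum $\ell(b)$ has just been increased by at least $\ell(a)$, and I would argue that this resets the ratio. Concretely, every length other than the largest is at most a bounded number of sums of recently chosen minima, each of which is at most the current minimum. Step 1 then converts the bound on the non-maximal entries into a bound on the maximal entry, at the cost of a factor $d-1$.

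If this direct induction fails to close, my first fallback is the explicit formula
\[
\ell_{n+1}(c)=\ell_{t}(c)+\sum_{t\le j\le n}\ell_j(a_j),
\]
where $t$ is the last time $c$ was chosen, combined with the fact that the chosen minima $\ell_j(a_j)$ are nondecreasing in $j$. Together with the bounded-run hypothesis, I would use this to show that the last $M+1$ chosen minima already dominate each non-maximal entry up to a constant.
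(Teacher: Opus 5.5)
Your Step 1 is correct, and for $|\cA|=3$ your Step 2 can in fact be completed. Set $m_n=\min_c\ell_n(c)=\ell_{n-1}(a_{n-1})$. Then $\ell_n(c)=\sum_{j=t}^{n-1}\ell_j(a_j)$ when $t<n$ is the last time $c$ was chosen (your displayed formula counts $\ell_t(c)=\ell_t(a_t)$ twice), and $\ell_j(a_j)=m_{j+1}\le m_n$. So the lengths are ordered by how long ago each letter was last chosen. With three letters, the middle one was chosen just before the current run of $a_{n-1}$, so its length is at most $(M+1)m_n$, and Step 1 then gives $\max_c\ell_n(c)\le(M+2)m_n$.

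The gap is the case $|\cA|\ge4$, which the lemma covers (it is used for arbitrary alphabets in \cref{lem:powerfree_mainAR}). There your key claim, that every non-maximal length is a sum of boundedly many \emph{recently} chosen minima, is false. On $\{a,b,c,e\}$ take the directive sequence $e,c,a,b,a,b,\dots,a,b$, whose weak partial quotients all equal $1$. At the end $e$ is the longest letter, while $c$ is not maximal but was last chosen at time $1$, so $\ell_n(c)$ is a sum of $n-1$ chosen minima. The ratio $\ell_n(c)/m_n$ does stay bounded here, but only because the chosen minima grow geometrically along the alternation. Your fallback invokes only the monotonicity of the $\ell_j(a_j)$. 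That alone does not let the last $M+1$ terms dominate an unbounded number of earlier ones, and you never make precise how bounded runs are supposed to enter.

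The missing idea is that bounded runs force the minimum to grow geometrically. If $a_k\ne a_{k-1}$, then $m_{k+1}=\ell_{k-1}(a_k)+\ell_{k-1}(a_{k-1})\ge 2m_{k-1}$. Every window of $M+1$ consecutive indices contains a change of letter, so $m_{n+M+1}\ge 2m_n$. Combined with $\ell_n(c)\le 1+\sum_{j=0}^{n-1}\ell_j(a_j)=1+\sum_{j=1}^{n}m_j$, this gives $\ell_n(c)\le(2M+3)\,m_n$ for every letter and every alphabet size. It also makes both Step 1 and the detour through Sturmian words unnecessary.

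This is the mechanism the paper uses. It is packaged there as the two-step contraction $T(\eta\alpha\beta)\le\frac12T(\eta)+2$ for distinct Arnoux--Rauzy substitutions $\alpha,\beta$, together with $T(\eta\alpha)\le T(\eta)+1$, where $T(\eta)$ is the ratio of the longest to the shortest image. Your remark that, at a change of letter, the new minimum has just been increased by at least the old one is the germ of this. But it must be used as a halving of the ratio (equivalently, a doubling of the minimum), not as the assertion that the non-maximal letters were chosen recently.
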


\begin{proof}
Given a substitution $\eta$ on $\cA$, let $T(\eta) = \max_{a,b\in\cA} |\eta(a)| / |\eta(b)|$.
Note that for any Arnoux--Rauzy substitution $\alpha$, we have
\begin{align}\label{eq:singleAR}
    T(\eta\alpha)\le T(\eta)+1.
\end{align}
Furthermore, a direct computation of letter counts gives for any distinct Arnoux--Rauzy substitutions $\alpha,\beta$ and any substitution $\eta$ 
\begin{align} \label{eq:doubleAR}
    T(\eta \alpha \beta)\le \frac{1}{2}T(\eta)+2.
\end{align} 
We first show that this inequality holds in the binary case. suppose that $\alpha = \sigma_a$, $\beta = \sigma_b$ with $a\neq b$.
Observe that $\sigma_a \circ \sigma_b$ maps $a \mapsto aba$, $b \mapsto ba$. Suppose that $\eta(a) \ge \eta(b)$, so $T(\eta)=\frac{\eta(a)}{\eta(b)}$, and then \[ T(\eta \sigma_a \sigma_b) = \frac{|\eta(aba)|}{|\eta(ba)|} = 1+\frac{|\eta(a)|}{|\eta(ba)|} \leq 1+\frac{|\eta(a)|}{2|\eta(b)|}\leq 1+\frac{T(\eta)}{2}.
\] The case where $\eta(b) > \eta(a)$ works in a similar way.  
Now, assume that $|\cA|\ge 3$ and suppose that $\alpha = \sigma_a$, $\beta = \sigma_b$ with $a\neq b$.
Observe that $\sigma_a \circ \sigma_b$ maps $a \mapsto aba$, $b \mapsto ba$, $c \mapsto caba$ for $c \in \cA \setminus \{a,b\}$. Let $c_{m} \in \cA \setminus \{a,b\}$ (resp., $c_{M}$) be such that $|\eta(c_{m})|$ is the shortest (resp., the longest) word in $\bigl\{\eta(c) : c \in \cA \setminus\{a,b\}\bigl\}$. Therefore, the shortest possible image of $\eta \alpha \beta$ is $\eta(ba)$, and the longest one is $\eta(c_Maba)$. 
Then, we have 
\[ T(\eta \sigma_a \sigma_b) = \frac{|\eta(c_Maba)|}{|\eta(ba)|} = 
1 + \frac{|\eta(c_Ma)|}{|\eta(ba)|} \le 
2 + \frac{|\eta(c_M)|}{|\eta(ba)|} \le 
2 + \frac{|\eta(c_M)|}{2|\eta(c_{m})|}  \le
2 + \frac{1}{2}T(\eta).
\]
Therefore, \eqref{eq:doubleAR} is proven for all sizes of $\cA$.

Now, since $\btau$ has bounded weak partial coefficients, there is an increasing sequence $(n_j)_{j \ge 0}$ of positive integers such that, for all $j\ge 0$, $\tau_{n_j} \neq \tau_{n_j+1}$ and such that $2 \le n_{j+1} - n_j \le C$ for some constant $C \ge 0$. By \eqref{eq:doubleAR}, we have
\begin{align*}
    T(\tau_{0,n_j+2}) =
    T(\tau_{0,n_j} \circ \tau_{n_j} \circ \tau_{n_j+1}) \le 
    \frac{1}{2}T(\tau_{0,n_j}) + 2
    \end{align*} 
    for every $j \ge 0$.
Moreover, we have $0 \le n_{j} - (n_{j-1}+2) \le C-2$ for all $j\ge 1$, so by inductively applying~\eqref{eq:singleAR}, the previous inequality becomes
\begin{align*}
    T(\tau_{0,n_j+2}) \le 
    \frac{1}{2}T(\tau_{0,n_j}) + 2 &\le 
    \frac{1}{2}T(\tau_{0,n_{j-1}+2}) + \frac{C-2}{2} +2, \\
    &\leq 
    \frac{1}{2}T(\tau_{0,n_{j-1}+2}) + \frac{C}{2}+1
    \end{align*} 
for every $j \ge 1$.
For all $j\ge 1$, iterating the previous inequality yields the bound
\begin{equation}
\label{eq:relationship between nj+2 and n0+2}
T(\tau_{0,n_j+2}) \le
    \frac{1}{2^j}T(\tau_{0,n_0+2}) + (C+2)    
\end{equation}
as $\sum_{i=0}^{j-1} \frac{1}{2^i} (\frac{C}{2}+1) \le C+2$.
Between the indices $n_{j+1}+2$ and $n_j+2$, there are $n_{j+1}-n_j\le C$ substitutions, so applying \eqref{eq:singleAR} at most $C$ times gives $T(\tau_{0,m}) \le T(\tau_{0,n_j+2})+C$ for all $m \in [n_j+2, n_{j+1}+2)$.
Putting this and~\eqref{eq:relationship between nj+2 and n0+2} together, we conclude that $T(\tau_{0,m}) \le \frac{1}{2^j}T(\tau_{0,n_0+2})+2C+2$, which gives a uniform bound.
\end{proof}

\begin{lemma}
    \label{lem:powerfree_mainAR}
     An Arnoux--Rauzy word over $\cA$ with bounded weak partial coefficients is $P$-power free for some $P \ge 0$.
\end{lemma}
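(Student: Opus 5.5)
The plan is to reduce power-freeness to a bound on how long a power can survive under desubstitution by Arnoux--Rauzy substitutions. Write the Arnoux--Rauzy word (up to having the same language, which suffices by the definition of power-freeness of languages) as $\infw{x} = \lim_n \tau_{0,n}(a)$ with $\tau_n = \sigma_{a_n}$. The weak partial quotients being bounded by $C$ means that no $\sigma_b$ is repeated more than $C$ times consecutively. I would then argue by contradiction: suppose $v^P \in \cL(\infw{x})$ with $v$ primitive and $P$ large, and desubstitute.

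The key step is a desubstitution lemma for a single $\sigma_b$. Every factor of $\infw{x}^{(n)} = \sigma_b(\infw{x}^{(n+1)})$ is parsed uniquely into blocks $b$ and $cb$ ($c\neq b$), since each letter other than $b$ is followed by $b$. Suppose $v^P$ occurs in $\infw{x}^{(n)}$ with $|v|\ge 2$ (or $v$ containing a letter $\neq b$). Conjugating $v$ so that it begins at a block boundary, I get $v' = \sigma_b(v'')$ with $v''^{P-1} \in \cL(\infw{x}^{(n+1)})$ (losing at most one period at the ends), and $|v''| < |v'|$ unless $v'$ consists only of the letter $b$. If instead $v = b$, then the exponent of $b$ shrinks: $b^{P}$ in $\sigma_b(\infw{y})$ forces $b^{P-1}$ in $\infw{y}$, since the only way to produce consecutive $b$'s is from consecutive $b$'s (preceded possibly by one $c$). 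For a letter $c\ne b$, $c^2$ cannot occur in $\sigma_b(\infw{y})$ at all. So a run of the letter $b_j$ of exponent $e$ at level $n$ becomes exponent $e-1$ at level $n+1$ when $\tau_n=\sigma_{b_j}$, and disappears (exponent at most $1$) as soon as a different substitution is applied.

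Now the argument: iterate the desubstitution along the directive sequence. Each step decreases the exponent by at most one and does not increase $|v|$; moreover, whenever $|v|\ge 2$, the length strictly decreases unless $v$ is a single letter, and the length drops by at least the number of non-$b$ letters of $v$. Since $v$ is primitive and nonconstant, within each block of identical consecutive substitutions (length at most $C$), and certainly across a change of substitution letter (which forces some letter of $v$ to be $\neq$ the current $b$), the length of $v$ strictly decreases. Thus after at most roughly $C|v|$ steps... here is the main obstacle: the exponent loss is per step while the length decrease could be slow, so a naive count gives exponents depending on $|v|$. To fix this I would use the ratio estimate instead: \cref{lem:images_mainAR} shows $|\tau_{0,n}(a)|/|\tau_{0,n}(b)|\le T$ uniformly, and together with the fact that every letter occurs with gaps at most $d\cdot C$-ish changes... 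Concretely, I would desubstitute until $v$ reduces to a single letter $c$ at some level $m$ with exponent $P - (m - n_0)$, then note that $\tau_{n_0,m}(c)$ has length comparable (by the ratio bound and $\sigma$-lengths growing geometrically every two distinct substitutions, cf. \eqref{eq:doubleAR}) to $|v|$, so $m-n_0 = O(\log|v|)$ fails to be uniform; hence instead I would bound exponents directly by showing the exponent drops by at most one only per \emph{change} of letter, of which length decrease by a factor forces at least one per $2C$ steps while length shrinks by factor $\ge 1+1/T$. Combining, the exponent can lose at most $O(C)$ per halving of $|v|$ while $v^P$ at the original level has length $P|v|$; comparing with the linear growth of lengths gives $P \le P_0(C,T)$. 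I expect making this last accounting uniform to be the hardest part.
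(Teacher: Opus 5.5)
Your one-step desubstitution is sound. The unique parsing of $\sigma_b(\infw{y})$ into blocks $b$ and $cb$, conjugating $v$ so that it starts at a cut, primitivity of $v''$, $|v''|=|v|_b$, and the behaviour of one-letter runs are all fine. In fact a primitive $v$ with $|v|\ge 2$ contains two distinct letters, so the period drops at \emph{every} step. The gap is exactly where you suspected, and the fix you sketch does not close it. Charging ``at most one unit of exponent per step'', or ``$O(C)$ per halving of $|v|$'', still costs $O(C\log|v|)$ in total, because about $\log|v|$ halvings are needed before the period becomes a single letter. Invoking the linear growth of lengths does not remove this dependence on $|v|$. As written, the argument gives no $P$ independent of the power $v^P$.

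The missing idea is to measure the loss in $\infw{x}$ itself, not as exponent at the current level. Desubstituting from level $j$ to level $j+1$ discards only the incomplete blocks at the two ends of the current periodic window, i.e.\ at most $3$ letters of $\infw{x}^{(j)}$. Track the $\tau_{0,j}$-image of the surviving periodic factor, which is a factor of the original occurrence of $v^P$. Then the loss at that step is at most $3\max_a|\tau_{0,j}(a)|$ letters of $\infw{x}$. Now $\sigma_a\sigma_b$ with $a\neq b$ sends every letter to a word of length at least $2$, and any $C+1$ consecutive substitutions contain such a pair, so $\min_a|\tau_{0,j}(a)|$ at least doubles every $C+1$ levels. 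Combined with the ratio bound $T$ of Lemma~\ref{lem:images_mainAR}, the total loss up to the level $m$ where the period becomes a single letter $\lambda$ is at most $6(C+1)T\min_a|\tau_{0,m}(a)|\le 6(C+1)T|v|$, since $\tau_{0,m}(\lambda)$ is a conjugate of $v$. Hence $\lambda^E\in\cL(\infw{x}^{(m)})$ with $E\ge P-6(C+1)T$. Bounded weak partial quotients limit runs of a single letter to length $C+1$ (via the images of $\sigma_a^r\sigma_b$, as in Step~3 of the paper), which yields a uniform $P$.

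The paper avoids this bookkeeping altogether. Writing the power as $u^P$, it jumps directly to the level $n$ with $|\tau_{0,n}(a)|\ge 4|u|$ and $|\tau_{0,n+1}(a)|<16C_2^2|u|$ for all $a$. It then uses right-properness (every $\tau_{0,n+1}(a)$ ends with $\tau_{0,n}(a_n)$, of length at least $4|u|$) together with Fine--Wilf. This forces the desubstituted word to have the form $c\,\lambda^k$ with $k\ge C_1+2$.
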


\begin{proof}
Let $\infw{x}$ be an Arnoux--Rauzy word over $\cA$ given by the directive sequence $\btau = (\tau_n)_{n \ge 0}$ with bounded weak partial coefficients.
Let $C_1$ be an upper bound for the weak partial coefficients.
By~\cref{lem:images_mainAR}, there exists a constant $C_2 \ge 0$ such that $|\tau_{0,n}(a)|/|\tau_{0,n}(b)| \le C_2$ for all $a,b \in \cA$ and $n\ge0$.
Set $P = 16C_2^2(C_1+5)$.
We argue by contradiction and assume that $\infw{x}$ is not $P$-power free.
Let $u \in \cL(\infw{x})$ be nonempty with $u^P \in \cL(\infw{x})$. 
We may suppose that $u$ is primitive, i.e., $u$ is not a power of a strictly shorter word. The proof is divided into three steps: the first one consists of finding an integer $n$ and a sufficiently long word $v \in \cL(\infw{x}^{(n+1)})$ such that $\tau_{0,n+1}(v)$ occurs in $u^P$ and its period $|u|$ is much smaller than $|\tau_{0,n}(a)|$ for all $a \in \cA$. Then, the second step consists of proving that this strong periodicity forces $v$ to actually be of the form $v=v_1\lambda^{|v|-1}$ for some letters $v_1,\lambda \in \cA$. Finally, we use the assumption of bounded weak partial quotients to prove that such a $v$ does not exist. 

\textbf{Step 1}:
First, take $n \ge 1$ to be the least integer such that \begin{align} \label{eq:choiceofn}
    |\tau_{0,n}(a)| \ge 4|u|
\end{align} for all $a \in \cA$.
Note that for all $a,a' \in \cA$ and $m \ge 1$, the definition of Arnoux--Rauzy substitutions and that of $C_2$ give
\[  |\tau_{0,m+1}(a)| \le 2 \max_{b \in \cA} |\tau_{0,m}(b)| \le 2C_2 |\tau_{0,m}(a')|.
\]
Applying this inequality twice (where $\tau_{0,m}$ is the identity map if $m = 0$) gives $|\tau_{0,n+1}(a)| \le 2C_2 |\tau_{0,n}(a')| \le 4C_2^2|\tau_{0,n-1}(a')|$ for all $a,a' \in \cA$.
Now, by the minimality of $n$, there exists $a' \in \cA$ such that $|\tau_{0,n-1}(a')| < 4|u|$.
Therefore, 
\begin{equation}
 \label{eq:with 16 and C2 squared}
 |\tau_{0,n+1}(a)| \le 4 C_2^2|\tau_{0,n-1}(a')| < 16 C_2^2|u|
\end{equation}
for all $a \in \cA$. Let $\infw{x}^{(n+1)}$ be the Arnoux--Rauzy word generated by $(\tau_m)_{m>n+1}$.
Since $u^P$ occurs in $\infw{x}$, there exists $v \in \cL(\infw{x}^{(n+1)})$ such that $\tau_{0,n+1}(v)$ occurs in $u^P$. 
We take $v$ to be of maximal length. 
By maximality of $v$, we have $|u^P| \le |\tau_{0,n+1}(v)| + 2\max_{b\in\cA}|\tau_{0,n+1}(b)|$, which in turn gives
\[
P |u| = |u^P| \le |\tau_{0,n+1}(v)| + 2\max_{b\in\cA}|\tau_{0,n+1}(b)| \le (|v|+2)\cdot 16 C_2^2 |u|
\]
by using~\eqref{eq:with 16 and C2 squared}.
Therefore, we have 
\[|v| \ge \frac{P}{16 C_2^2}-2 \ge C_1+3.\]

\textbf{Step 2}: We prove that $v$ ends with $\lambda^{|v|-1}$ for some letter $\lambda \in \cA$.
Notice that Arnoux--Rauzy substitutions are right-proper, i.e., each image $\tau_n(a)$ ends with the same fixed letter; say $a_n$.
Therefore, $\tau_{0,n+1}(a)$ ends with $w_n \coloneqq \tau_{0,n}(a_n)$ for all $a \in \cA$.
Then, by \eqref{eq:choiceofn}, $|w_n| \ge 4 |u|$.
Also, for each nonempty prefix $pa$ of $v$, with $p \in \cA^*$ and $a \in \cA$, there is an occurrence of $\tau_{0,n+1}(a)$ in $\tau_{0,n+1}(v)$ ending at position $\ell(pa) \coloneqq |\tau_{0,n+1}(pa)|$. 
Let us write $v = v_1 v_2 \cdots v_{|v|}$ with $v_i \in \cA$ for all $i\in\{1,\ldots,|v|\}$.
We claim that 
\begin{equation}
 \label{eq: the claim on length mod u}
 |\tau_{0,n+1}(v_i)| \equiv 0 \pmod{|u|}
\end{equation} 
for all $i\in\{2,3,\ldots,|v|\}$.
Indeed, for each such $i$, there is an occurrence of $\tau_{0,n+1}(v_i)$ in $\tau_{0,n+1}(v)$ ending at $\ell(v_1 \cdots v_i)$. 
Since $\tau_{0,n+1}(v_i)$ ends with $w_n$, there is an occurrence of $w_n$ in $\tau_{0,n+1}(v)$ ending at the same position $\ell(v_1 \cdots v_i)$.
Hence, if $\ell(v_1 \cdots v_i) \ne \ell(v_1 \cdots v_{i'}) \pmod{|u|}$ for two distinct $i,i'$, then $w_n$ occurs in the infinite word $u^\infty$ at positions that are different modulo $|u|$ since $\tau_{0,n+1}(v)$ occurs in $u^P$. As $|w_n| \ge 4|u|$, we may apply Fine and Wilf's theorem (e.g. see~\cite[Proposition 1.3.5]{Lothaire1983}), which yields that $u$ is not a primitive word, contradicting the definition of $u$.
Therefore, all prefixes $v_1 \cdots v_i$ of $v$ produce the same residue $\ell(v_1 \cdots v_i) \pmod{|u|}$.
This implies $|\tau_{0,n+1}(v_i)| = \ell(v_1 \cdots v_i) - \ell(v_1 \cdots v_{i-1}) = 0 \pmod{|u|}$ for all $2 \le i \le |v|$, proving~\eqref{eq: the claim on length mod u} as desired.

Next, since $\tau_{0,n+1}(v)$ occurs in $u^P$ and using the congruences in~\eqref{eq: the claim on length mod u} for all $2 \le i \le k$, every $\tau_{0,n+1}(v_i)$ begins with the same letter, say $\lambda$. 
Now, from the definition of Arnoux--Rauzy substitutions, $\tau_m(a)$ begins with $a$ for all $a \in \cA$ and $m \ge 0$, so $\tau_{0,n+1}(v_i)$ also begins with $v_i$ for all $i$.
Therefore, $v_i = \lambda$ for all $2 \leq i \le |v|$.
This implies that $v = v_1 \, \lambda^{|v|-1}$.

\textbf{Step 3}: Therefore, we obtain that $\lambda^{C_1+2} \in \cL(\infw{x}^{(n+1)})$ since $|v| \ge C_1+3$.
We now prove that this contradicts the boundedness hypothesis on weak partial coefficients. Note that the Arnoux--Rauzy word $\infw{x}^{(n+1)}$ also has weak partial coefficients bounded by $C_1$.
So we have that $\infw{x}^{(n+1)} = \sigma_a^r (\sigma_b(\infw{x}^{(n+r+2)}))$ for some Arnoux--Rauzy substitutions $\sigma_a$ and $\sigma_b$, $a \ne b$ and some $1 \le r \le C_1$, where $\infw{x}^{(n+r+2)}$ is the Arnoux--Rauzy generated by $(\tau_m)_{m > n+r+2}$.
We have 
\[  \sigma_a^r \sigma_b : a \mapsto aba^r,\, b \mapsto ba^r,\, c \mapsto ca^rba^r   \] 
for $c \not\in\{ a,b\}$.
As $\infw{x}^{(n+1)}$ is an infinite concatenation of words of the form $\sigma_a^r(\sigma_b(c))$, with $c \in \cA$, we deduce that $c^{r+2} \notin \cL(\infw{x}^{(n+1)})$ for all $c \in \cA$. 
Therefore, as $\lambda^{C_1+2} \in \cL(\infw{x}^{(n+1)})$, we must have $C_1+2 < r+2$, contradicting that $r \le C_1$. 
We conclude that the word $\infw{x}$ is $P$-power free.
\end{proof}

We are now ready to prove~\cref{thm:mainAR} in the ternary case. Notice that the necessary condition holds for every alphabet. 

\begin{proof}[Proof of \cref{thm:mainAR} in the ternary case]
We first turn to the necessary condition. Then $\infw{x}$ admits the following $\cS$-adic expansion
\[  \infw{x} = \lim_{n\to\infty}
        \sigma_{b_0}^{k_0} \circ \sigma_{b_1}^{k_1} \circ \cdots \circ \sigma_{b_n}^{k_{n}}(a),
\] for some $a\in \cA$, $b_i \in \cA$, where $(k_n)_{n\ge 0}$ is the sequence of weak partial quotients. For $c\in \cA$, define $x_n^{(c)}=\sigma_{b_0}^{k_0} \circ \sigma_{b_1}^{k_1}  \circ \cdots \circ \sigma_{b_n}^{k_{n}}(c)$. 
For each $n\ge 0$ and a letter $c\neq b_{n+1}$, we have 
\[ x_{n+1}^{(c)}=\sigma_{b_0}^{k_0} \circ \sigma_{b_1}^{k_1} \circ \cdots \circ \sigma_{b_n}^{k_{n}} \circ \sigma_{b_{n+1}}^{k_{n+1}}(c) = x_n^{(c)}(x_n^{(b_{n+1})})^{k_{n+1}}.\] 
Since $x_{n+1}^{(c)} \in \cL(\infw{x})$ and $\infw{x}$ is an aperiodic uniformly factor-balanced word, \cref{factorBal-powerFree:main_prop} yields that $(k_n)_{n\ge 0}$ is bounded.

We now prove the sufficient condition. As announced before, we make use of \cref{thm:main_thm_weaker_hyp}. All necessary hypothesis are verified since \begin{enumerate}
    \item[(i)] Items (1) and (2) of \cref{prop:Sadic_props_for_LR} hold, by \cref{lem:images_mainAR}. 
    \item[(ii)] Power-freeness holds, by \Cref{lem:powerfree_mainAR}.
    \item[(iii)] Letter-balancedness at each level holds, by \cref{thm:AR_letter-balanced}. 
\end{enumerate}
Therefore, since the constant of letter-balancedness is independent from the considered level, the word is uniformly factor-balanced. 
\end{proof}

\section{Factor-balancedness and factor complexity}
\label{sec:complexity}

The goal of this section is to exhibit certain links between the (uniform) factor-balancedness and factor complexity. 
It is well known that linearly recurrent words possess a linear factor complexity~\cite{Durand-1999}, so it seems natural to also investigate this matter regarding factor-balancedness. Let us mention that the previous work~\cite{BertheCecchi} of Berthé and Cecchi Bernales also focuses on infinite words with low complexity, since dendric words have at most linear complexity. 
In this paper, we prove that substitutive factor-balanced words have linear factor complexity (see~\cref{UFB&substitutive=>uniformly_recurrent}), even if the fixed point is not uniformly recurrent. 
Remember from a result of Pansiot~\cite{Pansiot} that substitutive words can reach quadratic factor complexity. 
Finally, in~\cref{sec:exponential sub complexity}, we also provide an example of a factor-balanced word with exponential factor complexity using Toeplitz words, whose construction is recalled and illustrated in~\cref{sec:Toeplitz}. 

\subsection{Non-primitive substitutive words}

In the following, we need the next result: the proof of the first item follows from~\cite[Lemma 2.8]{Rust_tameness} and~\cite[Lemma 3.8]{Shimomura_tameness}; the second from~\cite[Theorem A]{Shimomura_tameness}; and the last from~\cite[Theorem 1, Item (i)]{Pansiot}.

\begin{proposition}[{\cite{Rust_tameness,Pansiot,Shimomura_tameness}}]
    \label{general_substitutions:basic_stuff}
    Let $\tau$ be a substitution.
    \begin{enumerate}
        \item If $\cL(\tau)$ is $P$-power-free for some $P \in \NN$, then $\tau$ is tame.
        \item If $\tau$ is tame and quasi-primitive, then $\cL(\tau)$ is uniformly recurrent.
        \item If $\tau$ is growing and quasi-uniform, then any fixed point $\infw{x}$ of $\tau$ has linear-growth factor-complexity.
    \end{enumerate}
\end{proposition}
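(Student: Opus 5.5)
The proposition collects three known facts, and the plan is to treat its items independently, mainly by reduction to the cited sources while recording the mechanism of each argument. Throughout we use that the set $B_\tau$ of bounded letters is stable under $\tau$: if $b \in B_\tau$, every letter of $\tau(b)$ is bounded. Hence $\tau$ restricts to a map $B_\tau^* \to B_\tau^*$. Moreover, for each $b \in B_\tau$ the sequence $(\tau^n(b))_{n\ge0}$ takes finitely many values, so it is eventually periodic in $n$.

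\textbf{Item (1).} We argue by contraposition: assume $\cL(\tau)\cap B_\tau^*$ is infinite and produce arbitrarily high powers in $\cL(\tau)$. A long word of $\cL(\tau)$ made only of bounded letters occurs inside some $\tau^n(c)$. Track it down the desubstitution tree: at each level, the bounded letters produced by the growing ancestor form a block of bounded length. Consequently a bounded block of length $N$ in $\tau^n(c)$ has the shape
\[ u_0\,\tau(u_1)\,\tau^2(u_2)\cdots\tau^{k}(u_k) \]
(or its mirror image), where the $u_i$ are drawn from a finite set of bounded words and $k\to\infty$ as $N\to\infty$. The sequences $i\mapsto u_i$ (generated by the growing letter's images along a path) and $i\mapsto\tau^i(\cdot)$ are both eventually periodic. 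So, for some $p$ and some $z\neq\varepsilon$, arbitrarily long stretches of this block equal $z^m$ with $m$ unbounded. This contradicts $P$-power-freeness. The main obstacle is making this ``eventually periodic block'' description rigorous, i.e., controlling the interface between growing and bounded letters. For this step we rely on \cite[Lemma 2.8]{Rust_tameness} and \cite[Lemma 3.8]{Shimomura_tameness}.

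\textbf{Item (2).} Fix $w\in\cL(\tau)$. By tameness, bounded-letter blocks in $\cL(\tau)$ have length at most some $M$, so every word of $\cL(\tau)$ of length $>M$ contains a growing letter. First, $w$ occurs in some $\tau^{n}(c)$; we argue that $c$ may be taken growing, since bounded letters only produce words of bounded length. By quasi-primitivity, some power $\tau^{q}$ has every growing letter occurring in $\tau^{q}(c')$ for every growing $c'$. Hence $w$ occurs in $\tau^{n+q}(c')$ for all $c'\in\cC_\tau$. It remains to show that every sufficiently long $v\in\cL(\tau)$ contains some full block $\tau^{n+q}(c')$ with $c'$ growing. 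For this, desubstitute $v$ by $\tau^{n+q}$: $v$ lies in $\tau^{n+q}(y)$ for some $y\in\cL(\tau)$, and since bounded letters contribute words of length at most a constant, $y$ becomes long as $v$ does. Then $y$ minus its two end letters has length $>M$ and therefore contains a growing letter. This gives uniform recurrence, as in \cite[Theorem A]{Shimomura_tameness}.

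\textbf{Item (3).} Since $\tau$ is growing, $\min_a|\tau^n(a)|\to\infty$, and by quasi-uniformity $\max_a|\tau^n(a)|\le C\min_a|\tau^n(a)|$. Given $\ell$, choose $n$ maximal with $\min_a|\tau^{n}(a)|\le \ell$. Then
\[ \ell<\min_a|\tau^{n+1}(a)|\le |\tau|\max_a|\tau^n(a)|\le C|\tau|\,\ell .\]
Every length-$\ell$ factor of the fixed point $\infw{x}=\tau^{n+1}(\infw{x})$ therefore occurs in $\tau^{n+1}(ab)$ for some length-$2$ factor $ab$ of $\infw{x}$, starting inside $\tau^{n+1}(a)$. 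The number of such factors is at most
\[ \#\cL_2(\infw{x})\cdot \max_a|\tau^{n+1}(a)|\le |\cA|^2\, C^2|\tau|\,\ell, \]
which gives $p_{\infw{x}}(\ell)=O(\ell)$, in line with \cite[Theorem 1, Item (i)]{Pansiot}.
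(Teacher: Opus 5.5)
Your proposal is correct. The paper gives no argument of its own for this proposition. It cites \cite[Lemma~2.8]{Rust_tameness} and \cite[Lemma~3.8]{Shimomura_tameness} for (1), \cite[Theorem~A]{Shimomura_tameness} for (2), and \cite[Theorem~1]{Pansiot} for (3). For (1) you do the same. For (2) and (3) you instead give self-contained proofs, which makes the statement independent of those sources and shows exactly where tameness, quasi-primitivity and quasi-uniformity are used; the citations buy brevity.

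Your proof of (3) is the standard level-selection and window-counting argument and is complete. The count actually gives $|\cA|^2 C|\tau|\,\ell$, so your extra factor $C$ is harmless.

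Your proof of (2) is sound, except for one step you assert without justification: that $v$ lies in $\tau^{n+q}(y)$ with $y\in\cL(\tau)$. You can prove this by desubstituting a bi-infinite word of the subshift and using compactness. Alternatively, sidestep it:
\begin{enumerate}
\item Take $y$ minimal such that $v$ is a factor of $\tau^{n+q}(y)$ and $y$ is a factor of some $\tau^{j}(a)$ with $j\ge 1$; this is possible once $v$ is long.
\item Observe that tameness bounds the length of all factors of the words $\tau^{j}(a)$ lying in $B_\tau^*$. Indeed, infinitely many of them would give, by compactness, a bi-infinite word over $B_\tau$ whose language is factor-closed, biextensible and made of such factors. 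That language would then be contained in $\cL(\tau)$, contradicting tameness.
\end{enumerate}

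Finally, the heuristic shape in (1) is slightly off. A maximal bounded block of $\tau^n(c)$ is in general two-sided,
\[ u_0\,\tau(u_1)\cdots\tau^{k}(u_k)\,\tau^{k+1}(\gamma)\,\tau^{k}(v_k)\cdots\tau(v_1)\,v_0, \]
with $\gamma$ of bounded length, rather than one of the two one-sided shapes. Since a long block forces one side to be long, your extraction of high powers $z^m$ still works. In any case you defer the rigorous version of this step to the same lemmas as the paper.
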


Before proving~\cref{UFB&substitutive=>uniformly_recurrent}, we first establish a variation of Pansiot's result (Item (3) of~\cref{general_substitutions:basic_stuff}), where the assumption is only based on growing letters. 

\begin{lemma}
    \label{substitutive&quasi_unif=>linear_complexity}
    Let $\infw{x}$ be a fixed point of the substitution $\tau \colon \cA^* \to \cA^*$.
    Assume that there exists $C > 0$ such that, for all growing letters $a,b \in \cC_\tau$,
    \begin{equation}
        \label{eq:hip:substitutive&quasi_unif=>linear_complexity}
    |\tau^n(a)| \leq C \, |\tau^n(b)|
    \enspace \text{for every $n \ge 1$.}    
    \end{equation}
    If $\tau$ is tame, then $\infw{x}$ has linear factor complexity.
\end{lemma}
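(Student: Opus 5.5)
The plan is to reduce to Pansiot's result (Item (3) of \cref{general_substitutions:basic_stuff}) by removing the bounded letters, using tameness to control how long runs of bounded letters can be. Since $\tau$ is tame, $\cL(\tau) \cap \cB_\tau^*$ is finite. Hence there is $M$ such that every factor of $\infw{x}$ made only of bounded letters has length at most $M$, at least for factors in $\cL(\tau)$. Factors of $\infw{x}$ not in $\cL(\tau)$ occur only near the beginning, or contribute a bounded number per length; I will handle them separately, since non-recurrent factors add at most linearly many. We may also replace $\tau$ by a power $\tau^p$ and assume $\tau(b)\in \cB_\tau^*$ stabilises for bounded letters: the images $\tau^n(b)$ of bounded letters take finitely many values, and are eventually periodic in $n$.

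Next I would build the standard return-word recoding. Cut $\infw{x}$ at each occurrence of a growing letter, so that $\infw{x} = w_0 \, c_1 w_1 \, c_2 w_2 \cdots$ with $c_i \in \cC_\tau$ and $w_i \in \cB_\tau^*$ of length at most $M$. Take a new alphabet $\cD$ of the finitely many pairs $[c w]$ that occur, and define $\eta \colon \cD^* \to \cD^*$ by decomposing $\tau(c w) = \tau(c)\tau(w)$ in the same way. Because $\tau(c)$ begins after trimming the bounded prefix, this is well defined up to a shift of the bounded prefix of $\tau(c)$. A cleaner choice is to attach bounded blocks to the preceding growing letter, which makes $\tau(cw)$ decompose into blocks $[c'w']$, with the trailing bounded part absorbed into the last block. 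This uses tameness again: the bounded run after the last growing letter of $\tau(c)$, followed by $\tau(w)$ and the bounded prefix of the next image, still has length at most $M$. Then $\eta$ is growing, since $|\eta^n([cw])| \asymp |\tau^n(c)|_{\cC_\tau}$, and $\infw{y}=\eta^\omega$ satisfies $\psi(\infw{y}) = \infw{x}$ (after a finite prefix) for the letter-to-word map $\psi([cw]) = cw$, with $1 \le |\psi(d)| \le M+1$.

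Quasi-uniformity of $\eta$ follows from \eqref{eq:hip:substitutive&quasi_unif=>linear_complexity}. Indeed, $|\eta^n([cw])|$ equals the number of growing letters in $\tau^n(c)$ plus a bounded term, and this count is comparable to $|\tau^n(c)|$ since bounded runs have length at most $M$. So Pansiot's Item (3) gives $p_{\infw{y}}(n) = O(n)$. Finally, every factor of $\infw{x}$ of length $n$ is determined by a factor of $\infw{y}$ of length at most $n+2$ together with an offset of at most $M+1$. This gives $p_{\infw{x}}(n) \le (M+1)\, p_{\infw{y}}(n+2) = O(n)$.

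The main obstacle is making $\eta$ a genuine well-defined substitution compatible with $\tau$. The difficulty is that the bounded prefix of $\tau(c)$ must be reassigned to the previous block, so the block coding may depend on the next letter. I would try first to use $\tau^p$ with $p$ chosen so that bounded letters map to bounded words with stable structure. If that is insufficient, I would use the next letter's information via a 2-block refinement of $\cD$, which keeps finiteness and quasi-uniformity.
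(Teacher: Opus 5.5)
Your strategy is the paper's. You code $\infw{x}$ by return words to growing letters (your blocks $[cw]$ are exactly the paper's set $\cR$), intertwine $\tau$ with a substitution on the code alphabet via the decoding map, get growth and quasi-uniformity from the hypothesis, apply Item~(3) of \cref{general_substitutions:basic_stuff}, and transfer linear complexity back. However, the proposal leaves open the step that carries the content: that the coding is compatible with $\tau$. Your worry is justified.

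The paper disposes of this step by asserting that tameness forces $\tau(c)$ to begin with a growing letter for every $c\in\cC_\tau$, so that $\tau(w)\in\cR^*$ for $w\in\cR$. That assertion is false. Take $\tau\colon a\mapsto ad,\ d\mapsto ea,\ e\mapsto e$, with fixed point $\tau^\omega(a)=adeaeadeadea\cdots$.
The growing letters are $a,d$, and $|\tau^n(d)|\le|\tau^n(a)|\le 2|\tau^n(d)|$.
The substitution is tame: the only length-$2$ words occurring in the images $\tau^n(b)$ are $ad,de,ea,ae$, so $ee$ never occurs.
Yet $de\in\cR$ and $\tau(de)=eae\notin\cR^*$.

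The same example shows your first remedy cannot work, since $\tau^n(d)=e\,\tau^{n-1}(a)$ begins with the bounded letter $e$ for every $n$. Attaching bounded runs to the preceding growing letter is already your coding, so it does not touch the leading bounded prefix of $\tau(c)$.

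Your second remedy is the right one, and it closes the gap. Write $\tau(c)=p_c\,\hat\tau(c)$ with $p_c\in\cB_\tau^*$ and $\hat\tau(c)$ beginning with a growing letter $g(c)$.
Let $\cD$ be the set of $[cw,c']$ with $c,c'\in\cC_\tau$, $w\in\cB_\tau^*$ and $cwc'\in\cL(\infw{x})$; it is finite because $|w|\le M$.
Let $\eta([cw,c'])$ be the cutting of $\hat\tau(c)\,\tau(w)\,p_{c'}$ before each growing letter. Label each block by the growing letter following it, and the last block by $g(c')$. These labels lie in $\cD$ because $\hat\tau(c)\tau(w)p_{c'}g(c')$ is a factor of $\tau(cwc')\in\cL(\infw{x})$.
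Set $\psi([cw,c'])=cw$ and $d_j=[c_jw_j,c_{j+1}]$. Then $\psi(\eta(d_1\cdots d_m))$ is $\tau(\psi(d_1\cdots d_m))\,p_{c_{m+1}}$ with its prefix $p_{c_1}$ deleted.
Since $p_{x_0}=\varepsilon$, the coding $\infw{y}$ of $\infw{x}$ satisfies $\eta(\infw{y})=\infw{y}$ and $\psi(\infw{y})=\infw{x}$ exactly, with no finite prefix to discard.
Iterating gives $|\eta^n([cw,c'])|=|\tau^n(c)|_{\cC_\tau}$, which lies between $(|\tau^n(c)|-M)/(M+1)$ and $|\tau^n(c)|$.
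So $\eta$ is growing and quasi-uniform, as you anticipated, and the rest of your argument goes through.

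Finally, drop the separate treatment of factors outside $\cL(\tau)$, together with the unproved claim that they contribute at most linearly many. Tameness already bounds every bounded run of $\infw{x}$. Arbitrarily long runs would yield, by compactness, a bi-infinite word over $\cB_\tau$ whose factors form an infinite factor-closed biextensible set of words occurring in the images $\tau^n(b)$. That set would then be contained in $\cL(\tau)$, contradicting tameness. The coding above also covers all of $\infw{x}$.
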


\begin{proof}
The strategy is as follows.
We construct a new substitution $\sigma$ having a fixed point $\infw{y}$ such that $\infw{x} = \pi(\infw{y})$ for some substitution $\pi$, in such a way that $\sigma$ is growing.
We then show that Hypothesis \eqref{eq:hip:substitutive&quasi_unif=>linear_complexity} implies that $\sigma$ is quasi-uniform, enabling us to use Item (3) of~\cref{general_substitutions:basic_stuff}; hence $\infw{y}$ has linear-growth factor complexity.
As $\infw{x} = \pi(\infw{y})$, the linear-growth factor complexity is inherited by $\infw{x}$.

Let $\cR$ be the set of return words to growing letters $\cC_\tau$ in $\infw{x}$, i.e., $u \in \cR$ if and only if $u$ occurs at a position $i \ge 0$ of $\infw{x}$ such that $x_i, x_{i+|u|} \in \cC_\tau$ and $x_j \notin \cC_\tau$ for all $i < j < i +|u|$.
Since $\tau$ is tame (i.e., $\cB_\tau^* \cap \cL(\tau)$ is finite), the set $\cR$ is finite, and furthermore, $\infw{x}$ admits a factorization $\infw{x} = w_0 w_1 w_2 w_3 \cdots$, where $w_j \in \cR$ for all $j \ge 0$. (Note that we use that $x_0 \in \cC_\tau$.)
Let $\pi \colon \cB \to \cR$ be any bijection between a finite alphabet $\cB$ and $\cR$, and extend $\pi$ to a substitution $\pi \colon \cB^* \to \cA^*$.
Now, since $\tau$ is tame, $\tau(a)$ begins with a growing letter for any $a \in \cC_\tau$.
Thus, for every $w \in \cR$, $\tau(w)$ factorizes into a concatenation of words in $\cR$.
This induces a substitution $\sigma \colon \cB^* \to \cB^*$ such that $\pi(\sigma(b)) = \tau(\pi(b))$ for all $b \in \cB$, as shown in the commutative diagram below:
\[ \begin{tikzcd}
\cB^* \arrow{r}{\sigma} \arrow[swap]{d}{\pi} & \cB^*  \arrow{d}{\pi} \\%
\cA^*  \arrow{r}{\tau}& \cA^* 
\end{tikzcd}
\]  
An inductive argument then shows that
\begin{equation}
\label{eq:diagram:substitutive&quasi_unif=>linear_complexity}
    \pi(\sigma^n(b)) = \tau^n(\pi(b))
    \enspace \text{for all $n \ge 1$ and $b \in \cB$,}
\end{equation} 
as can be seen on the commutative diagram below:
\[\begin{tikzcd}
\cB^* \arrow[r,"\sigma"] \arrow[d,"\pi"] &
\cB^* \arrow[r,"\sigma"] \arrow[d,"\pi"] &
\cdots \arrow[r,"\sigma"] \arrow[d,"\pi"] &
\cB^* \arrow[d,"\pi"]
\\
\cA^* \arrow[r,"\tau"] & \cA^* \arrow[r,"\tau"]  & \cdots \arrow[r,"\tau"] & \cA^*
\end{tikzcd}
\]
Let $b_0 \in \cB$ be the letter corresponding to $w_0 \in R$, i.e., $\pi(b_0) = w_0$.
Then, $\sigma$ is prolongable on $b_0$ (since $\tau$ is prolongable on $x_0$), so $\infw{y} = \sigma^\omega(b_0)$ exists.
Moreover, it follows from \eqref{eq:diagram:substitutive&quasi_unif=>linear_complexity} that $\infw{x} = \pi(\infw{y})$.

Let us prove that $\sigma$ is growing and quasi-uniform.
For a substitution $\eta$, let us recall that $|\eta|$ denotes the longest image $|\eta(a)|$, with $a$ a letter. 
Let also $\ell$ be the length of the longest words in $\cR$.
Then, for $b \in \cB$ and $n \ge 1$, we can bound using \eqref{eq:diagram:substitutive&quasi_unif=>linear_complexity}
\[  |\sigma^n(b)| \leq 
    |\pi(\sigma^n(b))| =
    |\tau^n(\pi(b))| \leq 
    \ell |\tau^n|.
\]
We can also prove a lower bound as follows.
Let $c$ be the first letter of $\pi(b)$, which is growing as $\pi(b) \in \cR$.
Then, using~\eqref{eq:diagram:substitutive&quasi_unif=>linear_complexity} again, 
\[  |\sigma^n(b)| \ge 
    \frac{1}{|\pi|}|\pi(\sigma^n(b))| =
    \frac{1}{|\pi|}|\tau^n(\pi(b))| \ge 
    \frac{1}{|\pi|} |\tau^n(c)|.
\]
By Hypothesis~\eqref{eq:hip:substitutive&quasi_unif=>linear_complexity}, we have $|\tau^n(c)| \ge \frac{1}{C} |\tau^n|$, which in turn yields
\[  \frac{1}{C|\pi|} |\tau^n| \le |\sigma^n(b)| \leq 
    \ell |\tau^n|
\]
for any $b \in \cB$.
This proves that $\sigma$ is quasi-uniform.
Note that, since $\sigma$ has a growing letter (namely, $b_0$), this implies that $\sigma$ is growing.
We can then use use Item (3) of~\cref{general_substitutions:basic_stuff}, giving that $\infw{y}$, as well as $\infw{x} = \pi(\infw{y})$, has linear-growth factor complexity.
\end{proof}

The following result describes the structure of substitutive words that are factor balanced.
In particular, it shows that any such word has linear-growth complexity.
We will use the following definitions.
Let $\infw{x}=(x_n)_{n\ge 0}$ be the fixed point of a substitution $\tau \colon \cA^* \to \cA^*$ generated by a prolongable letter.
We define the set $R \subseteq \cA$ of \emph{recurrent letters} of $\infw{x}$, i.e., the letters that occur infinitely many times in $\infw{x}$.
As $\cA$ is finite, we can find $i \ge 1$ such that $x_j \in R$ for all $j \ge i$.
Thus, for any $a \in R$ occurring at a position $j \ge i$, then $\tau(x_j)$ occurs at a position $k \ge j \ge i$, which implies that $\tau(x_j)$ contains only letters in $R$.
This shows that $\tau(R) \subseteq R^*$.
Furthermore, write $u = \infw{x}_{[0,i)}$ (with $i$ taken as above) and $\tau(u) = u v$ for some non-empty word $v$.
We next define the following substitutions
\begin{equation}   
\label{eq:defi_statement:UFB&substitutive=>uniformly_recurrent}
    \sigma \colon \begin{cases}
        a & \mapsto \tau(a), \enspace \text{for $a \in R$} \\
        * & \mapsto * v
    \end{cases}
    \quad \text{and} \quad 
    \pi \colon \begin{cases}
        a & \mapsto a, \enspace \text{for $a \in R$} \\
        * & \mapsto u.
    \end{cases}
\end{equation}

\begin{proposition} \label{UFB&substitutive=>uniformly_recurrent}
Let $\infw{x}$ be the fixed point of a substitution $\tau \colon \cA^* \to \cA^*$ generated by a prolongable letter.
Consider the set $R$ defined above and the substitutions $\sigma,\pi$ defined in \eqref{eq:defi_statement:UFB&substitutive=>uniformly_recurrent}.
If $\infw{x}$ is factor-balanced, then the restriction $\tau|_R \colon R^* \to R^*$ is such that $\cL(\tau|_R)$ is uniformly recurrent and factor-balanced.
Furthermore, we have $\infw{x} = \pi(\sigma^\omega(*))$.
In particular, $\infw{x}$ has linear-growth factor complexity.
\end{proposition}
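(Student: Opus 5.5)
The proof splits into three parts: the identity $\infw{x}=\pi(\sigma^\omega(*))$, which is purely combinatorial; the structure of $\tau|_R$, which is obtained from factor-balancedness via \cref{recurrence&balance=>uniform_recurrence}, \cref{factorBal-powerFree:technical} and \cref{general_substitutions:basic_stuff}; and linear complexity, which is obtained from \cref{substitutive&quasi_unif=>linear_complexity}.

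\textbf{The identity.} Since $\tau(u)=uv$ and $\sigma$ acts as $\tau$ on $R$, induction gives
\[
\pi(\sigma^n(*)) = u\,v\,\tau(v)\cdots\tau^{n-1}(v)=\tau^n(u).
\]
Since $\infw{x}$ is prolongable and $u$ is a prefix of $\infw{x}$, the words $\tau^n(u)$ are prefixes of $\infw{x}$ of length tending to infinity (if $|\tau^n(u)|$ stays bounded, $\infw{x}$ is handled trivially). Hence $\infw{x}=\pi(\sigma^\omega(*))$.

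\textbf{Structure of $\tau|_R$.} The tail $\infw{x}'=x_ix_{i+1}\cdots$ lies in $R^\NN$. Every factor of $\infw{x}'$ lies in $\cL(\infw{x})$, and $\cL(\tau|_R)\subseteq\cL(\infw{x})$, since $\tau^n(a)$ occurs in $\infw{x}$ for each recurrent $a$. I would use these three facts in order.
\begin{enumerate}
\item \emph{Uniform recurrence.} By the remark after the definition of uniform recurrence, there is a uniformly recurrent word $\infw{y}$ with $\cL(\infw{y})\subseteq\cL(\tau|_R)$. By \cref{recurrence&balance=>uniform_recurrence}(2), all recurrent words with language inside $\cL(\infw{x})$ share a single language. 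Any $w\in\cL(\tau|_R)$ lies in some $\tau^n(a)$ with $a\in R$ recurrent, so $w$ occurs infinitely often in $\infw{x}$. From this I would extract a recurrent word containing $w$, for instance by taking a limit point of the shifts at these occurrences. That forces $w\in\cL(\infw{y})$, so $\cL(\tau|_R)=\cL(\infw{y})$ is uniformly recurrent. This step is the one I expect to need the most care: one must make sure the recurrent limit word really contains $w$ and has language inside $\cL(\infw{x})$.
\item \emph{Factor-balancedness.} Once $\cL(\tau|_R)=\cL(\infw{y})\subseteq\cL(\infw{x})$, any two factors of equal length of $\infw{y}$ are factors of $\infw{x}$, so $\infw{y}$ inherits the balance constants of $\infw{x}$.
\item \emph{Power-freeness.} Suppose $\infw{y}$ is aperiodic. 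If $w^{2C_w+3}\in\cL(\infw{y})$, then \cref{factorBal-powerFree:technical} makes $\infw{y}$ eventually periodic, hence periodic by recurrence, a contradiction. This only bounds powers of each fixed $w$ separately, so I would instead show that all powers $w^k$ are uniformly excluded. If unbounded powers occurred, their limit would be a periodic word whose language lies in the minimal language $\cL(\infw{y})$, contradicting aperiodicity. It follows that $\cL(\tau|_R)$ is $P$-power-free for some $P$. If $\infw{y}$ is instead periodic, the whole argument is easy.
\end{enumerate}
Items (1) and (2) of \cref{general_substitutions:basic_stuff} then give that $\tau|_R$ is tame; quasi-primitivity of $\tau|_R$ on its growing letters would follow from uniform recurrence.

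\textbf{Linear complexity.} Next I would verify that $\sigma$ satisfies the hypothesis of \cref{substitutive&quasi_unif=>linear_complexity}.
\begin{itemize}
\item \emph{Tameness of $\sigma$.} The language $\cL(\sigma)$ consists of words of $\cL(\tau|_R)$ together with a prefix involving $*$. So bounded-letter blocks remain finite in number, by tameness of $\tau|_R$ and because $*$ occurs only once.
\item \emph{Quasi-uniformity on growing letters.} For growing letters of $\tau|_R$, uniform recurrence with linear-size return words gives that $|\tau^n(a)|$ and $|\tau^n(b)|$ are comparable. Concretely, each $\tau^n(b)$ for growing $b$ contains $\tau^{n-k}(a)$ for every growing $a$, and a bounded ratio follows via primitivity of the growing part. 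For $*$, we have $|\sigma^n(*)|=|\tau^n(u)|-|u|+1$, which is comparable to $\max_a|\tau^n(a)|$ because $u$ contains a growing letter. This sub-step is the second likeliest place for trouble: one must control that $*$ does not grow much faster, for example quadratically as in Pansiot's examples. Factor-balancedness of $\infw{x}$ should rule this out, since a faster-growing prefix would give unbounded discrepancy.
\end{itemize}
With the hypothesis verified, \cref{substitutive&quasi_unif=>linear_complexity} gives linear complexity for $\sigma^\omega(*)$. Since $\pi$ is a substitution with bounded images, this transfers to $\infw{x}=\pi(\sigma^\omega(*))$.
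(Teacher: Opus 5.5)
Your identity $\pi(\sigma^n(*))=\tau^n(u)=u\,v\,\tau(v)\cdots\tau^{n-1}(v)$ is correct, and so is the inheritance of balance constants. The power-freeness step fails, however, and your use of \cref{substitutive&quasi_unif=>linear_complexity} depends on it through the tameness of $\tau|_R$ and of $\sigma$.

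If powers $w_k^{m_k}$ with $m_k\to\infty$ have bounded root lengths $|w_k|$, then a single root recurs, and your per-root bound already handles that case. If $|w_k|\to\infty$, no periodic limit need exist. In fact no argument using only factor-balancedness, uniform recurrence and aperiodicity can give power-freeness. Sturmian words are factor-balanced (Fagnot and Vuillon~\cite{FV:2002}). Yet the relation $u_{n+1}=u_n^{a_n}v_n$ in the proof of \cref{thm:mainAR} shows that a Sturmian word with unbounded partial quotients contains arbitrarily high powers.

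\cref{factorBal-powerFree:technical} bounds the exponent only for each \emph{fixed} root. Upgrading this to a uniform bound must use the substitution. One way is the Ehrenfeucht--Rozenberg theorem: a D0L language containing arbitrarily high powers contains $w^k$ for one fixed $w$ and all $k$. The D0L language generated by $\tau$ from $x_0$ has exactly $\cL(\infw{x})$ as its set of factors, so \cref{factorBal-powerFree:technical} applied to that $w$ gives the contradiction. Note that the paper's own proof takes power-freeness from \cref{factorBal-powerFree:main_prop}, which is stated for \emph{uniformly} factor-balanced words, so some such substitutive input is genuinely needed here.

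Two further steps need repair.

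In the uniform-recurrence part, a limit point of shifts taken at occurrences of $w$ is not recurrent in general: for $0\,1\,0^2\,1\,0^3\,1\cdots$ and $w=1$ it is $1\,0^\infty$. What you need is the counting argument from the proof of \cref{recurrence&balance=>uniform_recurrence}. If a prefix $p$ of $\infw{x}$ has $|p|_w\ge C_w+1$, then every factor of $\infw{x}$ of length $|p|$ contains $w$, so $w\in\cL(\infw{y})$ directly, with no limit point needed. With this fix, identifying $\cL(\tau|_R)$ with the unique minimal language inside $\cL(\infw{x})$ is a valid and more self-contained alternative to the paper's route. The paper instead gets quasi-primitivity of $\tau|_R$ from bounded gaps of recurrent letters, then tameness, then Shimomura's theorem (Item~(2) of \cref{general_substitutions:basic_stuff}).

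In the quasi-uniformity part, you leave the case of $*$ open, and comparing $|\sigma^n(*)|$ with $\max_{a\in\cA}|\tau^n(a)|$ is the wrong comparison. Letters of $u$ outside $R$ play no role; what must be bounded is the ratio with the growing letters of $R$. Also, uniform recurrence gives no linear bound on return words, so that justification does not apply.

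Your own identity settles the $*$ case: $|\sigma^n(*)|=1+\sum_{j<n}|\tau^j(v)|$ with $v\in R^*$. Once all growing letters of $R$ grow like $\theta^n$ for a common $\theta>1$, the geometric sum gives $|\sigma^n(*)|\asymp\theta^n$. The common rate comes from primitivity of the growing block of $\tau|_R$, and tameness excludes $\theta=1$. The paper packages this as a Perron--Frobenius criterion: a fixed growing $\bar a\in R$ occurs in a power of every growing letter of $\sigma$, including $*$.
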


\begin{proof}
We may assume that $\infw{x}$ is not eventually periodic, as otherwise the result is immediate.
Furthermore, without loss of generality, we assume that every letter of $\cA$ occurs in $\tau^n(x_0)$ for some $n \ge 1$ (that might depend on the letter).
This implies that $\cL(\infw{x}) \supseteq \cL(\tau)$.

Let $\tau|_R\colon R^* \to R^*$ denote the restriction of $\tau$ to $R^*$.
The strategy to prove the properties about $\tau|_R$ is to use~\cref{general_substitutions:basic_stuff}.
By the same argument as that used in~\cref{recurrence&balance=>uniform_recurrence}, we can use, for each letter $a$ in $R$, that $a$ is balanced and recurrent in $\infw{x}$ to deduce that it occurs with bounded gaps in $\infw{x}$.
So, for every growing letter $a \in \cC_{\tau} \cap R$, every large enough power $\tau^n(a)$ has occurrences of every letter in $R$.
This shows that the restriction $\tau|_R$ is quasi-primitive.
Let us prove that it is tame as well.
By what we proved in the first paragraph, we have $\cL(\infw{x}) \supseteq \cL(\tau) \supseteq \cL(\tau|_R)$.
Since~\cref{factorBal-powerFree:main_prop} implies that $\infw{x}$ is $P$-power-free for some $P \in \NN$, we deduce that $\cL(\tau|_R)$ is $P$-power-free.
Thus, $\tau|_R$ is tame by Item (1) of~\cref{general_substitutions:basic_stuff}.
As $\tau|_R$ is tame and quasi-primitive, Item (2) in~\cref{general_substitutions:basic_stuff} implies that $\cL(\tau|_R)$ is uniformly recurrent.
Furthermore, since $\cL(\infw{x}) \supseteq \cL(\tau|_R)$, then $\cL(\tau|_R)$ is factor-balanced.

Next, we construct $\sigma$ and $\pi$.
Recall that $i \ge 1$ is defined so that $x_j \in R$ for all $j \ge i$.
Set $u = \infw{x}_{[0,i)}$ as in the statement.
Since $\infw{x}$ is a fixed point of $\tau$, $u$ is a prefix of $\tau(u)$, so there exists $v$ with $\tau(u) = u v$ and $v \in R^* \setminus \{\varepsilon\}$.
Let $\sigma$ and $\pi$ be as in \eqref{eq:defi_statement:UFB&substitutive=>uniformly_recurrent}, and observe that $\pi(\sigma(a)) = \tau(\pi(a))$ for every $a \in R \cup \{*\}$.
By an inductive argument, we get
\begin{equation}
\label{eq:diagram:UFB&substitutive=>uniformly_recurrent}
    \pi(\sigma^n(a)) = \tau^n(\pi(a))
    \enspace \text{for all $n \ge 1$ and $a \in R \cup \{*\}$.}
\end{equation}
Clearly, $\sigma$ is prolongable on $*$, so $\infw{y} = \sigma^\omega(*)$ is a well-defined infinite word.
Since $\pi(*) = u =\infw{x}_{[0,i)}$, $\pi(*)$ starts with $x_0$, which together with \eqref{eq:diagram:UFB&substitutive=>uniformly_recurrent} allows us to write 
\[  \infw{x} = \tau^\omega(\pi(*)) = 
    \pi(\sigma^\omega(*)) = \pi(\infw{y}).
\]

It remains to show that $\infw{x}$ has linear-growth complexity.
By the last identity, it suffices to prove that $\infw{y}$ has linear-growth complexity.
To this end, we check the hypotheses of~\cref{substitutive&quasi_unif=>linear_complexity}.
Let us first prove that $\sigma$ is tame.
We have, by~\cref{factorBal-powerFree:main_prop}, that $\infw{x}$ is $P$-power-free for some $P > 0$.
Since $\infw{x} = \pi(\infw{y})$, the same holds for $\infw{y}$.
Since $\cL(\infw{x}) \supseteq \cL(\tau)$, then $\cL(\infw{y}) \supseteq \cL(\sigma)$ by \eqref{eq:diagram:UFB&substitutive=>uniformly_recurrent}.
We deduce that $\cL(\sigma)$ is $P$-power-free, and thus that $\sigma$ is tame by Item (1) of~\cref{general_substitutions:basic_stuff}.
Next, we prove that $\sigma$ is quasi-uniform.
By classical results in Perron-Frobenius theory, it is enough to find a growing letter $\bar{a} \in \cC_\sigma$ such that, for any $b \in \cC_\sigma$, there is a power $\sigma^n(b)$ with an occurrence of $\bar{a}$; see for instance~\cite[Lemma 11]{LeroyRigoCharlier}.
Fix $\bar{a} \in \cC_\sigma \cap R$ and consider an arbitrary $b \in \cC_\sigma$.
As $\tau|_R$ is quasi-primitive and $\sigma|_R = \tau|_R$, then $\sigma|_R$ is quasi-primitive as well.
Hence, for every $b \in R \cap \cC_\sigma$ there is a power $\sigma^n(b)$ with an occurrence of $\bar{a}$.
Furthermore, $\infw{y}=\sigma^\omega(*)$, so there is a power $\sigma^n(*)$ with an occurrence of $\bar{a}$.
This proves the claim and completes the proof after using~\cref{substitutive&quasi_unif=>linear_complexity}.
\end{proof}



The next example gives a factor-balanced substitutive word that is not recurrent. Hence, in Proposition~\ref{UFB&substitutive=>uniformly_recurrent}, the auxiliary symbol $*$ is necessary to encode a possibly transient part.

\begin{example}
\label{exa:nonrecurrent_FB}
Let $\tau \colon \{0,1\}^* \to \{0,1\}^*$ be the Fibonacci substitution, i.e., $\tau(0) = 01$ and $\tau(1) = 0$.
Define $\sigma \colon \{0,1,*\}^* \to \{0,1,*\}^*$ by $\sigma(a) = \tau(a)$ for $a \in \{0,1\}$ and $\sigma(*) = * 11$.
Note that $11$ does {\em not} belong to $\cL(\tau)$. In~\cref{appendix:example}, we prove that the fixed point 
\[
\infw{x}= \sigma^\omega(*)=* 11 \tau(11) \tau^2(11)\cdots =*11000101010010010010100\cdots
\]
is factor-balanced and that none of its shifts is recurrent.
To do this, we first prove that $\infw{x}$ is letter-balanced.
Then, we show that $\infw{x}$ is \emph{Fibonacci-automatic}, i.e., it can be generated by a DFAO where the input is read in the Zeckendorf numeration system.
Finally, equipped with this property, we make use of the automatic theorem-prover \texttt{Walnut}~\cite{Walnut-2023} to prove that $\infw{x}$ is $5$-power-free and no shift of $\infw{x}$ is recurrent.
Therefore, by~\cref{thm:main_thm_weaker_hyp}, $\infw{x}$ is uniformly factor-balanced. In addition, by \cref{UFB&substitutive=>uniformly_recurrent}, it has linear complexity. 

As a final note to this example, observe that the first shift $\infw{x'}=11000\cdots$ of $\infw{x}$ has factor complexity $2n$, i.e., it belongs to the family of \emph{Rote words}. Also, this word seems to be $2$-letter-balanced, which is the smallest possible constant for such a word. Together with the word $h(\psi^{\omega}(0))$, where $\psi: 0\mapsto 0121, 1\mapsto 0, 2\mapsto 01$ and $h:0 \mapsto 01, 1 \mapsto 0, 2 \mapsto 011$, they are the only known examples of Rote words with this property\footnote{This follows from a personal communication with L. Dvořáková, E. Pelantová, and J. Shallit.}.
\end{example}

\subsection{Toeplitz words}
\label{sec:Toeplitz}

A {\em Toeplitz word} is a uniformly recurrent infinite word $\infw{x}=(x_n)_{n\ge 0} \in \cA^\NN$ such that for every $n \in \NN$ there exists a period $p \ge 1$ for which $x_{n+kp} = x_n$ for every $k \in \NN$.
We present a method to construct Toeplitz words following~\cite{DurandPerrinBook}, which is based on~\cite{GJ2000}. 
We also warn the reader that this definition is more general than that of ``Toeplitz words'' presented, for instance, in~\cite{CassaigneKarhumaki1997}. Finally, we  refer to the survey of Downarowicz~\cite{Downarowicz2005} for more details. 

Let us fix an alphabet $\cA$.
The construction depends on two parameters.
First, we need an increasing sequence $(p_n)_{n \ge 0}$ of positive integers such that 
\begin{enumerate}
    \symitem{$(\mathbf{T\!p_{1}})$}{defi:Toeplitz:divides}
    The integer $p_n$ divides $p_{n+1}$ for all $n \ge 0$.
\end{enumerate}
For $n\ge 1$, define the integer $q_n = p_n/p_{n-1}$.
The second parameter is a sequence $(W_n)_{n\ge 0}$ of sets of words with $W_n \subseteq \cA^{p_n}$ and satisfying the following: for every $n \ge 0$, 
\begin{enumerate}
    \symitem{$(\mathbf{T\!p_2})$}{defi:Toeplitz:hierarchical}
    Each $w \in W_{n+1}$ is a concatenation $w = \phi_n(w,1) \phi_n(w,2) \cdots \phi_n(w,q_n)$ of $q_n$ words from $W_n$; and
    \symitem{$(\mathbf{T\!p_3})$}{defi:Toeplitz:proper}
   There exists $w_n \in W_n$ that is a prefix of each word in $W_{n+1}$.
\end{enumerate}
Observe that the last property ensures that $w_n$ is always a prefix of $w_{n+1}$.
Since $|w_n| = p_n$ is increasing, we obtain an infinite word
\[  \infw{x} = \lim_{n\to\infty} w_n \in \cA^\NN.  \]
If we furthermore assume that the parameters satisfy, for every $n \ge 1$,
\begin{enumerate}
    \symitem{$(\mathbf{T\!p_4})$}{defi:Toeplitz:primitive}
     The set $\{ \phi_n(w,i) : 1 \leq i \leq q_n \}$ equals $W_n$ for every $w \in W_{n+1}$,
\end{enumerate}
then $\infw{x}$ is uniformly recurrent, and $\infw{x}$ is a Toeplitz word.

\begin{example}
\label{ex:PD_is_Toeplitz}
We exemplify the definitions by showing that the fixed point of the period doubling substitution is a Toeplitz word.
Let $\cA = \{0,1\}$ and $p_n = 4^n$ for $n \ge 0$, which satisfy \ref{defi:Toeplitz:divides}.
We inductively define the sets of words $W_n = \{u_0(n), u_1(n)\} \subseteq \cA^{p_n}$ as follows.
First, $u_0(0) = 0$ and $u_1(0) = 1$, and then we set
\begin{equation}
    \label{ex:PD_is_toeplitz:recurrence}
    \begin{cases}
    u_{n+1}(0)&=u_n(0)\, u_n(1)\, u_n(0)\, u_n(0), \\
    u_{n+1}(1)&=u_n(0)\, u_n(1)\, u_n(0)\, u_n(1),
    \end{cases}
\end{equation}
for $n \ge 0$.
Conditions \ref{defi:Toeplitz:hierarchical} and \ref{defi:Toeplitz:primitive} are satisfied by the very construction, and \ref{defi:Toeplitz:proper} holds as well with $w_n = u_n(0)$.
Therefore, the sequence $(u_n(0))_{n\ge 0}$ of words converges to a uniformly recurrent word $\infw{x} \in \cA^\NN$, which is a Toeplitz word.

One can check that $\infw{x}$ is exactly the fixed point of the {\em period doubling substitution} $\tau \colon \cA^* \to \cA^*$, defined by $\tau(0) = 01$ and $\tau(1) = 00$.
The equivalence of the two construction methods is a consequence of the recurrence relations in \eqref{ex:PD_is_toeplitz:recurrence} mimicking $\tau^2$, which is given by $\tau^2(0) = 0100$ and $\tau^2(1) = 0101$.
Note that in \eqref{ex:PD_is_toeplitz:recurrence} one needs to use $\tau^2$ instead of $\tau$ to guarantee that the construction satisfies \ref{defi:Toeplitz:primitive}.
\end{example}

Observe that the words $\phi_n(w,i)$ in \ref{defi:Toeplitz:hierarchical} essentially define substitutions.
We now formalize this idea, as it will be highly convenient to describe the Toeplitz construction using the language of substitutions.

Let us fix a sequence $(\cA_n )_{n \ge 0}$ of alphabets such that $\cA_0 = \cA$ and $\cA_n$ is in bijection with $W_n$ for each $n\ge 0$.
Then, these bijections translate the decompositions $w = \phi_n(w,1) \phi_n(w,2) \cdots \phi_n(w,q_n)$ in \ref{defi:Toeplitz:hierarchical} into a map $\tau_n \colon \cA_{n+1} \to \cA_n^{q_n}$.
The map $\tau_n$ extends uniquely into a substitution $\tau_n \colon \cA_{n+1}^* \to \cA_n^*$ such that 
\begin{enumerate}
    \symitem{$(\mathbf{T\!p'_1})$}{defi:Toeplitz:sadic:divides} 
    We have $|\tau_n(a)| = \frac{p_n}{p_{n-1}}$ for every $n \ge 1$ and every $a \in \cA_{n+1}$.
\end{enumerate}
Conditions \ref{defi:Toeplitz:proper} and \ref{defi:Toeplitz:primitive} are then equivalent to, for every $n \ge 0$,
\begin{enumerate}
    \symitem{$(\mathbf{T\!p'_3})$}{defi:Toeplitz:sadic:proper} 
    There exists $a_n \in \cA_n$ such that $\tau_n(a)$ begins with $a_n$ for every $a \in \cA_{n+1}$.
    \symitem{$(\mathbf{T\!p'_4})$}{defi:Toeplitz:sadic:primitive} 
    Every $b \in \cA_n$ occurs in $\tau_n(a)$ for every $a \in \cA_{n+1}$.
\end{enumerate}
Condition \ref{defi:Toeplitz:sadic:proper} is usually called {\em left-properness}, and \ref{defi:Toeplitz:sadic:primitive} {\em strong primitiveness}.

Therefore, a Toeplitz word $\infw{x}$ is completely described by a sequence $(\tau_n \colon \cA_{n+1} \to \cA_n^{q_n})_{n \ge 0}$ of substitutions satisfying \ref{defi:Toeplitz:sadic:divides}, \ref{defi:Toeplitz:sadic:proper} and \ref{defi:Toeplitz:sadic:primitive}, by the formula
\[  \infw{x} = \lim_{n \to \infty} \tau_0 \circ \tau_1 \circ \dots \circ \tau_n(a_{n+1}). \]
Observe that, for each $m\in\NN$, the shifted sequence $(\tau_n)_{n \ge m}$ also satisfies the Toeplitz conditions, so it defines a Toeplitz word
\begin{equation}
    \label{defi:Toeplitz:levels}
    \infw{x^{(m)}} = \lim_{n \to \infty} \tau_m \circ \tau_{m+1} \circ \dots \circ \tau_n(a_{n+1})
    \enspace \text{for each $m \ge 0$.}
\end{equation}
The intermediate words $(\infw{x^{(m)}})_{m\ge 0}$ play an important role in understanding $\infw{x}$.
They satisfy the relations
\begin{equation}
    \label{defi:Toeplitz:xm_to_xn}
    \infw{x^{(m)}} = \tau_m \circ \tau_{m+1} \circ \dots \circ \tau_{n-1}(\infw{x^{(n)}}),
    \enspace \text{for all $n > m \ge 0$.}
\end{equation}
This shows in particular that $\infw{x^{(m)}}$ is an infinite concatenation of words $\tau_m \circ \tau_{m+1} \circ \dots \circ \tau_{n-1}(a)$ for $a \in \cA_n$.

\begin{remark}
    We carefully warn the reader that the notation $\infw{x^{(m)}}$ in the framework of Toeplitz does not refer to the notation of inducing tuples from the previous sections of the paper. Since we do not use both of these objects simultaneously, what the notation $\infw{x^{(m)}}$ means should be clear regarding the context. 
\end{remark}

\subsection{Exponential factor complexity}
\label{sec:exponential sub complexity}
Our aim is to prove~\cref{thm: FB word with exp complexity}.
We thus construct a factor-balanced Toeplitz word $\infw{x}$ whose complexity function $p_\infw{x}$ satisfies 
\[  \lim_{n\to\infty} \frac{1}{n} \log p_\infw{x}(n) > 0, \] i.e., $\infw{x}$ has a positive entropy. 

Let $\cA_0$ be an alphabet. Consider two sequences of positive integers $(k_n)_{n \ge 0}$ and $(r_n)_{n \ge 0}$ with $r_n\leq 2k_n$ and that will be fixed later. We inductively construct $\tau_n \colon \cA_{n+1}^* \to \cA_n^*$ as follows.

First, we simply set $\cA_1 = \cA_0$ and $\tau_0 \colon \cA_1 \to \cA_0$ to be the identity map, i.e., $\tau_0(a) = a$ for all $a \in \cA_1$, and we extend it to a substitution by concatenation.
Suppose that $\tau_{n-1}$ has been defined.
Fix a primitive word $v_n u_n \in \cA_n^*$ with $|u_n| = |v_n| = r_n$.

Let $W_n$ be the set of all words $w \in \cA_n^*$ such that:
\begin{enumerate}
    \item The word $w$ begins with $u_n$ and ends with $v_n$;
    \item Each $a \in \cA_n$ occurs exactly $k_n$ times in $w$, i.e., $|w|_a=k_n$;
\end{enumerate}
Note that the second condition implies that $|w| = k_n |\cA_n|$ for all $w \in W_n$.
We let $\tau_n \colon \cA_{n+1} \to W_n$ to be any bijection between the alphabet $\cA_{n+1}$ and $W_n$, and then extend $\tau_n$ to a substitution $\tau_n \colon \cA_{n+1}^* \to \cA_n^*$.

\begin{example}
    Let us describe how the first step works on the simple example where $r_1=2$, $k_1=4$, $\cA_{1}=\{0,1\}$, $u_1=01$, and $v_1=10$. Then $w \in W_1$ if $w=01w'10$, $|w'|_0=|w'|_1=2$. Thus, there are six choices for $w'$: $0011,0101,0110,1001,1010,1100$. Finally, we define $\cA_2=\{0,1,\ldots, 5\}$ and $\tau_1:0\mapsto 0011,1\mapsto 0101,\ldots, 5\mapsto 1100$.  
\end{example}

It is not difficult to see that the sequence $(\tau_n )_{n \ge 0}$ of substitutions defined above satisfies the Toeplitz conditions, so they define Toeplitz words $\infw{x^{(m)}}$ according to \eqref{defi:Toeplitz:levels}.
We set $\infw{x} = \infw{x^{(0)}}$. 


\begin{theorem}
    \label{Toep_construct:factor_bal&positive_entropy}
    The word $\infw{x}$ is factor-balanced. Moreover, there exists a choice of sequences $(r_n,k_n)$ such that the word $\infw{x}$ has exponential-growth factor complexity.
\end{theorem}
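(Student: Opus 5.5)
The plan is to prove the two parts separately: factor-balancedness from the abelian rigidity of the construction combined with decisiveness, and positive entropy from counting the letters of $\cA_{n+1}$.

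\textbf{Step 1 (abelian rigidity).} First I would show by induction on $n$ that all words $\tau_{0,n}(a)$, $a\in\cA_n$, have the same Parikh vector and the same length $p_n=\prod_{m<n}k_m|\cA_m|$. This holds because every $w\in W_n$ contains each letter of $\cA_n$ exactly $k_n$ times. Since each $\tau_n$ is injective on letters and uniform, each $\tau_{0,n}$ is injective on $\cA_n^*$.

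\textbf{Step 2 (decisiveness).} Next I would note that every $\tau_n(a)$ begins with $u_n$, where $r_n\ge1$. Hence $\tau_{0,n}$ is $\ell$-decisive in $\infw{x^{(n)}}$ with $\ell\ge p_{n-1}$: the next block always begins with $\tau_{0,n-1}(u_n)$, whatever the letter. Fix a factor $u$ of $\infw{x}$ and choose $n$ with $|u|\le p_{n-2}$. By \cref{prop:formula_occurrences_decisive} applied to $\tau_{0,n-1}$ and $w=\tau_{n-1}(b)$, the number of occurrences of $u$ starting inside the block $\tau_{0,n}(b)$ equals $\sum_{c\in\cA_{n-1}}k_{n-1}\,q_{\tau_{0,n-1},\infw{x^{(n-1)}}}(u,c)=:N_u$. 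This count is independent of $b\in\cA_n$, since $|\tau_{n-1}(b)|_c=k_{n-1}$ for every $c$. The point is that decisiveness of $\tau_{0,n-1}$ makes the count a linear function of the Parikh vector of $\tau_{n-1}(b)$, and that Parikh vector is constant.

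\textbf{Step 3 (balance bound).} Now take two factors $s,t$ of $\infw{x}$ of equal length $L$. Cut each one along the natural decomposition $\infw{x}=\tau_{0,n}(\infw{x^{(n)}})$ into a prefix piece, some number of full level-$n$ blocks, and a suffix piece, where the two end pieces have length $<p_n$. The numbers of full blocks in $s$ and $t$ differ by at most $1$. Occurrences of $u$ starting in the end pieces number at most $2p_n$. Therefore $\bigl||s|_u-|t|_u\bigr|\le N_u+4p_n$, a constant depending only on $u$ (through $n$), so $\infw{x}$ is factor-balanced. This argument needs no constraint on $(r_n,k_n)$.

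\textbf{Step 4 (entropy).} For the complexity statement I would take $|\cA_0|\ge2$ and $r_n=1$, with $u_n=a$ and $v_n=b$ distinct letters, so that $v_nu_n=ba$ is primitive. Then $|\cA_{n+1}|=|W_n|$ is a multinomial coefficient, essentially $\binom{k_n|\cA_n|-2}{k_n,\dots,k_n-1,\dots}$. By the injectivity from Step 1, the words $\tau_{0,n+1}(a)$, $a\in\cA_{n+1}$, are distinct factors of length $p_{n+1}$, so $p_\infw{x}(p_{n+1})\ge|\cA_{n+1}|$. Set $h_n=\log|\cA_n|/p_n$. Stirling gives $\log|W_n|\ge k_n|\cA_n|\log|\cA_n|\,(1-\varepsilon_n)$, with $\varepsilon_n=O\bigl(\log(k_n|\cA_n|)/(k_n\log|\cA_n|)\bigr)$. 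Hence $h_{n+1}\ge h_n(1-\varepsilon_n)$. Choosing $k_n$ growing fast enough that $\sum\varepsilon_n<\infty$ yields $\inf_n h_n>0$. Since $\lim\frac1n\log p_\infw{x}(n)$ exists by submultiplicativity, a positive lower bound along the subsequence $(p_n)$ gives positive entropy. I expect the main obstacle to be this Stirling estimate: it must be uniform enough to make the losses $\varepsilon_n$ summable despite $|\cA_n|$ itself exploding, so I would make it explicit with the crude bound $\binom{N}{k,\dots,k}\ge |\cA|^{N}/(N+1)^{|\cA|}$.
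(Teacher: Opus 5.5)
Your proof is correct, but it reaches both conclusions by a different route from the paper.

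\textbf{Factor-balancedness.} The paper only proves that each level $\infw{x^{(m)}}$ is $4k_m$-letter-balanced (\cref{Toep_construct:letter_balanced}). It then invokes the theorem of Poirier and Steiner on left-proper primitive $\cS$-adic sequences as a black box. Its own \cref{thm:char_balance_LR} is unavailable here because the alphabets $\cA_n$ are unbounded. You instead give a self-contained argument built on the paper's decisiveness formula (\cref{prop:formula_occurrences_decisive}). Since $\tau_{0,n-1}$ is $|u|$-decisive and every $\tau_{n-1}(b)$ has the same Parikh vector, every level-$n$ block carries exactly $N_u$ occurrences of $u$. Uniform block length then finishes the job. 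This uses the exact equidistribution of the construction, not mere letter-balancedness, so it is tailored to this example; the paper's route applies to any left-proper primitive sequence with letter-balanced levels. In exchange, you avoid the external theorem and get an explicit constant, as well as the frequency $N_u/p_n$.

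\textbf{Entropy.} The paper takes $r_n=|\cA_n|$, uses Stirling asymptotics, and iterates a recursion with an additive loss $3/p_n$. That loss forces a large initial alphabet and a large $k_1$. Your choice $r_n=1$, together with the type-class bound, gives a purely multiplicative recursion that works from any $|\cA_0|\ge 2$. You also correctly invoke Fekete's lemma to pass from the subsequence $(p_n)$ to the limit, which the paper leaves implicit.

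The uniformity you flagged as the main obstacle is not actually an issue. With $d=|\cA_n|\ge 2$ and $N=k_nd$, one has $|W_n|=\binom{N}{k_n,\dots,k_n}\frac{k_n^2}{N(N-1)}\ge d^{N-2}/(N+1)^d$. Hence $\varepsilon_n\le(\log_2(k_n+1)+2)/k_n$, independently of $d$, so $k_n\ge\max(16,n^2)$ suffices.

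There are a few indexing slips, none of which affects correctness:
\begin{itemize}
\item Since $\tau_0$ is the identity, $p_n=\prod_{1\le m<n}k_m|\cA_m|$. For the same reason, the equal-Parikh-vector claim of Step~1 holds only for $n\ge 2$.
\item The common prefix of the level-$n$ blocks is $\tau_{0,n-1}(u_{n-1})$, not $\tau_{0,n-1}(u_n)$. Consequently, Step~2 needs $n\ge 3$, so that $\tau_{n-2}$ is one of the left-proper substitutions.
\item In Step~3, up to $|u|-1$ of the occurrences counted in $N_u$ for the last full block may run past the end of the factor. Your $4p_n$ slack absorbs this.
\end{itemize}
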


Observe that~\cref{thm: FB word with exp complexity} directly follows from the previous statement.
We prove~\cref{Toep_construct:factor_bal&positive_entropy} using a series of intermediate results.~\cref{Toep_construct:factor_balanced} proves the first statement of the theorem, and~\cref{Toeplitz:exponential_complexity} the second. 

\begin{lemma}
    \label{Toep_construct:letter_balanced}
For every $m \ge 0$, $\infw{x^{(m)}}$ is $(4k_m)$-letter-balanced.
\end{lemma}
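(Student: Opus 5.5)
The plan is to use the fact that $\infw{x^{(m)}} = \tau_m(\infw{x^{(m+1)}})$ is an infinite concatenation of blocks $\tau_m(a)$, $a\in\cA_{m+1}$. By construction every such block lies in $W_m$, so it has length exactly $k_m|\cA_m|$ and contains each letter $b\in\cA_m$ exactly $k_m$ times. For $m=0$, where $\tau_0$ is the identity, one should instead regard $\infw{x^{(0)}}=\infw{x^{(1)}}$ and argue with $\tau_1$. Since the statement's constant is $4k_m$, the cleanest route is to handle general $m$ through the blocks of $\tau_m$. If the indexing of $k$ is off by one at $m=0$, I would simply note that the same argument applies with the relevant $k$.

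\textbf{Step 1.} Write $N=k_m|\cA_m|$ for the common block length. Every factor $u$ of $\infw{x^{(m)}}$ can be decomposed as $u = s\, \tau_m(z)\, p$, where $z\in\cL(\infw{x^{(m+1)}})$, $s$ is a proper suffix of some block and $p$ is a proper prefix of some block, so $|s|,|p| < N$. For each letter $b\in\cA_m$ we have $|\tau_m(z)|_b = k_m|z|$ and $|\tau_m(z)| = N|z|$.

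\textbf{Step 2.} Take two factors $u,u'$ of the same length $\ell$. Write $\ell = N|z| + |s|+|p|$ and $\ell = N|z'|+|s'|+|p'|$. Since $0\le |s|+|p| \le 2N-2$, the integers $|z|$ and $|z'|$ differ by at most $1$. Hence $\bigl||u|_b - |u'|_b\bigr| \le k_m\bigl||z|-|z'|\bigr| + \max(|s|_b+|p|_b,\ |s'|_b+|p'|_b)$. A proper suffix or prefix of a block contains at most $k_m$ occurrences of $b$, so each boundary term is at most $2k_m$. This gives the bound $k_m + 2k_m \le 4k_m$.

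\textbf{Step 3.} The crude bound above may double count: when $|z|=|z'|+1$, the word $u'$ has longer boundary pieces. This is the point where care is needed, because in that case $|s'|+|p'| \ge N$ is forced. A cleaner way to handle it is via discrepancy with respect to the frequency $1/|\cA_m|$. For any factor $u$, $|u|_b - |u|/|\cA_m| = (|s|_b - |s|/|\cA_m|) + (|p|_b-|p|/|\cA_m|)$. Each bracket has absolute value at most $k_m$, since for a piece $t$ of a block both $|t|_b$ and $|t|/|\cA_m|$ lie in $[0,k_m]$. So the discrepancy is at most $2k_m$, and the second item of \cref{lem:balance=>discrepancy} yields $4k_m$-balancedness.

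This discrepancy route is the one I would write up, since it matches the constant $4k_m$ exactly. The only real obstacle is checking that the decomposition into full blocks with short boundary pieces exists for every factor. That holds because $\infw{x^{(m)}}$ is literally a concatenation of $\tau_m$-images, so every factor sits inside such a concatenation, and short factors lying within at most two blocks are covered by the same bound.
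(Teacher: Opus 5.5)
Your proposal is correct, and the discrepancy route of Step 3 is essentially the paper's argument. The paper also proves $\bigl||u|_a - |u|/|\cA_m|\bigr| \le 2k_m$, there by covering $u$ with a minimal number $\ell$ of blocks $\tau_m(b_i)$, so that $(\ell-2)k_m \le |u|_a \le \ell k_m$ and $|u|/(k_m|\cA_m|) \le \ell \le |u|/(k_m|\cA_m|)+2$, and then gets $4k_m$ from the triangle inequality.

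Two side remarks. Your worry in Step 3 is unfounded, since the Step 2 estimate is already valid and even gives $3k_m$. Your caveat about $m=0$ is apt: there $\tau_0$ is the identity, so the block property $|\tau_0(b)|=k_0|\cA_0|$ used in the paper's proof fails, and the argument really runs through $\tau_1$.
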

\begin{proof}
Fix $m \ge 0$, let $u \in \cL(\infw{x^{(m)}})$ and $a \in \cA_m$. It is enough to prove that 
\begin{equation}
    \label{proof:main_claim:Toep_construct:letter_balanced}
    \left| |u|_a - k_m\, \frac{|u|}{|\cA_m|} \right| 
    \leq 2k_m,
\end{equation}
as then for any other $v \in \cL(\infw{x^{(m)}})$ of length $|v| = |u|$, the previous inequality yields
\[  \big| |u|_a - |v|_a \big|  \leq 
    \left| |u|_a - k_m\, \frac{|u|}{|\cA_m|} \right| + 
    \left| k_m\, \frac{|v|}{|\cA_m|} - |v|_a \right| \leq 4k_m. \]

Let us prove \eqref{proof:main_claim:Toep_construct:letter_balanced}.
Observe that, by construction, $\infw{x^{(m)}}$ is an infinite concatenation of words of the form $\tau_m(b)$ for $b \in \cA_{m+1}$.
In particular, since $u$ occurs in $\infw{x^{(m)}}$, it must occur in $\tau_m(b_1 \cdots b_\ell)$ for some word $b_1 \cdots b_\ell \in \cA_{m+1}^*$ with $\ell \ge 1$.
We take such a word of minimal length $\ell$. By construction, for any $b \in \cA_{m+1}$, $\tau_m(b)$ satisfies \[\begin{cases}
    |\tau_m(b)|=k_m |\cA_m|, \\
    |\tau_m(b)|_a=k_m. 
\end{cases}\]
Therefore, since $u$ occurs in $\tau_m(b_1 \cdots b_\ell)$, we have the following upper bounds \[ \begin{cases}
   |u| \leq \ell k_m |\cA_m|, \\
    |u|_a \leq \ell k_m.
\end{cases}\] 
Now, by the minimality of $\ell$, either $\ell \leq 2$ or $\tau_m(b_2 b_3 \cdots b_{\ell-1})$ occurs inside $u$.
In both cases, we obtain the lower bounds \[ \begin{cases}
   |u| \geq (\ell-2) k_m |\cA_m|,  \\
    |u|_a \geq (\ell-2) k_m.
\end{cases}\] 
From the previous inequalities we deduce that \[ \begin{cases}
   (\ell-2) k_m \leq |u|_a \leq \ell k_m,  \\
    \frac{|u|}{k_m |\cA_m|} \leq \ell \leq 
    \frac{|u|}{k_m |\cA_m|} + 2.
\end{cases}\] Therefore, plugging the second inequalities in the first, we obtain 
\[
\frac{|u|}{|\cA_m|} - 2 k_m \leq |u|_a \leq 
\frac{|u|}{|\cA_m|} + 2k_m.
\]
This proves \eqref{proof:main_claim:Toep_construct:letter_balanced} and thus completes the proof of the lemma.
\end{proof}

To prove the first part of~\cref{Toep_construct:factor_bal&positive_entropy}, we now use the following result of Poirier and Steiner~\cite{PoirierSteiner}.
In their paper, it is stated in terms of symbolic dynamics; we state an equivalent version for infinite words. 

\begin{proposition}[{\cite[Theorem 4.1]{PoirierSteiner}}]
    \label{Poirier&Steiner:factor_balance}
    Let $(\tau_n \colon \cA_{n+1}^* \to \cA_n^* )_{n \ge 0}$ be a sequence of substitutions that are left-proper and primitive.
    Let $\infw{x^{(m)}} \in \cA_m^\NN$ be the limit word of the shifted sequence $(\tau_n)_{n \ge m}$, for every $m \ge 0$.
    If each $\infw{x^{(m)}}$ is letter-balanced, then $\infw{x^{(0)}}$ is factor-balanced.
\end{proposition}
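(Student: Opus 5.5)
The plan is to mimic the proof of \cref{thm:char_balance_LR}, dropping everything that was needed only for \emph{uniform} bounds. Fix a nonempty $u\in\cL(\infw{x^{(0)}})$. The goal is a constant $D_u$, depending only on $u$, such that $\bigl||w|_u-\mu_u|w|\bigr|\le D_u$ for all $w\in\cL(\infw{x^{(0)}})$. Here $\mu_u$ is the quantity of \eqref{eq:def_freq_u:lem:bound_freqs_convergence} computed at a suitable level $n$. \cref{lem:balance=>discrepancy} then gives that $u$ is $2D_u$-balanced.

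\textbf{Step 1 (choice of level).} Left-properness gives decisiveness for free. Since $\tau_{n-1}$ is left-proper with common first letter $a_{n-1}$, every image $\tau_{0,n}(b)$ begins with $\tau_{0,n-1}(a_{n-1})$. So $\tau_{0,n}$ is $k$-decisive in $\infw{x^{(n)}}$, with the constant map $r\equiv\tau_{0,n-1}(a_{n-1})$ truncated to length $k=|\tau_{0,n-1}(a_{n-1})|$. I would pick $n$ with $|\tau_{0,n-1}(a_{n-1})|\ge|u|$. For this I need $|\tau_{0,n-1}(a_{n-1})|\to\infty$, which I expect to be the main (if mild) obstacle. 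First dispose of the eventually periodic case, where the statement is immediate. Then use primitivity: if lengths stayed bounded along infinitely many levels, the substitutions there would be letter-to-letter permutations. Composing such maps would make $\infw{x^{(0)}}$ periodic, or force $|\cA_n|=1$ infinitely often. I would handle this by telescoping those trivial levels away, which changes neither left-properness, primitivity nor the words $\infw{x^{(m)}}$ at the remaining levels.

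\textbf{Step 2 (desubstitution).} Write any $w\in\cL(\infw{x^{(0)}})$ as $p\,w\,s=\tau_{0,n}(w')$ with $w'\in\cL(\infw{x^{(n)}})$ minimal. Then $|p|,|s|\le|\tau_{0,n}|$, exactly as in the proof of \cref{thm:char_balance_LR}. By \cref{prop:formula_occurrences_decisive},
\[
|w|_u=\sum_{a\in\cA_n}|w'|_a\,q_{\tau_{0,n},\infw{x^{(n)}}}(u,a)+E_1,\qquad |w|=\sum_{a\in\cA_n}|w'|_a\,|\tau_{0,n}(a)|+E_2,
\]
with $|E_1|,|E_2|\le 2|\tau_{0,n}|+|u|$. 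Here I would use only the trivial bound $|t|_u\le|t|$, since no power-freeness is available and none is needed for non-uniform constants.

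\textbf{Step 3 (letter balance at level $n$).} Since $\infw{x^{(n)}}$ is $B_n$-letter-balanced, \cref{prop:letterbal-freq} and \cref{lem:balance=>discrepancy} give $|w'|_a=\nu_a|w'|+\varepsilon_a$ with $|\varepsilon_a|\le B_n$. Substitute this into both identities and eliminate $|w'|$, as in the proof of \cref{lem:bound_freqs_convergence} but with $P$ replaced by the trivial bounds $q\le|\tau_{0,n}(a)|+|u|$ and $\mu_u\le 2$. This yields $\bigl||w|_u-\mu_u|w|\bigr|\le C(B_n,|\cA_n|,|\tau_{0,n}|,|u|)$, a constant independent of $w$. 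This is the required $D_u$ and concludes factor-balancedness.
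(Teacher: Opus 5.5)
Your proof is correct. It differs from the paper only in that the paper gives no proof here: it imports the result from \cite[Theorem 4.1]{PoirierSteiner}, stated there for subshifts.

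What you do is rerun the non-uniform half of the proof of \cref{thm:char_balance_LR}. Left-properness of $\tau_{n-1}$ makes $\tau_{0,n}$ decisive in $\infw{x^{(n)}}$, with the constant extension $\tau_{0,n-1}(a_{n-1})$. Then \cref{prop:formula_occurrences_decisive} turns the occurrences of $u$ into a linear form in the letter counts of $w'$, and letter-balancedness of $\infw{x^{(n)}}$ controls that form.

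The observation that makes this work is correct. For non-uniform constants, any level $n$ with $|\tau_{0,n-1}(a_{n-1})|\ge|u|$ will do. So the scale-matching machinery is not needed: no \cref{lem:find_representative}, no Conditions \ref{defi:L1}--\ref{defi:L2}, no linear recurrence, no power-freeness. Crude bounds such as $q_{\tau_{0,n},\infw{x^{(n)}}}(u,a)\le|\tau_{0,n}(a)|$ suffice, and your error terms $E_1,E_2$ absorb all boundary effects.

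This gives a self-contained proof using only the paper's own tools. It also shows that primitivity is never used: the conclusion holds for any left-proper directive sequence whose levels are letter-balanced.

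The price is that your constant depends on $B_n$, $|\cA_n|$ and $|\tau_{0,n}|$. All three blow up in the Toeplitz construction, where $B_n=4k_m$ by \cref{Toep_construct:letter_balanced} and $|\cA_n|$ grows super-exponentially. This is consistent with the paper's remark that the construction yields no uniformly factor-balanced example.

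The ``main obstacle'' in your Step 1 is not an obstacle. Since each $\tau_n$ is left-proper, $\tau_{0,n}(a_n)$ is a prefix of $\tau_{0,n+1}(a_{n+1})$, so the lengths $|\tau_{0,n}(a_n)|$ are nondecreasing. They are unbounded exactly because $\infw{x^{(0)}}$ is assumed to lie in $\cA_0^{\NN}$. You can therefore delete the discussion of eventually periodic cases, letter-to-letter levels and telescoping.
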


\begin{lemma}
    \label{Toep_construct:factor_balanced}
    The word $\infw{x}$ is factor-balanced.
\end{lemma}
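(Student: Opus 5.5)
The plan is to apply \cref{Poirier&Steiner:factor_balance} directly to the sequence $(\tau_n)_{n\ge 0}$ from the construction. Three hypotheses must be checked: every $\tau_n$ is left-proper, every $\tau_n$ is primitive, and every level word $\infw{x^{(m)}}$ is letter-balanced. The third is exactly \cref{Toep_construct:letter_balanced}, which shows that $\infw{x^{(m)}}$ is $(4k_m)$-letter-balanced for each $m$. Since \cref{Poirier&Steiner:factor_balance} only asks for letter-balancedness at each level, with no uniformity in $m$, the growth of $k_m$ does no harm.

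For left-properness, take $n \ge 1$. Every word of $W_n$ begins with $u_n$, which is nonempty because $r_n \ge 1$. Hence every image $\tau_n(a)$, $a \in \cA_{n+1}$, begins with the first letter of $u_n$; this is condition \ref{defi:Toeplitz:sadic:proper}. The map $\tau_0$ is the identity on $\cA_1 = \cA_0$, so it is not left-proper when $|\cA_0| \ge 2$. To handle this, I will apply the proposition to the shifted sequence $(\tau_n)_{n \ge 1}$, which generates $\infw{x^{(1)}}$, and then observe that $\infw{x} = \tau_0(\infw{x^{(1)}}) = \infw{x^{(1)}}$. Alternatively, one could compose $\tau_0 \circ \tau_1$ into a single first substitution.

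For primitivity, I read the hypothesis in \cref{Poirier&Steiner:factor_balance} in the directive-sequence sense, namely positivity of the $\tau_n$. For $n \ge 1$, each $w \in W_n$ contains every letter $b \in \cA_n$ exactly $k_n \ge 1$ times. So every letter of $\cA_n$ occurs in every $\tau_n(a)$, which is condition \ref{defi:Toeplitz:sadic:primitive}, and each $\tau_n$ is positive in the strongest sense.

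The one real point to verify is that $W_n$ is nonempty, so that the bijections $\tau_n$ exist and the alphabets $\cA_{n+1}$ are nonempty. This requires the prescribed prefix $u_n$ and suffix $v_n$ to be compatible with the letter counts $|w|_a = k_n$. Since $|u_n v_n|_a \le 2r_n \le 4k_n$, the inequality $r_n \le 2k_n$ alone does not obviously suffice, so I expect to need either $|u_nv_n|_a \le k_n$ for every $a$, or the natural implicit assumption that the parameters are chosen so that $W_n \neq \emptyset$. With that in place, the conclusion of \cref{Poirier&Steiner:factor_balance} gives that $\infw{x^{(1)}} = \infw{x}$ is factor-balanced.
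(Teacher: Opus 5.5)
Your proposal is correct and follows the paper's own proof: check left-properness via \ref{defi:Toeplitz:sadic:proper}, primitivity via \ref{defi:Toeplitz:sadic:primitive}, and letter-balancedness at each level via \cref{Toep_construct:letter_balanced}, then invoke \cref{Poirier&Steiner:factor_balance}. Two of your additions are sound refinements that the paper's one-line argument glosses over: passing to $(\tau_n)_{n\ge1}$ because the identity $\tau_0$ is neither left-proper nor positive when $|\cA_0|\ge 2$, and noting that $W_n\neq\emptyset$ is an implicit assumption of the construction that $r_n\le 2k_n$ alone does not guarantee.
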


\begin{proof}
The substitutions $\tau_n$ are left-proper by \ref{defi:Toeplitz:sadic:proper} and primitive by \ref{defi:Toeplitz:sadic:primitive}.
These two conditions together with~\cref{Toep_construct:letter_balanced} are precisely the hypothesis of~\cref{Poirier&Steiner:factor_balance}, whose conclusion is that $\infw{x}$ is factor-balanced.
\end{proof}

We now turn to the task of estimating the factor complexity of $\infw{x}$.
The first step is to prove a recognizability result.
For this, we need the words $v_n u_n$ from the construction. 

\begin{lemma}
\label{Toeplitz:morphisms_are_injective}
For every $m > n \ge 0$, the map $\tau_{n,m}:\cA_m \rightarrow \cA_n$ is injective.

\end{lemma}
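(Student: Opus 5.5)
The plan is to argue by induction on $m$, for fixed $n$, using two facts about the construction. First, each $\tau_k$ is injective on letters: $\tau_0$ is the identity, and for $k \ge 1$ the map $\tau_k \colon \cA_{k+1} \to W_k$ is a bijection by definition. Second, each $\tau_k$ has constant length: every $w \in W_k$ satisfies $|w| = k_k|\cA_k|$ (and $|\tau_0(a)| = 1$), as in \ref{defi:Toeplitz:sadic:divides}. Consequently every composition $\tau_{n,m}$ is also of constant length, namely $\prod_{n \le j < m} |\tau_j(\cdot)|$.

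The base case $m = n+1$ is immediate, since $\tau_{n,n+1} = \tau_n$ is injective on letters.

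For the inductive step, write $\tau_{n,m+1} = \tau_{n,m} \circ \tau_m$. The key observation is that a constant-length substitution that is injective on letters is injective on all words of a given length. Suppose $c_1 \cdots c_q$ and $c'_1 \cdots c'_q$ in $\cA_m^*$ have equal images under $\tau_{n,m}$. Cut both images into consecutive blocks of the common length $p = |\tau_{n,m}(c)|$. Then the $i$-th blocks agree, which means $\tau_{n,m}(c_i) = \tau_{n,m}(c'_i)$, and the induction hypothesis gives $c_i = c'_i$ for each $i$.

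Now take letters $a \ne b$ in $\cA_{m+1}$. Then $\tau_m(a) \ne \tau_m(b)$, and these two words have the same length $k_m|\cA_m|$. By the observation, $\tau_{n,m}(\tau_m(a)) \ne \tau_{n,m}(\tau_m(b))$, which closes the induction.

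I do not expect any real obstacle here. The only point requiring care is to remember that the uniform-length property is what upgrades letter-injectivity to injectivity on words of equal length. The words $v_n u_n$ and their primitivity are not needed for this lemma; they should only matter later, for recognizability (uniqueness of the cutting points in $\infw{x}$).
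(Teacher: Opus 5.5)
Your proof is correct and follows the paper's own argument: induction on $m$ for fixed $n$, writing $\tau_{n,m+1} = \tau_{n,m}\circ\tau_m$ and using that each $\tau_k$ is injective on letters. The one addition is that you use the constant-length property to pass from injectivity of $\tau_{n,m}$ on letters to injectivity on the equal-length words $\tau_m(a)$ and $\tau_m(b)$; the paper's proof uses this step without comment.
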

\begin{proof}
Fix some $n\ge 0$ and let us prove the result by induction on $m$. 
When $m = n + 1$, then $\tau_{n,m} = \tau_n$, so the base case follows directly from the construction of $\tau_n$, which was done by requiring $\tau_n$ to map bijectively $\cA_n$ onto $W_n$. Assume $m  n + 1$ and take $a,a' \in \cA_m$ with $\tau_{n,m}(a) = \tau_{n,m}(a')$, and let us prove that $a=a'$. Therefore, we have $\tau_{n,m-1}(\tau_{m-1}(a)) = \tau_{n,m-1}(\tau_{m-1}(a'))$. By induction hypothesis, $\tau_{n,m-1}$ is injective, then $\tau_{m-1}(a) = \tau_{m-1}(a')$, and the result follows since $\tau_{m-1}$ is chosen to be a bijection.
\end{proof}

We need the next estimating result.

\begin{lemma}
\label{estimates_multinomial}
For $k,d \in \NN$, the multinomial coefficient $\binom{dk}{k,\dots,k} \coloneqq 
    \frac{(dk)!}{(k!)^d}$ satisfies
\[  \binom{dk}{k,\dots,k} =
    \bigl(1 + o(1)\bigr) \frac{\sqrt{d}}{(2k\pi)^{(d-1)/2}} d^{dk}.
    \]
\end{lemma}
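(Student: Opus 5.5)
The plan is to apply Stirling's formula $m! = (1+o(1))\sqrt{2\pi m}\,(m/e)^m$ to the numerator and to each factor of the denominator, with $d$ fixed and $k \to \infty$. This is the regime in which the $o(1)$ term is meant; for $d=1$ the statement reads $1 = (1+o(1))\cdot 1$, which is trivial.

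First, write
\[
(dk)! = (1+o(1))\sqrt{2\pi dk}\,\Bigl(\frac{dk}{e}\Bigr)^{dk}
\quad\text{and}\quad
(k!)^d = (1+o(1))\,(2\pi k)^{d/2}\Bigl(\frac{k}{e}\Bigr)^{dk}.
\]
The second expression uses that a product of $d$ factors of the form $1+o(1)$ is again $1+o(1)$, since $d$ is fixed.

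Next, take the quotient. The exponential factors $e^{-dk}$ cancel, and $(dk)^{dk}/k^{dk} = d^{dk}$. What remains of the square-root factors is
\[
\frac{\sqrt{2\pi dk}}{(2\pi k)^{d/2}} = \frac{\sqrt d}{(2\pi k)^{(d-1)/2}}.
\]
The error ratio $(1+o(1))/(1+o(1))$ is again $1+o(1)$, which gives exactly the claimed asymptotic.

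There is no real obstacle here. The only point requiring care is to state explicitly that $d$ is fixed, so that the $d$-fold product of Stirling error terms stays $1+o(1)$. Uniformity in $d$ is not needed for the later entropy estimate, where $|\cA_n|$ plays the role of $d$ at a given level.
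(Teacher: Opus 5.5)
Your proposal is correct and follows the same route as the paper. The paper also applies Stirling's formula, though it works on the logarithmic scale, $\log\binom{dk}{k,\dots,k} = dk\log d + \tfrac12\log d - \tfrac{d-1}{2}\log(2\pi k) + O(1/k)$. Your multiplicative bookkeeping with $d$ fixed and $k\to\infty$ gives the same constant $\sqrt{d}/(2\pi k)^{(d-1)/2}$, and holding $d$ fixed matches how the lemma is later applied with $d=|\cA_n|$ and $k=k_n-2$.
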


\begin{proof}
This is the classical estimate of $\binom{dk}{k,\dots,k}$ when using Stirling's approximation to obtain 
\begin{equation}
\label{eqn:approx by Stirling}
    \log\left(\binom{dk}{k,\dots,k}\right)=dk\log(d)+\frac{1}{2}\log(d)-\frac{d-1}{2}\log(2\pi k)+O\left(\frac{1}{k}\right),
\end{equation}
and the result follows.
\end{proof}

The second part of~\cref{Toep_construct:factor_bal&positive_entropy} now follows.
 
\begin{lemma}\label{Toeplitz:exponential_complexity}
   There exists a choice of sequences  $(r_n,k_n)_{n \ge 0}$ such that the word $\infw{x}$ has exponential-growth factor complexity.
\end{lemma}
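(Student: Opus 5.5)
We bound $p_{\infw{x}}$ from below by counting the images $\tau_{0,n+1}(a)$ for $a \in \cA_{n+1}$. These are distinct factors of $\infw{x}$ of common length $p_{n+1} \coloneqq |\tau_{0,n+1}(a)| = \prod_{j\le n} k_j|\cA_j|$, and they all occur: by strong primitiveness \ref{defi:Toeplitz:sadic:primitive}, every letter of $\cA_{n+1}$ occurs in $\infw{x^{(n+1)}}$, and \eqref{defi:Toeplitz:xm_to_xn} gives $\infw{x} = \tau_{0,n+1}(\infw{x^{(n+1)}})$. By \cref{Toeplitz:morphisms_are_injective}, they are pairwise distinct. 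Hence
\[ p_{\infw{x}}(p_{n+1}) \ge |\cA_{n+1}| = |W_n|. \]
Since $p_{\infw{x}}$ is nondecreasing (every factor extends to the right), it suffices to show that $\log|\cA_{n+1}| \ge c\, p_{n+1}$ for some $c>0$ and all $n$. Indeed, for $p_{n+1} \le N < p_{n+2}$ this gives $\log p_{\infw{x}}(N) \ge c\,p_{n+1} \ge c N / (k_{n+1}|\cA_{n+1}|)$. To avoid losing this factor, I would rather use submultiplicativity: $\frac1N \log p_{\infw{x}}(N)$ converges to its infimum, so a positive lower bound along the subsequence $N = p_{n+1}$ already shows that the limit is positive.

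Next I count $|W_n|$. A word in $W_n$ has the fixed prefix $u_n$ and the fixed suffix $v_n$, and the middle part of length $d_n k_n - 2r_n$ (where $d_n = |\cA_n|$) has prescribed letter counts $k_n - |u_nv_n|_a$. Choose $r_n$ small, say $r_n = 1$; this requires $v_nu_n$ to be a primitive word of length $2$, i.e.\ two distinct letters, which is possible once $d_n \ge 2$. The middle part then is a multinomial with all counts at least $k_n - 2$, so \cref{estimates_multinomial} gives
\[ \log|\cA_{n+1}| \ge d_n k_n \log d_n - O(d_n \log(k_n)). \]
Since $p_{n+1} = p_n d_n k_n$, we get $\log|\cA_{n+1}| / p_{n+1} \approx \log(d_n)/p_n$. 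Here $d_n$ itself is huge: $\log d_n \approx p_{n-1}^{-1}\cdot p_n \cdot c_{n-1}$ inductively. Writing $c_n \coloneqq \log|\cA_{n+1}|/p_{n+1}$, the recursion reads
\[ c_n \ge \frac{\log d_n}{p_n} - O\!\left(\frac{\log k_n}{k_n p_n}\right) = c_{n-1} - O\!\left(\frac{\log k_n}{k_n p_n}\right), \]
because $\log d_n = \log|\cA_n| = c_{n-1}p_n$. The error terms are therefore summable once $k_n$ grows, or even for $k_n$ constant, since $p_n$ grows geometrically. The base case requires $c_0 = \log|\cA_1|/p_1 > 0$, which is automatic if $|\cA_0| \ge 2$ (for $n=0$ the map $\tau_0$ is the identity, so the recursion starts at the first nontrivial level). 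Choosing, for instance, $k_n = k$ large and fixed, and $r_n = 1$, yields $c_n \ge c_0 - \sum_n \varepsilon_n > 0$ provided $\sum_n \varepsilon_n < c_0$. This can be arranged by taking $k_0$ large, or by starting the error sum at a level where $p_n$ is already large.

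The main obstacle is controlling these error terms precisely. The $o(1)$ in \cref{estimates_multinomial} must hold uniformly in $d$, which is growing, so I would avoid it and instead use the crude Stirling-type lower bound
\[ \binom{dk}{k,\dots,k} \ge \frac{d^{dk}}{(e\sqrt{k}\,)^{d}}, \]
valid for all $d$ and $k$. The middle part with its slightly unequal counts can be compared to this by a bounded factor. The resulting error $O(d_n\log k_n)/(d_nk_np_n)$ is still summable, so the argument goes through.
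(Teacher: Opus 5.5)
Your argument is correct and has the same skeleton as the paper's proof. Injectivity of $\tau_{0,n+1}$ on letters gives $p_{\infw{x}}(p_{n+1})\ge|\cA_{n+1}|$, the size $|W_n|$ is a multinomial coefficient, and $\log|\cA_n|/p_n$ satisfies a recursion with summable errors.

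The differences lie in the parameters and the estimates, and they work in your favour. The paper takes $r_n=|\cA_n|$ with $u_n,v_n$ permutations of $\cA_n$, so the middle part is an exactly balanced multinomial with parameter $k_n-2$ and the asymptotic \cref{estimates_multinomial} applies. The price is a loss of $2|\cA_n|\log|\cA_n|$, i.e.\ a multiplicative factor $(1-2/k_n)$ in the recursion. This forces $\prod_n(1-2/k_n)>0$ (they take $k_n\ge n^2$), $k_n$ large relative to $|\cA_n|$, and a large initial alphabet. With $r_n=1$ and the uniform bound $\binom{dk}{k,\dots,k}\ge d^{dk}/(e\sqrt{k})^{d}$, your errors are purely additive, of order $(1+\log k_n)/(k_np_n)$. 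So a constant large $k$ and a binary $\cA_0$ already suffice. You also make explicit, via Fekete's lemma, the passage from the subsequence $N=p_n$ to $\lim_N\frac1N\log p_{\infw{x}}(N)>0$, which the paper leaves implicit.

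Two things in your write-up should be repaired. First, the comparison with the balanced multinomial is not by a bounded factor. Exactly,
\[
|W_n|=\frac{k_n}{d_n(d_nk_n-1)}\binom{d_nk_n}{k_n,\dots,k_n}\ge\frac{1}{d_n^{2}}\binom{d_nk_n}{k_n,\dots,k_n},
\]
which costs only $2\log d_n\le 2d_n$ and is harmless. You must use this identity, not the remark that all counts are at least $k_n-2$. Bounding below by the $(k_n-2)$- or even $(k_n-1)$-balanced multinomial loses a term of order $d_n\log d_n$. Since $\log d_n=c_{n-1}p_n$ is huge compared with $\log k_n$, this is not $O(d_n\log k_n)$. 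It reintroduces a factor $(1-O(1/k_n))$ in $c_n$, and the constant-$k$ version then fails.

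Second, the indexing needs correcting. Since $\tau_0$ is the identity, $p_1=1$ and $p_{n+1}=\prod_{1\le j\le n}k_j|\cA_j|$. So $k_0$ plays no role, and it is $k_1$ (or the common value $k$) that must be taken large. The heuristic $\log d_n\approx p_{n-1}^{-1}p_nc_{n-1}$ should simply be the identity $\log d_n=c_{n-1}p_n$ that you use afterwards.
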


\begin{proof}
Let $\ell_n=|\cA_n|$, $u_n=01\cdots (\ell_n-1)$, and $v_n=(\ell_n-1)(\ell_n-2)\cdots 0$. Therefore, $r_n=\ell_n$ for all $n\geq 0$. By construction,  we have $p_{n+1}=p_nk_n\ell_n$. Then, by~\cref{Toeplitz:morphisms_are_injective}, $p_{\infw{x}}(p_{n+1}) \ge \ell_{n+1}$. Therefore, our goal is to prove that $\frac{1}{p_n} \log(\ell_n) >c$ for all $n\geq 0$ and for some constant $c>0$. 

Remember that $\ell_{n+1}$ depends only on the choice of $k_n$, and that the choice of $k_{n+1}$ is independent of $k_n$. 
Suppose that all $k_{i}$'s, $i\leq n-1$, have been chosen. If $k_n$ is sufficiently large, $\ell_{n+1}$ corresponds to the number of words $w\in \cA_n^*$ such that \begin{enumerate}[label=(\roman*)]  
    \item The word $w$ starts with $u_n$ and ends with $v_n$.  
    \item We have $|w|_a = k_n$ for all $a \in \cA_n$. In particular, we have $|w| = k_n|\cA_n|$.  
\end{enumerate}  
Since $|u_n|_a=|v_n|_a=1$ for all $a\in \cA_n$, we have that $w=u_n w' v_n$ for some $w'$ such that $|w'|_a=(k_n-2)$ for all $a \in \cA_n$. Therefore, the number of such $w'$ is exactly the multinomial coefficient $\binom{\ell_n (k_n-2)}{k_n-2,\ldots,k_n-2}$. Thus, we have \begin{align*}
    \ell_{n+1}=(1+o(1))\left(\ell_n^{\ell_n (k_n-2)}\frac{\sqrt{\ell_n}}{(2(k_n -2)\pi)^{(\ell_n-1)/2}}\right).
\end{align*}
Then for sufficiently large $k_n$, we have 
\begin{equation}
 \label{eqn: bound on the l's}
 \ell_{n+1} > \ell_n^{\ell_n k_n} \frac{1}{(2\pi\ell_n^2k_n)^{\ell_n}}.
\end{equation}
Using the bound in~\eqref{eqn: bound on the l's}, we have \begin{align*}
    \frac{1}{p_{n+1}}\log(\ell_{n+1}) &\geq \frac{1}{p_nk_n\ell_n}\log \left(\ell_n^{\ell_n k_n} \frac{1}{(2\pi\ell_n^2k_n)^{\ell_n}}\right), \\
    & \geq \frac{\log(\ell_n)}{p_n}-\frac{1}{p_nk_n}\log(2\pi \ell_n^2k_n), \\
    & \geq \frac{\log(\ell_n)}{p_n}-\frac{2}{p_nk_n}\log(\ell_n)-\frac{1}{p_nk_n}\log(2\pi k_n), \\
    & \geq \frac{\log(\ell_n)}{p_n}\left(1-\frac{2}{k_n}\right)-\frac{3}{p_n}. 
\end{align*}
Thus, by iterating the previous inequality, we obtain for all $n\geq 2$ \begin{align*}
    \frac{1}{p_{n}}\log(\ell_{n}) \geq \frac{1}{p_2}\log(\ell_2) \prod_{2\leq i\leq n-1} \left(1-\frac{2}{k_i}\right) - S_n,
\end{align*}
where \[S_n:=\sum_{2\leq i \leq n-1} \frac{3}{p_{i}} \prod_{i+1 \le j \le n-1}\left(1-\frac{2}{k_{j}}\right).\] 
Since $p_i=p_{i-1}\ell_{i-1}k_{i-1}\geq 2p_{i-1}$, we have $p_i\geq 2^i$ for all $i\geq 2$. And we have $S_n\leq \sum_{2\leq i \leq n-1}\frac{3}{p_i}$, then the sum $S=\lim_{n\to +\infty}S_n$ is finite. 
Now, we choose $(k_n)_{n\ge 2}$ such that $\prod_{2\leq i\leq n} \left(1-\frac{2}{k_i}\right)$ is finite and non-zero, e.g., $k_n \ge n^2$ for all $n\ge 2$. Then we have \begin{align*}
    \frac{1}{p_{n}}\log(\ell_{n}) \geq \frac{1}{p_2}\log(\ell_2) \prod_{i\geq 2} \left(1-\frac{2}{k_i}\right) - S,
\end{align*} since $\prod_{2\leq i \leq n-1} \left(1-\frac{2}{k_i}\right)\geq \prod_{i\geq 2} \left(1-\frac{2}{k_i}\right)$ and $S_n\leq S$. 
Finally, we choose $|\cA|=\ell_1=\ell_0$ and $k_1$ as below.
Recall that $p_1=1$, $p_2 = p_1\ell_1k_1 = |\cA| k_1$, and by~\eqref{eqn:approx by Stirling}, we have
\begin{align*}
\frac{1}{p_2} \log \ell_2 = \frac{1}{|\cA|k_1} \bigg( |\cA|(k_1-2)\log(|\cA|)&+\frac{1}{2}\log(|\cA|)  \\ & - \left. \frac{|\cA|-1}{2}\log(2\pi (k_1-2))+O\left(\frac{1}{k_1}\right) \right)
\end{align*}
So, first choose $\cA$ such that $\log|\cA| \prod_{i\geq 2} \left(1-\frac{2}{k_i}\right) > S$.
Then, as $k_1 \to\infty$, we obtain 
\[
\frac{1}{p_2} \log \ell_2 = 
\log|\cA|\prod_{i\geq 2} \left(1-\frac{2}{k_i}\right)+O\left(\frac{1}{k_1}\right).
\]
In the end, we may choose $k_1$ such that
\[
    \frac{1}{p_n} \log p_{\infw{x}}(p_n) \ge \frac{1}{p_{n}} \log |\cA_n| \ge \frac{1}{p_2}\log(\ell_2) \prod_{i\geq 2} \left(1-\frac{2}{k_i}\right) - S>0. 
\]
Therefore, the sequence $\bigl(\frac{1}{p_{n}} \log p_{\infw{x}}(p_{n})\bigr)_{n\ge 1}$ remains bounded away from zero, which means that $\infw{x}$ has exponential-growth factor complexity, as desired.
\end{proof}


\bibliographystyle{alpha}
\bibliography{biblio.bib}

\appendix

\phantomsection

\addcontentsline{toc}{section}{Appendices}

\addtocontents{toc}{\protect\setcounter{tocdepth}{0}}

\section{Tribonacci linearly recurrent constant in \texorpdfstring{\cref{ex: constant for the Tribonacci word}}{}}
\label{appendix: Trib}

In this section, we use the free software \texttt{Walnut} to establish the linear recurrence constant given in~\cref{ex: constant for the Tribonacci word}. 
Mousavi~\cite{Mousavi2016} designed \texttt{Walnut} to automatically decide the truth of assertions about many properties for a large family of words.
We assume that the reader is familiar with the formalism of the software; if not, we refer them to~\cite{Walnut-2023} for a complete introduction and a survey of the combinatorial properties that can be checked. 

From~\cref{ex: constant for the Tribonacci word}, recall the Tribonacci substitution $\tau \colon \cA^* \to \cA^*$ by $a \mapsto ab$, $b \mapsto ac$, and $c \mapsto a$ and its unique fixed point $\infw{t}=(t_i)_{i\geq 0}$ called the \emph{Tribonacci word}.
The \emph{Tribonacci numeration system} is a positional numeration system for integers based on the sequence of \emph{Tribonacci numbers}, i.e., the sequence $(T_n)_{n\ge 0}$ of integers defined by $T_0=1$, $T_1=2$, $T_2=4$, and $T_{n+3}=T_{n+2}+T_{n+1}+T_n$ for all $n\ge 0$.
The Tribonacci word $\infw{t}$ is then \emph{Tribonacci-automatic}, i.e., it can be generated by a DFAO where the input is read in the Tribonacci numeration system.
For more on generalized automatic sequences, for instance see~\cite{Rigo-2000,Maes-Rigo}. In \texttt{Walnut}, the Tribonacci numeration system and the Tribonacci word $\infw{t}$ are respectively stored under the name \verb|trib| and \verb|TR|. 

Let us recall that a word $\infw{x}$ is linearly recurrent with constant $L\ge 0$ if, for all $u \in \cL(\infw{x})\setminus \{\varepsilon\}$ and all $w \in \mathcal{R}_{\infw{x}}(u)$, we have $|w|\leq L|u|$. 
In particular, $\infw{x}$ is linearly recurrent with constant $L$ if every length-$(L+1)n$ factor of $\infw{x}$ contains every element of $\cL_n(\infw{x})$. We rather use this second description to compute the linear recurrence constant via \texttt{Walnut}. We define $R_{\infw{x}}(w)$ to be the smallest integer $k$ such that every factor of length $k$ of $\infw{x}$ contains $w$, or $\infty$ is no such $k$ exists. We also define the recurrence function as $R_{\infw{x}}(n) = \max_{w \in  \cL_n(\infw{x})} R_{\infw{x}}(w)$. Therefore, if $\infw{x}$ is linearly recurrent with constant $L\ge 0$, we have \begin{align} \label{eq:linrec L+1}
    L+1=\lim_{n\to \infty}\frac{R_{\infw{x}}(n)}{|n|}.
\end{align}

To establish the linear recurrence constant in~\cref{ex: constant for the Tribonacci word}, we will need a formula to test equality of factors in $\infw{t}$, i.e., for all integers $i,j,n$, $\texttt{tribeqfac(i,j,n)}$ defined by
\begin{verbatim}
def tribeqfac "?msd_trib Au,v (u>=i & u<i+n & u+j=v+i) => TR[u]=TR[v]": 
\end{verbatim}
returns \texttt{TRUE} if and only if $t_i \cdots t_{i+n-1}=t_j\cdots t_{j+n-1}$.
It produces an automaton of $26$ states.
The function $\texttt{hasall(j,m,n)}$ defined by
\begin{verbatim}
def hasall "?msd_trib Ai Ek $tribeqfac(i,k,n) & k>=j & k+n<=j+m":
\end{verbatim}
returns \texttt{TRUE} if and only if $t_j\cdots t_{j+m-1}$ contains all length-$n$ factors.
It produces an automaton of $710$ states.
The command $\texttt{recur(m,n)}$ defined by
\begin{verbatim}
def recur "?msd_trib Aj $hasall(j,m,n)":
\end{verbatim}
returns \texttt{TRUE} if and only if every length-$m$ factor of $\infw{t}$ contains all factors of length $n$.  
It produces an automaton of $34$ states. The recurrence function $R_{\infw{t}}(n)$ is then given by
\begin{verbatim}
def rf "?msd_trib $recur(z,n) & ~$recur(z-1,n)":
\end{verbatim}

\begin{lemma}
 The linear recurrence constant of the Tribonacci word $\infw{t}$ is in $[9,10]$. 
\end{lemma}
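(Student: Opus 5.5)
The plan is to use \texttt{Walnut} to compute the recurrence function $R_{\infw{t}}(n)$ exactly, as a relation \texttt{rf(z,n)} in the Tribonacci numeration system. From it we will extract the ratio $R_{\infw{t}}(n)/n$, and conclude via \eqref{eq:linrec L+1}.

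The first step is to ask \texttt{Walnut} to confirm that \texttt{rf} defines a function: for every $n\ge1$ there should be a unique $z$ with \texttt{rf(z,n)}. Then inspect the resulting automaton, which should be small, to identify the shape of $R_{\infw{t}}(n)$. The expected shape is piecewise linear: on intervals $T_k \le n < T_{k+1}$ (or a shifted family of intervals) we expect $R_{\infw{t}}(n) = n + c_k$, with $c_k$ a sum of Tribonacci numbers. This mirrors the classical Sturmian/Fibonacci picture of Cassaigne's recurrence function.

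Rather than reading formulas off by hand, the bounds can be certified directly in \texttt{Walnut}.
\begin{itemize}
\item For the upper bound, prove the first-order statement $\forall n\,(n\ge1 \Rightarrow \texttt{recur}(11n,n))$. Here $11n$ is a valid term since multiplication by a constant is Presburger-definable. This gives $R_{\infw{t}}(n)\le 11n$, hence $L\le 10$.
\item For the lower bound, prove $\forall N\,\exists n\,(n>N \wedge \neg\texttt{recur}(10n,n))$. This gives infinitely many $n$ with $R_{\infw{t}}(n)>10n$, so $\limsup R_{\infw{t}}(n)/n \ge 10$ and $L\ge 9$.
\end{itemize}
If the \texttt{Walnut} computation with the constant $11$ blows up, the fallback is to work with \texttt{rf} directly. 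In that case, compare $z$ against linear combinations of $n$ expressed in \texttt{msd\_trib}.

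The main obstacle is interpretive rather than computational. The constant $L$ in the paper is defined through return words, whereas \eqref{eq:linrec L+1} uses the recurrence function and a limit which may not exist. We must therefore check carefully that the quantity certified (bounds on $R_{\infw{t}}(n)/n$ for all large $n$, plus along a subsequence) yields $L\in[9,10]$ in the intended sense.

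For the sharper value $2\psi^2+\psi+1\approx 9.605$ announced in \cref{ex: constant for the Tribonacci word}, one would additionally extract the subsequence $n=T_k+1$ (or similar) from the automaton. Along it, $R_{\infw{t}}(n)/n$ tends to $2\psi^2+\psi+2$, computed via the asymptotic $T_{k+j}/T_k\to\psi^j$. For the stated lemma, the two certified inequalities suffice.
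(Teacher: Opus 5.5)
Your proposal is correct and follows the paper's proof, which certifies exactly your two inequalities in \texttt{Walnut}: \texttt{check1} verifies $R_{\infw{t}}(n)\le 11n$ for all $n\ge 1$ (phrased through \texttt{rf} rather than \texttt{recur}, which is equivalent because \texttt{recur} is monotone in its first argument), and \texttt{check2} returns \texttt{FALSE}, i.e., some $n$ satisfies $R_{\infw{t}}(n)>10n$. Your lower bound is slightly stronger than the paper's, since you require infinitely many $n$ with $R_{\infw{t}}(n)>10n$ rather than a single witness, and that is what is actually needed if $L+1$ is read as $\limsup_n R_{\infw{t}}(n)/n$: by the formula $R_{\infw{t}}(n)=T_{i+4}+n-1$ on $U_i<n\le U_{i+1}$, the ratio $R_{\infw{t}}(n)/n$ oscillates between roughly $6.2$ and $10.6$, so the limit in \eqref{eq:linrec L+1} does not exist.
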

\begin{proof}
Using the commands written above, we can run the following in \texttt{Walnut}:
\begin{verbatim}
def check1 "?msd_trib An,z (n>=1 & $rf(n,z)) => z<=11*n":
def check2 "?msd_trib An,z (n>=1 & $rf(n,z)) => z<=10*n":
\end{verbatim}
The first returns \texttt{TRUE} and the second \texttt{FALSE}, which proves the lemma.
\end{proof}

To get the precise value of the linear recurrent constant, more work is needed. 
Let $(U_n)_{n\ge 0}=1,2,4,8,15,28,52,\ldots$ be the sequence of integers satisfying the recurrence relation $U_n=2U_{n-1}-U_{n-4}$ for all $n\ge 4$ with initial conditions $(U_n)_{0\le n\le 3}=1,2,4,8$.
Note that these integers are represented in the Tribonacci numeration system by the sequence of binary words starting with $1,10,100,1001,10010,100100,1001001,\ldots$.
The following commands
\begin{verbatim}
reg link msd_trib msd_trib "[0,0]*[1,1]([0,0][0,0][0,1])
                                    *(()|[0,0]|[0,0][0,0])":
reg adjtrib msd_trib msd_trib "[0,0]*[0,1][1,0][0,0]*":
def tn4 "?msd_trib Ej,k,l $adjtrib(n,j) & $adjtrib(j,k) 
                                & $adjtrib(k,l) & $adjtrib(l,z)":
def adju "?msd_trib Eu,t $link(u,x) & $adjtrib(x,t) & $link(t,y)":
\end{verbatim}
respectively provide an automaton that accepts the Tribonacci representations of the pairs $(T_n,U_n)$, $(T_n,T_{n+1})$, $(T_n,T_{n+4})$, and $(U_n,U_{n+1})$ in parallel (possibly with leading $0$'s).

\begin{lemma}
Let $R_{\infw{t}}$ be the recurrence function in the Tribonacci word $\infw{t}$.
If $U_i < n \leq U_{i+1}$ for integers $i,n$, then $R_{\infw{t}}(n)=T_{i+4}+n-1$. 
\end{lemma}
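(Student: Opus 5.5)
The plan is to prove the lemma with \texttt{Walnut}, exactly as in the preceding lemma, by reducing the statement to a first-order formula over pairs of Tribonacci representations. The predicate \texttt{rf(n,z)} already characterizes $z=R_{\infw{t}}(n)$. The automata \texttt{link}, \texttt{adjtrib}, \texttt{tn4} and \texttt{adju} give access to the pairs $(T_i,U_i)$, $(T_i,T_{i+4})$ and $(U_i,U_{i+1})$. With these, the statement becomes a single quantified assertion.

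First, I will check that the auxiliary regular expressions really encode the intended sequences; this is the step most likely to go wrong. The representation of $U_i$ is read off from the pattern $1(001)^*$ followed by zero to two zeros, and it is paired with $T_i=10^i$ in \texttt{link}. To confirm it, I will verify in \texttt{Walnut} that \texttt{adju} defines the graph of a function $U_i\mapsto U_{i+1}$, and that the resulting sequence satisfies $U_{i+4}=2U_{i+3}-U_i$. The latter can be expressed with chained \texttt{adju} calls and addition. In parallel, I will check a few small values by hand: $U=1,2,4,8,15,28,52$, with representations $1,10,100,1001,10010,100100,1001001$ in the Tribonacci system.

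Next, I will write the main check. I will define a predicate \texttt{tu(t,x)} asserting that $t=T_i$ and $x=U_i$ for a common index $i$; this is exactly \texttt{link}. The formula is then
\begin{verbatim}
def check3 "?msd_trib An,z,t,s,x,y ($link(t,x) & $adju(x,y)
   & $tn4(t,s) & x<n & n<=y & $rf(n,z)) => z+1=s+n":
\end{verbatim}
Here $s=T_{i+4}$ is obtained from $t=T_i$ via \texttt{tn4}. I expect it to return \texttt{TRUE}, which gives $R_{\infw{t}}(n)=T_{i+4}+n-1$ whenever $U_i<n\le U_{i+1}$. One subtlety is that \texttt{adju} as written pairs $U_i$ with $U_{i+1}$ through $T_i\mapsto T_{i+1}$, so I must make sure the index in \texttt{link} is aligned with that of \texttt{tn4}. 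If an off-by-one appears, I will adjust it by inserting one more \texttt{adjtrib} step.

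The main obstacle is computational: \texttt{hasall} already has $710$ states, and the final formula has several existential and universal quantifiers, so state blow-up is possible. If that happens, I will first materialize \texttt{rf} as a standalone automaton, since it is a synchronized function $n\mapsto z$. I will then substitute it directly instead of re-deriving it, and I will also check that it is functional, i.e.\ that $\forall n\,\exists! z$ holds.

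Finally, I will note the consequence for the Tribonacci example. Taking $n=U_i+1$, the ratio $R_{\infw{t}}(n)/n$ tends to $\lim T_{i+4}/U_i$, which yields $L+1$ via~\eqref{eq:linrec L+1} and recovers the constant $2\psi^2+\psi+1$ stated in \cref{ex: constant for the Tribonacci word}.
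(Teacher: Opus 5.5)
This is the paper's approach: the lemma is proved by one \texttt{Walnut} evaluation (\texttt{recurformula}) of an implication built from \texttt{adju}, \texttt{rf} and \texttt{tn4}. Your \texttt{check3} obtains $t=T_i$ from \texttt{link(t,x)} and feeds $t$ rather than $x=U_i$ into \texttt{tn4}, so it is in fact the correctly aligned form of that command; the printed one applies \texttt{tn4} to $x$, which is not a Tribonacci number once $i\ge 3$. Your planned sanity check on \texttt{adju} is genuinely necessary and should include totality on all $U_i$: as printed, \texttt{adju} applies \texttt{adjtrib} to $x$ instead of $u$ and therefore only relates $(1,2),(2,4),(4,8)$, in which case \texttt{check3} and any chained-\texttt{adju} recurrence test would return \texttt{TRUE} vacuously for $n>8$.
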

\begin{proof}
The following command 
\begin{verbatim}
eval recurformula "?msd_trib Ax,y,z,v,n (n>=1 & $adju(x,y) & n>x & n<=y 
                & $rf(n,z) & $tn4(x,v)) => z+1=v+n":
\end{verbatim}
checks the formula for the recurrence function $R_{\infw{t}}(n)$ written in the statement and returns \texttt{TRUE}.
\end{proof}

Recall that for functions $f,g \colon \NN \to \RR$, with $f(n), g(n) > 0$ for all $n \ge 0$, we write $f \sim g$ if $\lim_{n\to\infty} f(n)/g(n) = 1$.

\begin{proposition}
The linear recurrence constant is of the Tribonacci word $\infw{t}$ is $2\psi^2 + \psi + 1$, where $\psi\approx 1.839$ is the dominant root of the polynomial $X^3-X^2-X-1$.
\end{proposition}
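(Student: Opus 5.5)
The statement to prove is that the linear recurrence constant $L$ of $\infw{t}$ equals $2\psi^2+\psi+1$. My plan is to combine the preceding lemma with \eqref{eq:linrec L+1}. By that lemma, for $U_i < n \le U_{i+1}$ we have $R_{\infw{t}}(n) = T_{i+4}+n-1$. Hence
\[
\frac{R_{\infw{t}}(n)}{n} = \frac{T_{i+4}-1}{n} + 1 .
\]
On each block $(U_i,U_{i+1}]$ this quantity is maximal at $n = U_i+1$ and minimal at $n=U_{i+1}$. The linear recurrence constant is the smallest $L$ with $R_{\infw{t}}(n) \le (L+1)n$ for all $n$. Asymptotically, the relevant value is therefore
\[
L+1 = \limsup_{n\to\infty} \frac{R_{\infw{t}}(n)}{n} = 1 + \lim_{i\to\infty}\frac{T_{i+4}}{U_i}.
\]
I read \eqref{eq:linrec L+1} as a $\limsup$, since the ratio oscillates within each block. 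I will also check that no small $n$ gives a larger ratio; the Walnut check \texttt{check1} bounding the ratio by $11$ is consistent with this.

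The key computation is the asymptotics of $U_i$. The recurrence $U_n = 2U_{n-1}-U_{n-4}$ has characteristic polynomial
\[
X^4-2X^3+1 = (X-1)(X^3-X^2-X-1).
\]
So $U_n = \alpha + c\psi^n + o(1)$, the remaining two roots having modulus less than $1$. Fitting the initial values $1,2,4,8$ gives $\alpha = 1/2$, and I will determine $c$ explicitly. Similarly $T_n = d\psi^n + o(1)$, since $T_0,T_1,T_2 = 1,2,4$ and the conjugate roots decay.

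I do not expect to need $c$ and $d$ separately. Comparing $U$ with $T$ should be easier directly, since $U_n$ looks like a partial sum of Tribonacci numbers plus a constant. Indeed $1,2,4,8,15,28,52$ satisfies $U_{n} = T_{n}+\dots$; I would verify an identity such as $U_n = (T_{n+1}+T_{n-1}+1)/2$ or $2U_n = T_n + T_{n-2}+\dots$ by checking initial values and the recurrence. This yields
\[
\frac{T_{i+4}}{U_i} \to \frac{\psi^4}{\lambda},
\]
where $\lambda$ is the constant such that $U_i \sim \lambda\, T_i/d \cdot d$; in other words, I compute $\lim_i T_{i+4}/U_i$ as an algebraic expression in $\psi$. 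The final step is to reduce that expression, using $\psi^3=\psi^2+\psi+1$, to $2\psi^2+\psi+1$ after subtracting the $1$ from $L+1$. As a sanity check, $2(3.383)+1.839+1 \approx 9.605$, which lies in $[9,10]$ as the earlier lemma requires.

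The main obstacle is pinning down the exact linear relation between $U_i$ and the $T_j$'s and carrying out the algebraic reduction cleanly. I would first guess the relation by computing a few terms ($T$: $1,2,4,7,13,24,44,81$; $U$: $1,2,4,8,15,28,52,96$), then prove it by induction from the two recurrences, and only then take limits. A secondary issue is justifying that the $\limsup$, not a true limit, is what matters in \eqref{eq:linrec L+1}, together with the uniform bound for all $n$. The uniform bound follows because $(T_{i+4}-1)/(U_i+1)$ increases to its limit, which I would check from the explicit form.
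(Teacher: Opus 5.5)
Your proposal is correct and follows the paper's argument: combining $R_{\infw{t}}(n)=T_{i+4}+n-1$ on $(U_i,U_{i+1}]$ with the dominant asymptotics of $(T_n)$ and $(U_n)$ gives $L=\lim_i T_{i+4}/U_i$. Your $\limsup$ reading of the limit formula for $L+1$, and your check that $(T_{i+4}-1)/(U_i+1)$ stays below its limit, make explicit what the paper leaves implicit. One small slip: the correct identity is $U_n=1+\sum_{j<n}T_j=(T_{n+1}+T_{n-1}-1)/2$, so the constant term is $-1/2$ rather than $1/2$; this does not affect the limit $2\psi^5/(\psi^2+1)=2\psi^2+\psi+1$.
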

\begin{proof}
As $i\rightarrow \infty$, it is well-known that we have $T_i\sim  c_T\psi^{i+2}$ with $c_T=\frac{1}{-\psi^2+4\psi-1}$, and similarly one can check that $U_i\sim c_U \psi^{i+6}$ with $c_U=\frac{1}{4\psi^2+6\psi+4}$. Using the previous lemma and \eqref{eq:linrec L+1}, the linear recurrence constant is given by
\[
\lim_{i \rightarrow +\infty} \frac{T_{i+4}}{U_i + 1}= \frac{c_U}{c_T}=2\psi^2 + \psi + 1 \approx 9.605,
\]
which is enough.
\end{proof}

\section{Details on \texorpdfstring{\cref{exa:nonrecurrent_FB}}{} } \label{appendix:example}

Let us recall that the useful definitions from~\cref{exa:nonrecurrent_FB}. 
We let $\tau \colon \{0,1\}^* \to \{0,1\}^* \colon 0 \mapsto 01, 1\mapsto 0$ and $\sigma \colon \{0,1,*\}^* \to \{0,1,*\}^*$ be defined by $\sigma(a) = \tau(a)$ for $a \in \{0,1\}$ and $\sigma(*) = * 11$.
Let $\infw{x}= \sigma^\omega(*) = *11000101010010010010100\cdots$ and consider $\infw{x'}=11000\cdots$, the first shift of $\infw{x}$.
We prove several properties about these words.

\begin{lemma}
The words $\infw{x}$ and $\infw{x'}$ are both letter-balanced.
\end{lemma}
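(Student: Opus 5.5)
The plan is to write $\infw{x}$ explicitly as a concatenation of Fibonacci blocks and compare it with the Fibonacci word. Let $\infw{f}=\tau^\omega(0)$ be the Fibonacci word, which is $1$-letter-balanced, since it is Sturmian. By construction,
\[
\infw{x}=*\,11\,\tau(11)\,\tau^2(11)\cdots=*\,\prod_{n\ge 0}\tau^n(1)\tau^n(1),
\]
and $\infw{x'}$ is the same word with $*$ removed. Since $\infw{x}$ is obtained from $\infw{x'}$ by prepending one letter, it suffices to prove that $\infw{x'}$ is letter-balanced, and the letter $*$ is trivially balanced. Over the binary alphabet $\{0,1\}$, balancedness of the letter $1$ implies balancedness of $0$. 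So I will show that $|w|_1-\beta|w|$ is bounded for every factor $w$ of $\infw{x'}$, where $\beta=1/\varphi^2$ is the frequency of $1$ in $\infw{f}$.

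The key identity is $\tau^n(1)=\tau^{n-1}(0)$. For every letter $a$, the deviation $|\tau^n(a)|_1-\beta|\tau^n(a)|$ is a coordinate of the incidence-matrix iterate applied in the contracting direction. Hence it equals $c_a(-1/\varphi)^n$ for some constant $c_a$. Concretely, writing $d_n:=|\tau^n(1)|_1-\beta|\tau^n(1)|$, we have $d_n=(-1)^{n}\varphi^{-n}\cdot$const, and so $\sum_n |d_n|<\infty$. Now take a factor $w$ of $\infw{x'}$. It has the form $w=s\,\tau^m(1)\tau^m(1)\cdots\tau^{M}(1)\tau^M(1)\,p$, where $s$ is a suffix of a block $\tau^{m-1}(1)$ and $p$ is a prefix of a block $\tau^{M+1}(1)$, with the degenerate case where $w$ lies inside one or two consecutive blocks. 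The full blocks contribute at most $2\sum_n|d_n|$ to the deviation.

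For $p$, I would use that every block $\tau^k(1)=\tau^{k-1}(0)$ is a prefix of $\infw{f}$. Therefore $p$ is a prefix of $\infw{f}$, and its deviation is bounded by the discrepancy of $\infw{f}$, which is at most $1$ by \cref{lem:balance=>discrepancy}. For $s$, note that $s$ is a suffix of $\tau^{k-1}(0)$, and $\tau^{k-1}(0)$ is itself a factor of $\infw{f}$. So $s$ is a factor of $\infw{f}$ and likewise has deviation at most $1$. Summing these pieces gives a uniform bound of the form $2+2\sum_n|d_n|$ on $\bigl||w|_1-\beta|w|\bigr|$, and the second item of \cref{lem:balance=>discrepancy} then yields letter-balancedness of $\infw{x'}$. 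Finally, adding the single letter $*$ changes counts by at most $1$, which gives the result for $\infw{x}$.

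The main obstacle is making the geometric decay of $d_n$ precise with a clean constant. I will use the incidence matrix $\begin{pmatrix}1&1\\1&0\end{pmatrix}$ and decompose the vector $e_1$ (the abelianization of the letter $1$) along its two eigenvectors, with eigenvalues $\varphi$ and $-1/\varphi$. The deviation along the line of slope $\beta$ is then exactly the component along the contracting eigenvector, multiplied by $(-1/\varphi)^n$. Everything else is routine bookkeeping of the at most two partial blocks.
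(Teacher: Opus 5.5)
Your proposal is correct and follows essentially the same route as the paper. Both arguments split a factor of $\infw{x'}$ into a suffix and a prefix of Fibonacci blocks, which are controlled by the letter-balancedness of the Fibonacci word, plus full blocks $\tau^{n}(1)$, whose deviations decay like $\varphi^{-n}$ (contracting eigenvalue of the incidence matrix) and so contribute a summable total. Two small bookkeeping points: the full blocks at the ends need not come in complete pairs $\tau^n(1)\tau^n(1)$, and $\tau^0(1)=1$ is not a prefix of $\infw{f}$ (only a factor). Neither changes your bound $2+2\sum_n|d_n|$, since each $n$ still occurs at most twice and the partial pieces are always factors of $\infw{f}$.
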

\begin{proof}
Let us prove that the shift $\infw{x'}$ of $\infw{x}$ is letter-balanced, which clearly implies that $\infw{x}$ is letter-balanced too.
It is clear that the word $\infw{x'}$ admits letter frequencies, and these frequencies are respectively $\mu_0=1/\varphi$ and $\mu_1=1/\varphi^2$, where $\varphi=(1+\sqrt{5})/2$ is the golden ratio (note that these are the same frequencies as in the Fibonacci word).
Then, let us only prove that the discrepancy of $0$ in any factor is finite, since the proof regarding the discrepancy of $1$ is similar. Let $w$ be a factor of $\infw{x'}$, and let us prove that $||w|_0-\mu_0|w||$ is bounded. Write $w=p_{w}\tau^{\ell_1}(1)\tau^{\ell_2}(1)\cdots \tau^{\ell_k}(1)s_w$, for some integer sequence $(\ell_i)_{1\le i\le k}$ and $p_w,s_w$ factors of the Fibonacci word. By the classical Perron-Frobenius theory, it is well-known that we have $|\tau^n(1)|_0-\mu_0|\tau^n(1)|=O(\bar{\varphi}^n)$, with $\bar{\varphi}=(1-\sqrt{5})/2$. 



Since the Fibonacci word is letter-balanced, we have $|p_w|_0-\mu_0|p_w|=O(1)$ and $|s_w|_0-\mu_0|s_w|=O(1)$. Therefore, we have \begin{align*}
    ||w|_0-\mu_0|w||&=\Big||p_w|_0+\sum_{1\leq i \leq k}|\tau^{\ell_i}(1)|_0+|s_w|_0-\mu_0\Big(|p_w|+\sum_{1\leq i \leq k}|\tau^{\ell_i}(1)|+|s_w|\Big)\Big| \\
    &=\Big|\sum_{1\leq i \leq k}O(\bar{\varphi}^{\ell_i})+O(1)\Big|,
\end{align*} and the sum $\sum_{1\leq i \leq k}O(\bar{\varphi}^{\ell_i})$ converges as $\ell_k\rightarrow +\infty$ since $|\bar{\varphi}|<1$.  
\end{proof}

Like the Tribonacci case in the previous appendix, the \emph{Zeckendorf numeration system} is a positional numeration system for integers based on the sequence of \emph{Fibonacci numbers}.
In this numeration system, a sequence is said to be \emph{Fibonacci-automatic} if it can be generated by a DFAO where the input is read in the Zeckendorf numeration system. 


\begin{lemma}
The word $\infw{x}$ is Fibonacci-automatic and is generated by the DFAO of~\cref{fig:DFAO for twisted Fibonacci} in the Zeckendorf numeration system. 
\end{lemma}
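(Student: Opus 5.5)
The plan is to understand $\infw{x}=\sigma^\omega(*)$ through its position structure in the Zeckendorf numeration system, and then to build the automaton from that description. Write $F_n$ for $|\tau^n(0)|$, so $F_0=1$, $F_1=2$, $F_2=3,\dots$ are the Fibonacci numbers used in the Zeckendorf system. Iterating $\sigma(*)=*11$ gives $\infw{x}= * \, 11 \, \tau(11)\, \tau^2(11)\cdots$. The block $\tau^n(11)=\tau^n(1)\tau^n(1)=\tau^{n-1}(0)\tau^{n-1}(0)$ has length $2F_{n-1}$ (with the convention $\tau^{-1}(0)=1$, $F_{-1}=1$). So $\infw{x}$ is $*$ followed by a concatenation of blocks $\tau^{m}(0)\tau^{m}(0)$ for $m=-1,0,1,2,\dots$, where $\tau^{-1}(0)$ is read as $1$.

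The first step is to locate the block boundaries. The block for level $m$ starts at position $1+2\sum_{j=-1}^{m-1}F_j$. Using $\sum_{j\le m-1}F_j = F_{m+1}-c$ and $2F_{m+1}=F_{m+2}+F_{m-1}$, I expect these starting positions to have Zeckendorf representations of a fixed, simple shape, roughly $10\,0^{*}$ plus a bounded correction. The second step uses the classical fact that the Fibonacci word $\mathbf{f}=\tau^\omega(0)$ is Fibonacci-automatic, with its letter at position $i$ read off the Zeckendorf representation of $i$ (the letter is $1$ iff the representation ends in $1$). Since $\tau^m(0)$ is a prefix of $\mathbf{f}$, inside each half of a level-$m$ block the letter at relative offset $r<F_m$ equals $\mathbf f[r]$.

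The key step is then to show that the relation ``$i$ lies in the first or the second half of the level-$m$ block at offset $r$'' is recognized by a synchronized automaton reading $(i,r)$ in Zeckendorf representation. A cleaner alternative is to express this in \texttt{Walnut}. I would define the predicate $\exists m,r$ such that $i = s_m + r$ or $i=s_m+F_m+r$ with $r<F_m$, where $s_m$ is the block start. Here $F_m$ and $s_m$ are given by regular expressions on Zeckendorf representations, in the style of the \texttt{link} and \texttt{adjtrib} commands of \cref{appendix: Trib}. Then $x_i=\mathbf f[r]$, and the automaton for $\infw{x}$ is obtained by projection and quantifier elimination. Closure of automatic sequences under first-order definitions guarantees that the result is a DFAO.

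The final step is to compare this DFAO (minimized) with the one in \cref{fig:DFAO for twisted Fibonacci}. Alternatively, I would verify directly that the figure's automaton satisfies the defining recurrences. These recurrences are: $x_0=*$, $x_1x_2=11$, and the block identities $\infw{x}_{[s_m,s_m+2F_m)}=\tau^m(0)^2$. Each can be checked in \texttt{Walnut} as an \texttt{eval} returning \texttt{TRUE}, and together they determine $\infw{x}$ uniquely. I expect the main obstacle to be the exact bookkeeping of the block starting positions $s_m$ for small $m$, where the conventions $F_{-1}$, $\tau^{-1}(0)=1$ and the extra symbol $*$ make the offsets irregular. Getting the regular expression for $s_m$ right is the delicate part; I would first compute the first several $s_m$ by hand and read off the pattern.
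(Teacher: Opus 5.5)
Your approach is correct, but it takes a different route from the paper's.

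\textbf{The paper's route.} The paper never describes the positions of $\infw{x}$. It reads the ten-state DFAO as a substitution $\psi$ with a coding $\pi$, so that the automaton generates $\pi(\psi^\omega(0))$, and introduces an auxiliary map $\delta$. It then checks by hand the finite ``twisted'' intertwining identity $\pi(\psi^2(a))\,\delta(b)=\delta(a)\,\sigma(\pi(\psi(a)))$ for the fourteen length-$2$ factors $ab$ of $\psi^\omega(0)$. Telescoping this along $\psi^k(0)$, using $\delta(0)=\varepsilon$, gives $\pi(\psi^\omega(0))=\sigma(\pi(\psi^\omega(0)))$. This is a fixed point of $\sigma$ starting with $*$, hence equal to $\sigma^\omega(*)$.

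\textbf{Your route.} You start from the block decomposition of $\infw{x}$ into $*$, then $11$, then squares $\tau^m(0)^2$ beginning at $s_m=2F_{m+1}-1$. You then use closure of Fibonacci-automatic sequences under first-order definability, and finish with a \texttt{Walnut} equivalence check.

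\textbf{What each buys.} Your argument explains \emph{why} $\infw{x}$ is Fibonacci-automatic at all. The paper's later remark stresses that this is not evident from the characteristic polynomial $(X-1)(X^2-X-1)$ of $\sigma$. Your argument also shows how to construct the automaton rather than merely certify a given one. The paper's argument is a self-contained finite verification. It needs neither the logical machinery behind \texttt{Walnut} nor trust in a decision procedure, only the standard dictionary between Zeckendorf DFAOs and substitutions.

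\textbf{A minor correction.} For $m\ge 1$, the Zeckendorf representation of $s_m=F_{m+2}+F_{m-1}-1$ is $1000$ followed by a prefix of $(10)^\omega$. It is not ``$10\,0^*$ plus a bounded correction'': the $-1$ changes unboundedly many digits. You do not actually need this regular expression. Both ``$s+1=2f$ with $f$ a Fibonacci number'' and ``the block half-length is $g-f$, with $g$ the next Fibonacci number after $f$'' are already first-order definable. That removes the bookkeeping you flagged as delicate. Only positions $0,1,2$ need separate handling: the $*$, and the block $11$, which is not a prefix of the Fibonacci word $\mathbf f$.
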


\begin{figure}[ht]
\includegraphics[width=\textwidth]{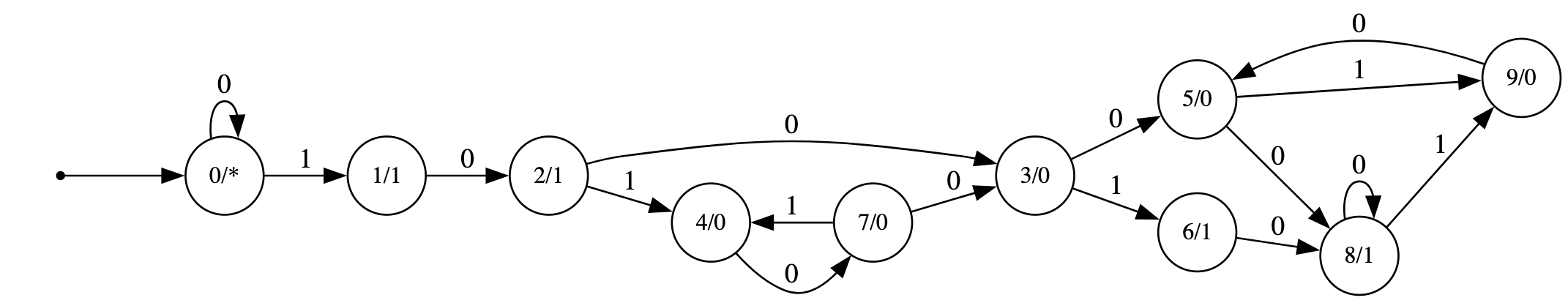}
\caption{A DFAO generating $\infw{x}$ in the Zeckendorf numeration system.}
\label{fig:DFAO for twisted Fibonacci}
\end{figure}

\begin{proof}
 It is enough to prove that the automaton in~\cref{fig:DFAO for twisted Fibonacci} generates $\infw{x}$.
Let $\cA = \{*,0,1\}$ be the alphabet of the substitution $\sigma$ and let $\cC = \{0,1,2,3,4,5,6,7,8,9\}$ be the set of states of the automaton in~\cref{fig:DFAO for twisted Fibonacci}.
Define the substitutions $\psi \colon \cC^* \to \cC^*$ and $\pi \colon \cC^* \to \cA^*$ by the automaton, i.e.,
\[
\psi \colon 
0 \mapsto 01,\, 
1 \mapsto 2, \,
2 \mapsto 34, \,
3 \mapsto 56, \,
4 \mapsto 7, \,
5 \mapsto 89, \,
6 \mapsto 8, \,
7 \mapsto 34, \,
8 \mapsto 89, \,
9 \mapsto 5,
\]
and
\[
\pi \colon 
0 \mapsto *, \,
1 \mapsto 1, \,
2 \mapsto 1, \,
3 \mapsto 0, \,
4 \mapsto 0, \,
5 \mapsto 0, \,
6 \mapsto 1, \,
7 \mapsto 0,\,
8 \mapsto 1,\,
9 \mapsto 0.
\]
We claim that the infinite word generated by the automaton is $\infw{x}$, i.e., $\pi(\psi^\omega(0)) = \infw{x} = \sigma^\omega(*)$.
Define the substitutions $\phi,\delta \colon \cC^* \to \cA^*$ by  $\phi = \pi \circ \psi$ and
\[
\delta \colon
0 \mapsto \varepsilon, \,
1 \mapsto 0, \,
2 \mapsto \varepsilon, \,
3 \mapsto 1, \,
4 \mapsto 0, \,
5 \mapsto 1, \,
6 \mapsto 1, \,
7 \mapsto \varepsilon, \,
8 \mapsto 1, \,
9 \mapsto 1.
\]
Observe that
\[ 
\pi(\psi^2(a))\, \delta(b) = 
\delta(a) \, \sigma(\phi(a))
\]
for all $ab \in \cL(\psi^\omega(0)) \cap \cC^2 = \{01,12,
23,34,45,48,56,58,67,78,83,89,95,98\}$. 
Therefore, by an inductive argument, for all $\ell \ge 1$
\begin{align}
\label{diagrama_chueco_automata}
\pi(\psi^2(a_0\cdots a_{\ell-1}))\, \delta(a_\ell) =
\delta(a_0) \, \sigma(\phi(a_0\cdots a_{\ell-1}))
\end{align}
for all $a_0\cdots a_\ell \in \cL(\psi^\omega(0)) \cap \cC^{\ell+1}$.


Let $k \ge 1$ be an integer, set $a_0\cdots a_{\ell-1} = \psi^k(0)$ and let $a_\ell \in \cC $ be such that $\psi^k(0)a_\ell \in \cL(\psi^\omega(0)) \cap \cC^{\ell+1}$. Since the word $\psi^k(0)$ begins with $0$ and $\delta(0)=\varepsilon$, Equality~\eqref{diagrama_chueco_automata} implies that
$\pi(\psi^{k+2}(0)) \delta(a_\ell)=\sigma(\phi(\psi^k(0)))$, so $\pi(\psi^{k+2}(0))$ is a prefix of $\sigma(\phi(\psi^k(0)))$.
Taking the limit as $k \to +\infty$ and setting $\infw{y} := \psi^\omega(0)$, we obtain $\pi(\infw{y}) = \sigma(\phi(\infw{y}))$.
Since $\phi = \pi \circ \psi$ and $\psi(\infw{y}) = \infw{y}$, it follows that
$\infw{z} \coloneqq \pi(\infw{y})$ satisfies $\infw{z} = \sigma(\infw{z})$.
Thus, $\infw{z} = \sigma^m(\infw{z})$ for all $m \in \NN$.
Note that $\infw{z}$ starts with $*$, since $\infw{y}$ starts with $0$ and $\pi(0) = *$.
Therefore, by taking the limit $m \to +\infty$, we conclude that
$\infw{z} = \sigma^\omega(*) = \infw{x}$, as desired.
\end{proof}

\begin{remark}
Notice that it is not trivial at all that $\infw{x}$ is Fibonacci-automatic. Indeed, the polynomial of the substitution $\sigma$ is $(X-1)(X^2-X-1)$, which does not guarantee it to be Fibonacci-automatic in general.      
\end{remark}

We use the free software \texttt{Walnut} to prove the next statement.

\begin{proposition}
The word $\infw{x}$ is $5$-power-free and no shift of $\infw{x}$ is recurrent.  
\end{proposition}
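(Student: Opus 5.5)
The plan is to let \texttt{Walnut} decide both properties, using the Fibonacci-automaticity established in the previous lemma. We first load the DFAO of \cref{fig:DFAO for twisted Fibonacci} into \texttt{Walnut} as a word automaton over the Zeckendorf numeration system \verb|msd_fib|; call it \verb|X|, with the output alphabet $\{*,0,1\}$ encoded as integers. As in \cref{appendix: Trib}, we define a factor-equality predicate
\begin{verbatim}
def xeqfac "?msd_fib Au,v (u>=i & u<i+n & u+j=v+i) => X[u]=X[v]":
\end{verbatim}
which holds if and only if $x_i\cdots x_{i+n-1}=x_j\cdots x_{j+n-1}$.

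For $5$-power-freeness, note that $\infw{x}$ contains a factor $v^5$ with $|v|=p\ge1$ starting at position $i$ if and only if $x_{[i,i+4p)}=x_{[i+p,i+5p)}$. This is the predicate \verb|xeqfac(i,i+p,4*p)|. We therefore evaluate
\begin{verbatim}
eval fivepow "?msd_fib Ei,p p>=1 & $xeqfac(i,i+p,4*p)":
\end{verbatim}
and expect the answer \texttt{FALSE}. A cheaper variant avoids the factor predicate altogether: we check ``$\forall k<4p,\ X[i+k]=X[i+p+k]$'' directly, and this is what we would run if the automaton sizes become a problem.

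For non-recurrence of every shift, it suffices to exhibit, for each starting position $s$, a factor of the shift $x_sx_{s+1}\cdots$ that occurs in it only finitely often. The natural candidate is the prefix $x_{[s,2s+3)}$. It contains the factor $11$ coming from $\sigma(*)=*11$ whenever $s\le 1$, and for larger $s$ it contains $\tau^k(11)$-type junction words. Rather than trying to pin down such a candidate by hand, we let \texttt{Walnut} check the statement
\[
\forall s\ \exists n\ \exists N\ \forall j\ \bigl(j>N \Rightarrow \neg\,\mathtt{xeqfac}(s,j,n)\bigr),
\]
that is, the length-$n$ factor at $s$ never reappears beyond $N$, and expect \texttt{TRUE}. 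If the quantifier alternation makes this too costly, we would guess an explicit $n$ as a linear function of $s$ (for instance $n=s+3$, since the factors $\tau^k(11)$ have length comparable to their position). We would then verify the simpler formula ``$\forall s\,\forall j>s$, $\neg\,\mathtt{xeqfac}(s,j,s+3)$'', which says that the prefix of each shift occurs only once in it.

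The main obstacle we expect is the non-recurrence part. The \texttt{Walnut} side only needs the right explicit witness, so the difficulty is in finding it. Intuitively, the occurrences of $11$ are exactly the seams $\tau^k(11)$ between consecutive blocks $\tau^k(1)\tau^k(1)$, and these seams become sparser. We would explore small cases by computing $\infw{x}$ to guide the choice of the linear bound on $n$ before running the final check.
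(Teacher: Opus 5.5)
Your main plan is essentially the paper's: both facts are decided by \texttt{Walnut} on the Fibonacci-automatic word, your \texttt{fivepow} test is equivalent to the paper's \texttt{pow5} check, and your non-recurrence sentence (for each $s$, some factor starting at $s$ occurs only finitely often) is an equivalent variant of the paper's \texttt{norec} (for each $n$, some factor starting beyond $n$ occurs exactly once). Do not rely on your fallback witness $n=s+3$, however: that formula is false, since already at $s=7$ the factor $\infw{x}_{[7,17)}=0101001001$ lies in the Fibonacci language and reappears at position $44$, inside the block $\tau^6(11)$.
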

\begin{proof}
To prove the first part of the statement, the following two commands test whether the word $\infw{x}$ contains
$4$-or $5$-powers, respectively:
\begin{verbatim}
eval pow4 "?msd_fib Ei,n n>0 & At (t>=i & t<i+3*n)=> X[t]=X[t+n]":
eval pow5 "?msd_fib Ei,n n>0 & At (t>=i & t<i+4*n)=> X[t]=X[t+n]":
\end{verbatim}
The first returns \texttt{True} and the second returns \texttt{False}, showing that
$\infw{x}$ contains $4$-powers but is $5$-power-free.

In \texttt{Walnut}, we define equality of factors as follows: the predicate \texttt{eqfac}$(i,j,n)$ returns \texttt{True} if and only if $\infw{x}[i..i+n] = \infw{x}[j..j+n]$, which is defined with the following command:
\begin{verbatim}
def xeqfac "?msd_fib Au,v (u>=i & u<i+n & u+j=v+i) => X[u]=X[v]":
\end{verbatim}
This produces an automaton with $199$ states. To prove the second part of the statement, we use the following command:
\begin{verbatim}
def norec "?msd_fib An Ei,m i>n & m>n & Aj (j!=i)=> ~$xeqfac(i,j,m)":
\end{verbatim}
This returns \texttt{True}, showing that no shift of $\infw{x}$ is recurrent.    
\end{proof}

\end{document}